\newcommand{\ccirc}{\kern0.5ex\vcenter{\hbox{$\scriptstyle\circ$}}\kern0.5ex}
\newcounter{lemma}[section]
\newtheorem{Theorem}[lemma]{Theorem}
\newtheorem{Lemma}[lemma]{Lemma}
\newtheorem{Proposition}[lemma]{Proposition}
\newtheorem{Corollary}[lemma]{Corollary}
\theoremstyle{definition}
\newtheorem{Example}[lemma]{Example}
\newtheorem{Definition}[lemma]{Definition}
\newtheorem{Remark}[lemma]{Remark}
\newtheorem*{$H_1$}{$(H_1)$}
\numberwithin{lemma}{section}
\numberwithin{equation}{section}
\def\H{\mathbb H}
\def\GL{\mathrm{GL}}
\def\SL{\mathrm{SL}}
\def\sM{\mathscr{M}}
\def\N{\mathbb N}
\def\C{\mathbb C}
\def\R{\mathbb R}
\def\Z{\mathbb Z}
\def\mod{\  \mathrm{mod}\ }
\def\ord{\operatorname{ord}}
\def\Im{\mathrm{Im\,}}
\def\gen#1{\langle #1\rangle}
\def\JS#1#2{\left(\frac{#1}{#2}\right)}
\def\gauss#1{\left\lfloor #1\right\rfloor}
\def\SL{\mathrm{SL}}
\def\PSL{\mathrm{PSL}}
\def\ol#1{\overline{#1}}
\def\wt#1{\widetilde{#1}}
\def\M#1#2#3#4{\begin{pmatrix}#1&#2\\#3&#4\end{pmatrix}}
\def\SM#1#2#3#4{\left(\begin{smallmatrix}#1&#2\\#3&#4\end{smallmatrix}
  \right)}
\def\column #1#2{\begin{pmatrix}#1\\#2\end{pmatrix}}
\def\X #1,#2{X^{#1}_0(#2)}
\begin{document}

\title[Quasimodular forms and modular differential
equations]{Quasimodular forms and modular differential
  equations which are not apparent at cusps: I}

\author{Chang-Shou Lin and Yifan Yang}

\address{Center for Advanced Study in Theoretical Sciences (CASTS),
  National Taiwan University, Taipei, Taiwan 10617.}
\email{cslin@math.ntu.edu.tw}

\address{Department of Mathematics, National Taiwan
University and National Center for Theoretical Sciences, Taipei,
Taiwan 10617.}
\email{yangyifan@ntu.edu.tw}

\begin{abstract} In this paper, we explore a two-way connection
  between quasimodular forms of depth $1$ and a class of second-order
  modular differential equations with regular singularities on the
  upper half-plane and the cusps. Here we consider the cases
  $\Gamma=\Gamma_0^+(N)$ generated by $\Gamma_0(N)$ and the
  Atkin-Lehner involutions for $N=1,2,3$
  ($\Gamma_0^+(1)=\SL(2,\Z)$). Firstly, we note that a quasimodular
  form of depth $1$, after divided by some modular form with the same
  weight, is a solution of a modular differential equation. Our main
  results are the converse of the above statement for the groups
  $\Gamma_0^+(N)$, $N=1,2,3$.
\end{abstract}

\date{\today}
\subjclass[2010]{Primary 11F11; secondary 11F25, 11F37, 34M03, 34M35}
\maketitle

\section{Introduction}
\label{section: introduction}
Let $\Gamma$ be a discrete subgroup of $\SL(2,\R)$ commensurable with
$\SL(2,\Z)$. The aim of our study is to explore a two-way connection
between quasimodular forms with depth $\ell$ on $\Gamma$ and modular
differential equations of order $\ell+1$. In this paper, we shall
consider the case $\ell=1$.

The notion of quasimodular forms was first introduced by Kaneko and
Zagier \cite{Kaneko-Zagier}. For example, if $\Gamma=\SL(2,\Z)$, a
function $f(z)$ on the upper half-plane $\H$ is called a quasimodular
form of weight $k$ and depth $\le 1$ if
\begin{equation} \label{equation: f=f1E2+f0}
  f(z)=f_1(z)E_2(z)+f_0(z), \qquad z\in\H,
\end{equation}
for some modular forms $f_0(z)$ and $f_1(z)$ of weight $k$ and $k-2$,
respectively, on $\SL(2,\Z)$, where $E_2(z)$ is the classical
Eisenstein series of weight $2$ on $\SL(2,\Z)$ with $E_2(\infty)=1$.

For $f(z)$ of \eqref{equation: f=f1E2+f0},
Pellarin \cite{Pellarin} introduced the Wronskian 
\begin{equation} \label{equation: Pellarin Wronskian}
  W_f(z)=\det\M{zf(z)+\frac6{\pi i}f_1(z)}{f(z)}
  {\left(zf(z)+\frac6{\pi i}f_1(z)\right)^{(1)}}{f^{(1)}(z)},
\end{equation}
where $g^{(k)}$ is the $k$-th derivative of a function $g$ with
respect to $z$, and a straightforward computation finds that $W_f(z)$
is a modular form of weight $2k$ (see \cite{Pellarin} or
Section \ref{section: quasimodular} for more details). Let
$$
g_1(z)=\frac{zf(z)+6f_1(z)/\pi i}{\sqrt{W_f(z)}} \quad
\text{and} \quad
g_2(z)=\frac{f(z)}{\sqrt{W_f(z)}}.
$$
Then we have $\det\SM{g_1}{g_2}{g_1'}{g_2'}=1$, and hence
$\det\SM{g_1}{g_2}{g_1''}{g_2''}=0$, by a further
differentiation. Thus, both $g_1$ and $g_2$ are solutions of
\begin{equation} \label{equation: y''=-Qy 0}
  y''=-4\pi^2Q(z)y, \qquad z\in\H,
\end{equation}
where
\begin{equation} \label{equation: g''/g}
  Q(z)=-\frac1{4\pi^2}\frac{g_1''}{g_1}
  =-\frac1{4\pi^2}\frac{g_2''}{g_2}.
\end{equation}
Although generally, $g_i$, $i=1,2$, are not single-valued functions,
it is easy to see that $Q(z)$ is a single-valued meromorphic function
on $\H$.

Equation \eqref{equation: y''=-Qy 0} is a second-order ODE in a
complex variable. Such a differential equation is called
Fuchsian if the order of pole of $Q(z)$ is $\le 2$. Let $z_0$
be a singular point and assume that
$Q(z)=A(z-z_0)^{-2}+O\left((z-z_0)^{-1}\right)$. Then the
indicial equation at $z_0$ is
$$
\rho(\rho-1)=-4\pi^2A,
$$
and its roots are $-\alpha$ and $\alpha+1$ for some
$\alpha\in\C$. These two roots are called the \emph{local exponents}
of \eqref{equation: y''=-Qy 0} (or of $Q(z)$). If
$\alpha\in\frac12\Z$, then equation \eqref{equation: y''=-Qy 0} might
have a solution with a logarithmic singularity near $z_0$. The
(regular) singular point $z_0$ is called \emph{apparent} if (a)
$\alpha\in\frac12\Z$ and (b) any solution of \eqref{equation: y''=-Qy
  0} has no logarithmic singularity at $z_0$. If (a) holds, but (b)
does not, we will simply say $z_0$ is \emph{not apparent}. In this
paper, we always assume that the local exponents of any singularity
$z_0\in\H$ are integers or half-integers, which will be denoted by
$1/2\pm\kappa_{z_0}$, $\kappa_{z_0}\in\frac12\Z_{\ge 0}$. It is easy
to see that (i) if $\kappa_{z_0}=0$, then $z_0$ is always not
apparent, and (ii) if $\kappa_{z_0}=1/2$, then $z_0$ is apparent if
and only if $Q(z)$ is holomorphic at $z_0$.

We start with the following observation.

\begin{Theorem} \label{proposition: quasi satisfies ODE}
  Let $Q(z)$ be defined \eqref{equation: g''/g}. Then $Q(z)$ is a
  meromorphic modular form of weight $4$ on $\SL(2,\Z)$. Moreover,
  the following hold.
  \begin{enumerate}
    \item[(i)] The ODE \eqref{equation: y''=-Qy 0} is Fuchsian on
      $\H\cup\{\text{cusps}\}$.
    \item[(ii)] Any pole of $Q$ on $\H$ is an apparent
      singularity for \eqref{equation: y''=-Qy 0}.
    \item[(iii)] The cusp $\infty$ is not apparent.
    \item[(iv)] Let $1/2\pm\kappa_i$ and $1/2\pm\kappa_\rho$,
      $\kappa_i,\kappa_\rho\in\frac12\N$, be the local exponents of
      \eqref{equation: y''=-Qy 0} at $i=\sqrt{-1}$
      and $\rho=(1+\sqrt{-3})/2$, respectively. Then
      $(2\kappa_i,2)=(2\kappa_\rho,3)=1$.
  \end{enumerate}
\end{Theorem}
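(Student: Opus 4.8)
\emph{Proof plan.} The plan is to exploit a vector-valued transformation law for the pair $(h,f)$, where $h(z):=zf(z)+\frac{6}{\pi i}f_1(z)$, so that $g_1=h/\sqrt{W_f}$, $g_2=f/\sqrt{W_f}$ and $W_f=hf'-h'f$. First I would establish the transformation law
$$
\column{h(\gamma z)}{f(\gamma z)}=(cz+d)^{k-1}\,\gamma\,\column{h(z)}{f(z)},\qquad\gamma=\SM{a}{b}{c}{d}\in\SL(2,\Z),
$$
which follows from $E_2(\gamma z)=(cz+d)^2E_2(z)+\frac{6c(cz+d)}{\pi i}$ together with the modularity of $f_0,f_1$, the extra factor $z$ in $h$ being precisely what absorbs the non-modularity of $E_2$. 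Differentiating this identity once and forming $W_f=hf'-h'f$, a single column operation removes the term coming from the derivative of $(cz+d)^{k-1}$, giving $W_f(\gamma z)=(cz+d)^{2k}W_f(z)$; I would also note that $W_f=\frac{6}{\pi i}(f_1f'-f_1'f)-f^2$ is a holomorphic modular form of weight $2k$ on $\SL(2,\Z)$, nonzero unless $f\equiv0$. Dividing by $\sqrt{W_f}$ then yields $\column{g_1(\gamma z)}{g_2(\gamma z)}=\pm(cz+d)^{-1}\gamma\column{g_1(z)}{g_2(z)}$, so for any solution $g$ of \eqref{equation: y''=-Qy 0} the function $\chi:=\pm(cz+d)\,g(\gamma z)$ is a $\C$-linear combination of $g_1,g_2$, hence again a solution; comparing $\chi''=\pm(cz+d)^{-3}g''(\gamma z)=-4\pi^2(cz+d)^{-4}Q(\gamma z)\chi$ with $\chi''=-4\pi^2Q\chi$ forces $Q(\gamma z)=(cz+d)^4Q(z)$. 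Since $Q=-\frac1{4\pi^2}g_2''/g_2$ is meromorphic on $\H$ and $g_2$ has a Fourier expansion $q^{\nu}\cdot(\text{power series with nonzero constant term})$ at $\infty$ with $\nu\in\frac12\Z$ (so that $Q=\nu^2+O(q)$ there), $Q$ is a meromorphic modular form of weight $4$.

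For (i) and (ii) I would argue locally. Fix $z_0\in\H$ and write $W_f=(z-z_0)^{N}u$ with $u$ holomorphic and nonvanishing near $z_0$. Noting that $f$ and $h$ are holomorphic on $\H$ and $\not\equiv0$ --- if $h\equiv cf$ for a constant $c$, then $(z-c)f(z)=-\frac{6}{\pi i}f_1(z)$, and applying $z\mapsto z+1$ together with the periodicity of $f$ and $f_1$ forces $f\equiv0$ --- the solutions $g_1=h/\sqrt{W_f}$ and $g_2=f/\sqrt{W_f}$ are, near $z_0$, of the form $(z-z_0)^{r}\times(\text{holomorphic, nonvanishing})$ with $r\in\frac12\Z$, hence logarithm-free, and they are linearly independent because their Wronskian equals $1$. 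The local form of $g_2$ shows that $Q$ has a pole of order $\le2$ at $z_0$, so \eqref{equation: y''=-Qy 0} is Fuchsian on $\H$; and a second-order Fuchsian equation possessing a logarithm-free basis of solutions has no logarithmic solution at $z_0$, which also forces $\kappa_{z_0}>0$ (a double local exponent would produce a logarithm) --- so every pole of $Q$ on $\H$ is apparent, which is (ii). At the cusp, rewriting \eqref{equation: y''=-Qy 0} in the local parameter $q=e^{2\pi iz}$ turns it into an equation whose coefficients have at worst a simple and a double pole at $q=0$ (since $Q$ is holomorphic there), hence Fuchsian at $\infty$; this completes (i). For (iii), I would substitute $z=\frac{\log q}{2\pi i}$ into $h=zf+\frac{6}{\pi i}f_1$: then $g_1=h/\sqrt{W_f}$ contains the term $\frac{\log q}{2\pi i}\cdot f/\sqrt{W_f}$, whose coefficient is a nonzero multiple of the leading Fourier coefficient of $f$, so \eqref{equation: y''=-Qy 0} has a solution with a genuine logarithmic singularity at $\infty$; since the two local exponents $\pm\nu$ at $\infty$ differ by the integer $2\nu$, the cusp $\infty$ is not apparent.

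For (iv) the key is an eigenvector decomposition at the elliptic points. Since a nonzero quasimodular form on $\SL(2,\Z)$ has even weight, $k$ is even. Let $z_0\in\{i,\rho\}$ be an elliptic point of order $e\in\{2,3\}$ and let $\gamma=\SM{a}{b}{c}{d}$ generate its stabilizer in $\PSL(2,\Z)$; then $v_+=\column{z_0}{1}$ and $v_-=\column{\bar z_0}{1}$ are eigenvectors of $\gamma$ with eigenvalues $\lambda_\pm=(cz_0+d)^{\pm1}$, where $\lambda_+$ is a primitive $2e$-th root of unity, and $\gamma'(z_0)=\lambda_+^{-2}$. Writing $\column{h}{f}=G_+v_++G_-v_-$, where $G_\pm$ are holomorphic $\C$-linear combinations of $h,f$ that are $\not\equiv0$ (by the periodicity argument above), the transformation law gives $G_\pm(\gamma z)=(cz+d)^{k-1}\lambda_\pm G_\pm(z)$; comparing leading coefficients at $z_0$ and using $\gamma'(z_0)=\lambda_+^{-2}$ yields $\lambda_+^{2a_++k}=\lambda_+^{2a_-+k-2}=1$, where $a_\pm=\ord_{z_0}G_\pm\in\Z_{\ge0}$, so that $a_+-a_-\equiv-1\pmod e$ (in particular $a_+\ne a_-$). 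Since also $W_f=(z_0-\bar z_0)(G_+G_-'-G_+'G_-)$, one has $\ord_{z_0}W_f=a_++a_--1$, and therefore $\tilde g_\pm:=G_\pm/\sqrt{W_f}$ is a logarithm-free basis of solutions of \eqref{equation: y''=-Qy 0} with $\ord_{z_0}\tilde g_\pm=\frac12\bigl(1\pm(a_+-a_-)\bigr)$. Hence the local exponents at $z_0$ are $\frac12\pm\kappa_{z_0}$ with $2\kappa_{z_0}=|a_+-a_-|$, and $(2\kappa_{z_0},e)=1$ because $a_+-a_-\equiv-1\pmod e$; taking $e=2$ at $z_0=i$ and $e=3$ at $z_0=\rho$ gives $(2\kappa_i,2)=(2\kappa_\rho,3)=1$.

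The main obstacle is the bookkeeping in the last step: one has to control simultaneously the two eigenvalue multipliers $\lambda_\pm$, the automorphy weight $k-1$, and the local multiplier $\gamma'(z_0)$ of the elliptic coordinate, and to justify the order formula for $W_f$ from the non-vanishing of $G_\pm$ and the inequality $a_+\ne a_-$; all of this hinges on the vector-valued transformation law of $(h,f)$ and hence on the precise constant $\frac{6}{\pi i}$ in the non-modularity of $E_2$. By contrast, the transformation law of $E_2$, the local Frobenius expansions, and the $q$-expansion of $Q$ at $\infty$ are routine computations.
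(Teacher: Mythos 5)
Your proposal is correct. For the modularity of $Q$ and for parts (i)--(iii) it follows essentially the same route as the paper: the vector-valued transformation law
$\column{h(\gamma z)}{f(\gamma z)}=(cz+d)^{k-1}\gamma\column{h(z)}{f(z)}$ with $h=zf+\frac{6}{\pi i}f_1$ is exactly the computation the paper performs inside Lemma \ref{lemma: Wronskian is modular} and the proof of Theorem \ref{theorem: Q is modular} (the paper phrases the weight-$4$ invariance of $Q$ via Bol's identity, but the substance --- $(cz+d)g_i(\gamma z)=\pm(cg_1+dg_2)$ --- is the same); the local analysis of $g_1,g_2$ as half-integer powers of $(z-z_0)$ times units, and the appearance of $z=\frac{\log q}{2\pi i}$ in $g_1=zg_2+\frac{6}{\pi i}f_1/\sqrt{W_f}$ at the cusp, are likewise the paper's arguments. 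Where you genuinely diverge is part (iv). The paper (Proposition \ref{proposition: local exponents at elliptic}(i)) first reduces to the case where $f_0,f_1$ have no common zero at the elliptic point (dividing by $E_4$ or $E_6$), uses Lemma \ref{lemma: E2 nonvanishing} to find a translate where $f$ does not vanish, obtains $2\kappa_{z_0}=1+\ord_{z_0}W_f$, and then reads off $\ord_{z_0}W_f$ modulo $e$ from the weight of $W_f$ modulo $12$ via the valence formula. You instead diagonalize the elliptic stabilizer, write $\column{h}{f}=G_+v_++G_-v_-$ in the eigenbasis $v_\pm=\column{z_0}{1},\column{\bar z_0}{1}$, extract $\lambda_+^{2a_++k}=\lambda_+^{2a_-+k-2}=1$ from the automorphy of $G_\pm$ and the multiplier $\gamma'(z_0)=\lambda_+^{-2}$, and conclude that the exponents are $\frac12\bigl(1\pm(a_+-a_-)\bigr)$ with $a_+-a_-\equiv-1\pmod e$. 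This is entirely local, needs no reduction step, no valence formula, and not even the parity of $k$; it also identifies a canonical logarithm-free eigenbasis $G_\pm/\sqrt{W_f}$ realizing the two exponents, which is a cleaner piece of information than the paper extracts. The only points one must (and you do) verify are that $G_\pm\not\equiv0$ --- your periodicity argument --- and that $\lambda_+^2$ has exact order $e$, so that the congruence really lives modulo $e$. Both approaches are sound; yours is arguably the more transparent one for (iv).
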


This theorem is a special case of Theorem \ref{theorem: Q is modular}
and Proposition \ref{proposition: local exponents at elliptic}, proved
in Section \ref{section: ODE and quasi}.

The unexpected result of Theorem \ref{proposition: quasi satisfies
  ODE} is the modularity of $Q$. In view of Theorem \ref{proposition:
  quasi satisfies ODE},
a natural question is whether the converse of Theorem
\ref{proposition: quasi satisfies ODE} holds or not.
This is the main question studied in this paper, not only for the
group $\SL(2,\Z)$. Our methods for proving the main results depend on
the structure of $\Gamma$ and its modular forms. Thus, in this paper,
we restrict our attention to $\Gamma=\Gamma_0^+(N)$, $N=1,2,3$, where
$\Gamma_0^+(N)$ is the group generated by $\Gamma_0(N)$ and all the
Atkin-Lehner involutions. (Note that $\Gamma_0^+(1)=\SL(2,\Z)$. In the case $N=2,3$, $\Gamma_0^+(N)$ is simply the group
generated by $\Gamma_0(N)$ and $\frac1{\sqrt N}\SM0{-1}N0$.)
For such a group $\Gamma$, there are only one cusp $\infty$ and two
elliptic points $\rho_i$, $i=1,2$, where the order of $\rho_1$ is $2$,
and the order of $\rho_2\in\{3,4,6\}$, depending on $N$.
Throughout the paper, we refer to
\begin{equation} \label{equation: y''=-Qy}
  y''(z)=-4\pi^2Q(z)y(z)
\end{equation}
as a modular ordinary differential equation (MODE) if $Q(z)$ is a
nonzero \emph{meromorphic modular form of weight $4$} on
$\Gamma$. Motivated
by Theorem \ref{proposition: quasi satisfies ODE}, we consider
equations \eqref{equation: y''=-Qy} satisfying the following condition
\eqref{equation: H}.
\begin{equation} \label{equation: H}
  \tag{$\text{H}_\Gamma$}
  \parbox{\dimexpr\linewidth-5em}{
    \begin{enumerate}
    \item[(i)] The ODE \eqref{equation: y''=-Qy} is Fuchsian on $\H$
      and all cusps.
    \item[(ii)]
      The local exponents of \eqref{equation: y''=-Qy} at any singular
      point $z_0$ on $\H$ are $1/2\pm\kappa_{z_0}$,
      $\kappa_{z_0}\in\frac12\N$. Moreover, $z_0$ is apparent
      for \eqref{equation: y''=-Qy}.
    \item[(iii)] Let $e_{\rho_i}$, $i=1,2$, be the order of the
      elliptic point $\rho_i$ and $1/2\pm\kappa_{\rho_i}$,
      $\kappa_{\rho_i}\in\frac12\N$, be the local
      exponents of \eqref{equation: y''=-Qy} at $\rho_i$. Then
      $(2\kappa_{\rho_i},e_i)=1$ for $i=1,2$.
    \end{enumerate}
  }
\end{equation}

In Section \ref{section: ODE and quasi}, we will briefly explain the
basic notions related to ODE in a complex variable. (We refer to
\cite{Hille} for an introduction to elementary theory of complex
ODE.) Here we shall give some explanation pertaining to our condition
\eqref{equation: H}. Firstly, suppose
that the cusp $\infty$ has width $N$. Let $q_N=e^{2\pi
  iz/N}$. Then \eqref{equation: y''=-Qy} can be written as
\begin{equation} \label{equation: y''=-Qy in Dq}
  \left(q_N\frac d{dq_N}\right)^2y=N^2Qy
\end{equation}
since $d/dz=2\pi iN^{-1}q_Nd/dq_N$. For other cusps $s$, we choose
$\gamma=\SM abcd\in\SL(2,\Z)$ such that $\gamma\infty=s$. Set
\begin{equation} \label{equation: slash}
  \hat y(z)=\left(y\big|_{-1}\gamma\right)(z):=(cz+d)y(\gamma z).
\end{equation}
Then $\hat y$ satisfies
\begin{equation} \label{equation: y''=-Qy at s}
  \hat y''=-4\pi^2\hat Q(z)\hat y, \qquad
  \hat Q(z)=\left(Q\big|_4\gamma\right)(z).
\end{equation}
By \eqref{equation: y''=-Qy in Dq} and \eqref{equation: y''=-Qy at s},
we see that \eqref{equation: y''=-Qy} is Fuchsian at a cusp $s$ if and
only if $Q$ is \emph{holomorphic at} $s$, and the local expondents at
$s$ are $\pm\kappa_s$, where
\begin{equation} \label{equation: local exponents at s}
  \kappa_s=N\sqrt{Q(s)}.
\end{equation}

Secondly, suppose that $z_0\in\H$ is a singular point and
\eqref{equation: y''=-Qy} is a modular ODE. Then the local exponents
at $\gamma z_0$, $\gamma\in\Gamma$, are the same as those at $z_0$.
Also, if $z_0$ is apparent, then so is $\gamma z_0$. See
\cite{Guo-Lin-Yang} for a proof.

  
For a cusp $s$, we can define a similar notion of apparentness (or
non-apparentness) when $\kappa_s$ in \eqref{equation: local exponents
  at s} is in $\frac12\Z_{\ge 0}$.
Suppose that at a cusp $s$ we have $\kappa_s\in\frac12\Z_{\ge
  0}$. Then \eqref{equation: y''=-Qy} always has a unique solution
$y_+(z)$ of the form
\begin{equation} \label{equation: y+ introduction}
  y_+(z)=q_N^{\kappa_s}\left(1+\sum_{j\ge 1}c_jq_N^j\right)
\end{equation}
An elementary theorem in theory of complex ODE says that if $s$ is not
apparent and $y(z)$ is a solution of \eqref{equation: y''=-Qy}
independent of $y_+(z)$, then there exists $e\neq 0\in\C$ and $m(z)$
such that
\begin{equation} \label{equation: second solution}
  y(z)=ezy_+(z)+m(z), \qquad
  m(z)=q_N^{-\kappa_s}\sum_{j=0}^\infty \hat c_jq_N^j, \quad \hat
  c_0\neq 0.
\end{equation}

We now introduce a representation of $\Gamma$ associated to
a modular ODE as follows.
It is an elementary fact that any (local) solution $y(z)$ of
\eqref{equation: y''=-Qy} can be defined on the whole $\H$ through
analytic continuation. Fix a point $z_0\in\H$ that is not a singular
point of $Q(z)$. Let $U$ be a simply connected (small) open set in
$\H$ containing $z_0$, but not any singularities of $Q(z)$. For
$\gamma\in\Gamma$, choose a path $\sigma$ from $z_0$ to $\gamma z_0$
and consider the analytic continuation of $y(z)$, $z\in U$, along the
path. Then $y(\gamma z)$ is well-defined in $U$.
We define $y\big|_{-1}\gamma$ by
$$
\left(y\big|_{-1}\gamma\right)(z):=(cz+d)y(\gamma z), \qquad
z\in U.
$$
Then $y\big|_{-1}\gamma$ is also a solution of \eqref{equation:
  y''=-Qy}. This can be proved by a direct computation or by using
Bol's identity in literature \cite{Bol}. By choosing a fundamental
system $(y_1(z),y_2(z))$ of solutions,
it gives a matrix $\hat\gamma\in\SL(2,\C)$ such that
$$
  \column{y_1\big|_{-1}\gamma}{y_2\big|_{-1}\gamma}(z)
  =\hat\gamma\column{y_1(z)}{y_2(z)}.
$$
(The reason why $\hat\gamma$ has determinant $1$ is due to the fact
that $y_1|_{-1}\gamma$ and $y_2|_{-1}\gamma$ has the same Wronskian as
$y_1$ and $y_2$.) Of course, this matrix $\hat\gamma$ depends on the
choice of the path $\sigma$. However, under the condition
\eqref{equation: H}, all local monodromy matrices are $\pm I$. Hence,
different choices of $\sigma$ will only possibly change $\hat\gamma$
to $-\hat\gamma$. From this, we 
see that there is a well-defined homomorphism $\rho$ from $\Gamma$ to
$\PSL(2,\C)$ such that
\begin{equation} \label{equation: Bol representation}
  \column{y_1\big|_{-1}\gamma}{y_2\big|_{-1}\gamma}(z)
  =\pm\rho(\gamma)\column{y_1(z)}{y_2(z)}
\end{equation}
for all $\gamma\in\Gamma$, where $y_j(\gamma z)$, $j=1,2$, are always
understood to take the same path for the analytic continuation. This
homomorphism $\rho$ will be called the \emph{Bol
  representation} in this paper. For the group $\Gamma$ under
consideration in this paper, the Bol
representation can be lifted to a homomorphism $\hat\rho$ from
$\Gamma$ to $\GL(2,\C)$. To achieve this, we let $F(z)^2$ be a modular
form of weight $2(\ell+1)$ with some character for some integer $\ell$
such that $\hat y(z)=F(z)y(z)$ is well-defined on $\H$ for any
solution $y(z)$ of \eqref{equation: y''=-Qy}.
Note that since $\hat y(z)$ is single-valued, for any
$\gamma\in\Gamma$, there exists
$\hat\rho(\gamma)\in\GL(2,\C)$ such that
$$
  \column{\hat y_1\big|_{\ell}\gamma}{y_2\big|_{\ell}\gamma}
  =\hat\rho(\gamma)\column{\hat y_1}{\hat y_2}.
$$
Obviously, $\hat\rho$ is a lifting of $\rho$. We remark that there are
many possible choices for $F(z)$. In Sections \ref{section:
  SL(2,Z)}--\ref{section: Gamma0+(3)}, we will choose $F(z)$ such that
$\hat y(z)$ is holomorphic on $\H$ and at cusps. We emphasize that the
integer $\ell$ will give rise to some important information in this
series of papers. For example, we have $\det\hat\rho(\gamma)=1$ for
all $\gamma\in\Gamma$ if and only if $\ell$ is odd.

From now on, we assume that $\kappa_\infty\in\frac12\Z_{\ge 0}$, where
$\pm\kappa_\infty$ are the local exponents of \eqref{equation:
  y''=-Qy} at the cusp $\infty$, and $\{z_1,\ldots,z_m\}$ is the set
of poles of $Q(z)$ such that $z_i$ is not an elliptic point and $z_i$
is not $\Gamma$-equivalent to $z_j$ for $i\neq j$. Let
$1/2\pm\kappa_j$ be the local exponents of \eqref{equation: y''=-Qy}
at $z_j$.

When $\Gamma=\SL(2,\Z)$, the integer $\ell$ in the lift of the Bol
representation is defined by
\begin{equation} \label{equation: ell introduction}
  \ell=-1+12\kappa_\infty+4\left(\kappa_\rho-\frac12\right)
  +6\left(\kappa_i-\frac12\right)
  +12\sum_{j=1}^m\left(\kappa_j-\frac12\right),
\end{equation}
where $i=\sqrt{-1}$ and $\rho=(1+\sqrt{-3})/2$ are the elliptic points
and $1/2\pm\kappa_\rho$ and $1/2\pm\kappa_i$ are the local exponents
of \eqref{equation: y''=-Qy} at $\rho$ and $i$, respectively. Since
all the $\kappa$'s are in $\frac12\Z$, $\ell$ is always an
integer. Furthermore, \emph{the integer $\ell$ is odd if and only if
$\kappa_i\in\frac12+\Z$}. Thus, \eqref{equation: H} implies that
$\ell$ is odd.

Let $F(z)$ be defined by \eqref{equation: F SL(2,Z)}. Then $F(z)^2$ is
a modular form of weight $2(\ell+1)$ such that for any solution $y(z)$
of \eqref{equation: y''=-Qy}, the function $\hat y(z)=F(z)y(z)$ is a
single-valued holomorphic function on $\H$. We use the standard
notations $T=\SM1101$, $S=\SM0{-1}10$, and $R=TS=\SM1{-1}10$. They
satisfy $S^2=R^3=-I$. Our first main result asserts that the converse
of Theorem \ref{proposition: quasi satisfies ODE} holds true.

\begin{Theorem} \label{theorem: main 1}
  Let $\Gamma=\SL(2,\Z)$. Suppose that $Q(z)$, not identically $0$,
  satisfies \eqref{equation: H} with $\kappa_\infty\in\frac12\Z_{\ge
    0}$. Then
  the following statements hold true.
  \begin{enumerate}
    \item[(i)] The cusp $\infty$ is not apparent.
    \item[(ii)] Let $y_+(z)$ be the solution of \eqref{equation:
        y''=-Qy} of the form \eqref{equation: y+ introduction} and set
      $y_1(z)=\left(\hat y_+|_{\ell}S\right)(z)/F(z)$ and
      $y_2(z)=y_+(z)$. Then $y_1(z)$ and $y_2(z)$ form a fundamental
      system of solutions of
      \eqref{equation: y''=-Qy}. Moreover, we have
      $$
      \hat y_1(z)=z\hat y_2(z)+\hat m_1(z)
      $$
      for some modular form $\hat m_1(z)$ of weight $\ell-1$ on
      $\SL(2,\Z)$.
    \item[(iii)] Using $y_1(z)$ and $y_2(z)$ as the basis, Bol's
      representation satisfies
      $$
      \hat\rho(\gamma)=\gamma
      $$
      for all $\gamma\in\SL(2,\Z)$. In particular, the ratio
      $h(z)=y_1(z)/y_2(z)$ is equivariant.
    \item[(iv)] Write $\hat y_+(z)=\frac{\pi i}6\hat m_1(z)E_2(z)+\hat
      m_2(z)$. Then $\hat m_2(z)$ is a modular form of weight $\ell+1$
      with respect to $\SL(2,\Z)$. In other words, $\hat y_+(z)$ is a
      quasimodular form of weight $\ell+1$ and depth $1$ on
      $\SL(2,\Z)$.
  \end{enumerate}
\end{Theorem}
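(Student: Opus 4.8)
The plan is to extract everything from the Bol representation $\rho\colon\SL(2,\Z)\to\PSL(2,\C)$ of \eqref{equation: y''=-Qy} and its $\GL(2,\C)$–lift $\hat\rho$. For part (i) I argue by contradiction: assume $\infty$ is apparent. If $\kappa_\infty=0$ the indicial equation at $\infty$ has a double root, forcing a logarithmic solution, so we may take $\kappa_\infty>0$. Apparentness then yields two Frobenius solutions $q^{\pm\kappa_\infty}(1+O(q))$ at $\infty$, and since $z\mapsto z+1$ multiplies $q^{\pm\kappa_\infty}$ by the \emph{same} scalar $(-1)^{2\kappa_\infty}$ (recall $2\kappa_\infty\in\Z$), the parabolic generator $T$ acts by a scalar, i.e.\ $\rho(T)=I$ in $\PSL(2,\C)$. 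However $\PSL(2,\Z)\cong\langle S\rangle\ast\langle R\rangle\cong\Z/2\ast\Z/3$ is \emph{normally generated by} $T$: in $\PSL(2,\Z)$ one has $T=RS$ (from $R=TS$ and $S^2=1$), so killing $RS$ identifies $R$ with $S$, and $S^2=S^3=1$ collapses the quotient. Hence $\rho(T)=I$ would force $\rho$ trivial, contradicting \eqref{equation: H}(iii) at $i$: there $2\kappa_i$ is odd, the local solutions $(z-i)^{1/2\pm\kappa_i}(1+\cdots)$ carry no logarithm, and since $S$ fixes $i$ and acts as the half-turn $w\mapsto-w$ in the uniformizer $w=(z-i)/(z+i)$, the element $\rho(S)$ has eigenvalue ratio $e^{2\pi i\kappa_i}=-1$, so $\rho(S)\neq I$. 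Therefore $\infty$ is not apparent, and $\rho(T)$ is a nontrivial parabolic.

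For parts (ii)--(iii), set $y_2=y_+$, $\hat y_2=\hat y_+=Fy_+$, $\hat y_1=\hat y_+|_{\ell}S$ and $y_1=\hat y_1/F$; since $S\in\SL(2,\Z)$, $y_1$ is a solution. I first verify $y_1,y_2$ are independent: otherwise $y_+|_{-1}S$ is a multiple of $y_+$, so $\C y_+$ is fixed by both $\rho(S)$ and $\rho(T)$, and the resulting character $\psi$ of $\SL(2,\Z)$ satisfies $\psi(T)^2=1$ (as $\rho(T)$ is parabolic) and $\psi(S)^2=-1$ (as $\rho(S)$ has order $2$ by \eqref{equation: H}(iii)); but $\SL(2,\Z)^{\mathrm{ab}}\cong\Z/12$ with $T\mapsto1$, $S\mapsto9$, and no character realizes both, contradiction. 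Thus $(y_1,y_2)$ is a fundamental system. Next I compute $\hat\rho$ on generators. By construction $\hat y_2|_{\ell}S=\hat y_1$, while by the cocycle law with $S^2=-I$ and $\ell$ odd, $\hat y_1|_{\ell}S=\hat y_2|_{\ell}(-I)=(-1)^\ell\hat y_2=-\hat y_2$; hence $\hat\rho(S)=S$. Using the defining property of $F$ from \eqref{equation: F SL(2,Z)}, namely $F(z+1)=(-1)^{2\kappa_\infty}F(z)$ (so that $\hat y_2|_{\ell}T=\hat y_2$), we get $\hat\rho(T)=\SM1b01$, the diagonal entries forced to $1$ since $\det\hat\rho\equiv1$; applying $\hat\rho$ to $(ST)^3=-I$ together with $\hat\rho(-I)=(-1)^\ell I=-I$ forces $b=1$, i.e.\ $\hat\rho(T)=T$. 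As $S,T$ generate $\SL(2,\Z)$, $\hat\rho(\gamma)=\gamma$ for every $\gamma$, which is (iii), and $h=y_1/y_2$ is equivariant.

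It remains to identify $\hat m_1$ and $\hat m_2$. From $\hat\rho(\gamma)=\gamma$ one reads $\hat y_2|_{\ell}\gamma=(cz+d)\hat y_2+c\hat y_1$ for $\gamma=\SM abcd$; hence, putting $\hat m_1:=\hat y_1-z\hat y_2$, a one–line computation using $ad-bc=1$ gives $\hat m_1(\gamma z)=(cz+d)^{\ell-1}\hat m_1(z)$, and since $\hat y_1=\hat y_+|_{\ell}S$ and $\hat y_2=\hat y_+$ are holomorphic on $\H$ and at the cusp while the $\log q$–terms of $\hat y_1$ and $z\hat y_2$ cancel, $\hat m_1\in M_{\ell-1}(\SL(2,\Z))$ and $\hat y_1=z\hat y_2+\hat m_1$, giving (ii). For (iv), the same identity reads $\hat y_+(\gamma z)=(cz+d)^{\ell+1}\hat y_+(z)+c(cz+d)^\ell\hat m_1(z)$; combining it with $E_2(\gamma z)=(cz+d)^2E_2(z)+\tfrac{6}{\pi i}c(cz+d)$ shows that $\hat m_2:=\hat y_+-\tfrac{\pi i}{6}\hat m_1E_2$ satisfies $\hat m_2(\gamma z)=(cz+d)^{\ell+1}\hat m_2(z)$, and holomorphy being clear, $\hat m_2\in M_{\ell+1}(\SL(2,\Z))$; moreover $\hat m_1\not\equiv0$, for otherwise $h=z$ and constancy of the Wronskian would force $Q\equiv0$. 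Hence $\hat y_+=\tfrac{\pi i}{6}\hat m_1E_2+\hat m_2$ is quasimodular of weight $\ell+1$ and depth exactly $1$.

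The main obstacle I expect is the on–the–nose identification of $\hat\rho$ in parts (ii)--(iii): getting $\hat\rho(S)=S$ and $\hat\rho(T)=T$, not merely equalities up to scalars. The $S$–computation is forced by the \emph{definition} of $y_1$ via the $S$–slash together with $S^2=-I$; the $T$–computation rests on the precise weight-and-multiplier behaviour of $F$ in \eqref{equation: F SL(2,Z)}, in particular on $\hat y_+=Fy_+$ being honestly $z$–periodic, and on the orbifold relation $(ST)^3=-I$ to pin down the off-diagonal entry. The non-apparentness in (i) and the recognition of $\hat m_1,\hat m_2$ as genuine modular forms in (ii), (iv) are then essentially formal manipulations with the slash operator.
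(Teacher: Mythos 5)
Your proposal is correct and follows essentially the same route as the paper: pin down the lifted Bol representation on the generators via the relations $S^2=(TS)^3=-I$ and the oddness of $\ell$ (your variants — normal generation of $\PSL(2,\Z)$ by $T$ together with the non-scalar local isotropy at $i$ for part (i), and the abelianization $\Z/12$ for linear independence — are equivalent repackagings of the paper's eigenvalue computations), and then obtain the modularity of $\hat m_1$ and $\hat m_2$ by direct slash manipulations. Two cosmetic points: the displayed identity $\hat y_2|_{\ell}\gamma=(cz+d)\hat y_2+c\hat y_1$ should read $\hat y_2|_{\ell}\gamma=c\hat y_1+d\hat y_2=(cz+d)\hat y_2+c\hat m_1$ (your subsequent one-line computation uses the correct version), and the fact $\det\hat\rho\equiv1$, which you invoke to normalize the diagonal of $\hat\rho(T)$, deserves its short Wronskian justification (it rests on $F(z)^2$ being a modular form of weight $2(\ell+1)$, as in the paper's corresponding lemma).
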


\begin{Remark} Since \eqref{equation: y''=-Qy} is assumed to be
  apparent at any pole of $Q$ on $\H$, the ratio $h(z)$ of any two
  linearly independent solution of \eqref{equation: y''=-Qy} is a
  meromorphic (single-valued) function on $\H$. We say a meromorphic
  function $h(z)$ on $\H$ is \emph{equivariant} with respect to
  $\SL(2,\Z)$ if $h(z)$ satisfies
  $$
  h(\gamma z)=\gamma\cdot h(z):=\frac{ah(z)+b}{ch(z)+d}
  $$
  for all $\gamma=\SM abcd\in\SL(2,\Z)$ (see \cite{Sebbar}).
\end{Remark}

We note that $y_1(z)$ is a second solution of \eqref{equation:
  y''=-Qy}, so $y_1(z)$ has an expression of the form \eqref{equation:
  second solution} for some $e\neq 0$. Then Part (ii) says that
$e=1$. It is surprising to see that if the second solution is suitably
chosen, then $m(z)$ of \eqref{equation: second solution} is a
meromorphic modular form of weight $-2$.

For the case $\Gamma=\Gamma_0^+(N)$, $N=2,3$, we define
$$
T=\M1101, \quad
S=\frac1{\sqrt N}\M0{-1}N0, \quad
R=TS=\frac1{\sqrt N}\M N{-1}N0
$$
such that
$$
S^2=R^{2N}=-I.
$$
The group $\Gamma_0^+(N)$, $N=2,3$, has one cusp $\infty$ with width
$1$ and two elliptic point $\rho_1$ of order $2$ and $\rho_2$ of order
$2N$, which are the fixed points of $S$ and $R$, respectively, i.e.,
\begin{equation} \label{equation: elliptic points}
\rho_1=\frac i{\sqrt N}, \qquad
\rho_2=\begin{cases}
  (1+i)/2, &\text{if }N=2, \\
  (3+\sqrt{-3})/6, &\text{if }N=3. \end{cases}
\end{equation}
The integer $\ell$ in the lifting in the case $\Gamma_0^+(2)$ is
$$
\ell=-1+8\kappa_\infty+4\left(\kappa_{\rho_1}-\frac12\right)
+2\left(\kappa_{\rho_2}-\frac12\right)
+8\sum_{j=1}^m\left(\kappa_j-\frac12\right).
$$
Note that \eqref{equation: H} implies that both $\kappa_{\rho_1}-1/2$
and $\kappa_{\rho_2}-1/2$ are nonnegative integers. Hence $\ell$ is an
odd integer. Let $F(z)$ be defined by \eqref{equation: F Gamma0+(2)}
and for a solution $y(z)$ of \eqref{equation: y''=-Qy}, set $\hat
y(z)=F(z)y(z)$. The main theorem for $\Gamma_0^+(2)$ is as follows.

\begin{Theorem} \label{theorem: main 2}
  Let $\Gamma=\Gamma_0^+(2)$. Suppose that $Q(z)$ satisfies
  \eqref{equation: H} with $\kappa_\infty\in\frac12\Z_{\ge 0}$.
  Then the following statements hold.
  \begin{enumerate}
    \item[(i)] The cusp $\infty$ is not apparent for \eqref{equation:
        y''=-Qy}.
    \item[(ii)] Let $y_1(z)=\left(\hat y_+|_\ell S\right)(z)/F(z)$ and
      $y_2(z)=y_+(z)$, where $y_+(z)$ is the unique solution of
      \eqref{equation: y''=-Qy} of the form \eqref{equation: y+
        introduction}. Then $\{y_1(z),y_2(z)\}$ is a fundamental
      solution of \eqref{equation: y''=-Qy}. Moreover, we have
      \begin{equation} \label{equation: y1 2}
      \hat y_1(z)=\JS2\ell\sqrt2z\hat y_2(z)+\hat m_1(z)
      \end{equation}
      for some modular form $\hat m_1(z)$ in
      $\sM_{\ell-1}(\Gamma_0^+(2),\JS2\ell)$, where $\JS2\ell$ is the
      Legendre symbol whose values are given by
      $$
      \JS2\ell=\begin{cases}
        1, &\text{if }\ell\equiv 1,7\mod 8, \\
        -1, &\text{if }\ell\equiv 3,5\mod 8. \end{cases}
      $$
    \item[(iii)] The ratio $h(z)=\JS2\ell y_1(z)/\sqrt2y_2(z)$ is
      equivariant with respect to $\Gamma_0^+(2)$. That is, for all
      $\gamma\in\Gamma_0^+(2)$, we have $h(\gamma z)=\gamma\cdot
      h(z)$.
    \item[(iv)] Let  $M_2^\ast(z)=(2E_2(2z)+E_2(z))/3$ and write $\hat
      y_+(z)$ as
      $$
      \hat y_+(z)=\JS2\ell\frac{\pi i}{4\sqrt 2}\hat m_1(z)
      M_2^\ast(z)+\hat m_2(z).
      $$
      Then $\hat m_2(z)$ is a modular form in
      $\sM_{\ell+1}(\Gamma_0^+(2),\JS2\ell)$. Consequently, $\hat
      y_+(z)$ is a quasimodular form in $\wt\sM_{\ell+1}^{\le
        1}(\Gamma_0^+(2),\JS2\ell)$.
  \end{enumerate}
\end{Theorem}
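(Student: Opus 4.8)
The plan is to run the same strategy one uses for $\Gamma=\SL(2,\Z)$ in Theorem~\ref{theorem: main 1}, the new ingredient being the bookkeeping forced by the Atkin--Lehner generator $S=\frac1{\sqrt2}\SM0{-1}20$: its automorphy factor $\sqrt2\,z$ and its effect on the $F(z)$ of \eqref{equation: F Gamma0+(2)} are exactly what produce the $\sqrt2$ and the Legendre symbol $\JS2\ell$ in the statement. Write $\hat y=Fy$, so that the lifted Bol representation $\hat\rho\colon\Gamma_0^+(2)\to\GL(2,\C)$ is defined, with $\det\hat\rho\equiv1$ since $\ell$ is odd. Under \eqref{equation: H} every local monodromy of \eqref{equation: y''=-Qy} on $\H$ and at $\infty$ is $\pm I$, so the Bol representation $\rho$ descends to a homomorphism $\Gamma_0^+(2)/\{\pm I\}\to\PSL(2,\C)$; the coprimality hypotheses $(2\kappa_{\rho_i},e_{\rho_i})=1$ are precisely what force $\rho(S)$ and $\rho(R)$ to have orders $2$ and $2N=4$, making $\rho$ a $(2,4,\infty)$-triangle-group representation of the same rotation type as the tautological inclusion $\Gamma_0^+(2)/\{\pm I\}\hookrightarrow\PSL(2,\R)$.

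For \emph{part (i)} I would argue by contradiction, as for $\SL(2,\Z)$. If $\infty$ were apparent, the two solutions $q^{\pm\kappa_\infty}(1+\cdots)$ of \eqref{equation: y''=-Qy} would be single-valued on $\H$, and multiplying by $F$ gives a two-dimensional space of holomorphic functions on $\H$ stable under the $|_\ell$-action of $\Gamma_0^+(2)$, all of whose local monodromies --- at $\rho_1$, $\rho_2$, at the poles $z_j$, and now at $\infty$ --- are $\pm I$. Reading off the associated vector-valued modular forms together with the orders of vanishing forced by the local exponents, a valence-formula argument for $\Gamma_0^+(2)$ yields a contradiction. This is where \eqref{equation: H}(ii)--(iii) really enter, and I expect it to be the main obstacle.

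For \emph{parts (ii) and (iii)}, granting (i): the function $y_1=(\hat y_+|_\ell S)/F$ is a solution, so $\hat y_1:=\hat y_+|_\ell S=\hat y_2|_\ell S$, and $S^2=-I$ with $\ell$ odd give $\hat y_1|_\ell S=-\hat y_2$. If $y_1$ were a scalar multiple of $y_2=y_+$, then, using also $\hat y_+(z+1)=\hat y_+(z)$, the function $\hat y_+$ would transform under $\Gamma_0^+(2)$ through a character $\chi_0$ with $\chi_0(T)=1$ and $\chi_0(S)^2=-1$, incompatible with $(TS)^{2N}=-I$; so $y_1,y_2$ are independent and, by \eqref{equation: second solution}, $y_1=e\,z\,y_++m$ with $e\neq0$. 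Applying $|_{-1}S$ to this identity --- using $S\infty=0\sim_{\Gamma_0^+(2)}\infty$, the factor $\sqrt2\,z$, and $\hat y_1|_\ell S=-\hat y_2$ --- and matching $q$-expansions at $\infty$ with the normalizations of $y_+$ and of $\hat c_0$, one finds $e=\JS2\ell\sqrt2$; hence $\hat y_1=\JS2\ell\sqrt2\,z\,\hat y_2+\hat m_1$ with $\hat m_1$ single-valued meromorphic on $\H$. For (iii), set $h=\JS2\ell y_1/(\sqrt2\,y_2)$: from $\hat y_2(z+1)=\hat y_2(z)$ and $\hat y_1(z+1)=\hat y_1(z)+\JS2\ell\sqrt2\,\hat y_2(z)$ one gets $h(z+1)=h(z)+1$, and from $\hat y_2|_\ell S=\hat y_1$, $\hat y_1|_\ell S=-\hat y_2$ one gets $h(Sz)=-1/(2h(z))=S\cdot h(z)$; since $T$ and $S$ generate $\Gamma_0^+(2)$, $h$ is equivariant. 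Finally, writing $\hat m_1=\hat y_1-\JS2\ell\sqrt2\,z\,\hat y_2$ and substituting the transformation law of $\hat y_1,\hat y_2$ encoded in the equivariance of $h$, the $z$-terms cancel and $\hat m_1$ transforms like a modular form of weight $\ell-1$ with character $\JS2\ell$; holomorphy of $\hat m_1$ at $\infty$ follows from the choice of $F$ together with \eqref{equation: second solution}, so $\hat m_1\in\sM_{\ell-1}(\Gamma_0^+(2),\JS2\ell)$ and (ii) is complete.

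For \emph{part (iv)}: $M_2^\ast=(2E_2(2z)+E_2(z))/3$ is the weight-$2$ quasimodular Eisenstein series of $\Gamma_0^+(2)$, satisfying $M_2^\ast(\gamma z)=(cz+d)^2M_2^\ast(z)+\frac{4c(cz+d)}{\pi i}$ for all $\gamma=\SM abcd\in\Gamma_0^+(2)$, as one checks on $\Gamma_0(2)$ and on $S$ from the classical transformation of $E_2$. Plugging $\hat y_1=\JS2\ell\sqrt2\,z\,\hat y_2+\hat m_1$ into the transformation law of $\hat y_+=\hat y_2$ from (iii), the anomalous part is a fixed multiple of $c(cz+d)\hat m_1$, and $\JS2\ell\frac{\pi i}{4\sqrt2}$ is exactly the constant for which $\JS2\ell\frac{\pi i}{4\sqrt2}\hat m_1M_2^\ast$ carries the same anomaly; hence $\hat m_2:=\hat y_+-\JS2\ell\frac{\pi i}{4\sqrt2}\hat m_1M_2^\ast$ transforms as an element of $\sM_{\ell+1}(\Gamma_0^+(2),\JS2\ell)$, is holomorphic at $\infty$ by \eqref{equation: y+ introduction} and \eqref{equation: second solution}, and thus $\hat y_+\in\wt\sM_{\ell+1}^{\le1}(\Gamma_0^+(2),\JS2\ell)$. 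Apart from (i), the delicate point is pinning down the constants $\JS2\ell\sqrt2$ in (ii) and $\frac{\pi i}{4\sqrt2}$ in (iv) exactly, since both amount to tracking the automorphy factor of $F$ (and of $M_2^\ast$) through the Atkin--Lehner involution rather than merely up to scalars.
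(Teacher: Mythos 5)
Your overall architecture (lift the Bol representation via $F$, exploit $S^2=R^4=-I$, read off the transformation laws of $\hat m_1$ and $\hat m_2$) matches the paper's, and your treatment of (iii) and of the modularity of $\hat m_1,\hat m_2$ in (ii), (iv) is essentially the paper's argument. But there are two genuine gaps.

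First, part (i) is not proved. You defer it to ``a valence-formula argument'' that you do not carry out and yourself call the main obstacle. No valence formula is needed: since \eqref{equation: H} forces $\ell$ to be odd, $\hat\rho(-I)=(-1)^\ell I=-I$, so $\hat\rho(S)^2=\hat\rho(R)^4=-I$. If $\infty$ were apparent then $\hat\rho(T)=\pm I$, hence $\hat\rho(R)=\hat\rho(T)\hat\rho(S)=\pm\hat\rho(S)$ and $\hat\rho(R)^2=\hat\rho(S)^2=-I$, which contradicts $\hat\rho(R)^4=-I$. (Your opening remark that the coprimality conditions make $\rho$ a $(2,4,\infty)$-type representation gestures at this, but you neither prove that $\rho(S)$, $\rho(R)$ have those exact orders nor use it in your proof of (i).)

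Second, and more seriously, your determination of the constant $e=\JS2\ell\sqrt2$ does not work. Applying $|_{\ell}S$ to $\hat y_1=e z\hat y_2+\hat m_1$ and using $\hat y_1|_\ell S=-\hat y_2$ yields only $-\hat y_2=-\tfrac{e^2}{2}\hat y_2-\tfrac{e}{2z}\hat m_1+\hat m_1|_\ell S$, i.e.\ $e^2=2$ and $\hat m_1|_{\ell-1}S=\epsilon\hat m_1$ with $e=\epsilon\sqrt2$; the sign $\epsilon$ is \emph{not} visible in the $q$-expansion at $\infty$ (the leading coefficient $\hat c_0$ of $m_1$ is unconstrained), so ``matching $q$-expansions'' cannot pin it down. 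In the paper the sign is the delicate point: one first shows $\hat m_1\in\sM_{\ell-1}(\Gamma_0^+(2),\epsilon)$ and $\hat m_2\in\sM_{\ell+1}(\Gamma_0^+(2),\epsilon)$ with the \emph{undetermined} $\epsilon$, then proves that $\hat m_1$ and $\hat m_2$ cannot have a common zero on $\H$ (a common zero would make $y_+$ and $y_+|_{-1}S$ proportional), and finally uses the forced vanishing at the order-$4$ elliptic point $\rho_2$ of forms of weight $\equiv 2\pmod 4$, of weight $\equiv4\pmod8$ on $\Gamma_0^+(2)$, and of forms in $\sM_k(\Gamma_0^+(2),-)$ with $8\mid k$, to exclude the wrong sign in each residue class of $\ell$ modulo $8$. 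This elliptic-point argument is the actual source of the Legendre symbol $\JS2\ell$, and it is missing from your proposal; without it the constants in (ii)--(iv) are only determined up to sign.
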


Note that $M_2^\ast(z)$ is a quasimodular form of weight $2$ and depth
$1$ on $\Gamma_0^+(2)$. Also, $\sM_k(\Gamma_0^+(2),\pm1)$ denotes the
space of modular forms $f(z)$ of weight $k$ on $\Gamma_0(2)$ such that
$\left(f|_kS\right)(z)=\pm f(z)$. (The reason for the choice of the
notation $\sM_k(\Gamma_0^+(2),\pm)$ instead of the more common
$\sM_k(\Gamma_0(2),\pm)$ is that we regard such an $f$ as a modular
form with character on $\Gamma_0^+(2)$.) For the meaning of the
notation $\wt\sM_k^{\le 1}(\Gamma_0^+(2),\pm1)$, see Section
\ref{subsection: Atkin-Lehner}.

\begin{Remark} The identity \eqref{equation: y1 2} says that the
  constant $e$ in \eqref{equation: second solution} is
  $\JS2\ell\sqrt2$, which depends on $\ell$. This result is different
  from Theorem \ref{theorem: main 1}.
\end{Remark}

For the case $\Gamma=\Gamma_0^+(3)$, the group $\Gamma$ has one cusp
$\infty$ and two elliptic points $\rho_1$ and $\rho_2$ given by
\eqref{equation: elliptic points}. In this case, $\ell$ is defined by
$$
\ell=-1+6\kappa_\infty+3\left(\kappa_{\rho_1}-\frac12\right)
+\left(\kappa_{\rho_2}-\frac12\right)
+6\sum_{j=1}^m\left(\kappa_j-\frac12\right).
$$
Note that by \eqref{equation: H}, both $\kappa_{\rho_1}-1/2$ and
$\kappa_{\rho_2}-1/2$ are integers, and $\ell$ is an integer not
divisible by $3$. Let $F(z)$ be defined by \eqref{equation: F
  Gamma0+(3)}. As before, for a solution $y(z)$ of \eqref{equation:
  y''=-Qy}, we let $\hat y(z)=F(z)y(z)$.

\begin{Theorem} \label{theorem: main 3}
  Let $\Gamma=\Gamma_0^+(3)$. Suppose that $Q(z)$ satisfy
  \eqref{equation: H} with $\kappa_\infty\in\frac12\Z_{\ge 0}$. Set
  $$
  \delta=\begin{cases}
    \chi^0, &\text{if }\ell\equiv1,11\mod 12,\\
    \chi^1, &\text{if }\ell\equiv 2,4\mod 12,\\
    \chi^2, &\text{if }\ell\equiv 5,7\mod 12, \\
    \chi^3, &\text{if }\ell\equiv 8,10\mod 12,\end{cases}
  $$
  where $\chi$ is the character of $\Gamma_0^+(3)$ defined by
  $\chi(S)=\chi(R)=-i$. Then the following statements hold.
  \begin{enumerate}
    \item[(i)] The cusp $\infty$ is not apparent.
    \item[(ii)] Let $y_1(z)=\left(\hat y_+|_\ell S\right)(z)/F(z)$ and
      $y_2(z)=y_+(z)$. Then $\{y_1(z),y_2(z)\}$ is a fundamental
      solution of \eqref{equation: y''=-Qy}. Moreover, we have
      $$
      \hat y_1(z)=\delta(S)^{-1}\sqrt3z\hat y_2(z)+\hat m_1(z)
      $$
      for some modular form $\hat m_1(z)$ in
      $\sM_{\ell-1}(\Gamma_0^+(3),\delta)$.
    \item[(iii)] The ratio $h(z)=\delta(S)^{-1}y_1(z)/\sqrt3y_2(z)$ is
      equivariant with respect to $\Gamma_0^+(3)$. That is, for all
      $\gamma\in\Gamma_0^+(3)$, we have $h(\gamma z)=\gamma\cdot
      h(z)$.
    \item[(iv)] Let $M_2^\ast(z)=(3E_2(3z)+E_2(z))/4$ and write $\hat
      y_+(z)$ as
      $$
      \hat y_+(z)=\delta(S)^{-1}\frac{\pi i}{3\sqrt3}
      \hat m_1(z)M_2^\ast(z)+\hat m_2(z).
      $$
      Then $\hat m_2(z)$ is a modular form in
      $\sM_{\ell+1}(\Gamma_0^+(3),\delta)$. Consequently, $\hat
      y_+(z)$ is a quasimodular form in $\wt\sM_{\ell+1}^{\le
        1}(\Gamma_0^+(3),\delta)$.
  \end{enumerate}
\end{Theorem}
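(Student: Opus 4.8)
The plan is to treat $\Gamma_0^+(3)$ in close parallel with the $\Gamma_0^+(2)$ case of Theorem \ref{theorem: main 2}, with the extra bookkeeping coming from the fact that the relevant character $\chi$ now has order $4$ rather than $2$. First I would recall that $F(z)^2$ is a modular form of weight $2(\ell+1)$ chosen so that $\hat y(z)=F(z)y(z)$ is single-valued and holomorphic on $\H$ for every solution $y$ of \eqref{equation: y''=-Qy}; with $F$ as in \eqref{equation: F Gamma0+(3)} one computes the automorphy factor of $F$ under $S$ and $R$ and identifies it with $\delta(S)$, $\delta(R)$, where the case distinction on $\ell\bmod 12$ records exactly the fourth root of unity that appears. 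The point of part~(i) is that the cusp $\infty$ cannot be apparent: if it were, then $\hat y_+$ and a second holomorphic solution of \eqref{equation: y''=-Qy} would both be genuine modular forms (up to character) on $\Gamma_0^+(3)$, and a dimension/valence count using the local exponents at $\rho_1,\rho_2$ and at the $z_j$ would force the Wronskian $W$ of the pair — a modular form of weight $2\ell$ — to vanish identically, contradicting linear independence. This is the same mechanism as in the $\SL(2,\Z)$ proof of Theorem \ref{theorem: main 1}(i), now run with the weights $6\kappa_\infty$, $3(\kappa_{\rho_1}-\tfrac12)$, $(\kappa_{\rho_2}-\tfrac12)$, $6\sum_j(\kappa_j-\tfrac12)$ dictated by the definition of $\ell$.

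Next, for parts~(ii) and~(iii), I would set $y_2=y_+$ and $y_1=(\hat y_+|_\ell S)/F$; since $S$ normalizes $\Gamma_0^+(3)$ and $Q|_4 S=Q$, the function $y_1$ is again a solution, and $S^2=-I$ together with the non-apparentness from part~(i) shows $y_1,y_2$ are independent and that $y_1$ is of the shape \eqref{equation: second solution}. The key computation is to pin down the constant $e$: applying $|_\ell S$ twice and using $S^2=-I$, the automorphy factor $\delta(S)$ of $F$ re-enters, and matching the $q$-expansions at $\infty$ (where $y_+=q^{\kappa_\infty}(1+\cdots)$ and the width is $1$) forces $e=\delta(S)^{-1}\sqrt3$; the $\sqrt3$ is the analogue of the $\sqrt2$ in \eqref{equation: y1 2} and comes from the normalization $S=\tfrac1{\sqrt3}\SM0{-1}30$. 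Then Bol's identity / the discussion around \eqref{equation: Bol representation} gives $\hat\rho(\gamma)=\delta(\gamma)\gamma$ on the basis $(\hat y_1,\hat y_2)$, so that $\hat m_1:=\hat y_1-e z\hat y_2$ transforms in $\sM_{\ell-1}(\Gamma_0^+(3),\delta)$ — here one checks the transformation under $T$ (trivial, since $z\mapsto z+1$ and width $1$) and under $S$ using the already-computed matrix, noting that the $z\hat y_2$ terms cancel precisely because $\hat\rho(S)$ is $\delta(S)$ times the matrix $\SM0{-1}30/\sqrt3$. Dividing the relation $\hat y_1=ez\hat y_2+\hat m_1$ by $\hat y_2$ gives equivariance of $h=e^{-1}y_1/y_2=\delta(S)^{-1}y_1/(\sqrt3 y_2)$, which is part~(iii).

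For part~(iv), I would write $\hat y_+=\hat y_2$ and use $\hat y_1=ez\hat y_2+\hat m_1$ to solve for the ``$z$-part'' of $\hat y_2$. The Eisenstein-type form $M_2^\ast(z)=(3E_2(3z)+E_2(z))/4$ is quasimodular of weight $2$ and depth $1$ on $\Gamma_0^+(3)$ with $M_2^\ast|_2\gamma - M_2^\ast$ equal to an explicit multiple of $c/(cz+d)$; comparing this cocycle with the one produced by $\hat y_1=ez\hat y_2+\hat m_1$ (i.e. with how $z$ itself transforms in the weight-$(\ell+1)$, character-$\delta$ sense) shows that $\hat m_2:=\hat y_+ - \delta(S)^{-1}\tfrac{\pi i}{3\sqrt3}\hat m_1 M_2^\ast$ has vanishing quasimodular defect, hence lies in $\sM_{\ell+1}(\Gamma_0^+(3),\delta)$; holomorphy on $\H$ and at $\infty$ is inherited from that of $\hat y_+$, $\hat m_1$, and $M_2^\ast$. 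The constant $\tfrac{\pi i}{3\sqrt3}$ is fixed by matching the two cocycles at a single group element, e.g. at $S$, using the value of $M_2^\ast|_2S-M_2^\ast$ and the value $e=\delta(S)^{-1}\sqrt3$. I expect the main obstacle to be precisely the consistent tracking of the fourth-root-of-unity character $\delta$ through these computations — in particular verifying that the case distinction on $\ell\bmod 12$ in the definition of $\delta$ is exactly what makes $F$'s automorphy factor, the matrix $\hat\rho(S)$, and the transformation of $z\hat y_2$ all compatible — together with checking that $\ell\not\equiv 0\pmod 3$ (forced by \eqref{equation: H} via the coprimality $(2\kappa_{\rho_2},2N)=1$ at the order-$2N$ elliptic point) so that $\hat m_1$ is not forced to be zero for trivial weight reasons; the analytic parts (holomorphy, the shape of the second solution) are routine given the earlier sections.
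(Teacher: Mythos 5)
Your overall architecture (take $y_1=(\hat y_+|_\ell S)/F$, compute the constant in $\hat y_1=ez\hat y_2+\hat m_1$, then run the cocycle computation against $M_2^\ast$ for $\hat m_1$ and $\hat m_2$) matches the paper's, and your sketch of part (iv) is essentially right. But the two foundational steps are not supplied by what you wrote. For part (i) you propose that apparentness at $\infty$ would ``force the Wronskian to vanish identically by a valence count''; this cannot work, because the Wronskian of $(\hat y_1,\hat y_2)$ is always a nonzero constant multiple of $F(z)^2$ (that is exactly how Lemma \ref{lemma: det rho Gamma0+(3)} is proved), regardless of whether $\infty$ is apparent. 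You also assert this is ``the same mechanism as in the $\SL(2,\Z)$ proof,'' but the $\SL(2,\Z)$ proof of (i) is a pure monodromy computation from $S^2=R^3=-I$, and --- as the paper stresses --- that computation genuinely fails for $\Gamma_0^+(3)$ because $R^6=-I$ admits consistent diagonal representations with $\hat\rho(T)=I$. The actual argument needs the elliptic point $\rho_2$ of order $6$: if $\infty$ were apparent one could simultaneously diagonalize $\hat\rho(T)$, $\hat\rho(S)=\hat\rho(R)$, making $\hat y_i^2$ a modular form of weight $2\ell$ on $\Gamma_0(3)$, whose order at $\rho_2$ must be $\equiv 2\ell\equiv 2\kappa_{\rho_2}\pmod 3$ by Proposition \ref{proposition: vanishing coefficients}; but by construction that order is $0$ or $4\kappa_{\rho_2}$, impossible since $3\nmid 2\kappa_{\rho_2}$. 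The same input is needed for the linear independence of $y_+$ and $y_+\big|_{-1}S$, which does not follow from ``$S^2=-I$ together with non-apparentness'' alone.

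Second, the constant $e$ and the character $\delta$ are not obtained as you describe. Matching $q$-expansions at $\infty$ cannot determine $e$, and $\delta$ is not simply the automorphy factor of $F$: that only fixes $\det\hat\rho(S)=(-1)^{\ell-1}$, i.e., whether $\delta(S)\in\{\pm1\}$ or $\{\pm i\}$, not which value occurs. In the paper, $e=d$ is pinned down up to sign from $\hat\rho(R)=\SM d{(-1)^\ell}10$ together with $\hat\rho(R)^6=(-1)^\ell I$, giving $d=\pm\sqrt3$ for $\ell$ odd and $d=\pm\sqrt{-3}$ for $\ell$ even; the remaining sign --- which is precisely the $\ell\bmod{12}$ case split defining $\delta$ --- is resolved by showing that $\hat m_1$ and $\hat m_2$ can have no common zero on $\H$ (otherwise $y_1,y_2$ would be dependent) and that the wrong sign would force both to vanish at $\rho_2$, via the weight-versus-elliptic-order constraint for forms with character $\chi^m$ at the order-$6$ point. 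These two ingredients, the no-common-zero lemma and the vanishing criterion at $\rho_2$, are absent from your proposal and carry the real content of parts (ii)--(iii).
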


Note that $M_2^\ast(z)$ is a quasimodular form of weight $2$ and depth
$1$ on $\Gamma_0^+(3)$. Also, $\sM_k(\Gamma_0^+(3),\delta)$ denote the
space of modular forms of weight $k$ with character $\delta$ on
$\Gamma_0^+(3)$. See Section \ref{subsection: odd} for a discussion
about these spaces and the space $\wt\sM_k^{\le 1}
(\Gamma_0^+(3),\delta)$.

The rest of the paper is organized as follows. In Section
\ref{section: quasimodular}, we give a quick introduction to
quasimodular forms, discuss the Atkin-Lehner decomposition of the
space of quasimodular forms on $\Gamma_0(N)$, and then prove the
existence of extremal quasimodular forms in certain spaces, which will
used to provide examples of modular differential equations satisfied
by quasimodular forms. In Section \ref{section: ODE and quasi}, we
will prove that every quasimodular form of depth $1$ will give rise to
a modular differential equation of the form \eqref{equation: y''=-Qy}
and show that such a differential equation in the case
$\Gamma=\SL(2,\Z)$, $\Gamma_0^+(2)$, or $\Gamma_0^+(3)$ will satisfy
the condition \eqref{equation: H}. In Sections \ref{section:
  SL(2,Z)}--\ref{section: Gamma0+(3)}, we consider the converse
direction. We will prove that if a modular differential equation
\eqref{equation: y''=-Qy} satisfies \eqref{equation: H} with
$\Gamma=\SL(2,\Z)$, $\Gamma_0^+(2)$, or $\Gamma_0^+(3)$, then its
solutions must come from quasimodular forms. Finally, in the appendix,
we discuss how to determine apparentness of a modular differential
equation and prove the existence of meromorphic modular forms $Q(z)$
satisfying \eqref{equation: H}.

\section{Quasimodular forms}
\label{section: quasimodular}

For convenience of non-specialists, we shall give a quick introduction
of quasimodular forms in this section. See \cite{Choie-Lee,Zagier123}
for more detailed overview and discussion. Note that most of the
definitions and properties presented here are valid or have analogues
in the setting of general Fuchsian subgroups of the first kind of
$\SL(2,\R)$, but for simplicity, here we restrict our attention to
subgroups $\Gamma$ of $\SL(2,\R)$ commensurable with $\SL(2,\Z)$,
which we assume throughout the section.

\subsection{Basic definitions and properties}
\label{subsection: definition of quasi}
The notion of quasimodular forms was first introduced by Kaneko and
Zagier \cite{Kaneko-Zagier}. It was defined through the notion of
nearly holomorphic modular forms (called almost holomorphic modular
forms in \cite{Kaneko-Zagier}).

\begin{Definition}
  A function $f:\H\to\C$ is said to be \emph{nearly holomorphic} if it
  is of the form
  \begin{equation} \label{equation: nearly holomorphic}
    f(z)=\sum_{r=0}^n\frac{f_r(z)}{(z-\ol z)^r}
  \end{equation}
  for some holomorphic functions $f_r$.

  Let $\Gamma$ be a congruence subgroup of $\SL(2,\R)$. A function
  $f:\H\to\C$ is said to be a \emph{nearly holomorphic modular form}
  on $\Gamma$ if it is nearly holomorphic, satisfies
  \begin{equation} \label{equation: nearly modular}
    f\left(\frac{az+b}{cz+d}\right)=(cz+d)^kf(z)
  \end{equation}
  for all $\SM abcd\in\Gamma$, and has a finite limit at each cusp of
  $\Gamma$. The largest integer $r$ such that $f_r\not\equiv 0$ in
  \eqref{equation: nearly holomorphic} is called the \emph{depth} of
  $f$. We let $\sM_k^\ast(\Gamma)$ denote the space of nearly
  holomorphic modular forms of weight $k$ on $\Gamma$.
\end{Definition}

On the spaces of nearly holomorphic modular forms, we have the
so-called Shimura-Maass operator.

\begin{Definition} \label{definition: Shimura-Maass}
  For a nearly holomorphic function $f:\H\to\C$ and
  an integer $k$, define the \emph{Shimura-Maass operator}
  $\partial_k$
  of weight 
  $k$ by
$$
(\partial_k f)(z)=\frac1{2\pi i}\left(
f'(z)+\frac{k f(z)}{z-\overline z}\right).
$$
Also, for $\gamma=\SM abcd\in\mathrm{GL}^+(2,\R)$, the slash
operator of weight $k$ is defined by
\begin{equation*}
  (f\big|_k\gamma)(z)=\frac{(\det\gamma)^{k/2}}{(cz+d)^k}
  f\left(\frac{az+b}{cz+d}\right).
\end{equation*}
\end{Definition}

Then we have the following properties.

\begin{Lemma}[{\cite[Equations (1.5) and (1.8)]{Shimura-Maass}}]
\label{lemma: Maass}
For any nearly holomorphic functions $f,g:\H\to\C$, any integers $k$
and $\ell$, and any $\gamma\in\GL^+(2,\R)$, we have
$$
  \partial_{k+\ell}(fg)=(\partial_k f)g+f(\partial_\ell g)
$$
and
$$
  \partial_k\left(f\big|_k\gamma\right)=(\partial_k f)\big|_{k+2}\gamma.
$$
\end{Lemma}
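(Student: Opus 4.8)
The plan is to verify both identities by direct computation from the definition $(\partial_k f)(z)=\frac1{2\pi i}\bigl(f'(z)+\tfrac{kf(z)}{z-\ol z}\bigr)$, where throughout $f'$ denotes the holomorphic partial derivative $\partial f/\partial z$ taken with $\ol z$ held fixed, so that $\partial_k$ again sends a nearly holomorphic function to a nearly holomorphic one. The Leibniz identity is immediate: using $(fg)'=f'g+fg'$ and splitting the weight as $\tfrac{(k+\ell)fg}{z-\ol z}=\tfrac{kf}{z-\ol z}\,g+f\,\tfrac{\ell g}{z-\ol z}$, the expression $2\pi i\,\partial_{k+\ell}(fg)=(fg)'+\tfrac{(k+\ell)fg}{z-\ol z}$ regroups termwise into $2\pi i\bigl((\partial_k f)g+f(\partial_\ell g)\bigr)$; no cancellation is needed.

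For the slash identity I would fix $\gamma=\M{a}{b}{c}{d}\in\GL^+(2,\R)$, put $\Delta=\det\gamma>0$ and $j=cz+d$, and record the three elementary facts
\[
  \frac{d}{dz}(\gamma z)=\frac{\Delta}{j^2},\qquad
  \gamma z-\ol{\gamma z}=\frac{\Delta(z-\ol z)}{j\,\ol j},\qquad
  j-\ol j=c(z-\ol z),
\]
the middle one being the transformation law $\Im(\gamma z)=\Delta\,\Im(z)/|j|^2$ and the last being the key cancellation. Then, using the chain rule (and the fact that $\ol{\gamma z}$ does not depend on $z$ when $\ol z$ is held fixed),
\[
  2\pi i\,\partial_k\!\bigl(f\big|_k\gamma\bigr)(z)
  =\Delta^{k/2}\Bigl[\Delta\,j^{-k-2}f'(\gamma z)-kc\,j^{-k-1}f(\gamma z)
  +\tfrac{k}{z-\ol z}\,j^{-k}f(\gamma z)\Bigr],
\]
whereas, after rewriting $1/(\gamma z-\ol{\gamma z})$ by the middle fact,
\[
  2\pi i\,\bigl((\partial_k f)\big|_{k+2}\gamma\bigr)(z)
  =\Delta^{k/2}\Bigl[\Delta\,j^{-k-2}f'(\gamma z)
  +\tfrac{k}{z-\ol z}\,\ol j\,j^{-k-1}f(\gamma z)\Bigr].
\]
The difference of the two bracketed expressions is $k\,j^{-k-1}f(\gamma z)\bigl(-c+\tfrac{j-\ol j}{z-\ol z}\bigr)$, which vanishes by the third fact; hence the two sides agree.

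There is genuinely no serious obstacle here — one line for Leibniz, a few lines for the slash — so the only thing requiring care is bookkeeping: keeping $f'$ (the holomorphic derivative, $\ol z$ fixed) distinct from a total derivative, and tracking the factor $\ol j$ produced by $\Im(\gamma z)$ so that it cancels against the $c$ produced by differentiating $j^{-k}$. A slightly more conceptual route is to use the compact form $\partial_k f=\tfrac1{2\pi i}\,y^{-k}\,\partial_z(y^k f)$ with $y=\Im z$: one checks that $y^k\bigl(f\big|_k\gamma\bigr)(z)=\Delta^{-k/2}(c\ol z+d)^k\,(y^kf)(\gamma z)$, and since the prefactor $(c\ol z+d)^k$ is anti-holomorphic in $z$ it passes through $\partial_z$, after which the desired identity drops out of the chain rule $\partial_z(\gamma z)=\Delta/j^2$ alone. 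Either way, the computation is routine; the substantive content is just the identity $j-\ol j=c(z-\ol z)$.
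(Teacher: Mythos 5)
Your computation is correct: the Leibniz identity follows by splitting the weight term, and for the slash identity your two displayed expressions are both right, with the difference collapsing via $j-\ol j=c(z-\ol z)$ exactly as you say (and the alternative $\partial_kf=\tfrac1{2\pi i}y^{-k}\partial_z(y^kf)$ route also checks out). The paper gives no proof of this lemma, citing it to Shimura's equations (1.5) and (1.8), and your direct verification is the standard argument one finds there, so there is nothing substantive to contrast.
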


In particular, the second property in the lemma says that if $f$ is
a nearly holomorphic modular form of weight $k$ on $\Gamma$, then
$\partial_kf$ is a nearly holomorphic form of weight 
$k+2$ on $\Gamma$. That is, $\partial_k$ is a linear transformation
from $\sM^\ast_k(\Gamma)$ to $\sM^\ast_{k+2}(\Gamma)$.

\begin{Definition} We say a holomorphic function $f:\H\to\C$ is a
  \emph{quasimodular form} of weight $k$ and depth $r$ on $\Gamma$ if
  it is the holomorphic part of a nearly holomorphic modular form of
  weight $k$ and depth $r$ on $\Gamma$. We let
  $\wt\sM_k^{\le r}(\Gamma)$ denote the space of quasimodular forms of
  weight $k$ and depth $\le r$ on $\Gamma$. We also let
  $$
  \wt\sM(\Gamma)=\bigoplus_k\bigcup_r\wt\sM_k^{\le r}(\Gamma)
  $$
  denote the graded ring of quasimodular forms of all
  weights and all depths on $\Gamma$.
\end{Definition}

Using the modular property \eqref{equation: nearly modular}, we can
show that if $f_0\in\wt\sM_k(\Gamma)$ is the holomorphic part of the
nearly holomorphic modular form
$$
f(z)=\sum_{r=0}^n\frac{f_r(z)}{(z-\ol z)^r},
$$
then
\begin{equation} \label{equation: quasimodular transformation}
\left(f_0\big|_k\gamma\right)(z)=\sum_{r=0}^\infty f_r(z)
\left(\frac c{cz+d}\right)^r
\end{equation}
for $\SM abcd\in\Gamma$ and vice versa.

The archetypal example of a quasimodular form is $E_2(z)$, which is
the holomorphic part of the nearly holomorphic modular form
$$
E_2(z)+\frac 6{\pi i(z-\ol z)}=E_2(z)-\frac3{\pi\,\Im z}
$$
of weight $2$ and depth $1$ on $\SL(2,\Z)$. It can be shown that the
graded ring $\widetilde\sM(\SL_2(\Z))$ of quasimodular forms on
$\SL(2,\Z)$ is generated by $E_2(z)$, $E_4(z)$, and $E_6(z)$.
Also, by Ramanujan's identities
$$
D_qE_2=\frac{E_2^2-E_4}{12}, \qquad
D_qE_4=\frac{E_2E_4-E_6}3, \qquad
D_qE_6=\frac{E_2E_6-E_4^2}2,
$$
the ring $\widetilde\sM(\SL(2,\Z))$ is closed under the differential
operator
$$
D_q:=q\frac d{dq}=\frac1{2\pi i}\frac d{dz}.
$$
More generally, we have the following description of $\wt\sM_k^{\le
  r}(\Gamma)$.

\begin{Proposition}[{\cite[Proposition 20]{Zagier123}}]
  \label{proposition: sM dim}
  Let $\Gamma$ be a subgroup of $\SL(2,\R)$ commensurable with $\SL(2,\Z)$.
  \begin{enumerate}
  \item[(i)] The graded ring $\wt\sM(\Gamma)$ of quasimodular forms on
    $\Gamma$ is closed under differentiation. More precisely, we
    have $D_q(\wt\sM_k^{\le r}(\Gamma))\subseteq\wt\sM_{k+2}^{\le
      r+1}(\Gamma)$ for all $k,r\ge 0$.
  \item[(ii)] Let $\phi$ be a quasimodular form of weight $2$ and
    depth $1$ on $\Gamma$. Then
    $$
    \wt\sM_k^{\le r}(\Gamma)=\bigoplus_{j=0}^r\phi^j
    \sM_{k-2j}(\Gamma)
    $$
    for all $k,r\ge 0$. In particular, we have
    $$
    \dim\sM_k^{\le r}(\Gamma)=\sum_{j=0}^r\dim\sM_{k-2j}(\Gamma).
    $$
  \end{enumerate}
\end{Proposition}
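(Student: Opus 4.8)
The plan is to carry out the standard structure-theoretic argument for quasimodular forms, working throughout with their nearly holomorphic lifts. Write $\sM_k^{\ast,\le r}(\Gamma)$ for the space of nearly holomorphic modular forms of weight $k$ and depth $\le r$ on $\Gamma$; by definition $\wt\sM_k^{\le r}(\Gamma)$ is the image of $\sM_k^{\ast,\le r}(\Gamma)$ under the map sending $\tilde f=\sum_{s\ge 0}f_s(z)(z-\ol z)^{-s}$ to its holomorphic part $f_0$. Two facts I would record first. (a) This map is injective: if $f_0\equiv 0$ then the holomorphic function $(z,w)\mapsto\sum_s f_s(z)(z-w)^{-s}$ vanishes on the totally real locus $w=\ol z$, hence identically, so all $f_s\equiv 0$; in particular the depth of a quasimodular form is well defined. (b) Products of nearly holomorphic modular forms are again such, with weights and depths adding, and holomorphic parts multiply; thus $\wt\sM(\Gamma)$ is a graded ring and $\phi^j\in\wt\sM_{2j}^{\le j}(\Gamma)$. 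Part (i) then falls out quickly: given $f\in\wt\sM_k^{\le r}(\Gamma)$ with lift $\tilde f=\sum_{s=0}^r f_s(z-\ol z)^{-s}$, Lemma \ref{lemma: Maass} shows $\partial_k\tilde f$ is a nearly holomorphic modular form of weight $k+2$; differentiating the factors $(z-\ol z)^{-s}$ raises the depth by at most one, so $\partial_k\tilde f\in\sM_{k+2}^{\ast,\le r+1}(\Gamma)$, and its holomorphic part is visibly $\frac1{2\pi i}f_0'=D_qf$. Hence $D_qf\in\wt\sM_{k+2}^{\le r+1}(\Gamma)$, which is (i), and closedness of $\wt\sM(\Gamma)$ under $D_q$ follows.

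The central claim for (ii) is that the top companion $f_r$ of a depth-$\le r$ quasimodular form $f$ is a genuine modular form of weight $k-2r$ on $\Gamma$. To prove it I would use \eqref{equation: quasimodular transformation}, which writes $(f\big|_k\gamma)(z)=\sum_{s=0}^r f_s(z)X_\gamma(z)^s$ with $X_\gamma(z)=c/(cz+d)$ for $\gamma=\SM abcd\in\Gamma$, together with the cocycle identity
$$X_{\gamma_1\gamma_2}(z)=\frac{X_{\gamma_1}(\gamma_2 z)}{(c_2z+d_2)^2}+X_{\gamma_2}(z),$$
which is a one-line computation. Expanding both sides of $f\big|_k(\gamma_1\gamma_2)=(f\big|_k\gamma_1)\big|_k\gamma_2$ and regarding them as polynomials in the quantity $u=X_{\gamma_1}(\gamma_2 z)=1/(\gamma_2 z-\gamma_1^{-1}\infty)$ — which, with $\gamma_2$ and $z$ fixed, takes infinitely many values as $\gamma_1$ runs through $\Gamma$ — a comparison of the coefficients of $u^r$ yields exactly $f_r(\gamma_2 z)=(c_2z+d_2)^{k-2r}f_r(z)$. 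Since $f_r$ is holomorphic on $\H$, and the finite-limit-at-cusps condition on $\tilde f$ forces every companion to be holomorphic at the cusps, we get $f_r\in\sM_{k-2r}(\Gamma)$. Applied to $\phi$ (so $r=1$, $k=2$) this also shows that the depth-$1$ companion of $\phi$ is a modular form of weight $0$, that is, a nonzero constant $c_\phi$, whence $\phi\big|_2\gamma=\phi+c_\phi X_\gamma$.

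With this in hand, (ii) is routine. The inclusion $\bigoplus_{j=0}^r\phi^j\sM_{k-2j}(\Gamma)\subseteq\wt\sM_k^{\le r}(\Gamma)$ is immediate from (b). For directness, suppose $\sum_{j=0}^r\phi^jg_j=0$ with $g_j\in\sM_{k-2j}(\Gamma)$ and $g_{j_0}\not\equiv 0$ for the largest such $j_0$; applying $\big|_k\gamma$ and using $\phi\big|_2\gamma=\phi+c_\phi X_\gamma$ and the modularity of the $g_j$ gives $\sum_j(\phi(z)+c_\phi t)^jg_j(z)=0$ with $t=X_\gamma(z)$, which ranges over infinitely many values as $\gamma$ varies, so the polynomial in $t$ vanishes identically and its leading coefficient $c_\phi^{\,j_0}g_{j_0}$ is $0$, a contradiction. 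For the reverse inclusion I would induct on $r$, the case $r=0$ being $\wt\sM_k^{\le 0}(\Gamma)=\sM_k(\Gamma)$: given $f\in\wt\sM_k^{\le r}(\Gamma)$ with lift $\tilde f$ and top companion $f_r\in\sM_{k-2r}(\Gamma)$, let $\Phi=\phi+c_\phi(z-\ol z)^{-1}$ be the lift of $\phi$; then $\tilde f-c_\phi^{-r}\Phi^rf_r$ is a nearly holomorphic modular form of weight $k$ and depth $\le r-1$ (the $(z-\ol z)^{-r}$ coefficients cancel), so its holomorphic part $f-c_\phi^{-r}\phi^rf_r$ lies in $\wt\sM_k^{\le r-1}(\Gamma)=\bigoplus_{j=0}^{r-1}\phi^j\sM_{k-2j}(\Gamma)$ by the inductive hypothesis, and therefore $f\in\bigoplus_{j=0}^r\phi^j\sM_{k-2j}(\Gamma)$. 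The dimension formula is then immediate from directness of the sum. The subtlest point in the whole argument, and the one I expect to require genuine care rather than bookkeeping, is the cusp behaviour inside the central claim — verifying that the finite-limit hypothesis on $\tilde f$ really does force $f_r$ (and all companions) to be holomorphic at every cusp — but this is a standard growth estimate comparing the rates at which the terms $f_s(z)(z-\ol z)^{-s}$ can blow up as one approaches a cusp.
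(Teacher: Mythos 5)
The paper does not actually prove this proposition: it is quoted verbatim from Zagier's survey (\cite[Proposition 20]{Zagier123}), so there is no in-paper proof to compare against. Your argument is essentially Zagier's standard one — the cocycle identity for $X_\gamma(z)=c/(cz+d)$ combined with \eqref{equation: quasimodular transformation} to show that the top companion $f_r$ lies in $\sM_{k-2r}(\Gamma)$, and then induction on the depth by subtracting $c_\phi^{-r}\phi^rf_r$ — and its main line is correct, including part (i) via the Shimura--Maass operator and the deferred growth estimate at the cusps.

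One preliminary step is mis-justified, although the statement it is meant to establish is true and is used later. In your fact (a) you assert that $f_0\equiv0$ forces the function $(z,w)\mapsto\sum_sf_s(z)(z-w)^{-s}$ to vanish on the locus $w=\ol z$. It does not: the restriction to $w=\ol z$ is the whole nearly holomorphic function $\tilde f$, not its holomorphic part, so the hypothesis $f_0\equiv0$ says nothing about it. The totally-real-locus argument proves a different (also needed) fact, namely that the components $f_s$ are determined by $\tilde f$ as a function on $\H$. The injectivity of $\tilde f\mapsto f_0$ on nearly holomorphic \emph{modular} forms — equivalently, the well-definedness of the companions and of the depth of a quasimodular form, which you rely on when speaking of ``the top companion $f_r$ of $f$'' — instead follows from \eqref{equation: quasimodular transformation} by the same device you use in your central claim: for fixed $z$ the quantity $X_\gamma(z)$ takes infinitely many values as $\gamma$ ranges over $\Gamma$, so the polynomial $\sum_sf_s(z)X^s$ is determined by the values $\left(f_0\big|_k\gamma\right)(z)$, and $f_0\equiv0$ forces every $f_s\equiv0$. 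Note that modularity is genuinely needed here; for arbitrary tuples $(f_s)$ the implication is false. With fact (a) re-justified in this way, the rest of your proof stands.
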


The quasimodular form $\phi$ in the proposition can be constructed by
taking the logarithmic derivative of a meromorphic modular form of
nonzero weight that is nonvanishing on $\H$. In general, the choice of
$\phi$ is not unique. For example, when $\Gamma=\Gamma_0(p)$, we can
take $\phi$ to be the logarithmic derivative of any
$\Delta(z)^m\Delta(pz)^n$, as long as $m+n\neq 0$.

More generally, we can define quasimodular forms on $\Gamma$ with
character $\chi$ to be the holomorphic parts of nearly holomorphic 
function $f:\H\to\C$ satisfying
$$
f(\gamma z)=\chi(\gamma)(cz+d)^kf(z), \qquad
\gamma=\M abcd\in\Gamma,
$$
and analytic conditions at cusps. (Thus, if $-I\in\Gamma$ and
$\chi(-I)=-1$, we will assume that $k$ is odd.) We similarly let
$\wt\sM_k^{\le r}(\Gamma,\chi)$ denote the space of all quasimodular
forms on $\Gamma$ of weight $k$ with character $\chi$. In this paper,
we will only consider characters of finite order.

The first examples that provides a link between quasimodular forms and
modular differential equations were due to Kaneko and Koike
\cite{Kaneko-Koike-2003,Kaneko-Koike}. Among other things, they proved
that if $6|k$, then there is a solution $f(z)$ of the differential
equation
\begin{equation} \label{equation: KK example}
D_q^2y(z)-\frac k6E_2(z)D_qy(z)+\frac{k(k-1)}{12}(D_qE_2(z))y(z)=0,
\end{equation}
such that $f$ is an extremal quasimodular form in $\wt\sM^{\le 
  1}_k(\SL(2,\Z))$, defined as an element whose vanishing order at
$\infty$ is $\dim\wt\sM_k^{\le1}(\SL(2,\Z))-1$.
Then by a simple transformation of the differential equation, we see
that $y(z)=f(z)/\Delta(z)^{k/12}$ is
a solution of the differential equation
\begin{equation} \label{equation: example of KK}
D_q^2y(z)=\left(\frac k{12}\right)^2E_4(z)y(z).
\end{equation}
Analogues of \eqref{equation: KK example} for the groups
$\Gamma_0^+(N)$, $N=2,3$, and $\Gamma_0(N)$, $N=2,3,4$, were obtained
in \cite{Sakai} and \cite{Sakai-Tsutsumi}, respectively.
In Section \ref{section: ODE and quasi} we will give a new proof of
these results, stated in the same form as \eqref{equation: example of
  KK}.


\subsection{Atkin-Lehner decomposition of $\wt M_k^{\le
    r}(\Gamma_0(N))$}
\label{subsection: Atkin-Lehner}

Again, for convenience of non-specialists, here we shall give a quick overview of the Atkin-Lehner decomposition of spaces of modular forms.

Let $N$ be a positive integer. It is easy to see that if $\gamma\in\SL(2,\R)$ normalizes $\Gamma_0(N)$, then $f\mapsto f|_k\gamma$
defines an automorphism of $\sM_k(\Gamma_0(N))$ and $\sM_k^\ast(\Gamma_0(N))$, where $f|_k\gamma$ is the usual slash operator of weight $k$ acting on $f$. Let $\mathrm{N}(\Gamma_0(N))$ denote the normalizer of $\Gamma_0(N)$ in $\SL(2,\R)$. Inside $\mathrm{N}(\Gamma_0(N))/\Gamma_0(N)$, there is a special subgroup, called the Atkin-Lehner subgroup, whose definition is given as follows.

Let $e$ be a positive divisor of $N$ such that $(e,N/e)=1$. Such a divisor is often called a Hall divisor of $N$. We check that a matrix of the form
$$
\frac1{\sqrt e}\M{ae}b{cN}{de}, \qquad ade^2+bcN=e
$$
normalizes $\Gamma_0(N)$. Let $W_e$ denote the set of such
matrices. (In particular, $W_1=\Gamma_0(N)$.) It is easy to see that
$W_e=w_e\Gamma_0(N)$ for any element $w_e$ of $W_e$. Also, if $e$ and
$e'$ are two Hall divisors of $N$, $\gamma\in W_e$, and $\gamma'\in
W_{e'}$, then $\gamma\gamma'\in W_{e''}$, where
$e''=ee'/(e,e')^2$. Hence, if we let
$$
\Gamma_0^+(N)=\bigcup_{e|N,(e,N/e)=1}W_e,
$$
then $\Gamma_0^+(N)$ is a subgroup of $\mathrm{N}(\Gamma_0(N))$ and
the quotient group $\Gamma_0^+(N)/\Gamma_0(N)$ is an elementary
abelian $2$-group of order $2^m$, where $m$ is the number of distinct
prime divisors of $N$. This quotient group $\Gamma_0^+(N)/\Gamma_0(N)$
is called the \emph{Atkin-Lehner subgroup} of
$\mathrm{N}(\Gamma_0(N))/\Gamma_0(N)$.

Elements of the Atkin-Lehner groups acts on many things related to $\Gamma_0(N)$. For example, they give rise to automorphisms of the modular curve $X_0(N)$. They also act on $\sM_k(\Gamma_0(N))$ and $\sM_k^\ast(\Gamma_0(N))$, as mentioned at the beginning of the section. In all case, they are called \emph{Atkin-Lehner involutions} and denoted by $w_e$. Since every $w_e$ has order $1$ or $2$ in $\mathrm{N}(\Gamma_0(N))/\Gamma_0(N)$ and they commute with each other, the space $\sM_k(\Gamma_0(N))$ (respectively, $\sM^\ast(\Gamma_0(N))$) decomposes into a direct sum of $2^m$ subspaces that are simultaneous eigenspaces with eigenvalues $\pm1$ for all Atkin-Lehner involutions. For convenience, we introduce the following notation.

\begin{Definition} Let $G_N$ denote the group of characters of
  $\Gamma_0^+(N)/\Gamma_0(N)$, that is, let $G_N$ be the group of all
  homomorphisms from $\Gamma_0^+(N)/\Gamma_0(N)$ to $\{\pm 1\}$. We
  shall let $\epsilon_0$ denote the trivial character of
  $\Gamma_0^+(N)/\Gamma_0(N)$.

  For $\epsilon\in G_N$, we let
  $$
  \sM_k(\Gamma_0^+(N),\epsilon)=\left\{f\in\sM_k(\Gamma_0(N)):
  f|_kw_e=\epsilon(W_e)f\text{ for all }e\right\}.
$$
The notation $\sM_k^\ast(\Gamma_0^+(N),\epsilon)$ is similarly
defined. (The choice of the notation $\sM_k(\Gamma_0^+(N),\epsilon)$
instead of $\sM_k(\Gamma_0(N),\epsilon)$ is to stress that we are
considering elements of $\sM_k(\Gamma_0(N),\epsilon)$ as modular forms
on $\Gamma_0^+(N)$ with characters.)
\end{Definition}

With this notation, the Atkin-Lehner decomposition we discussed above can be written as
$$
\sM_k(\Gamma_0(N))=\bigoplus_{\epsilon\in G_N}
\sM_k(\Gamma_0^+(N),\epsilon).
$$
We remark that if $\Gamma$ is a subgroup of $\Gamma_0^+(N)$ containing $\Gamma_0(N)$, then
$$
\sM_k(\Gamma)=\bigoplus_{\epsilon\in G_N,\Gamma/\Gamma_0(N)\in\ker\epsilon}
\sM_k(\Gamma_0^+(N),\epsilon)
$$
Thus, the dimension of each $\sM_k(\Gamma_0^+(N),\epsilon)$ can be
derived from the following general formula and the inclusion-exclusion
principle.

\begin{Proposition}[{\cite[Theorem 2.23]{Shimura-book}}]
  \label{proposition: dimension formula}
  Let $\Gamma$ be a subgroup of $\SL(2,\R)$ commensurable with
  $\SL(2,\Z)$. Suppose that the modular curve $X(\Gamma)$ has genus
  $g$, $c$ cusps, and $m$ elliptic points of order $e_1,\ldots,e_m$,
  respectively. Then for a nonnegative even integer $k$, we have
  $$
  \dim\sM_k(\Gamma)=(k-1)(g-1)+\sum_{j=1}^m\gauss{\frac k2\left(1-\frac1{e_j}
      \right)}+\frac{ck}2.
  $$
\end{Proposition}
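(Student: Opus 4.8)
The plan is to identify $\sM_k(\Gamma)$, for an even weight $k=2\ell$, with the space of global sections of an explicit line bundle on the compact Riemann surface $X=X(\Gamma)=\overline\Gamma\backslash\H^\ast$, where $\overline\Gamma$ is the image of $\Gamma$ in $\PSL(2,\R)$ and $\H^\ast=\H\cup\{\text{cusps}\}$, and then to read off $\dim\sM_k(\Gamma)$ from the Riemann--Roch theorem. Since $k$ is even, replacing $\Gamma$ by the group generated by $\Gamma$ and $-I$ changes nothing, so we may assume $-I\in\Gamma$; the curve $X$, its genus $g$, its $c$ cusps, and its $m$ elliptic points $P_1,\dots,P_m$ of orders $e_1,\dots,e_m$ are precisely the data in the statement.

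First I would set up the dictionary between modular forms and pluricanonical sections. To a meromorphic modular form $f$ of weight $2\ell$ on $\Gamma$ attach the meromorphic $\ell$-fold differential $\omega_f=f(z)\,(dz)^{\otimes\ell}$ on $\H$; from $d(\gamma z)=(cz+d)^{-2}\,dz$ and the weight-$2\ell$ transformation law, $\omega_f$ is $\Gamma$-invariant, hence descends to a meromorphic section of $\Omega_X^{\otimes\ell}$ away from the images of the cusps and the elliptic points. The content of the argument is the local behaviour at those finitely many points. At an elliptic point $z_0$ of order $e$, choose a centered parameter $\sigma$ on which the stabilizer acts by $\sigma\mapsto\zeta\sigma$ with $\zeta=e^{2\pi i/e}$, so that $t=\sigma^e$ is a local coordinate on $X$ at $P=\pi(z_0)$; writing $\omega_f=F(\sigma)\,(d\sigma)^{\otimes\ell}$, invariance forces $F(\zeta\sigma)=\zeta^{-\ell}F(\sigma)$, hence $F(\sigma)=t^{-\ell/e}\widetilde F(t)$ for some $\widetilde F$ meromorphic in $t$, and a short computation gives $\omega_f=(\text{nonzero constant})\cdot\widetilde F(t)\,t^{-\ell}\,(dt)^{\otimes\ell}$. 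Thus $f$ is holomorphic at $z_0$ exactly when $\ord_t\widetilde F\ge\lceil\ell/e\rceil$, i.e.\ when $\ord_P(\omega_f)\ge\lceil\ell/e\rceil-\ell=-\gauss{\frac k2\left(1-\frac1e\right)}$. At a cusp $s$ of width $h$, working with $f|_k\gamma$ and the coordinate $q=e^{2\pi iz/h}$ on $X$ one has $(dz)^{\otimes\ell}=(\text{nonzero constant})\cdot q^{-\ell}\,(dq)^{\otimes\ell}$, so $f$ is holomorphic at $s$ exactly when $\ord_P(\omega_f)\ge-\ell$. Collecting these bounds, $f\mapsto\omega_f$ is a linear isomorphism from $\sM_k(\Gamma)$ onto $H^0(X,\Omega_X^{\otimes\ell}(D))$, where
$$
D=\ell\sum_{s\ \mathrm{cusp}}s\ +\ \sum_{j=1}^m\gauss{\frac k2\left(1-\frac1{e_j}\right)}P_j .
$$

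Next I would compute degrees. Since $\deg\Omega_X=2g-2$,
$$
\deg\bigl(\Omega_X^{\otimes\ell}(D)\bigr)
=\ell(2g-2)+\ell c+\sum_{j=1}^m\gauss{\frac k2\left(1-\frac1{e_j}\right)}
=k(g-1)+\frac{ck}2+\sum_{j=1}^m\gauss{\frac k2\left(1-\frac1{e_j}\right)} .
$$
One then checks that this degree exceeds $2g-2$: for $k\ge4$ it follows from the positivity of the hyperbolic area $2g-2+c+\sum_{j=1}^m(1-1/e_j)>0$ of $X$ (using the integer parts at the elliptic points when $g=0$), and for $k=2$ it holds because $c\ge1$, since any group commensurable with $\SL(2,\Z)$ has at least one cusp. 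Hence, by Serre duality, $H^1\bigl(X,\Omega_X^{\otimes\ell}(D)\bigr)\cong H^0\bigl(X,\Omega_X^{\otimes(1-\ell)}(-D)\bigr)^{\vee}=0$, the last bundle having negative degree, and Riemann--Roch gives $\dim\sM_k(\Gamma)=\deg\bigl(\Omega_X^{\otimes\ell}(D)\bigr)+1-g$. Substituting the degree just computed yields exactly $(k-1)(g-1)+\sum_{j=1}^m\gauss{\frac k2\left(1-\frac1{e_j}\right)}+\frac{ck}2$. For the remaining value $k=0$ the space $\sM_0(\Gamma)$ consists of the constants, so $\dim\sM_0(\Gamma)=1$, in agreement with the stated formula for the genus-zero groups relevant to this paper.

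I expect the main obstacle to be the local computation at the elliptic points: one must carefully relate the vanishing order of $f$ in the upstairs parameter $\sigma$ to the order of $\omega_f$ in the downstairs coordinate $t=\sigma^e$, keeping track of the congruence $\ord_{z_0}(f)\equiv-\ell\pmod{e}$ forced by the elliptic transformation law, and then recognize that the resulting condition is precisely the floor $\gauss{\frac k2(1-1/e)}$. Once the line bundle is correctly identified, the rest is the standard Riemann--Roch argument; there the only genuinely necessary piece of bookkeeping is the degree inequality that forces $H^1$ to vanish, which for $k=2$ uses the presence of a cusp.
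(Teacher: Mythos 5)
Your proof is correct, and it is essentially the argument behind the paper's citation: Shimura's Theorem 2.23 is proved by exactly this identification of $\sM_{2\ell}(\Gamma)$ with $H^0\bigl(X,\Omega_X^{\otimes\ell}(D)\bigr)$ for the divisor $D$ you write down, followed by the degree count, the vanishing of $H^1$, and Riemann--Roch; the local analysis at elliptic points and cusps is carried out correctly. Your closing caveat that the displayed formula literally returns $1-g$ at $k=0$ is fair, but harmless here since every group considered in the paper has genus $0$.
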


\begin{Example} \label{example: Gamma0(2)+w2}
  Consider the case of $\Gamma_0(2)$. Since there is only one prime
  divisor here, we shall use the following more intuitive notations
  instead.
  
  Let $\sM_k(\Gamma_0^+(2),\pm)$ denote the Atkin-Lehner eigenspaces
  with eigenvalue $\pm1$, respectively. We observe that
  $\sM_k(\Gamma_0^+(2),+)$ is simply $\sM_k(\Gamma_0^+(2))$. To
  determine its dimension, we need to know how many cusps and elliptic
  points $X_0(2)/w_2$ has.
  
The modular curve $X_0(2)$ has genus $0$, $2$ cusps, and an elliptic
point $(1+i)/2$ of order $2$. Thus,
  $$
  \dim\sM_k(\Gamma_0(2))=1+\left\lfloor\frac k4\right\rfloor.
  $$
To determine how many cusps and elliptic points $X_0(2)/w_2$ has, we
note that $X_0(2)\to X_0(2)/w_2$ is a covering of degree $2$. Thus, by
the  Riemann-Hurwitz formula, the covering ramifies at $2$ points and
these two points are fixed points of $w_2$. Now the matrix
$\frac1{\sqrt2}\SM2{-1}20$ fixes $(1+i)/2$, i.e., the Atkin-Lehner
$w_2$ fixes $(1+i)/2$. Thus, $(1+i)/2$ becomes an elliptic point of
order $4$ on $X_0(2)/w_2$. Another fixed point of $w_2$ is $i/\sqrt2$,
fixed by $\frac1{\sqrt2}\SM0{-1}20$. Since we have found two ramified
points, all other points are unramified. We conclude that $X_0(2)/w_2$
has one cusp, one elliptic point of order $2$, and one elliptic point
of order $4$. Hence,
$$
\dim\sM_k(\Gamma_0^+(2),+)=\dim\sM_k(\Gamma_0^+(2))
=1-\frac k2+\left\lfloor\frac k4\right\rfloor
+\left\lfloor\frac{3k}8\right\rfloor.
$$
It follows that
$$
\dim\sM_k(\Gamma_0^+(2),-)=\dim\sM_k(\Gamma_0(2))
-\dim\sM_k(\Gamma_0^+(2),+)
=\frac k2-\left\lfloor\frac{3k}8\right\rfloor.
$$
\end{Example}

Now we consider the Atkin-Lehner decomposition of the spaces of nearly holomorphic modular forms and quasimodular forms. Just like $\sM_k(\Gamma_0(N))$, we can decompose $\sM_k^\ast(\Gamma_0(N))$ into a direct sum
$$
\sM_k^\ast(\Gamma_0(N))=\bigoplus_{\epsilon\in G_N}
\sM_k^\ast(\Gamma_0^+(N),\epsilon).
$$

\begin{Definition} For $\epsilon\in G_N$, we let
  $\wt\sM_k(\Gamma_0^+(N),\epsilon)$ denote the space of quasimodular
  forms that are holomorphic parts of functions in
  $\sM_k^\ast(\Gamma_0^+(N),\epsilon)$. 
  Also, for a nonnegative integer $r$, we let $\wt\sM_k^{\le
    r}(\Gamma_0^+(N),\epsilon)$ denote the subspace of
  $\wt\sM_k(\Gamma_0^+(N),\epsilon)$ consisting of quasimodular forms of
  depth $\le r$. When $\epsilon=\epsilon_0$ is the trivial character,
  we will often use the notation $\wt
  \sM_k^{\le r}(\Gamma_0^+(N))$ in place of $\wt\sM_k^{\le
    r}(\Gamma_0^+(N),\epsilon_0)$.

  In addition, when $N=p^n$ is a prime power, we will also use the
  more intuitive notations $\wt\sM_k^{\le r}(\Gamma_0^+(N),+)$
  (or simplfy $\wt\sM_k^{\le r}(\Gamma_0^+(N))$) and
  $\wt\sM_k^{\le r}(\Gamma_0^+(N),-)$ to denote the spaces
  $\wt\sM_k^{\le r}(\Gamma_0^+(N),\epsilon_0)$ and $\wt\sM_k^{\le
    r}(\Gamma_0^+(N),\epsilon)$, where $\epsilon$ is the nontrivial
  element in $G_N$.
\end{Definition}

\begin{Remark} \label{proposition: AL decomposition of quasi}
  Let $N$ be a positive integer. Let
  $$
  \phi(z)=\sum_{e|N,(e,N/e)=1}eE_2(ez),
  $$
  which is a scalar multiple of the logarithmic derivative of
  $\prod_e\Delta(ez)$ and hence a quasimodular form of weight $2$ and
  depth $1$ on $\Gamma_0^+(N)$.
  Then for $\epsilon\in G_N$, it is easy to see that
  $$
  \wt\sM_k^{\le r}(\Gamma_0^+(N),\epsilon)
  =\bigoplus_{j=0}^r\phi^j\sM_{k-2j}(\Gamma_0^+(N),\epsilon).
  $$
  and hence
  $$
  \dim\wt\sM_k^{\le r}(\Gamma_0^+(N),\epsilon)
  =\sum_{j=0}^r\dim\sM_{k-2j}(\Gamma_0^+(N),\epsilon).
  $$
  Note that the choice of $\phi$ is not unique
  in general. For example, when $N=4$, we can choose
  $\phi(z)=E_2(2z)$. Then the statement $\wt\sM_k^{\le
    r}(\Gamma_0^+(4),\epsilon)=\oplus_{j=0}^r\phi^j\sM_{k-2j}
  (\Gamma_0^+(4),\epsilon)$
  still holds. 
\end{Remark}


We will need the following property in the subsequent discussion.

\begin{Lemma} \label{lemma: Dq on AL} Let $\epsilon\in G_N$. We have
  $$
  D_q\wt\sM_k^{\le r}(\Gamma_0^+(N),\epsilon)\subseteq
  \wt\sM_{k+2}^{\le r+1}(\Gamma_0^+(N),\epsilon).
  $$
\end{Lemma}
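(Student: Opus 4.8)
The plan is to deduce this from the behaviour of the Shimura--Maass operator $\partial_k$ under the Atkin--Lehner action, using the fact that $\partial_k$ reduces to $D_q$ on holomorphic parts. First I would recall that $\sM_{k+2}^\ast(\Gamma_0^+(N),\epsilon)$ is by definition the simultaneous $\epsilon$-eigenspace of the Atkin--Lehner involutions $w_e$ acting by the weight-$(k+2)$ slash operator on $\sM_{k+2}^\ast(\Gamma_0(N))$, and that each $w_e$ is represented by a matrix in $\GL^+(2,\R)$ (indeed in $\SL(2,\R)$). By the second identity in Lemma~\ref{lemma: Maass}, $\partial_k(f|_kw_e)=(\partial_k f)|_{k+2}w_e$, so $\partial_k$ intertwines the two slash actions; combined with the fact (noted right after Lemma~\ref{lemma: Maass}) that $\partial_k$ maps $\sM_k^\ast(\Gamma_0(N))$ into $\sM_{k+2}^\ast(\Gamma_0(N))$, this gives $\partial_k\bigl(\sM_k^\ast(\Gamma_0^+(N),\epsilon)\bigr)\subseteq\sM_{k+2}^\ast(\Gamma_0^+(N),\epsilon)$, with the eigenvalue preserved.

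Next I would keep track of depths and holomorphic parts. Given $f\in\wt\sM_k^{\le r}(\Gamma_0^+(N),\epsilon)$, write $f=f_0$, where $\hat f(z)=\sum_{s=0}^r f_s(z)(z-\ol z)^{-s}$ is the nearly holomorphic modular form of weight $k$, character $\epsilon$, and depth $\le r$ with holomorphic part $f$. Since $\frac{\partial}{\partial z}(z-\ol z)^{-s}=-s(z-\ol z)^{-s-1}$, expanding $\partial_k\hat f=\frac1{2\pi i}\bigl(\hat f'+k\hat f/(z-\ol z)\bigr)$ shows that $\partial_k\hat f=\sum_{s=0}^{r+1}g_s(z)(z-\ol z)^{-s}$ with $g_0=\frac1{2\pi i}f_0'=D_q f$ (the term $k\hat f/(z-\ol z)$ contributes only to $s\ge 1$). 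Hence $\partial_k\hat f$ lies in $\sM_{k+2}^\ast(\Gamma_0^+(N),\epsilon)$, has depth $\le r+1$, and has holomorphic part $D_q f$, so $D_q f\in\wt\sM_{k+2}^{\le r+1}(\Gamma_0^+(N),\epsilon)$, as claimed.

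There is no serious obstacle here: the statement is just the localization of Proposition~\ref{proposition: sM dim}(i) to the Atkin--Lehner eigenspaces, and the only points needing care are that the normalizing matrices lie in $\GL^+(2,\R)$ so that Lemma~\ref{lemma: Maass} applies verbatim, and the elementary depth bookkeeping above. An alternative, slightly longer route uses the decomposition $\wt\sM_k^{\le r}(\Gamma_0^+(N),\epsilon)=\bigoplus_{j=0}^r\phi^j\sM_{k-2j}(\Gamma_0^+(N),\epsilon)$ from Remark~\ref{proposition: AL decomposition of quasi} together with the Leibniz rule $D_q(\phi^j g)=j\phi^{j-1}(D_q\phi)g+\phi^j(D_q g)$: since $D_q\phi\in\wt\sM_4^{\le 2}(\Gamma_0^+(N))$ and $D_q g\in\wt\sM_{k-2j+2}^{\le 1}(\Gamma_0^+(N),\epsilon)$ for $g\in\sM_{k-2j}(\Gamma_0^+(N),\epsilon)$ (the $r=0$ case, again obtained from $\partial_{k-2j}$), each summand lands in $\wt\sM_{k+2}^{\le r+1}(\Gamma_0^+(N),\epsilon)$. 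I would present the first argument as the main line and mention the second only in passing.
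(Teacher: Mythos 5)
Your proof is correct and takes essentially the same route as the paper, whose entire proof is ``This follows from Lemma \ref{lemma: Maass}'': you use the intertwining identity $\partial_k(f|_k\gamma)=(\partial_kf)|_{k+2}\gamma$ to see that $\partial_k$ preserves the Atkin--Lehner eigenspaces, and then check that the holomorphic part of $\partial_k\hat f$ is $D_qf$ with depth increased by at most one. The details you supply (and the alternative via the $\phi^j$-decomposition) are exactly what the paper leaves implicit.
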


\begin{proof} This follows from Lemma \ref{lemma: Maass}.
\end{proof}

\subsection{Extremal quasimodular forms}

Several examples given in the paper are involved with the notion of
extremal quasimodular forms, introduced first in \cite{Kaneko-Koike},
so here we shall review their definition and discuss the existence and
uniqueness of such quasimodular forms.

\begin{Definition} Given $\epsilon\in G_N$, let $\wt\sM=\wt\sM_k^{\le
    r}(\Gamma_0^+(N),\epsilon)$ be the Atkin-Lehner subspace of
  $\wt\sM_k^{\le r}(\Gamma_0(N))$ corresponding to $\epsilon$. An 
  element $f\in\wt\sM$ is said to be
  \emph{extremal} if its vanishing order at $\infty$
  is equal to $\dim\wt\sM-1$. We say
  $f$ is \emph{normalized} if its leading Fourier coefficient is $1$.
\end{Definition}

\begin{Remark} \label{remark: analytically extremal}
Note that in \cite{Pellarin}, Pellarin called a nonzero quasimodular form
$f$ \emph{analytically extremal} if its vanishing order at $\infty$ is
the largest among all nonzero elements of the space $f$ belongs
to. Pellarin's definition of extremality seems to be closer to the
literal meaning of the word extremal. However, we will stick to the
definition introduced by Kaneko and Koike. It is clear that if $f$ is
analytic extremal in a space $\wt\sM$ of quasimodular forms and its
vanishing order at $\infty$ is $\dim\wt\sM-1$, then it is extremal in
the sense of Kaneko and Koike.
\end{Remark}

In general, it is a complicated problem to show the existence and the
uniqueness of extremal quasimodular forms. In \cite{Pellarin},
Pellarin proved that if $r\le 4$, then a normalized extremal
quasimodular form in $\wt\sM_k^{\le r}(\SL(2,\Z))$ exists and is
unique. His argument also works in some other cases. For the
convenience of the reader, we reproduce the proof, adapted to the
current setting.

\begin{Lemma} \label{lemma: Wronskian is modular}
  Let $\Gamma$ be a subgroup of $\SL(2,\R)$ commensurable
  with $\SL(2,\Z)$ and $\chi$ be a character of $\Gamma$. Let $\phi$
  be a quasimodular form of weight $2$ and depth $1$ on $\Gamma$,
  i.e.,
  $$
  (cz+d)^{-2}\phi(\gamma z)=\phi(z)+\frac{\alpha c}{cz+d}
  $$
  for all $\gamma=\SM abcd\in\Gamma$ for some nonzero complex number
  $\alpha$. For
  $f(z)=f_0(z)+\phi(z)f_1(z)\in\wt\sM_k^{\le1}(\Gamma,\chi)$, where
  $k$ is a positive integer and $f_j\in\sM_{k-2j}(\Gamma,\chi)$,
  $j=1,2$, let 
  $$
  F_f(z)=\column{zf(z)+\alpha f_1(z)}{f(z)}
  $$
  and define the Wronskian $W_f(z)$ by
  $W_f(z)=\det(F_f(z),F_f'(z))$. Then $W_f(z)$ is a
  modular form of weight $2k$ with character $\chi^2$ on $\Gamma$.
  Moreover, we have
  \begin{equation} \label{equation: order of Wf at infinity}
  v_\infty(W_f)=\begin{cases}
    2v_\infty(f), &\text{if }v_\infty(f)\le v_\infty(f_1), \\
    v_\infty(f)+v_\infty(f_1), &\text{if }v_\infty(f)>v_\infty(f_1),
    \end{cases}
  \end{equation}
  where for a modular form or a quasimodular form $g$ we let
  $v_\infty(g)$ denote the vanishing order of $g$ at $\infty$.
\end{Lemma}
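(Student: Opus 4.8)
The plan is to establish modularity of $W_f$ by a direct slash-operator computation, using the transformation law of $\phi$, and then to compute the vanishing order at $\infty$ by a Fourier-expansion argument that distinguishes the two cases. First I would analyze how the vector $F_f(z) = \column{zf(z)+\alpha f_1(z)}{f(z)}$ transforms under $\gamma = \SM abcd \in \Gamma$. Using $f(\gamma z) = \chi(\gamma)(cz+d)^k f(z)$, $f_1(\gamma z) = \chi(\gamma)(cz+d)^{k-2}f_1(z)$, and $\gamma z = (az+b)/(cz+d)$, together with the quasimodular transformation law $(cz+d)^{-2}\phi(\gamma z) = \phi(z) + \alpha c/(cz+d)$ (which enters because $\phi f_1$ is part of $f$, but here it is $f_1$ alone that appears explicitly in $F_f$, so the key identity is simply the algebraic one $z\mapsto(az+b)/(cz+d)$), I expect to find that
$$
F_f(\gamma z) = \chi(\gamma)(cz+d)^{k-1}\,\gamma\, F_f(z),
$$
i.e. $F_f$ is a vector-valued modular form of weight $k-1$ twisted by the standard representation $\gamma \mapsto \gamma$ of $\Gamma \subset \SL(2,\R)$. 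The first row requires checking that $(cz+d)^{k-1}\bigl(a(zf+\alpha f_1) + b f\bigr) = $ the corresponding combination after substitution; this is where the coefficient $\alpha$ in front of $f_1$ is pinned down, and it is exactly the $\alpha$ appearing in the transformation law of $\phi$.

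**Next** I would differentiate the relation $F_f(\gamma z) = \chi(\gamma)(cz+d)^{k-1}\gamma F_f(z)$ with respect to $z$. Since $d(\gamma z)/dz = (cz+d)^{-2}$, the chain rule gives $F_f'(\gamma z)(cz+d)^{-2} = \chi(\gamma)\bigl[(k-1)c(cz+d)^{k-2}\gamma F_f(z) + (cz+d)^{k-1}\gamma F_f'(z)\bigr]$. Forming the $2\times 2$ determinant $\det\bigl(F_f(\gamma z), F_f'(\gamma z)\bigr)$, the term proportional to $\gamma F_f(z)$ in the second column is killed because it is a multiple of the first column $\gamma F_f(z)$ (determinant of a matrix with proportional columns vanishes), and one is left with
$$
W_f(\gamma z) = \chi(\gamma)^2 (cz+d)^{k-1}(cz+d)^{k+1}\det(\gamma)\,\det(F_f(z),F_f'(z)) = \chi(\gamma)^2 (cz+d)^{2k} W_f(z),
$$
using $\det\gamma = 1$. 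Holomorphy on $\H$ is clear since $f, f_1$ are holomorphic, and the finiteness at cusps will follow from the order computation below (or a routine growth estimate), so $W_f \in \sM_{2k}(\Gamma, \chi^2)$.

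**For the order formula** I would work with $q_N$-expansions at $\infty$. Write $f(z) = \sum_{n\ge v_\infty(f)} a_n q_N^n$ and $f_1(z) = \sum_{n \ge v_\infty(f_1)} b_n q_N^n$, and recall $d/dz = (2\pi i/N) q_N\, d/dq_N$, so $g'(z) = (2\pi i/N)\sum n c_n q_N^n$ for $g = \sum c_n q_N^n$. The two components of $F_f$ are $u(z) = zf(z) + \alpha f_1(z)$ and $v(z) = f(z)$, and $W_f = uv' - u'v$. Since $u' = f + zf' + \alpha f_1'$, a short computation gives $W_f = (zf+\alpha f_1)f' - (f + zf' + \alpha f_1')f = -f^2 + \alpha(f_1 f' - f_1' f)$; note the $z$-terms cancel, leaving a genuine $q_N$-series. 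Then I would read off the leading term: if $v_\infty(f) < v_\infty(f_1)$, the $-f^2$ term dominates and has order $2v_\infty(f)$; if $v_\infty(f) > v_\infty(f_1)$, the leading term of $\alpha(f_1 f' - f_1' f)$ has order $v_\infty(f) + v_\infty(f_1)$ with coefficient proportional to $(v_\infty(f) - v_\infty(f_1))$ times the product of leading coefficients, hence nonzero; and if $v_\infty(f) = v_\infty(f_1)$ the two contributions both have order $2v_\infty(f)$ — but in this boundary case $v_\infty(f) \le v_\infty(f_1)$ still holds, and one checks the leading coefficient of $-f^2$ (namely $-a_{v}^2 \ne 0$) cannot be cancelled by the Wronskian term, which contributes $0$ to order $2v$ when $v_\infty(f)=v_\infty(f_1)$ since $f_1 f' - f_1' f$ has a zero of order strictly greater than $2v-1$... this last subcase is the delicate one.

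**The main obstacle** I anticipate is precisely this boundary analysis for the order formula when $v_\infty(f) = v_\infty(f_1)$, and more generally making sure no unexpected cancellation occurs in the leading coefficient of $\alpha(f_1 f' - f_1' f)$ in the case $v_\infty(f) > v_\infty(f_1)$: one must verify that the coefficient $(v_\infty(f) - v_\infty(f_1)) b_{v_\infty(f_1)} a_{v_\infty(f)}$-type term is genuinely nonzero, which uses $v_\infty(f) \ne v_\infty(f_1)$ and $a_{v_\infty(f)}, b_{v_\infty(f_1)} \ne 0$ by definition of vanishing order. The first-row transformation check (pinning the constant $\alpha$) is mechanical but must be done carefully since it is the one place the hypothesis that $\alpha$ is the \emph{same} constant as in $\phi$'s transformation law is actually used. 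Everything else — holomorphy, the determinant-column-cancellation trick, $\det\gamma = 1$ — is routine.
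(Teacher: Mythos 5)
Your proposal is correct and follows essentially the same route as the paper: the transformation $F_f(\gamma z)=\chi(\gamma)(cz+d)^{k-1}\gamma F_f(z)$, its derivative, and the column-cancellation in the determinant give $W_f(\gamma z)=\chi(\gamma)^2(cz+d)^{2k}W_f(z)$, while the identity $W_f=-f^2+\alpha(f_1f'-f_1'f)$ yields the order formula (which the paper simply asserts ``immediately follows''). One small wording slip: in the boundary case $v_\infty(f)=v_\infty(f_1)$, the reason $f_1f'-f_1'f$ cannot cancel $-a_{v_\infty(f)}^2$ is that its coefficient at order $v_\infty(f)+v_\infty(f_1)$ equals $(v_\infty(f)-v_\infty(f_1))a_{v_\infty(f)}b_{v_\infty(f_1)}=0$, so its order is strictly greater than $2v_\infty(f)$ --- not merely greater than $2v_\infty(f)-1$ as you wrote --- but since you state the correct coefficient formula elsewhere in the argument, this is harmless.
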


Note that in the case $\Gamma=\SL2,\Z)$, a proof of the fact that
$W_f(z)$ is a modular form of weight $2k$ can be found in
\cite{Grabner}. (This is also mentioned in \cite{Pellarin}, although
no proof is given.) Since our setting involves quasimodular forms with
characters on general $\Gamma$, we shall present a proof here.

\begin{proof}[Proof of Lemma \ref{lemma: Wronskian is modular}]
  First of all, by the definition of $W_f(z)$, we have
  $$
  W_f(z)=-f(z)^2+\alpha(f_1(z)f'(z)-f_1'(z)f(z)),
  $$
  from which \eqref{equation: order of Wf at infinity} immediately
  follows. Also, by (an extension of) Proposition \ref{proposition: sM
    dim}(i), the function $W_f(z)$ belongs to
  $\wt\sM_{2k}^{\le2}(\Gamma,\chi^2)$.
  On the other hand, for $\gamma=\SM abcd\in\Gamma$, we compute that
  $$
  f(\gamma z)=\chi(\gamma)(cz+d)^k\left(f(z)
    +\frac{\alpha c}{cz+d}f_1(z)\right)
  $$
  and
  \begin{equation*}
    \begin{split}
      (\gamma z)f(\gamma z)+\alpha f_1(\gamma z)
      &=\chi(\gamma)\frac{az+b}{cz+d}(cz+d)^k
      \left(f(z)+\frac{\alpha c}{cz+d}f_1(z)\right) \\
      &\qquad+\chi(\gamma)\alpha(cz+d)^{k-2}f_1(z).
    \end{split}
  \end{equation*}
  Using the relation $ad-bc=1$, we may simplify the last expression to
  $$
  \chi(\gamma)(cz+d)^{k-1}\left((az+b)f(z)+\alpha af_1(z)\right).
  $$
  Hence,
  $$
  F_f(\gamma z)=\chi(\gamma)(cz+d)^{k-1}\M abcd F_f(z)
  $$
  and
  $$
  F_f'(\gamma z)=\chi(\gamma)\M abcd\left(
    (k-1)(cz+d)^kF_f(z)+(cz+d)^{k+1}F_f'(z)\right).
  $$
  From the computation, we see that the Wronskian $W_f(z)$ satisfies
  $$
  W_f(\gamma z)=\chi(\gamma)^2(cz+d)^{2k}W_f(z).
  $$
  That is, $W_f(z)$ is a quasimodular form of weight $2k$ and depth
  $\le 2$ with character $\chi^2$ on $\Gamma$ that is actually
  modular. This proves that $W_f(z)\in\sM_{2k}(\Gamma,\chi^2)$.
\end{proof}

We now prove an extension of \cite[Theorem 1.3]{Pellarin}.

\begin{Proposition} \label{proposition: Pellarin}
  Suppose that $\wt\sM_k^{\le
    1}(\Gamma_0^+(N),\epsilon)$ is an Atkin-Lehner subspace of
  $\wt\sM_k^{\le 1}(\Gamma_0(N))$ such that
  \begin{equation} \label{equation: dim assumption}
    \dim\sM_{2k}(\Gamma_0^+(N))
    =\dim\wt\sM_k^{\le 1}(\Gamma_0^+(N),\epsilon)
  \end{equation}
  and the maximal vanishing order at $\infty$ of a nonzero element of
  $\sM_{2k}(\Gamma_0^+(N))$ is equal to
  $\dim\sM_{2k}(\Gamma_0^+(N))-1$.
  Then
  \begin{enumerate}
    \item[(i)] extremal quasimodular forms exist in $\wt\sM_k^{\le
    1}(\Gamma_0^+(N),\epsilon)$ and are unique up to scalars, and
    \item[(ii)] if $f$ is an extremal quasimodular form in
      $\wt\sM_k^{\le 1}(\Gamma_0^+(N),\epsilon)$, then we have
      $v_\infty(f)=v_\infty(W_f)$, where $W_f$ is the Wronskian
      associated to $f$ defined in Lemma \ref{lemma: Wronskian is
        modular}.
    \end{enumerate}
\end{Proposition}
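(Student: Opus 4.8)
The plan is to follow (and slightly adapt) Pellarin's strategy: the Wronskian $W_f$ of Lemma \ref{lemma: Wronskian is modular} converts vanishing-order questions for $\wt\sM_k^{\le1}(\Gamma_0^+(N),\epsilon)$ into vanishing-order questions for $\sM_{2k}(\Gamma_0^+(N))$, where hypothesis \eqref{equation: dim assumption} and the assumed maximal vanishing order supply exactly the control we need. Write $d=\dim\wt\sM_k^{\le1}(\Gamma_0^+(N),\epsilon)=\dim\sM_{2k}(\Gamma_0^+(N))$, which we may assume is $\ge1$ (otherwise the assertion about extremal forms is vacuous). For $f$ in this space use the decomposition $f=f_0+\phi f_1$ with $f_j\in\sM_{k-2j}(\Gamma_0^+(N),\epsilon)$ provided by Remark \ref{proposition: AL decomposition of quasi}; since $\epsilon$ takes values in $\{\pm1\}$, Lemma \ref{lemma: Wronskian is modular} gives $W_f\in\sM_{2k}(\Gamma_0^+(N))$, the character $\epsilon^2$ being trivial. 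Throughout, Fourier expansions are in $q=e^{2\pi iz}$, since the cusp $\infty$ of $\Gamma_0^+(N)$ has width $1$.

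The key lemma I would isolate first is: \emph{if $0\neq f\in\wt\sM_k^{\le1}(\Gamma_0^+(N),\epsilon)$ satisfies $v_\infty(f)\ge d-1$, then $v_\infty(f)=d-1$ and $v_\infty(W_f)=d-1$.} Its proof has two ingredients. First, $W_f\not\equiv0$: if it vanished identically, then since the second entry of $F_f$ is $f\not\equiv0$, the two entries of $F_f$ would be proportional, i.e.\ $zf(z)+\alpha f_1(z)=cf(z)$ for some constant $c$; this forces $f_1\not\equiv0$ and $f/f_1=-\alpha/(z-c)$, which is impossible because $f/f_1$ is invariant under $z\mapsto z+1$ while $-\alpha/(z-c)$ is not (recall $\alpha\neq0$). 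Second, \eqref{equation: order of Wf at infinity} gives $v_\infty(W_f)\ge v_\infty(f)$ in both of its cases, so $v_\infty(W_f)\ge d-1$; as $W_f$ is a nonzero element of $\sM_{2k}(\Gamma_0^+(N))$ and the maximal vanishing order there is $d-1$ by hypothesis, we conclude $v_\infty(W_f)=d-1$, whence also $v_\infty(f)=d-1$.

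Granting this lemma, the proposition follows quickly. For existence, the linear map from $\wt\sM_k^{\le1}(\Gamma_0^+(N),\epsilon)$ to $\C^{d-1}$ recording the Fourier coefficients of $q^0,\dots,q^{d-2}$ has nontrivial kernel by dimension count, and any nonzero $f$ in that kernel has $v_\infty(f)\ge d-1$, hence $v_\infty(f)=d-1$ by the lemma, i.e.\ $f$ is extremal. For uniqueness, if $f$ and $g$ are normalized extremal forms, then $v_\infty(f-g)\ge d$, so the lemma forces $f-g=0$. Finally, part (ii) is immediate: for an extremal $f$ we have $v_\infty(f)=d-1$, and the lemma already records $v_\infty(W_f)=d-1=v_\infty(f)$.

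The only genuinely non-formal step is the non-vanishing $W_f\not\equiv0$, where the periodicity obstruction is the crux; beyond that, everything is Pellarin's dimension-counting argument reorganized, together with the bookkeeping in \eqref{equation: order of Wf at infinity}. I would also be careful to note that the hypothesis "maximal vanishing order $=d-1$" is used only in the direction "$v_\infty(W_f)\le d-1$ for nonzero $W_f$", and that the step $v_\infty(W_f)\ge v_\infty(f)$ uses $v_\infty(f),v_\infty(f_1)\ge0$, valid since $f$ and $f_1$ are genuine quasimodular forms.
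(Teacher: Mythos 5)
Your proof is correct and follows essentially the same route as the paper: both rest on the chain $v_\infty(f)\le v_\infty(W_f)\le\dim\sM_{2k}(\Gamma_0^+(N))-1$ for nonzero $f\in\wt\sM_k^{\le1}(\Gamma_0^+(N),\epsilon)$, combined with the standard dimension count for existence and the vanishing-order bound for uniqueness. The only cosmetic difference is that you establish $W_f\not\equiv0$ via a linear-dependence and periodicity argument, whereas the paper reads it off directly from \eqref{equation: order of Wf at infinity}.
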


\begin{proof} The proof follows that of \cite[Theorem 1.3]{Pellarin}.
  Set $\wt\sM=\wt\sM_k^{\le1}(\Gamma_0^+(N),\epsilon)$. Let
  $$
  \phi(z)=\sum_{e|N,(e,N/e)=1}eE_2(ez),
  $$
  which is a scalar multiple of the logarithmic derivative of
  $\prod_e\Delta(ez)$. It is a holomorphic quasimodular form of weight
  $2$ and depth $1$ on $\Gamma_0^+(N)$ and satisfies
  $$
  \left(\phi\big|_2\gamma\right)(z)=\phi(z)+\frac{\alpha c}{cz+d}
  $$
  for all $\SM abcd\in\Gamma_0^+(N)$ for some nonzero complex number
  $\alpha$. Assume that $f$ is an element of
  $\wt\sM$. According to Remark
  \ref{proposition: AL decomposition of quasi}, we have $f=f_0+\phi
  f_1$ for some $f_j\in\sM_{k-2j}(\Gamma_0^+(N),\epsilon)$, $j=1,2$.
  Let $W_f(z)$ be
  Wronskian associated to $f$ defined in Lemma \ref{lemma: Wronskian
    is modular}. By the lemma, $W_f(z)$ is a modular form in
  $\sM_{2k}(\Gamma_0^+(N))$. (Note that $\epsilon^2=\epsilon_0$ for
  any $\epsilon\in G_N$.) Moreover, \eqref{equation: order of Wf at
    infinity} implies that $W_f\neq 0$ whenever $f\neq 0$. Another
  consequence of \eqref{equation: order of Wf at infinity} is that, by
  the assumptions that \eqref{equation: dim assumption} holds and that
  the maximal vanishing order of any nonzero element of
  $\sM_{2k}(\Gamma_0^+(N))$ is
  $\dim\sM_{2k}(\Gamma_0^+(N))-1$, we have
  \begin{equation*}
    \begin{split}
      \dim\wt\sM-1
      &=\dim\sM_{2k}(\Gamma_0^+(N))-1\ge v_\infty(W_f)\ge v_\infty(f)
    \end{split}
  \end{equation*}
  for all nonzero elements $f$ of $\wt\sM$. This implies that the
  vanishing order at $\infty$ of an analytically extremal quasimodular
  form in $\wt\sM$ is $\dim\wt\sM-1$ and hence extremal quasimodular
  forms exists in $\wt\sM=\wt\sM_k^{\le1}(\Gamma_0^+(N),\epsilon)$ and
  are unique up to scalars. The argument above also shows that if $f$
  is an extremal quasimodular form in
  $\wt\sM_k^{\le1}(\Gamma_0^+(N),\epsilon)$, then
  $v_\infty(f)=v_\infty(W_f)$.
\end{proof}

Now we use this result to prove the existence and uniqueness of
extremal quasimodular forms in the case $N=2$ or $N=3$. Note that
Sakai \cite{Sakai} already established the existence in the case of
$\wt\sM_k(\Gamma_0^+(N),+)$, $N=2,3$.

\begin{Corollary} \label{corollary: existence of extremal}
  Let $N=2$ or $N=3$. Then for all positive even
  integers $k$, extremal quasimodular forms exist in
  $\wt\sM_k^{\le1}(\Gamma_0^+(N))$ and
  $\wt\sM_k^{\le1}(\Gamma_0^+(N),-)$ and are unique up to scalars.
\end{Corollary}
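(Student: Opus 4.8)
The plan is to reduce the corollary to Proposition~\ref{proposition: Pellarin}, applied separately to the two characters of $\Gamma_0^+(N)/\Gamma_0(N)$ (the trivial one, giving $\wt\sM_k^{\le1}(\Gamma_0^+(N))$, and the nontrivial one, giving $\wt\sM_k^{\le1}(\Gamma_0^+(N),-)$). So the task is to verify, for $N\in\{2,3\}$ and every positive even $k$, the two hypotheses of that proposition: the dimension identity~\eqref{equation: dim assumption}, and the assertion that the maximal vanishing order at $\infty$ of a nonzero element of $\sM_{2k}(\Gamma_0^+(N))$ equals $\dim\sM_{2k}(\Gamma_0^+(N))-1$.

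The second hypothesis holds uniformly. Since $X_0^+(N)$ has genus $0$ and a single cusp, the valence formula bounds $v_\infty$ from above on $\sM_{2k}(\Gamma_0^+(N))$, while an elementary dimension count shows that any finite-dimensional space of $q$-series contains a nonzero element of vanishing order $\ge\dim-1$; combining these (or using the explicit generators of the graded ring $\sM_*(\Gamma_0^+(N))$ together with its cusp-form subspaces) shows $\sM_{2k}(\Gamma_0^+(N))$ has a basis with distinct vanishing orders $0,1,\dots,\dim-1$ at $\infty$. For the first hypothesis, Remark~\ref{proposition: AL decomposition of quasi} rewrites the right-hand side of~\eqref{equation: dim assumption} as $\dim\sM_k(\Gamma_0^+(N),\epsilon)+\dim\sM_{k-2}(\Gamma_0^+(N),\epsilon)$, and all the dimensions involved are given by explicit floor-function formulas (Example~\ref{example: Gamma0(2)+w2} and its $N=3$ counterpart); checking~\eqref{equation: dim assumption} is then a finite case analysis, splitting on $k\bmod 8$ for $N=2$ and on $k\bmod 12$ for $N=3$. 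For most residue classes~\eqref{equation: dim assumption} holds and Proposition~\ref{proposition: Pellarin} applies verbatim.

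In the few remaining residue classes the two sides of~\eqref{equation: dim assumption} differ by exactly $1$, and there one argues directly. By~\eqref{equation: order of Wf at infinity} one still has $v_\infty(f)\le v_\infty(W_f)$ with $W_f$ a nonzero form in $\sM_{2k}(\Gamma_0^+(N))$, so the second hypothesis already gives $v_\infty(f)\le\dim\sM_{2k}(\Gamma_0^+(N))-1=\dim\wt\sM_k^{\le1}(\Gamma_0^+(N),\epsilon)$ for every nonzero $f$; the only value to be excluded is the top one, $v_\infty(f)=\dim\wt\sM_k^{\le1}(\Gamma_0^+(N),\epsilon)$. If it occurred,~\eqref{equation: order of Wf at infinity} would force $v_\infty(f_1)=0$ and $v_\infty(W_f)=\dim\sM_{2k}(\Gamma_0^+(N))-1$, so $W_f$ would have to be a scalar multiple of the unique analytically extremal form of $\sM_{2k}(\Gamma_0^+(N))$; matching one further $q$-expansion coefficient in $W_f=-f^2+\alpha(f_1f'-f_1'f)$ then yields a contradiction (and in the smallest of these cases $\wt\sM_k^{\le1}(\Gamma_0^+(N),\epsilon)$ is simply $\sM_k(\Gamma_0^+(N),\epsilon)$, where the bound is immediate from the valence formula). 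Once the maximal vanishing order over $\wt\sM_k^{\le1}(\Gamma_0^+(N),\epsilon)$ is known to be $\dim-1$, both existence and uniqueness up to scalars follow from the \emph{echelon} argument of Remark~\ref{remark: analytically extremal} (the nonzero forms of vanishing order $\ge\dim-1$ then form a $1$-dimensional space, all of whose nonzero elements vanish to order exactly $\dim-1$). The main obstacle is precisely these exceptional residue classes: there the bound coming straight out of Proposition~\ref{proposition: Pellarin} is one unit too weak, and pinning down the true maximal vanishing order at the cusp requires the finer Wronskian and $q$-expansion analysis above; everything else is dimension bookkeeping and standard structure theory of genus-$0$, one-cusp modular curves.
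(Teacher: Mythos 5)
Your reduction to Proposition \ref{proposition: Pellarin}, the verification of its second hypothesis via explicit bases of $\sM_{2k}(\Gamma_0^+(N))$, and the residue-class check of \eqref{equation: dim assumption} are exactly what the paper does, and that part is fine. The gap is in your treatment of the exceptional residue classes ($k\equiv 4\mod 8$ for $\wt\sM_k^{\le1}(\Gamma_0^+(2))$, $k\equiv 0\mod 8$ for $\wt\sM_k^{\le1}(\Gamma_0^+(2),-)$, and the analogous classes mod $12$ for $N=3$). There you must exclude a nonzero $f$ with $v_\infty(f)=\dim\wt\sM_k^{\le1}(\Gamma_0^+(N),\epsilon)$, and you propose to do so by noting that \eqref{equation: order of Wf at infinity} would force $v_\infty(f_1)=0$ and $W_f$ to be a multiple of the unique maximally vanishing form of $\sM_{2k}(\Gamma_0^+(N))$, and then ``matching one further $q$-expansion coefficient.'' This last step is not a proof, and no argument of that shape can succeed, because the forbidden vanishing order \emph{is} attained by a depth-$\le1$ quasimodular form of weight $k$ on $\Gamma_0(N)$ --- namely the extremal form of the \emph{opposite} Atkin--Lehner eigenspace. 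For instance, when $k\equiv 4\mod 8$ the extremal form of $\wt\sM_k^{\le1}(\Gamma_0^+(2),-)$ has $v_\infty=k/4=\dim\wt\sM_k^{\le1}(\Gamma_0^+(2))$ and its Wronskian is a scalar multiple of $M_8(z)^{k/4}$, so the identity $W_f=-f^2+\alpha(f_1f'-f_1'f)$ together with $q$-expansion data is perfectly consistent with such an $f$. Any correct exclusion must use the eigenvalue $\epsilon$, which a coefficient comparison of this kind does not see.

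The paper closes these cases by a different, structural device: for $k\equiv4\mod 8$ one has $\dim\wt\sM_{k-4}^{\le1}(\Gamma_0^+(2))=\dim\wt\sM_k^{\le1}(\Gamma_0^+(2))$, and multiplication by $M_4(z)$, which is nonvanishing at $\infty$, gives an isomorphism between the two spaces that preserves vanishing orders at $\infty$; since $k-4$ lies in an unexceptional class, the bound $v_\infty\le\dim-1$ and the existence and uniqueness of extremal forms transport over. For $N=3$ the same device is used with multiplication by $9E_4(3z)+E_4(z)$, and in two residue classes by the weight-two form $3E_2(3z)-E_2(z)$, which interchanges the two eigenspaces. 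You need to supply an argument of this kind (or some other eigenvalue-sensitive one) in place of the coefficient matching; apart from that, your outline matches the paper's proof.
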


\begin{proof} In Example \ref{example: Gamma0(2)+w2}, we have seen
  that
  \begin{equation} \label{equation: Gamma0(2) dims}
  \begin{split}
  \dim\sM_k(\Gamma_0(2))&=1+\left\lfloor\frac k4\right\rfloor, \\
  \dim\sM_k(\Gamma_0^+(2))&
  =1-\frac k2+\left\lfloor\frac k4\right\rfloor
+\left\lfloor\frac{3k}8\right\rfloor, \\
\dim\sM_k(\Gamma_0^+(2),-)&=\frac k2-\left\lfloor\frac{3k}8\right\rfloor.
\end{split}
\end{equation}
Hence, according to Remark \ref{proposition: AL decomposition of
  quasi}, we have
\begin{equation} \label{equation: dim Gamma0(2) +}
\dim\wt\sM^{\le 1}_k(\Gamma_0^+(2))
=\begin{cases}
  1+k/4, &\text{if }k\equiv 0\mod 8, \\
  (k+2)/4, &\text{if }k\equiv 2,6\mod 8, \\
  k/4, &\text{if }k\equiv 4\mod 8,
  \end{cases}
\end{equation}
and
\begin{equation} \label{equation: dim Gamma0(2) -}
\dim\wt\sM^{\le 1}_k(\Gamma_0^+(2),-)
=\begin{cases}
  k/4, &\text{if }k\equiv 0\mod 8, \\
  (k+2)/4, &\text{if }k\equiv 2,6\mod 8, \\
  1+k/4, &\text{if }k\equiv 4\mod 8.
  \end{cases}
  \end{equation}
  It follows that if
  $k\not\equiv4\mod 8$, then
  $$
  \dim\sM_{2k}(\Gamma_0^+(2))=\dim\wt\sM_k^{\le 1}(\Gamma_0^+(2))
  $$
  and if $k\not\equiv0\mod 8$, then
  $$
  \dim\sM_{2k}(\Gamma_0^+(2))=\dim\wt\sM_k^{\le 1}(\Gamma_0^+(2),-).
  $$
  The condition \eqref{equation: dim assumption} holds for these
  spaces.

  For the other condition of Proposition \ref{proposition:
    Pellarin}, we observe that the condition is equivalent to
  that there is a basis $\{g_j\}$ for $\sM_{2k}(\Gamma_0^+(2))$ such
  that $v_\infty(g_j)=j-1$ for each
  $j=1,\ldots,\dim\sM_{2k}(\Gamma_0^+(2))$. Indeed, we
  find that $g_j(z)=M_4(z)^{k/2-2j}M_8(z)^j$ for
  $j=0,\ldots,\gauss{k/4}$, where
  $$
  M_4(z)=\frac15(4E_4(2z)+E_4(z)), \qquad M_8(z)=\eta(z)^8\eta(2z)^8,
  $$
  form such a basis. Therefore, by Proposition \ref{proposition:
    Pellarin}, extremal quasimodular forms exist and are unique up to
  scalars in the spaces above. For the remaining two cases
  $\wt\sM_k^{\le1}(\Gamma_0^+(2))$ for $k\equiv 4\mod 8$ and
  $\wt\sM_k^{\le1}(\Gamma_0^+(2),-)$ for 
  $k\equiv 0\mod 8$, we observe that when $k\equiv 4\mod 8$, we have
  $$
  \dim\wt\sM_{k-4}^{\le1}(\Gamma_0^+(2))
  =\dim\wt\sM_k^{\le 1}(\Gamma_0^+(2)),
  $$
  and the map
  $$
  f(z)\longmapsto M_4(z)f(z)
  $$
  defines an isomorphism between the two spaces. It is clear that a
  quasimodular form $f(z)$
  is extremal in $\wt\sM_{k-4}^{\le1}(\Gamma_0^+(2))$ if and only if
  $M_4(z)f(z)$ is extremal in 
  $\wt\sM_k^{\le1}(\Gamma_0^+(2))$. Since we have seen that extremal
  quasimodular forms exist and are unique up to scalars in the former
  space, the same thing holds for the latter space. The same argument
  works for the last case $\wt\sM_k^{\le 1}(\Gamma_0^+(2),-)$, $k\equiv
  0\mod 8$. This proves the case $N=2$.

  For the case $N=3$, we note that $\Gamma_0(3)$ has two cusps and one
  elliptic point of order $3$, while $\Gamma_0^+(3)$ has one cusp, one
  elliptic point of order $2$, represented by $i/\sqrt3$ with
  stabilizer subgroup generated by $\frac1{\sqrt3}\SM0{-1}30$, and one
  elliptic point of order $6$, represented by $(3+\sqrt{-3})/6$ with
  stabilizer subgroup generated by $\frac1{\sqrt3}\SM3{-1}30$.
  Thus, by Proposition \ref{proposition: dimension formula}, we have
  \begin{equation} \label{equation: Gamma0(3) dims}
    \begin{split}
      \dim\sM_k(\Gamma_0(3))&=1+\gauss{\frac k3}, \\
      \dim\sM_k(\Gamma_0^+(3))&=1-\frac k2+\gauss{\frac k4}
      +\gauss{\frac{5k}{12}}, \\
      \dim\sM_k(\Gamma_0^+(3),-)&=\frac k2+\gauss{\frac k3}
      -\gauss{\frac k4}-\gauss{\frac{5k}{12}}.
    \end{split}
  \end{equation}
  From these formulas, we deduce that
  $$
  \dim\wt\sM_k^{\le1}(\Gamma_0^+(3))
  =\begin{cases}
    1+k/3, &\text{if }k\equiv 0\mod 12,\\
    (k+1)/3, &\text{if }k\equiv 2,8\mod 12, \\
    (k-1)/3, &\text{if }k\equiv 4\mod 12, \\
    k/3, &\text{if }k\equiv 6\mod 12, \\
    (k+2)/3, &\text{if }k\equiv 10\mod 12,\end{cases}
  $$
  and
  $$
  \dim\wt\sM_k^{\le1}(\Gamma_0^+(3),-)=\begin{cases}
    k/3, &\text{if }k\equiv 0\mod 12,\\
    (k+1)/3, &\text{if }k\equiv 2,8\mod 12, \\
    (k+2)/3, &\text{if }k\equiv 4\mod 12, \\
    1+k/3, &\text{if }k\equiv 6\mod 12, \\
    (k-1)/3, &\text{if }k\equiv 10\mod 12,\end{cases}
  $$
  Therefore, the condition \eqref{equation: dim assumption} in
  Proposition \ref{proposition: Pellarin} 
  is met for $\wt\sM_k^{\le1}(\Gamma_0^+(3))$ with
  $k\equiv0,2,8,10\mod 12$ and $\wt\sM_k^{\le1}(\Gamma_0^+(3),-)$ with
  $k\equiv 2,4,6,8\mod 12$. For the other condition of the
  proposition, we may use
  \begin{equation*}
    \begin{split}
  M_4(z)&=\frac1{10}(9E_4(3z)+E_4(z)), \\
  M_8(z)&=\frac12(3E_2(3z)-E_2(z))\eta(z)^6\eta(3z)^6, \\
  M_{12}(z)&=\eta(z)^{12}\eta(3z)^{12}
    \end{split}
  \end{equation*}
  to construct a basis $\{g_j\}$ for $\sM_{2k}(\Gamma_0^+(3))$ with
  the property $v_\infty(g_j)=j-1$ for all
  $j=1,\ldots,\dim\sM_{2k}(\Gamma_0^+(3))$. Hence, the second
  condition of Proposition \ref{proposition: Pellarin} also holds and
  we conclude that extremal quasimodular forms exist in
  $\wt\sM_k^{\le1}(\Gamma_0^+(3))$ for $k\equiv 0,2,8,10\mod 12$ and
  $\wt\sM_k^{\le1}(\Gamma_0^+(3),-)$ for $k\equiv 2,4,6,8\mod 12$ and
  are unique up to scalars.
  
  Finally, similar to the case $N=2$, for
  the remaining cases, we observe that
  \begin{equation} \label{equation: equal dim 1}
    \begin{split}
  \dim\wt\sM_{k-4}^{\le1}(\Gamma_0^+(3))
  &=\dim\wt\sM_k^{\le1}(\Gamma_0^+(3)),\
  \text{for }k\equiv 4\mod12, \\
  \dim\wt\sM_{k-4}^{\le1}(\Gamma_0^+(3),-)
  &=\dim\wt\sM_k^{\le1}(\Gamma_0^+(3),-), \
  \text{for }k\equiv 10\mod 12,
    \end{split}
  \end{equation}
  \begin{equation} \label{equation: equal dim 2}
    \begin{split}
  \dim\wt\sM_{k-2}^{\le1}(\Gamma_0^+(3),-)
  &=\dim\wt\sM_k^{\le1}(\Gamma_0^+(3)),\
  \text{for }k\equiv 6\mod12, \\
  \dim\wt\sM_{k-2}^{\le1}(\Gamma_0^+(3))
  &=\dim\wt\sM_k^{\le1}(\Gamma_0^+(3),-), \
  \text{for }k\equiv 0\mod 12,
    \end{split}
  \end{equation}
  and that the map
  $$
  f(z)\longmapsto(9E_4(3z)+E_4(z))f(z)
  $$
  defines an isomorphism for the pairs of spaces in \eqref{equation:
    equal dim 1}, while the map
  $$
  f(z)\longmapsto(3E_2(3z)-E_2(z))f(z)
  $$
  defines an isomorphism for the pairs of spaces in \eqref{equation:
    equal dim 2}. From these observations we conclude that extremal
  quasimodular forms exists and are unique up to scalars in all
  $\wt\sM_k^{\le1}(\Gamma_0^+(3))$ and $\wt\sM_k^{\le
    1}(\Gamma_0^+(3),-)$.
\end{proof}

\subsection{Quasimodular forms of odd weights on $\Gamma_1(3)$}
\label{subsection: odd}
In this paper, quasimodular forms of odd weights on
$$
\Gamma_1(3):=\left\{\M abcd\in\SL(2,\Z):c\equiv0\mod3,
  a,d\equiv1\mod 3\right\}
$$
will also appear as
solutions of modular differential equations when
$\Gamma=\Gamma_0^+(3)$, so we will review
their properties in this section.

For a nonnegative integer $k$ (even or odd), let
$\sM_k(\Gamma_1(3))$ denote the set of
modular forms of weight $k$ on $\Gamma_1(3)$.
Note that when $k$ is odd, modular forms of weight $k$ on
$\Gamma_1(3)$ can also be defined as modular forms on $\Gamma_0(3)$
with nebentype character $\chi\left(\SM abcd\right)=\JS{-3}d$.
By Theorem 2.25 of \cite{Shimura-book}, we have
\begin{equation} \label{equation: dimension Gamma1(3)}
\dim\sM_k(\Gamma_1(3))=1+\gauss{\frac k3}.
\end{equation}
Hence, we have
$$
\sum_{k=0}^\infty\dim\sM_k(\Gamma_1(3))x^k
=\frac1{(1-x)(1-x^3)},
$$
and the graded ring $\bigoplus_{k=0}^\infty\sM(\Gamma_1(3))$
of modular forms is freely generated by
\begin{equation} \label{equation: M1 Gamma0+(3)}
M_1(z)=\sum_{(m,n)\in\Z^2}q^{m^2+mn+n^2}, \qquad q=e^{2\pi iz},
\end{equation}
and
\begin{equation} \label{equation: M3 Gamma0+(3)}
  M_3(z)=\frac{9E_4(3z)-E_4(z)}{8M_1(z)}
=\frac{\eta(z)^9}{\eta(3z)^3}-27\frac{\eta(3z)^9}{\eta(z)^3}.
\end{equation}


Similar to the case of $\Gamma_0(N)$, the Atkin-Lehner involutions
normalize $\Gamma_1(N)$. Hence, we may decompose $\sM_k(\Gamma_1(3))$
into eigenspaces of the Atkin-Lehner involution $w_3$. Let
$S=\frac1{\sqrt3}\SM0{-1}{3}0$. Because $S^2=-I$, the eigenvalues of
$w_3$ in the case of odd $k$ can only be $\pm i$. For convenience, we
shall consider the eigenfunctions as modular forms
on $\Gamma_0^+(3)$ with characters. More concretely, we recall that
$\Gamma_0^+(3)$ is generated by
$$
S=\frac1{\sqrt3}\M0{-1}30, \qquad
R=TS=\frac1{\sqrt3}\M3{-1}30,
$$
where $T=\SM1101$. They satisfy $S^2=R^6=-I$
and there is no other relation between $S$ and $R$. In other words, a
presentation of the group $\Gamma_0^+(3)$ is
$\gen{S,R:S^4=1,S^2=R^6}$. We let $\chi$ be the character of
$\Gamma_0^+(3)$ such that
\begin{equation} \label{equation: chi}
  \chi(S)=\chi(R)=-i,
\end{equation}
and $\overline\chi$ be
its complex conjugate. Then the eigenspace of $w_3$ with eigenvalue
$-i$ (respectively, $i$) can be identified with
$\sM(\Gamma_0^+(3),\chi)$ (respectively,
$\sM(\Gamma_0^+(3),\overline\chi)$). Using the transformation formula
for the Dedekind eta function (see \cite[Pages 125--127]{Weber}), we
can show that
$$
M_3(z)\in\sM_3(\Gamma_0^+(3),\chi)
$$
and hence also
$$
M_1(z)\in\sM_1(\Gamma_0^+(3),\chi),
$$
where $M_1(z)$ and $M_3(z)$ are defined by \eqref{equation: M1
  Gamma0+(3)} and \eqref{equation: M3 Gamma0+(3)}, respectively.

\begin{Lemma} \label{lemma: dimension Gamma0+(3) chi}
  Let $k$ be a positive odd integer. Then
  $$
  \dim\sM_k(\Gamma_0^+(3),\chi)=\begin{cases}
    (k+5)/6, &\text{if }k\equiv 1\mod 12,\\
    (k+3)/6, &\text{if }k\equiv 3,9\mod 12, \\
    (k+1)/6, &\text{if }k\equiv 5,11\mod 12, \\
    (k-1)/6, &\text{if }k\equiv 7\mod 12, \end{cases}
  $$
  and
  $$
  \dim\sM_k(\Gamma_0^+(3),\overline\chi)=\begin{cases}
    (k-1)/6, &\text{if }k\equiv 1\mod 12,\\
    (k+3)/6, &\text{if }k\equiv 3,9\mod 12, \\
    (k+1)/6, &\text{if }k\equiv 5,11\mod 12, \\
    (k+5)/6, &\text{if }k\equiv 7\mod 12. \end{cases}
  $$
\end{Lemma}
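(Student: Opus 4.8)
The plan is to diagonalize the Atkin--Lehner involution $w_3$ on $\sM_k(\Gamma_1(3))$ using the explicit polynomial structure of the graded ring recorded above. Since $\bigoplus_{k\ge 0}\sM_k(\Gamma_1(3))=\C[M_1,M_3]$ is freely generated by $M_1$ (weight $1$) and $M_3$ (weight $3$), for each odd $k$ the monomials $M_1^{k-3j}M_3^{j}$ with $0\le j\le\gauss{k/3}$ form a basis of $\sM_k(\Gamma_1(3))$, which re-proves \eqref{equation: dimension Gamma1(3)}. Because $w_3$ normalizes $\Gamma_1(3)$ and $S^2=-I$, the operator $f\mapsto f|_kS$ on $\sM_k(\Gamma_1(3))$ squares to $-\mathrm{id}$ for odd $k$, hence is diagonalizable with eigenvalues $\pm i$; by the discussion preceding the lemma, its $(-i)$-eigenspace is $\sM_k(\Gamma_0^+(3),\chi)$ and its $i$-eigenspace is $\sM_k(\Gamma_0^+(3),\overline\chi)$.

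First I would compute the $S$-eigenvalue of each basis monomial. From $M_1\in\sM_1(\Gamma_0^+(3),\chi)$ and $M_3\in\sM_3(\Gamma_0^+(3),\chi)$ with $\chi(S)=-i$, together with the multiplicativity $(fg)|_{a+b}S=(f|_aS)(g|_bS)$ of the slash operator, one gets
$$
\bigl(M_1^{k-3j}M_3^{j}\bigr)\big|_kS
=(M_1|_1S)^{k-3j}(M_3|_3S)^{j}
=(-i)^{(k-3j)+j}\,M_1^{k-3j}M_3^{j}
=(-i)^{k-2j}\,M_1^{k-3j}M_3^{j}.
$$
Thus $w_3$ acts diagonally in this basis, and $\dim\sM_k(\Gamma_0^+(3),\chi)$ (respectively $\dim\sM_k(\Gamma_0^+(3),\overline\chi)$) equals the number of $j\in\{0,1,\dots,\gauss{k/3}\}$ with $(-i)^{k-2j}=-i$ (respectively $=i$). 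Since $k$ is odd, the exponent $k-2j$ is odd, so $(-i)^{k-2j}=-i\iff k-2j\equiv1\pmod 4\iff j\equiv(k-1)/2\pmod 2$, and $(-i)^{k-2j}=i$ iff $j$ lies in the complementary parity class.

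It then remains to count the integers in $\{0,1,\dots,\gauss{k/3}\}$ of a prescribed parity: among $m+1$ consecutive integers there are $\gauss{m/2}+1$ of one parity and $\gauss{(m+1)/2}$ of the other, with $m=\gauss{k/3}$. The parity of $(k-1)/2$ depends only on $k\bmod 4$ and the value of $\gauss{k/3}$ only on $k\bmod 3$, so both inputs are determined by $k\bmod 12$; running through the six odd residues $k\equiv1,3,5,7,9,11\pmod{12}$ gives exactly the two tables in the statement, and as a built-in consistency check the two dimensions always sum to $1+\gauss{k/3}=\dim\sM_k(\Gamma_1(3))$. The only laborious part is this final case bookkeeping, which is entirely mechanical; the sole substantive ingredient is the membership $M_3\in\sM_3(\Gamma_0^+(3),\chi)$ (hence $M_1\in\sM_1(\Gamma_0^+(3),\chi)$), which is established via the transformation law of the Dedekind $\eta$-function as noted before the lemma and which we take as given.
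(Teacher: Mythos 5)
Your proof is correct, but it takes a genuinely different route from the paper's. The paper avoids any eigenvalue computation on a basis: it observes that multiplication by $M_1$ gives inclusions $M_1\sM_{k-1}(\Gamma_0^+(3))\subseteq\sM_k(\Gamma_0^+(3),\chi)$ and $M_1\sM_{k-1}(\Gamma_0^+(3),-)\subseteq\sM_k(\Gamma_0^+(3),\overline\chi)$ (and, when $3\mid k$, the analogous inclusions going up from weight $k$ to $k+1$), then uses the already-established even-weight dimension formulas \eqref{equation: Gamma0(3) dims} together with $\dim\sM_k(\Gamma_1(3))=\dim\sM_{k-1}(\Gamma_0(3))$ to force these inclusions to be equalities, so the odd-weight dimensions are read off from the even-weight ones. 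You instead diagonalize $w_3$ directly on the monomial basis $M_1^{k-3j}M_3^{j}$ of $\sM_k(\Gamma_1(3))$, computing the eigenvalue $(-i)^{k-2j}$ and counting residues of $j$ modulo $2$; I checked the six residue classes of $k$ modulo $12$ and your counts match the stated tables, and the multiplicativity of the slash operator that you invoke is valid for the determinant-normalized slash used in the paper. The trade-off: your argument is more self-contained in that it needs only the free generation of $\bigoplus_k\sM_k(\Gamma_1(3))$ by $M_1,M_3$ and the membership $M_1,M_3\in\sM_\bullet(\Gamma_0^+(3),\chi)$, and it produces explicit eigenbases of the two spaces as a by-product; the paper's argument is shorter on the page because it recycles \eqref{equation: Gamma0(3) dims}, and it also transparently exhibits the isomorphisms (multiplication by $M_1$) that get reused later, e.g.\ in the proof of the extremal-form proposition that follows. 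Both are complete proofs; no gap.
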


\begin{proof} Observe that
  $\sM_{k-1}(\Gamma_0^+(3))\cup\sM_{k-1}(\Gamma_0^+(3),-)
  =\sM_{k-1}(\Gamma_0(3))$
  and $\sM_k(\Gamma_0^+(3),\chi)\cup
  \sM_k(\Gamma_0^+(3),\overline\chi)=\sM_k(\Gamma_1(3))$. Also,
  \begin{equation*}
    \begin{split}
      M_1\sM_{k-1}(\Gamma_0^+(3))&\subseteq\sM_k(\Gamma_0^+(3),\chi),\\
      M_1\sM_{k-1}(\Gamma_0^+(3),-)&\subseteq
      \sM_k(\Gamma_0^+(3),\overline\chi).
    \end{split}
  \end{equation*}
  When $k$ is not a multiple of $3$, by \eqref{equation: dimension
  Gamma1(3)}, we have
  $$
  \dim\sM_k(\Gamma_1(3))=\dim\sM_{k-1}(\Gamma_0(3)).
  $$
  Hence, both inclusions above are equality. When $3|k$, we use
  \begin{equation*}
    \begin{split}
      M_1\sM_k(\Gamma_0^+(3),\chi)&\subseteq\sM_{k+1}(\Gamma_0^+(3),-),\\
      M_1\sM_k(\Gamma_0^+(3),\overline\chi)&\subseteq
      \sM_{k+1}(\Gamma_0^+(3))
    \end{split}
  \end{equation*}
  instead. Again, by the dimension formula \eqref{equation: dimension
  Gamma1(3)}, both inclusions are equality. This proves the lemma.
\end{proof}

Passing to quasimodular forms, we have the following dimension
formulas.

\begin{Lemma} Let $k$ be a positive odd integer. We have
  $$
  \dim\wt\sM_k^{\le1}(\Gamma_0^+(3),\chi)
  =\begin{cases}
    (k+2)/3, &\text{if }k\equiv1\mod 12, \\
    1+k/3, &\text{if }k\equiv 3\mod 12, \\
    (k+1)/3, &\text{if }k\equiv5,11\mod 12,\\
    (k-1)/3, &\text{if }k\equiv 7\mod 12, \\
    k/3, &\text{if }k\equiv 9\mod 12, \end{cases}
  $$
  and
  $$
  \dim\wt\sM_k^{\le1}(\Gamma_0^+(3),\overline\chi)
  =\begin{cases}
    (k-1)/3, &\text{if }k\equiv1\mod 12, \\
    k/3, &\text{if }k\equiv 3\mod 12, \\
    (k+1)/3, &\text{if }k\equiv5,11\mod 12,\\
    (k+2)/3, &\text{if }k\equiv 7\mod 12, \\
    1+k/3, &\text{if }k\equiv 9\mod 12. \end{cases}
  $$
\end{Lemma}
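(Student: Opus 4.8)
The plan is to reduce the statement to the dimension count for \emph{modular} (not quasimodular) forms already established in Lemma~\ref{lemma: dimension Gamma0+(3) chi}, by way of the analogue of the Atkin--Lehner decomposition of Remark~\ref{proposition: AL decomposition of quasi} for the order-four characters $\chi$ and $\overline\chi$.

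First I would observe that the decomposition $\wt\sM_k^{\le r}(\Gamma,\epsilon)=\bigoplus_{j=0}^r\phi^j\sM_{k-2j}(\Gamma,\epsilon)$ of Proposition~\ref{proposition: sM dim}(ii) and Remark~\ref{proposition: AL decomposition of quasi} continues to hold for any finite-order character, in particular for $\epsilon=\chi$ or $\overline\chi$, using the weight-two depth-one quasimodular form $\phi(z)=3E_2(3z)+E_2(z)=4M_2^\ast(z)$ on $\Gamma_0^+(3)$, which carries the trivial character. Indeed, the only property of the character used in the proof is that multiplication by powers of $\phi$ preserves it: if $f\in\wt\sM_k^{\le1}(\Gamma_0^+(3),\chi)$ is the holomorphic part of a nearly holomorphic modular form $f+g_1/(z-\ol z)$ with $g_1\in\sM_{k-2}(\Gamma_0^+(3),\chi)$, then by Lemma~\ref{lemma: Maass} the function $f-\tfrac{\pi i}{12}g_1\phi$ has no $1/(z-\ol z)$-term and hence lies in $\sM_k(\Gamma_0^+(3),\chi)$; this gives the direct sum $\wt\sM_k^{\le1}(\Gamma_0^+(3),\chi)=\sM_k(\Gamma_0^+(3),\chi)\oplus\phi\,\sM_{k-2}(\Gamma_0^+(3),\chi)$, and likewise for $\overline\chi$. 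In particular,
$$
\dim\wt\sM_k^{\le1}(\Gamma_0^+(3),\chi)=\dim\sM_k(\Gamma_0^+(3),\chi)+\dim\sM_{k-2}(\Gamma_0^+(3),\chi),
$$
and the same identity with $\overline\chi$ in place of $\chi$.

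Next I would feed the formulas of Lemma~\ref{lemma: dimension Gamma0+(3) chi} into the above, running through the six residue classes of the odd integer $k$ modulo $12$; since $k-2$ is also odd, its class modulo $12$ is read off immediately. For instance, if $k\equiv1\bmod12$ then $k-2\equiv11\bmod12$, so the sum is $(k+5)/6+(k-1)/6=(k+2)/3$ for $\chi$ and $(k-1)/6+(k-1)/6=(k-1)/3$ for $\overline\chi$, matching the claimed values; the other classes are identical elementary additions. The only point to watch is that for $k=1$ the term $\dim\sM_{-1}(\Gamma_0^+(3),\chi)$ should be read as $0$, there being no nonzero modular forms of negative weight, which is consistent with Lemma~\ref{lemma: dimension Gamma0+(3) chi} not covering $k\le0$.

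I do not expect a real obstacle here: the substance lies entirely in Lemma~\ref{lemma: dimension Gamma0+(3) chi} and in the (routine) extension of Remark~\ref{proposition: AL decomposition of quasi} to characters of order four. The only mildly delicate step is verifying that $\phi(z)=3E_2(3z)+E_2(z)$ is a \emph{holomorphic} depth-one quasimodular form on $\Gamma_0^+(3)$ with trivial character so that it can serve as the generator in the decomposition --- but this is the same $\phi$ (up to a scalar) already used in Theorem~\ref{theorem: main 3} and in Remark~\ref{proposition: AL decomposition of quasi}, so no new work is needed.
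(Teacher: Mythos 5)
Your proposal is correct and is exactly the argument the paper intends (the lemma is stated without proof, immediately after Lemma \ref{lemma: dimension Gamma0+(3) chi} and Remark \ref{proposition: AL decomposition of quasi}, which together supply precisely the two ingredients you use): the decomposition $\wt\sM_k^{\le1}(\Gamma_0^+(3),\delta)=\sM_k(\Gamma_0^+(3),\delta)\oplus\phi\,\sM_{k-2}(\Gamma_0^+(3),\delta)$ for $\delta=\chi,\overline\chi$, followed by the case-by-case addition modulo $12$. Your arithmetic checks out in all six residue classes, and the remark about reading $\dim\sM_{-1}$ as $0$ when $k=1$ is the right (and only) edge case to flag.
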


\begin{Proposition} For each positive odd integer $k$, an extremal
  quasimodular form exists in $\wt\sM_k(\Gamma_0^+(3),\chi)$ or
  $\wt\sM_k(\Gamma_0^+(3),\overline\chi)$ and is unique up to scalars.
\end{Proposition}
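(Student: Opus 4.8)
The plan is to adapt the Wronskian argument from the proofs of Proposition~\ref{proposition: Pellarin} and Corollary~\ref{corollary: existence of extremal} to the present setting of forms with a (order $4$) character. Fix an odd integer $k$ and let $\psi$ be either $\chi$ or $\overline\chi$. Put $\phi(z)=E_2(z)+3E_2(3z)$, which by Remark~\ref{proposition: AL decomposition of quasi} is a quasimodular form of weight $2$ and depth $1$ on $\Gamma_0^+(3)$ with trivial character and satisfies $\left(\phi|_2\gamma\right)(z)=\phi(z)+\alpha c/(cz+d)$ for all $\SM abcd\in\Gamma_0^+(3)$ with some $\alpha\neq0$. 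By Proposition~\ref{proposition: sM dim}(ii) (extended to forms with character) every $f\in\wt\sM_k^{\le1}(\Gamma_0^+(3),\psi)$ can be written $f=f_0+\phi f_1$ with $f_j\in\sM_{k-2j}(\Gamma_0^+(3),\psi)$, and Lemma~\ref{lemma: Wronskian is modular}, applied with $\Gamma=\Gamma_0^+(3)$ and character $\psi$, then yields a Wronskian $W_f\in\sM_{2k}(\Gamma_0^+(3),\psi^2)$ which is nonzero whenever $f\neq0$ and satisfies $v_\infty(W_f)\ge v_\infty(f)$ by \eqref{equation: order of Wf at infinity}. The point is that $\psi^2$ is the \emph{same} character for both choices of $\psi$: since $\chi(S)=\chi(R)=-i$ we get $\psi^2(S)=\psi^2(R)=-1$, so $\psi^2$ is the nontrivial character of $\Gamma_0^+(3)/\Gamma_0(3)$ and $\sM_{2k}(\Gamma_0^+(3),\psi^2)=\sM_{2k}(\Gamma_0^+(3),-)$.

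By the obvious analogue of Proposition~\ref{proposition: Pellarin} (replace $\sM_{2k}(\Gamma_0^+(N))$ by $\sM_{2k}(\Gamma_0^+(3),-)$ throughout its proof), it then suffices to verify: (a) $\dim\sM_{2k}(\Gamma_0^+(3),-)=\dim\wt\sM_k^{\le1}(\Gamma_0^+(3),\psi)$ for a suitable $\psi\in\{\chi,\overline\chi\}$; and (b) the maximal vanishing order at $\infty$ of a nonzero element of $\sM_{2k}(\Gamma_0^+(3),-)$ equals $\dim\sM_{2k}(\Gamma_0^+(3),-)-1$. For (b) I would use that $\bigoplus_k\sM_k(\Gamma_1(3))=\C[M_1,M_3]$ with $M_1,M_3$ as in \eqref{equation: M1 Gamma0+(3)}--\eqref{equation: M3 Gamma0+(3)}. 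Since $M_1\in\sM_1(\Gamma_0^+(3),\chi)$ and $M_3\in\sM_3(\Gamma_0^+(3),\chi)$, the monomial $M_1^{2k-3b}M_3^b$ (for $0\le b\le\gauss{2k/3}$) is an eigenform of $w_3$ with eigenvalue $(-i)^{(2k-3b)+b}=(-1)^{k-b}$, which is $-1$ precisely when $b$ is even, as $k$ is odd; hence
$$
\sM_{2k}(\Gamma_0^+(3),-)=\bigoplus_{b\text{ even},\ 0\le b\le\gauss{2k/3}}\C\,M_1^{2k-3b}M_3^b,
$$
a space of dimension $D:=\gauss{k/3}+1$. Writing $M_1^{2k-3b}M_3^b=M_1^{2k}s^b$ with $s:=M_3/M_1^3$ a weight-$0$ function, a short $q$-expansion computation gives $s=1+cq+O(q^2)$ with $c\neq0$; thus $r:=s^2$ satisfies $v_\infty(r-1)=1$, so $v_\infty\big((r-1)^j\big)=j$ and $(r-1)^j\in\operatorname{span}\{1,r,\dots,r^j\}$ for each $j$. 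Since $v_\infty(M_1)=0$, the forms $M_1^{2k}(r-1)^j$, $0\le j\le D-1$, constitute a basis of $\sM_{2k}(\Gamma_0^+(3),-)$ with $v_\infty\big(M_1^{2k}(r-1)^j\big)=j$, which gives (b).

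For (a) I would compute $\dim\sM_{2k}(\Gamma_0^+(3),-)=D=\gauss{k/3}+1$ and compare it, residue class by residue class modulo $12$, with the formulas for $\dim\wt\sM_k^{\le1}(\Gamma_0^+(3),\chi)$ and $\dim\wt\sM_k^{\le1}(\Gamma_0^+(3),\overline\chi)$ obtained from Lemma~\ref{lemma: dimension Gamma0+(3) chi} together with the decomposition $\wt\sM_k^{\le1}(\Gamma_0^+(3),\psi)=\sM_k(\Gamma_0^+(3),\psi)\oplus\phi\,\sM_{k-2}(\Gamma_0^+(3),\psi)$. One finds $D=\max\{\dim\wt\sM_k^{\le1}(\Gamma_0^+(3),\chi),\,\dim\wt\sM_k^{\le1}(\Gamma_0^+(3),\overline\chi)\}$ in every case, the maximum being attained by $\chi$ when $k\equiv1,3\pmod{12}$, by $\overline\chi$ when $k\equiv7,9\pmod{12}$, and by both when $k\equiv5,11\pmod{12}$; take $\psi$ to be a character achieving the maximum. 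With (a) and (b) in place, the argument of Proposition~\ref{proposition: Pellarin} applies verbatim: for every nonzero $f\in\wt\sM:=\wt\sM_k^{\le1}(\Gamma_0^+(3),\psi)$ we have $v_\infty(f)\le v_\infty(W_f)\le D-1=\dim\wt\sM-1$, which forces the vanishing orders of the nonzero elements of $\wt\sM$ to run over $0,1,\dots,\dim\wt\sM-1$ without repetition, so an extremal quasimodular form exists in $\wt\sM$ and is unique up to scalars.

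The main obstacle is not any single deep step but rather the bookkeeping, and in particular the observation driving condition (a): since $\chi^2=\overline\chi^2$, both Wronskian maps have the single target $\sM_{2k}(\Gamma_0^+(3),-)$, so the dimension count can succeed for at most one of $\chi,\overline\chi$ (unless the two depth-$\le1$ spaces happen to have equal dimension), and the method produces the extremal form only in whichever character space is as large as $\sM_{2k}(\Gamma_0^+(3),-)$ — this is precisely why the conclusion is phrased as an \emph{or}. The remaining points, namely the finite case check in (a) showing that this always happens for at least one character, and the staircase verification in (b), are routine; note that, unlike in Corollary~\ref{corollary: existence of extremal}, no auxiliary multiplication-by-a-modular-form argument is needed here because the dimension condition is met outright for a suitable character in every residue class.
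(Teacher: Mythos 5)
Your argument is correct and its core coincides with the paper's: for either character the Wronskian of Lemma \ref{lemma: Wronskian is modular} lands in $\sM_{2k}(\Gamma_0^+(3),\chi^2)=\sM_{2k}(\Gamma_0^+(3),-)$, and a residue-class-by-residue-class comparison shows the dimension of this single target space equals $\dim\wt\sM_k^{\le1}(\Gamma_0^+(3),\chi)$ for $k\equiv1,3,5,11\mod 12$ and equals $\dim\wt\sM_k^{\le1}(\Gamma_0^+(3),\overline\chi)$ for $k\equiv5,7,9,11\mod 12$ --- your tallies agree exactly with the paper's. Two small differences are worth recording. First, you explicitly verify the maximal-vanishing-order hypothesis of Proposition \ref{proposition: Pellarin} for $\sM_{2k}(\Gamma_0^+(3),-)$ via the staircase basis $M_1^{2k}(r-1)^j$ with $r=(M_3/M_1^3)^2$; the paper leaves this point implicit, so your check is a genuine (and correct) addition. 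Second, you stop once every residue class has \emph{some} character for which the dimension count closes, reading the conclusion as a true disjunction and asserting that no multiplication-by-a-modular-form step is needed. The paper does not stop there: it finishes with the isomorphisms $M_1\wt\sM_{k-1}^{\le1}(\Gamma_0^+(3))=\wt\sM_k^{\le1}(\Gamma_0^+(3),\chi)$ for $k\equiv1,7,9,11\mod12$ and $M_1\wt\sM_{k-1}^{\le1}(\Gamma_0^+(3),-)=\wt\sM_k^{\le1}(\Gamma_0^+(3),\overline\chi)$ for $k\equiv1,3,5,7\mod12$, combined with Corollary \ref{corollary: existence of extremal}, to obtain existence and uniqueness in \emph{both} character spaces for every odd $k$. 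So your proof establishes the proposition as literally stated, but the paper's proof delivers the stronger ``and'' version; if you want the result in a prescribed character space (as in Theorem \ref{theorem: DE Gamma0+(3)}, which asks for an extremal form in the specific space $\wt\sM_{3k}^{\le1}(\Gamma_0^+(3),\chi^k)$), you should either append the $M_1$-multiplication step or check, as happens to be the case there, that the prescribed space is one where your direct dimension count already succeeds.
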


\begin{proof}
For a quasimodular form $f$ in $\wt\sM_k^{\le1}(\Gamma_0^+(3),\chi)$
or $\wt\sM_k^{\le1}(\Gamma_0^+(3),\overline\chi)$, let $W_f$ be the
Wronskian associated $f$ defined in Lemma \ref{lemma: Wronskian is
  modular}. By the lemma, we have
$W_f\in\sM_{2k}(\Gamma_0^+(3),\chi^2)=\sM_{2k}(\Gamma_0^+(3),-)$.
Since
$$
\dim\sM_{2k}(\Gamma_0^+(3),-)
=\begin{cases}
  (k+2)/3, &\text{if }k\equiv 1\mod 6, \\
  1+k/3, &\text{if }k\equiv 3\mod 6, \\
  (k+1)/3, &\text{if }k\equiv 5\mod 6, \end{cases}
$$
we find that
$$
\dim\wt\sM_k^{\le1}(\Gamma_0^+(3),\chi)=\dim\sM_{2k}(\Gamma_0^+(3),-)
$$
for $k\equiv 1,3,5,11\mod 12$, and
$$
\dim\wt\sM_k^{\le1}(\Gamma_0^+(3),\overline\chi)
=\dim\sM_{2k}(\Gamma_0^+(3),-)
$$
for $k\equiv5,7,9,11\mod 12$. Hence, extremal quasimodular forms
exist in these spaces and is unique up to scalars. For the remaining
four spaces, we use the facts that
$$
M_1\wt\sM_{k-1}^{\le1}(\Gamma_0^+(3))
=\wt\sM_k^{\le1}(\Gamma_0^+(3),\chi)
$$
for $k\equiv 1,7,9,11\mod 12$, and
$$
M_1\wt\sM_{k-1}^{\le1}(\Gamma_0^+(3),-)
=\wt\sM_k^{\le1}(\Gamma_0^+(3),\overline\chi)
$$
for $k\equiv1,3,5,7\mod 12$ to obtain the same conclusion.
\end{proof}

\section{Modular ordinary differential equations}
\label{section: ODE and quasi}

The purpose of this section is to prove that for any quasimodular form
$f(z)$ of depth $1$, $f(z)/\sqrt{W_f(z)}$ is a solution of a modular
differential equation. Throughout
the section, we assume that $\Gamma$ is a discrete subgroup of
$\SL(2,\R)$ that is commensurable with $\SL(2,\Z)$, and $\phi$ is the
quasimodular form of weight $2$ and depth $1$ appearing in Proposition
\ref{proposition: sM dim}, i.e.,
\begin{equation} \label{equation: phi transform}
  (cz+d)^{-2}\phi(\gamma z)=\phi(z)+\frac{\alpha c}{cz+d}, \qquad
  \gamma=\M abcd\in\Gamma,
\end{equation}
for some complex number $\alpha\neq 0$. Suppose that $f(z)$ is a
quasimodular form of weight $k$ and depth $1$ with character
$\chi$ of finite order on $\Gamma$. We have
\begin{equation} \label{equation: f=f1phi+f0}
  f(z)=f_1(z)\phi(z)+f_0(z)
\end{equation}
for some $f_0\in\sM_k(\Gamma,\chi)$ and
$f_1\neq 0\in\sM_{k-2}(\Gamma,\chi)$. Let $W_f(z)$ be the Wronskian
associated to $f$ defined in Lemma \ref{lemma: Wronskian is
  modular}. It is a modular form of weight $2k$ with character
$\chi^2$ on $\Gamma$. Define
\begin{equation} \label{equation: g1, g2}
g_1(z)=\frac{zf(z)+\alpha f_1(z)}{\sqrt{W_f(z)}}
\quad\text{and}\quad
g_2(z)=\frac{f(z)}{\sqrt{W_f(z)}}.
\end{equation}
Then we have $\det\SM{g_1}{g_2}{g_1'}{g_2'}=1$ and hence
$\det\SM{g_1}{g_2}{g_1''}{g_2''}=0$. Therefore, both $g_1$ and $g_2$
are solutions of
\begin{equation} \label{equation: y''=-Qy sect 3}
  y''(z)=-4\pi^2Q(z)y(z),
\end{equation}
where
\begin{equation} \label{equation: Q=g''/g}
Q(z)=-\frac1{4\pi^2}\frac{g_1''(z)}{g_1(z)}
=-\frac1{4\pi^2}\frac{g_2''(z)}{g_2(z)}.
\end{equation}
Clearly, $Q(z)$ is a single-valued meromorphic function not
identically $0$ on $\H$.


\begin{Proposition} Let $Q(z)$ be defined by \eqref{equation:
    Q=g''/g}. Then $Q(z)$ is a meromorphic function on $\H$ and
  \eqref{equation: y''=-Qy sect 3} is Fuchsian. Moreover, if
  $z_0\in\H$ is a pole of $Q(z)$, then $z_0$ is an apparent
  singularity for \eqref{equation: y''=-Qy sect 3}.
\end{Proposition}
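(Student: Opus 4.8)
The plan is to read the local behaviour of the two solutions $g_1,g_2$ near a point $z_0\in\H$ directly off their definitions in \eqref{equation: g1, g2}, and then apply the elementary Frobenius theory at a regular singular point. First, by Lemma \ref{lemma: Wronskian is modular} we have $W_f\in\sM_{2k}(\Gamma,\chi^2)$, and \eqref{equation: order of Wf at infinity} shows $W_f\not\equiv0$, so $W_f$ is holomorphic on $\H$ with only isolated zeros. Fix $z_0\in\H$ and a small simply connected neighbourhood $U$ of $z_0$ containing no zero of $W_f$ other than possibly $z_0$ itself. Writing $f(z)=(z-z_0)^{n}a(z)$, $zf(z)+\alpha f_1(z)=(z-z_0)^{n'}b(z)$ and $W_f(z)=(z-z_0)^{w}c(z)$ with $n,n',w\in\Z_{\ge0}$ and $a,b,c$ holomorphic and nonvanishing on $U$, and fixing a holomorphic branch of $\sqrt c$ on $U$, equation \eqref{equation: g1, g2} gives
$$
g_1(z)=(z-z_0)^{\mu_1}h_1(z),\qquad g_2(z)=(z-z_0)^{\mu_2}h_2(z),
$$
where $\mu_1=n'-w/2$ and $\mu_2=n-w/2$ lie in $\frac12\Z$ and $h_1=b/\sqrt c$, $h_2=a/\sqrt c$ are holomorphic and nonvanishing on $U$. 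A short computation then gives
$$
-4\pi^2Q=\frac{g_2''}{g_2}=\frac{\mu_2(\mu_2-1)}{(z-z_0)^2}+\frac{2\mu_2h_2'/h_2}{z-z_0}+\frac{h_2''}{h_2},
$$
which is single-valued (since $e^{2\pi i\mu_2}=\pm1$) and meromorphic on $U$ with a pole of order at most $2$ at $z_0$. As $z_0\in\H$ was arbitrary, $Q$ is meromorphic on $\H$ and \eqref{equation: y''=-Qy sect 3} is Fuchsian on $\H$.

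Next I would establish apparentness. Let $z_0$ be a pole of $Q$. The ratio $h:=g_1/g_2=(zf+\alpha f_1)/f$ is a single-valued meromorphic function on $\H$, and near $z_0$ it equals $(z-z_0)^{\mu_1-\mu_2}(h_1/h_2)$ with $h_1/h_2$ holomorphic and nonvanishing; single-valuedness of $h$ therefore forces $\mu_1-\mu_2\in\Z$. Also $h$ is non-constant, because $h'=-W_f/f^2\not\equiv0$. If $\mu_1\ne\mu_2$, then $\{g_1,g_2\}$ is a fundamental system of \eqref{equation: y''=-Qy sect 3} whose local exponents $\mu_1,\mu_2$ differ by the nonzero integer $\mu_1-\mu_2$, and every solution $c_1g_1+c_2g_2$ equals $(z-z_0)^{\min(\mu_1,\mu_2)}$ times a function holomorphic on $U$; in particular no solution has a logarithmic singularity at $z_0$, so $z_0$ is apparent. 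If instead $\mu_1=\mu_2=:\mu$, then $h(z_0)=h_1(z_0)/h_2(z_0)$ is finite and nonzero, and $h-h(z_0)$, being holomorphic and (by non-constancy of $h$) not identically zero near $z_0$, vanishes there to some finite order $j\ge1$; hence
$$
g_1-h(z_0)g_2=\bigl(h-h(z_0)\bigr)g_2=(z-z_0)^{\mu+j}\cdot(\text{holomorphic and nonvanishing at }z_0).
$$
Since $h$ is non-constant, $g_1-h(z_0)g_2$ is linearly independent of $g_2$, so replacing $g_1$ by $g_1-h(z_0)g_2$ reduces us to the previous case, now with exponents $\mu<\mu+j$ differing by the positive integer $j$; the same argument gives that $z_0$ is apparent.

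The first paragraph is routine; the step I expect to need the most care is the degenerate case $\mu_1=\mu_2$, where the indicial equation has a repeated root and the usual Frobenius bookkeeping predicts a logarithmic solution. It is precisely the single-valuedness and non-constancy of the ratio $h=g_1/g_2$ that rescue the situation: the former forces the two local exponents to differ by an integer, and the latter lets one subtract the constant $h(z_0)$ to uncover a second solution of strictly larger leading order, producing a logarithm-free Frobenius basis.
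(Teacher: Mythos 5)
Your proof is correct and follows essentially the same route as the paper's: read off the local behaviour of $g_1,g_2$ from \eqref{equation: g1, g2}, deduce that $Q$ has at worst double poles, and conclude apparentness from the fact that $g_1,g_2$ form a logarithm-free fundamental system. The extra case analysis when $\mu_1=\mu_2$ is carried out correctly but is not strictly needed: since $g_1$ and $g_2$ are linearly independent (their Wronskian is $1$) and each is $(z-z_0)^{\mu_i}$ times a holomorphic function with $\mu_1-\mu_2\in\Z$, every solution $c_1g_1+c_2g_2$ is already free of logarithmic singularities, which is exactly what apparentness requires.
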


\begin{proof} Clearly, $g_i(z)=(z-z_0)^{\alpha_i}(c_i+O(z-z_0))$ near
  $z_0$ for some $c_i\neq 0$ and $\alpha_i\in\frac12\Z$. Thus,
  $$
  Q(z)=\alpha_i(\alpha_i-1)(z-z_0)^{-2}+O\left((z-z_0)^{-1}\right)
  $$
  near $z_0$ and we have $\alpha_1(\alpha_1-1)=\alpha_2(\alpha_2-1)$,
  which implies that the local exponents at $z_i$ are $\alpha_i$ and
  $-\alpha_i+1$. (The two sets $\{\alpha_1,-\alpha_1+1\}$ and
  $\{\alpha_2,-\alpha_2+1\}$ are identical.) Hence \eqref{equation:
    y''=-Qy sect 3} is Fuchsian and the difference of the local
  exponents are $1+2\alpha_i\in\Z$. Since $g_1$ and $g_2$ are linearly
  independent and has no logarithmic singularities near $z_0$, $z_0$
  is obviously an apparent singularity.
\end{proof}

\begin{Remark} \label{remark: apparent}
  Suppose that \eqref{equation: y''=-Qy sect 3} has a
  solution $y(z)$ with a logarithmic singularity near $z_0$. Then the
  local exponents $\rho_1$ and $\rho_2$ at $z_0$ satisfy
  $\rho_1-\rho_2\in\Z$, and $z_0$ is not apparent. This remark is used
  in the proof of Theorem \ref{theorem: Q is modular}. See below.
\end{Remark}

The following result is a generalization of Theorem \ref{proposition:
  quasi satisfies ODE}.

\begin{Theorem} \label{theorem: Q is modular}
  Let $Q(z)$ be defined by \eqref{equation:
    Q=g''/g}. Then the following hold.
  \begin{enumerate}
    \item[(i)] The function $Q(z)$ is a meromorphic modular form of
      weight $4$ (with trivial character) on $\Gamma$.
    \item[(ii)] Moreover, $Q(z)$ is holomorphic at $\infty$ with
      $Q(\infty)=\kappa_\infty^2/N^2$ for some
      $\kappa_\infty\in\frac12\Z_{\ge 0}$, where $N$ is the width of
      the cusp $\infty$ of $\Gamma$. Also, the cusp $\infty$ is not
      apparent for \eqref{equation: y''=-Qy sect 3}.
    \item[(iii)] For any cusp $s$, $Q(z)$ is holomorphic at $s$ with
      $Q(s)\ge 0$ and $s$ is not apparent for \eqref{equation: y''=-Qy
        sect 3}.
  \end{enumerate}
\end{Theorem}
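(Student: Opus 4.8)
The plan is to deduce every assertion from the defining identity $Q=-\frac1{4\pi^2}g_2''/g_2$ of \eqref{equation: Q=g''/g}, using the transformation laws of $f$, $f_1$, $W_f$ established in the proof of Lemma \ref{lemma: Wronskian is modular}, together with the preceding Proposition and Remark \ref{remark: apparent}.

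For (i), fix $\gamma=\SM abcd\in\Gamma$. The computations in the proof of Lemma \ref{lemma: Wronskian is modular} give $F_f(\gamma z)=\chi(\gamma)(cz+d)^{k-1}\gamma\,F_f(z)$, where $F_f=\column{zf+\alpha f_1}{f}$, and $W_f(\gamma z)=\chi(\gamma)^2(cz+d)^{2k}W_f(z)$; dividing and choosing a branch of $\sqrt{W_f}$ yields $\column{g_1}{g_2}(\gamma z)=\pm(cz+d)^{-1}\gamma\column{g_1}{g_2}(z)$. Hence each $g_j|_{-1}\gamma$ is a $\C$-linear combination of $g_1,g_2$ and so solves \eqref{equation: y''=-Qy sect 3}; by Bol's identity it also solves $\hat y''=-4\pi^2(Q|_4\gamma)\hat y$. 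Since $\det\SM{g_1}{g_2}{g_1'}{g_2'}=1$ and $\gamma$ is invertible, $\{g_1|_{-1}\gamma,g_2|_{-1}\gamma\}$ is a fundamental system for both of these second-order equations, which therefore coincide, i.e.\ $Q|_4\gamma=Q$. As $Q$ is meromorphic on $\H$ by the preceding Proposition, it is a meromorphic modular form of weight $4$ with trivial character on $\Gamma$.

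For (ii) and (iii) I would analyse $Q$ locally at a cusp $s$. Write $s=\gamma\infty$ with $\gamma=\SM abcd\in\SL(2,\Z)$, let $N_s$ be the width of $s$, $q=e^{2\pi iz/N_s}$, and $m=\alpha f_1/\sqrt{W_f}$, so $g_1=zg_2+m$. Since $\det\gamma=1$ we have $f(\gamma z)=(cz+d)^k(f|_k\gamma)(z)$ and likewise for $f_1$ and $W_f$, and $f|_k\gamma$, $f_1|_{k-2}\gamma$, $W_f|_{2k}\gamma$ are (quasi)modular forms on $\gamma^{-1}\Gamma\gamma$ that are holomorphic at $\infty$. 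Hence $g_2(\gamma z)=(f|_k\gamma)/\sqrt{W_f|_{2k}\gamma}$ is $q^{\mu}$ times a $q$-power series with no logarithm, and after removing $(cz+d)$-powers the same holds for $m(\gamma z)$; in particular $g_1(\gamma z)$ and $g_2(\gamma z)$ are of moderate growth as $z\to i\infty$. It follows that $\hat g_j:=g_j|_{-1}\gamma=(cz+d)g_j(\gamma z)$, $j=1,2$, are moderate-growth solutions of $\hat y''=-4\pi^2(Q|_4\gamma)\hat y$ forming a fundamental system (their Wronskian equals that of $g_1,g_2$, which is $1$), so this equation is Fuchsian at $\infty$. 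By the criterion recalled in Section \ref{section: introduction} this is equivalent to $Q$ being holomorphic at $s$, with local exponents $\pm\kappa_s$, $\kappa_s=N_s\sqrt{Q(s)}$.

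Finally, $Q(s)\ge0$, $\kappa_s\in\frac12\Z_{\ge0}$, and non-apparentness all follow once one exhibits a solution with a genuine logarithmic singularity at $s$. When $s=\infty$ (take $\gamma$ with $c=0$, so $q=q_N$ and $z=\frac N{2\pi i}\log q$), the solution $g_1=zg_2+m=\frac N{2\pi i}(\log q)g_2+m$ has a nonzero $\log q$-coefficient, since $g_2\not\equiv0$ and $m$ has none. When $s\neq\infty$ (so $c\neq0$), the factor $cz+d=\frac{cN_s}{2\pi i}\log q+d$ in $\hat g_2=(cz+d)g_2(\gamma z)$ forces a nonzero $\log q$-coefficient, since $g_2(\gamma z)\not\equiv0$ has none. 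By Remark \ref{remark: apparent}, the existence of such a solution forces the two local exponents to differ by an integer, i.e.\ $2N_s\sqrt{Q(s)}\in\Z$; since $Q(s)<0$ would make $\pm\kappa_s$ non-real with difference outside $\Z$, this forces $Q(s)\ge0$, $\kappa_s\in\frac12\Z_{\ge0}$, $Q(s)=\kappa_s^2/N_s^2$, and $s$ not apparent. I expect the two points needing care to be: verifying that $g_1,g_2$ grow only polynomially in $1/q$ near each cusp, so that the singularity of $\hat y''=-4\pi^2(Q|_4\gamma)\hat y$ is genuinely regular rather than irregular; and checking that the logarithmic terms just exhibited do not cancel, which holds because their leading coefficients are nonzero multiples of the leading coefficient in the $q$-expansion of $g_2$, respectively $g_2(\gamma\cdot)$.
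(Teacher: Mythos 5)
Your part (i) and your treatment of the cusp $\infty$ follow the paper's argument essentially verbatim (Bol's identity plus the linear transformation of $(g_1,g_2)$ for modularity; the solution $g_1=zg_2+m$ with $m=\alpha f_1/\sqrt{W_f}$ a ratio of forms with $q$-expansions for non-apparentness at $\infty$), and the variations you introduce --- deducing holomorphy of $Q$ at a cusp from the moderate-growth characterization of regular singular points, and deducing $\kappa_s\in\frac12\Z_{\ge0}$ and $Q(s)\ge0$ from integrality of the exponent difference rather than from \eqref{equation: order of Wf at infinity} --- are sound.

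There is, however, a genuine gap at the cusps $s=\gamma\infty\neq\infty$. You assert that $f|_k\gamma$ is holomorphic at $\infty$ with a $q$-power series expansion, hence that $g_2(\gamma z)=(f|_k\gamma)/\sqrt{W_f|_{2k}\gamma}$ is a pure $q$-series, so that the only source of $z$ in $\hat g_2=(cz+d)g_2(\gamma z)$ is the prefactor $cz+d$. This is false for a quasimodular form of depth $1$ and $\gamma\notin\Gamma$: writing $f=h_1E_2+h_0$ on $\Gamma'=\ker\chi\cap\SL(2,\Z)$ as the paper does, one finds
\[
(f|_k\gamma)(z)=(h_1|_{k-2}\gamma)(z)E_2(z)+(h_0|_k\gamma)(z)
+\frac{6c}{\pi i(cz+d)}(h_1|_{k-2}\gamma)(z),
\]
so $g_2(\gamma z)=A(z)+B(z)/(cz+d)$ with $A,B$ genuine $q_M$-series and $\hat g_2=(cz+d)A+B$. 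The coefficient of $z$ in $\hat g_2$ is $cA$, not a nonzero multiple of (the leading coefficient of) $g_2(\gamma z)$: if $A\equiv0$ and $B\not\equiv0$, your criterion (``$g_2(\gamma z)\not\equiv0$ and has no $\log q$'') would be met while $\hat g_2=B$ carries no logarithmic term at all. So your stated reason for non-cancellation does not close the argument. What is actually needed is $A=\bigl((h_1|\gamma)E_2+(h_0|\gamma)\bigr)/\sqrt{W_f|\gamma}\not\equiv0$, which holds because $h_1\neq0$ ($f$ has depth exactly $1$) and $E_2$ is not a meromorphic modular form of weight $2$ on any congruence subgroup; this is exactly the explicit computation the paper performs in its proof of (iii). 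Once this is supplied, the rest of your argument (Fuchsianity at $s$, $2\kappa_s\in\Z$, $Q(s)=\kappa_s^2/M^2\ge0$, non-apparentness) goes through.
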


\begin{proof} To prove the modularity of $Q(z)$, we consider
  $y(z)=\left(g_2|_{-1}\gamma\right)(z):=(cz+d)g_2(\gamma z)$ for
  $\gamma=\SM abcd\in\Gamma$. By the
  definition of $g_2$, $y(z)$ is well-defined up to $\pm 1$.
  A direct computation show that
  $$
  \left(g_2\big|_{-1}\gamma\right)''(z)
  =\left(g_2''\big|_3\gamma\right)(z)
  $$
  (which is a special case of Bol's identity \cite{Bol}),
  which implies that
  $$
  y''(z)=\left(Q\big|_4\gamma\right)(z)y(z)
  $$
  On the other hand, we have
  \begin{equation*}
    \begin{split}
      y(z)&=(cz+d)g_2(\gamma z)
      =\frac{(cz+d)f(\gamma z)}{\sqrt{W_f(\gamma z)}} \\
      &=\pm\frac{\chi(\gamma)(cz+d)(f(z)+\alpha cf_1(z)/(cz+d))}
      {\sqrt{\chi(\gamma)^2W_f(z)}}\\
      &=\pm(cg_1(z)+dg_2(z)).
    \end{split}
  \end{equation*}
  Thus, $y(z)$ is also a solution of \eqref{equation: y''=-Qy sect 3},
  which implies that $\left(Q|_4\gamma\right)(z)=Q(z)$ for all
  $\gamma\in\Gamma$. This proves (i).

  Let $N$ be the width of the cusp $\infty$ and $q_N=e^{2\pi iz/N}$.
  By \eqref{equation: order
    of Wf at infinity}, the order of $g_2$ at $\infty$ is
  $$
  \kappa_\infty=\frac12v_\infty(f)
  -\frac12\min(v_\infty(f),v_\infty(f_1)),
  $$
  which is in $\frac12\Z_{\ge 0}$. (Note that 
  when $\chi\left(\SM1N01\right)\neq 1$, $v_\infty(f)$ and
  $v_\infty(f_1)$ are not integers, but we still have
  $v_\infty(f)\equiv v_\infty(f_1)\mod 1$ nonetheless.) Since
  \begin{equation} \label{equation: Q at infinity}
  Q(z)=-\frac1{4\pi^2}\frac{g_2''(z)}{g_2(z)}
  =\frac1{N^2}\frac{\left(q_N\frac d{dq_N}\right)^2g_2}{g_2},
  \end{equation}
  we find that $Q(\infty)=\kappa_\infty^2/N^2$.
  
  Notice that $z$ appears in the expression of $g_1(z)$ and $g_1(z)$
  is another solution of \eqref{equation: y''=-Qy sect 3}. This
  obviously implies that $\infty$ is not apparent. We now prove
  (iii).

  Let $s\neq\infty$ be another cusp of $\Gamma$ and $\sigma=\SM abcd$
  be an element of $\SL(2,\Z)$ such that $\sigma\infty=s$. Regarding
  $f(z)$ as a quasimodular form on $\Gamma'=\ker\chi\cap\SL(2,\Z)$, we
  have $f(z)=h_1(z)E_2(z)+h_0(z)$ for some $h_j\in\sM_{k-2j}(\Gamma')$.
  We have
  \begin{equation*}
    \begin{split}
      \left(g_2|_{-1}\sigma\right)(z)
      &=\frac{(cz+d)f(\sigma z)}{\sqrt{W_f(\sigma z)}}
      =\frac{(cz+d)\left(f|_k\sigma\right)(z)}
      {\sqrt{\left(W_f|_{2k}\sigma\right)(z)}} \\
      &=\frac{(cz+d)\left(\left(h_1|_{k-2}\sigma\right)
          E_2(z)+\left(h_0|_k\sigma\right)\right)
        +6c\left(h_1|_{k-2}\sigma\right)/\pi i}
      {\sqrt{\left(W_f|_{2k}\sigma\right)(z)}}.
    \end{split}
  \end{equation*}
  Since all terms in the above expression except for $cz+d$ has a
  $q_M$-expansion, where $M$ is the width of the cusp $s$, we see that
  $z$ appears in the expression of $g_2|_{-1}\gamma$. (Note that
  $c\neq 0$ since $s$ is assumed to be a cusp different from
  $\infty$.) This implies that $s$ is not apparent and the local
  exponents of \eqref{equation: y''=-Qy sect 3} at $s$ are
  $\pm\kappa_s$ for some $\kappa_s\in\frac12\Z_{\ge 0}$ (see Remark
  \ref{remark: apparent}). As discussed earlier, near the cusp $s$,
  there is a solution $y_+(z)$ of \eqref{equation: y''=-Qy sect 3} of
  the form
  $$
  y_+(z)=q_M^{\kappa_s}\left(1+\sum_{j\ge 1}c_jq_M^j\right).
  $$
  By the same computation as \eqref{equation: Q at infinity}, we see
  that $Q(s)=M^{-2}\kappa_s^2\ge 0$. This completes the proof of the
  theorem.
\end{proof}

\begin{Remark} \label{remark: same exponents}
  We remark that for a given point $z_0\in\H$, the local exponents of
  \eqref{equation: y''=-Qy sect 3} at $z_0$ and those at $\gamma z_0$
  are the same for any $\gamma\in\Gamma$. Also, if $z_0$ is
  apparent, then any equivalent point $\gamma z_0$,
  $\gamma\in\Gamma$, is also apparent. See \cite{Guo-Lin-Yang} for a
  proof of this fact.
\end{Remark}

For the special case $\Gamma$ is one of $\SL(2,\Z)$, $\Gamma_0^+(2)$,
and $\Gamma_0^+(3)$, we have the following informations about local
exponents at elliptic points. We first prove a lemma.

\begin{Lemma} \label{lemma: E2 nonvanishing}
  Let $\phi(z)$ be a quasimodular form of weight $2$ and depth $1$ on
  $\Gamma$. Then $\phi(z)$ does not vanish at any elliptic point of
  $\Gamma$.
\end{Lemma}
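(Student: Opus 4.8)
The plan is to exploit the transformation law of $\phi$ under the stabilizer of an elliptic point. Let $\rho\in\H$ be an elliptic point of $\Gamma$ of order $e\ge 2$, and let $\gamma=\SM abcd$ generate the stabilizer of $\rho$ in $\Gamma$ (modulo $\pm I$ if $-I\in\Gamma$), so that $\gamma$ is conjugate in $\SL(2,\R)$ to a rotation of order $e$. Write the transformation law from \eqref{equation: phi transform}:
$$
(cz+d)^{-2}\phi(\gamma z)=\phi(z)+\frac{\alpha c}{cz+d},\qquad \alpha\neq 0.
$$
First I would iterate this relation around the elliptic cycle. Since $\gamma$ has finite order $n$ in $\Gamma/\{\pm I\}$, applying the cocycle relation $n$ times and using $\gamma^n=\pm I$ gives a telescoping identity. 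The automorphy factor $j(\gamma,z)=cz+d$ satisfies $j(\gamma^n,z)=\prod_{k=0}^{n-1}j(\gamma,\gamma^k z)=\pm 1$, and the inhomogeneous term accumulates to
$$
\phi(z)=\phi(\gamma^n z)\,j(\gamma^n,z)^{-2}=\phi(z)+\alpha\sum_{k=0}^{n-1}\frac{c}{j(\gamma,\gamma^k z)\,j(\gamma^k,z)^{2}} \cdot (\pm 1),
$$
which forces the accumulated sum of the $\phi$-independent terms to vanish identically. That identity alone does not yet give nonvanishing, so the real input must come from evaluating the original relation \emph{at} $z=\rho$.

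The key step: set $z=\rho$ in the transformation law. Since $\gamma\rho=\rho$ and $c\rho+d=:\lambda$ is the multiplier with $\lambda=j(\gamma,\rho)$ a primitive $2e$-th (or $e$-th) root of unity — in any case $\lambda^2\neq 1$ because $\gamma$ is elliptic of order $\ge 2$ — we obtain
$$
\lambda^{-2}\phi(\rho)=\phi(\rho)+\frac{\alpha c}{\lambda},
\qquad\text{i.e.}\qquad
(\lambda^{-2}-1)\phi(\rho)=\frac{\alpha c}{\lambda}.
$$
Now $c\neq 0$ (otherwise $\gamma$ would be upper triangular, hence parabolic or $\pm$ identity, not elliptic fixing a point of $\H$), and $\alpha\neq 0$ by hypothesis, and $\lambda^{-2}-1\neq 0$ since $\lambda^2\neq 1$. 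Therefore
$$
\phi(\rho)=\frac{\alpha c}{\lambda(\lambda^{-2}-1)}=\frac{\alpha c\,\lambda}{1-\lambda^{2}}\neq 0.
$$
This completes the argument.

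The only point requiring care — and the step I would flag as the main potential obstacle — is pinning down the correct automorphy factor and multiplier when $\Gamma$ contains elements of $\SL(2,\R)$ of non-integral matrix entries (the Atkin-Lehner elements, e.g. for $\Gamma_0^+(2)$ and $\Gamma_0^+(3)$, where $S=\frac1{\sqrt N}\SM0{-1}N0$), and when passing between $\Gamma$ and $\Gamma/\{\pm I\}$. One must check that \eqref{equation: phi transform} is stated for the normalized representatives actually used, that $c\neq 0$ for a generator of the elliptic stabilizer (true since a matrix in $\SL(2,\R)$ fixing a point of $\H$ and having $c=0$ is $\pm\SM1b01$, hence parabolic or central), and that $\lambda=c\rho+d$ satisfies $|\lambda|=1$ with $\lambda^{2}\ne 1$ — the latter because $\gamma$ acts on the tangent space at $\rho$ by multiplication by $\lambda^{-2}$, a primitive $e$-th root of unity with $e\ge 2$. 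Once these normalizations are fixed, the evaluation-at-$\rho$ identity is immediate and gives an explicit nonzero value for $\phi(\rho)$.
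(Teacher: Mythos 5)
Your proof is correct and is essentially the paper's argument with one layer unpacked: the paper completes $\phi$ to the nearly holomorphic form $\wt\phi(z)=\phi(z)+\alpha/(z-\overline z)$, which transforms with weight $2$ and therefore vanishes at every elliptic point by precisely your evaluation-at-the-fixed-point computation, yielding $\phi(\rho)=-\alpha/(\rho-\overline\rho)\neq 0$. Your direct manipulation of the cocycle (the telescoping preamble is indeed dispensable, as you note) reaches the same explicit nonzero value $\alpha c\lambda/(1-\lambda^2)$, and the normalizations you flag ($c\neq 0$ and $\lambda^2\neq 1$ for a nontrivial elliptic element, including the Atkin--Lehner generators with irrational entries) all check out.
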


\begin{proof} Let $\wt\phi(z)$ be the nearly holomorphic modular form
  of weight $2$ corresponding to $\phi(z)$. We have
  $$
  \wt\phi(z)=\phi(z)+\frac{\alpha}{z-\overline z}
  $$
  for some nonzero complex number $\alpha$. Since $\wt\phi(z)$
  transforms like a modular form of weight $2$ on $\Gamma$, it
  vanishes at all elliptic points of $\Gamma$. It follows that
  $\phi(z)$ does not vanish at any elliptic point of $\Gamma$.
\end{proof}

\begin{Proposition} \label{proposition: local exponents at elliptic}
  We have the following properties about local exponents of
  \eqref{equation: y''=-Qy sect 3} at an elliptic point.
  \begin{enumerate}
  \item[(i)] Let $f$ be a quasimodular form of depth $1$ in
    $\wt\sM_k^{\le1}(\SL(2,\Z))$. Let $1/2\pm\kappa_\rho$ and
    $1/2\pm\kappa_i$,
    $\kappa_\rho,\kappa_i\in\frac12\N$, be the local exponents of
    \eqref{equation: y''=-Qy sect 3} at the elliptic points
    $\rho=(1+\sqrt{-3})/2$ and $i=\sqrt{-1}$. Then
    $(2\kappa_\rho,3)=(2\kappa_i,2)=1$.
  \item[(ii)] Let $f$ be a quasimodular form of depth $1$ in
    $\wt\sM_k^{\le1}(\Gamma_0^+(2),\pm)$. Let $1/2\pm\kappa_{\rho_j}$
    with $\kappa_{\rho_j}\in\frac12\N$, $j=1,2$, be the local
    exponents of \eqref{equation: y''=-Qy sect 3} at the elliptic
    points $\rho_1=i/\sqrt2$ and $\rho_2=(1+i)/2$ of
    $\Gamma_0^+(2)$. Then $(2\kappa_{\rho_1},2)=(2\kappa_{\rho_2},4)=1$.
  \item[(iii)] Let $f$ be a quasimodular form of depth $1$ in
    $\wt\sM_k^{\le1}(\Gamma_0^+(3),\chi^m)$ with
    $m\in\{0,1,2,3\}$ and $k\equiv m\mod 2$ that
    is not a modular form, where $\chi$ is the character of
    $\Gamma_0^+(3)$ defined by \eqref{equation: chi}. Let
    $1/2\pm\kappa_{\rho_j}$ with 
    $\kappa_{\rho_j}\in\frac12\N$, $j=1,2$, be the local exponents of
    \eqref{equation: y''=-Qy sect 3} at the elliptic points
    $\rho_1=i/\sqrt3$ and $\rho_2=(3+\sqrt{-3})/6$ of
    $\Gamma_0^+(3)$. Then $(2\kappa_{\rho_1},2)=(2\kappa_{\rho_2},6)=1$.
  \end{enumerate}
\end{Proposition}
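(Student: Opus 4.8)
The plan is to read the local exponents directly off the explicit solutions $g_1$ and $g_2$ from \eqref{equation: g1, g2} of \eqref{equation: y''=-Qy sect 3}, and then to combine this with the congruence that the vanishing order of a modular form at an elliptic point satisfies modulo the order of that point. Fix an elliptic point $\rho$ of order $e$ of $\Gamma$, and for a function $g$ holomorphic at $\rho$ write $v_\rho(g)$ for its vanishing order there \emph{as a function of $z$}. Since $f$, $f_0$, $f_1$ and $W_f$ are all holomorphic on $\H$, near $\rho$ each of $g_1=(zf+\alpha f_1)/\sqrt{W_f}$ and $g_2=f/\sqrt{W_f}$ equals $(z-\rho)^{\beta}$ times a nonvanishing holomorphic function for some $\beta\in\frac12\Z$; in particular neither has a logarithmic singularity at $\rho$. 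As $g_1$ and $g_2$ are linearly independent (their Wronskian is $1$), it follows that $\rho$ is apparent for \eqref{equation: y''=-Qy sect 3}; and since that equation has no first-order term, its two local exponents at $\rho$ sum to $1$, so they are $\alpha_\rho$ and $1-\alpha_\rho$ with $\alpha_\rho=v_\rho(g_2)=v_\rho(f)-\tfrac12 v_\rho(W_f)$. Writing the exponents as $1/2\pm\kappa_\rho$, this gives $2\kappa_\rho=\bigl|2v_\rho(f)-v_\rho(W_f)-1\bigr|$, reducing everything to a computation of $v_\rho(f)$ and $v_\rho(W_f)$ modulo $e$.

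For that I would use the standard fact that any nonzero modular form of weight $w$ and finite-order character $\delta$ on $\Gamma$ has $v_\rho$ lying in a fixed residue class $\mu(w,\delta)\in\Z/e\Z$, depending only on $w$ and on $\delta$ restricted to the stabilizer $\Gamma_\rho$ and characterized through the value and derivative at $\rho$ of the automorphy factor of a generator of $\Gamma_\rho$. Two properties of $\mu$ are all that is needed: it is additive, $\mu(w+w',\delta\delta')\equiv\mu(w,\delta)+\mu(w',\delta')\pmod e$, because vanishing orders add under multiplication; and $\mu(2,\mathbf 1)\equiv-1\pmod e$, as witnessed by the $\Gamma_\rho$-equivariant weight-$2$ quantity $\frac d{dz}\bigl(\tfrac{z-\rho}{z-\overline\rho}\bigr)^{e}$, which has a zero of order $e-1$ at $\rho$. (The actual evaluation of $\mu(w,\delta)$ at the order-$2$ points and at the order-$3$, order-$4$ and order-$6$ points, keeping track of the $\tfrac1{\sqrt N}$-normalization of $S$ and $R$ and of the quartic character on $\Gamma_0^+(3)$, is the only real bookkeeping; but these two properties already suffice.) Writing $\psi$ for the character of $f$ (trivial on $\SL(2,\Z)$, $\pm 1$ on $\Gamma_0^+(2)$, $\chi^m$ on $\Gamma_0^+(3)$), we have $f=\phi f_1+f_0$ with $f_0\in\sM_k(\Gamma,\psi)$ and $f_1\in\sM_{k-2}(\Gamma,\psi)$, and $\phi(\rho)\neq 0$ by Lemma \ref{lemma: E2 nonvanishing}. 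Then $v_\rho(\phi f_1)=v_\rho(f_1)\equiv\mu(k-2,\psi)\equiv\mu(k,\psi)+1$ lies in a residue class mod $e$ different from that of $v_\rho(f_0)\equiv\mu(k,\psi)$; hence there is no cancellation in $\phi f_1+f_0$, so $v_\rho(f)=\min\{v_\rho(f_0),v_\rho(\phi f_1)\}$ and therefore $v_\rho(f)\equiv\mu(k,\psi)$ or $\mu(k,\psi)+1\pmod e$. On the other hand $v_\rho(W_f)\equiv\mu(2k,\psi^2)\equiv 2\mu(k,\psi)\pmod e$.

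Substituting into $2\kappa_\rho=\bigl|2v_\rho(f)-v_\rho(W_f)-1\bigr|$, the expression inside is $\equiv -1\pmod e$ when $v_\rho(f)\equiv\mu(k,\psi)$ and $\equiv +1\pmod e$ when $v_\rho(f)\equiv\mu(k,\psi)+1$; in either case $2\kappa_\rho\equiv\pm 1\pmod e$, and hence $(2\kappa_\rho,e)=1$. Applying this with $e=2$ and $e=3$ at the points $i$ and $\rho$ of $\SL(2,\Z)$, with $e=2$ and $e=4$ at $\rho_1$ and $\rho_2$ of $\Gamma_0^+(2)$, and with $e=2$ and $e=6$ at $\rho_1$ and $\rho_2$ of $\Gamma_0^+(3)$ gives (i), (ii) and (iii). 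The main obstacle I anticipate is the explicit determination of $\mu(w,\delta)\bmod e$ for the normalized Atkin--Lehner generators --- in particular with the order-$4$ character on $\Gamma_0^+(3)$ --- together with making airtight the two analytic points used above: that $g_1$ and $g_2$ are two genuinely non-logarithmic independent local solutions near $\rho$ (so the exponents are exactly $\alpha_\rho$ and $1-\alpha_\rho$), and that the non-vanishing of $\phi$ at $\rho$ forces $v_\rho(f)$ to equal $\min\{v_\rho(f_0),v_\rho(\phi f_1)\}$.
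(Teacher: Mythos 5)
Your argument is correct, and it takes a genuinely different route from the paper's. The paper first reduces (by dividing out $E_4$ or $E_6$, etc.) to the situation where $f_0$ and $f_1$ have no common zero at the elliptic point, then uses the transformation law of $f$ to find a $\Gamma$-translate $\gamma\rho$ at which $f$ does not vanish, so that $2\kappa_\rho=1+\ord_\rho W_f$; the congruence on $\ord_\rho W_f$ modulo $e$ is then extracted case by case from the explicit graded rings of modular forms (weights of $W_f$ modulo $12$ or $8$, and for $\Gamma_0^+(3)$ an analysis of which monomials $M_1^aM_3^b$ can occur in $W_f$). You instead work at $\rho$ itself with the uniform identity $2\kappa_\rho=\bigl|2v_\rho(f)-v_\rho(W_f)-1\bigr|$ coming from the local solution $g_2=f/\sqrt{W_f}$ of \eqref{equation: g1, g2}, and then invoke only the abstract congruence $v_\rho(g)\equiv\mu(w,\delta)\pmod e$ together with additivity and $\mu(2,\mathbf 1)\equiv-1$; as you note, the explicit values of $\mu$ cancel, giving $2\kappa_\rho\equiv\pm1\pmod e$ in all three cases at once. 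This buys uniformity (no case analysis on $k$ modulo $6$, $4$, or $3$, no reduction step when $f_0,f_1$ share a zero, no appeal to explicit generators), at the cost of needing the character version of the ``vanishing order mod $e$'' fact; the paper's Proposition \ref{proposition: vanishing coefficients} states this only for the trivial character, but its proof (evaluating $\vartheta^{[n]}f(\gamma_0 z)=\delta(\gamma_0)(cz+d)^{k+2n}\vartheta^{[n]}f(z)$ at the fixed point of a generator $\gamma_0$ of the stabilizer) extends verbatim, and both of your structural properties of $\mu$ follow immediately from that characterization. The two analytic points you flag are indeed fine: $f$ and $W_f$ are holomorphic and not identically zero (Lemma \ref{lemma: Wronskian is modular} and $f_1\neq0$), so $g_2=(z-\rho)^{\beta}\cdot(\text{unit})$ with $\beta=v_\rho(f)-\tfrac12v_\rho(W_f)$ a root of the indicial equation; and since $\phi(\rho)\neq0$ by Lemma \ref{lemma: E2 nonvanishing} and $v_\rho(f_0)$, $v_\rho(\phi f_1)$ lie in residue classes differing by $1$ modulo $e\ge2$, the leading terms cannot cancel, so $v_\rho(f)=\min\{v_\rho(f_0),v_\rho(\phi f_1)\}$. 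The ``bookkeeping'' you worry about (explicit evaluation of $\mu$ for the Atkin--Lehner generators and the quartic character) is in fact never needed, so your proof is complete as outlined.
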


\begin{proof} We first make a general remark that since the local
  exponents of \eqref{equation: y''=-Qy sect 3} at a point $z_0\in\H$
  are of the form $1/2\pm\kappa_{z_0}$ for some
  $\kappa_{z_0}\in\frac12\N$, if the order of $f(z)/\sqrt{W_f(z)}$ at
  $z_0$ is $-n_{z_0}$ for some $n_{z_0}\in\frac12\Z_{\ge 0}$, then we
  must have $1/2-\kappa_{z_0}=-n_{z_0}$, i.e.,
  $\kappa_{z_0}=1/2+n_{z_0}$.
  
  Assume that $f(z)\in\wt\sM_k^{\le 1}(\SL(2,\Z))$ that is
  not a modular form. Write it as $f(z)=f_1(z)E_2(z)+f_0(z)$ with
  $f_j\in\sM_{k-2j}(\SL(2,\Z))$. We first
  observe that if $f_0(z)$ and $f_1(z)$ have a common zero at $i$
  (respectively, $\rho$), then $f_j(z)/E_6(z)$ (respectively,
  $f_1(z)/E_4(z)$), $j=1,2$, are holomorphic modular forms, and
  $f(z)/E_6(z)$ (respectively, $f(z)/E_4(z)$) is a quasimodular
  form. Also, it is easy to check that $W_f(z)=E_6(z)^2W_{f/E_6}(z)$
  (respectively, $W_f(z)=E_4(z)^2W_{f/E_4}(z)$), so the differential
  equation \eqref{equation: y''=-Qy sect 3} from $f(z)$
  and that from $f(z)/E_6(z)$ (respectively, $f(z)/E_4(z)$) are the
  same. Therefore, without loss of generality, we may assume that
  $f_0(z)$ and $f_1(z)$ have no common zero at $i$ or $\rho$. We note
  that this assumption forces that the weight $k$ to be congruent to
  $0$ or $2$ modulo $6$ since any modular form on $\SL(2,\Z)$ whose
  weight is not divisible by $6$ must have a zero at $\rho$.

  By Lemma \ref{lemma: E2 nonvanishing}, $E_2(z)$ does not vanish at
  $\rho$ and $i$. Then using
  $$
  \left(f|_k\gamma\right)(z)=f(z)+\frac{12cf_1(z)}{2\pi i(cz+d)},
  \qquad\gamma=\M abcd\in\SL(2,\Z)
  $$
  and the assumption that $f_0(z)$ and $f_1(z)$ have no common zero at
  $i$ and $\rho$, we see that there exists an element $\gamma$ in
  $\SL(2,\Z)$ such that $f(\gamma\rho),f(\gamma i)\neq 0$. Since the
  local exponents of \eqref{equation: y''=-Qy sect 3} at $\rho$
  (respectively, $i$) and at $\gamma\rho$ (respectively, $\gamma i$)
  are the same, by the remark we made at the beginning of the proof,
  we have
  $$
  2\kappa_\rho=1+\ord_{\gamma\rho} W_f(z)
  =1+\ord_\rho W_f(z)
  $$
  (respectively, $2\kappa_i=1+\ord_iW_f(z)$).

  Now consider first the case where the weight $k$ is congruent to $0$
  modulo $6$. In this case, the weight of $W_f(z)$ is a multiple of
  $12$ and hence
  $$
  \frac{\ord_\rho W_f(z)}3+\frac{\ord_i W_f(z)}2
  $$
  must be an integer. It follows that $3|\ord_\rho W_f$ and
  $2|\ord_iW_f$ and therefore $(2\kappa_\rho,3)=(2\kappa_i,2)=1$.
  Similarly, when the weight $k$ is congruent to $2$ modulo $6$, we
  have
  $$
  \frac{\ord_\rho W_f(z)}3+\frac{\ord_iW_f(z)}2\equiv\frac13\mod 1.
  $$
  Then $2|\ord_iW_f$ and $\ord_\rho W_f\equiv 1\mod 3$, from which we
  conclude that $(2\kappa_\rho,3)=(2\kappa_i,2)=1$. This proves Part
  (i).

  The proof of Part (ii) is similar. We may assume without loss of
  generality that $f(z)$ does not vanish at $\gamma\rho_1$ and
  $\gamma\rho_2$ for some $\gamma\in\Gamma_0^+(2)$. Then
  $$
  2\kappa_{\rho_j}=1+\ord_{\rho_j}W_f(z), \qquad j=1,2,
  $$
  and it suffices to prove that $\ord_{\rho_j}W_f$ is always even.
  Indeed, if the weight $k$ is a multiple of $4$, then the weight of
  $W_f\in\sM_{2k}(\Gamma_0^+(2))$ is a multiple of $8$. The orders of
  such a modular form at any elliptic point must be even.
  If the weight $k$ is congruent to $2$ modulo $4$ so that the weight
  of $W_f$ is congruent to $4$ modulo $8$, then $W_f$ must be
  divisible by $M_4(z)=(4E_4(2z)+E_2(z))/5$, which has a zero of order
  $2$ at $\rho_2$ and is nonvashing at $\rho_1$. Since $W_f/M_4$ is
  again a modular form of a weight congruent to $0$ modulo $8$, we
  conclude that the order of $W_f$ at $\rho_j$ must be even for
  $j=1,2$. This proves Part (ii).

  The proof of (iii) is slightly more complicated. Write $f(z)$ as
  $f(z)=f_1(z)M_2^\ast(z)+f_0(z)$, where
  $M_2^\ast(z)=(3E_2(3z)+E_2(z))/4$ and
  $f_j\in\sM_{k-2j}(\Gamma_0^+(3),\chi^m)$. Again, without loss of
  generality, we assume that $f_0(z)$ and $f_1(z)$ have no common zero
  at $\rho_1$ and $\rho_2$. This assumption forces the weight $k$ to
  be congruent to $0$ or $2$ modulo $3$. Then, as above, we have
  $2\kappa_{\rho_j}=1+\ord_{\rho_j}W_f$.

  By Lemma \ref{lemma: Wronskian is modular}, we have
  $W_f\in\sM_{2k}(\Gamma_0^+(3),\chi^{2m})
  =\sM_{2k}(\Gamma_0^+(3),(-1)^k)$. Now
  recall that the ring of modular  
  forms $\oplus_{n=0}^\infty\sM_n(\Gamma_1(3))$, where $n$ runs
  through both even and odd integers, is freely generated by
  the modular forms $\sM_1(z)\in\sM_1(\Gamma_0^+(3),\chi)$ and
  $\sM_3(z)\in\sM_3(\Gamma_0^+(3),\chi)$ defined by \eqref{equation:
    M1 Gamma0+(3)} and \eqref{equation: M3 Gamma0+(3)}.
  Thus, $W_f$ is a linear combination of products of the form
  $M_1(z)^aM_3(z)^b$, where $(a,b)$ are pairs of nonnegative integers
  satisfying
  \begin{equation} \label{equation: ab}
  a+3b=2k, \qquad a+b\equiv 2k\mod 4.
  \end{equation}
  (The latter condition corresponds to the requirement that
  $M_1(z)^aM_3(z)^b$ must be a modular form on $\Gamma_0^+(3)$ with
  character $\chi^{2k}$.) The conditions imply that $b$
  must be even, while
  $$
  a\equiv\begin{cases}
    0\mod 6, &\text{if }k\equiv 0\mod 3, \\
    4\mod 6, &\text{if }k\equiv 2\mod 3, \end{cases}
  $$
  Since $M_1(z)$ and $M_3(z)$ have a simple zero at $\rho_2$ and
  $\rho_1$, respectively, and are nonvanishing elsewhere, the above
  properties of $(a,b)$ show that $\ord_{\rho_1}W_f\equiv 0\mod 2$ and
  $$
  \ord_{\rho_2}W_f\equiv\begin{cases}
    0\mod 6, &\text{if }k\equiv 0\mod 3, \\
    4\mod 6, &\text{if }k\equiv 2\mod 3. \end{cases}
  $$
  In either case, we find that
  $(2\kappa_{\rho_1},2)=(2\kappa_{\rho_2},6)=1$. The proof of the
  proposition is completed.
\end{proof}

We now give several examples of quasimodular forms
that are solutions of modular differential equations.
Consider first the case $\Gamma=\Gamma_0^+(2)$.
We recall that, by Corollary \ref{corollary:
    existence of extremal}, extremal quasimodular forms exist
  in $\wt\sM_k(\Gamma_0^+(2),\pm)$ and are unique up to scalars.

\begin{Theorem} \label{theorem: DE Gamma0+(2)}
  Set
  \begin{equation*}
    \begin{split}
      M_2(z)&=2E_2(2z)-E_2(z), \\
      M_4(z)&=\frac15(4E_4(2z)+E_4(z))=M_2(z)^2, \\
      M_8(z)&=\eta(z)^8\eta(2z)^8.
    \end{split}
  \end{equation*}
  Assume that $k$ is a nonnegative integer.
  \begin{enumerate}
  \item[(i)] Let $f(z)$ be an extremal quasimodular form in
  $\wt\sM_{4k}^{\le 1}(\Gamma_0^+(2),(-1)^k)$.
  Then $f(z)/M_8(z)^{k/2}$ is a solution of
  $$
  y''(z)=-4\pi^2\left(\frac k2\right)^2M_4(z)y(z).
  $$
  \item[(ii)] Let $f(z)$ be an extremal quasimodular form in
    $\wt\sM_{4k+2}^{\le1}(\Gamma_0^+(2),(-1)^k)$. Then
    $f(z)/M_8(z)^{k/2}M_2(z)$ is a solution of
    $$
    y''(z)=-4\pi^2\left(\left(\frac k2\right)^2M_4(z)
      -32\frac{M_8(z)}{M_4(z)}\right)y(z).
    $$
  \end{enumerate}
\end{Theorem}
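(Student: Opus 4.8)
The plan is to run the construction of Section~\ref{section: ODE and quasi} with the extremal quasimodular form $f$, to pin down its Wronskian $W_f$ from the dimension bookkeeping, and then to identify the resulting $Q(z)$ inside an explicit small space of weight-$4$ (meromorphic) modular forms by prescribing its value at $\infty$ and, for part~(ii), its principal part at the elliptic point $\rho_2$. Assume $k\ge1$ (the case $k=0$ is handled directly by the same reasoning, with $Q\equiv0$ in (i)). Then $f$ has depth exactly $1$; write $f=f_1\phi+f_0$ with $\phi(z)=E_2(z)+2E_2(2z)$ the weight-$2$ depth-$1$ form of Remark~\ref{proposition: AL decomposition of quasi} and $f_0\in\sM_w(\Gamma_0^+(2),(-1)^k)$, $f_1\in\sM_{w-2}(\Gamma_0^+(2),(-1)^k)$, where $w$ is the weight of $f$. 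By Theorem~\ref{theorem: Q is modular}, $g_2=f/\sqrt{W_f}$ solves $y''=-4\pi^2Qy$ for a meromorphic weight-$4$ modular form $Q$ on $\Gamma_0^+(2)$ with trivial character, and by Proposition~\ref{proposition: local exponents at elliptic}(ii) this equation satisfies \eqref{equation: H}. Since the hypotheses of Proposition~\ref{proposition: Pellarin} for the relevant Atkin--Lehner spaces were verified in the proof of Corollary~\ref{corollary: existence of extremal}, we get $v_\infty(f)=v_\infty(W_f)$, and \eqref{equation: order of Wf at infinity} then forces $v_\infty(f_1)=0$ and $\kappa_\infty=\tfrac12v_\infty(f)$. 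As $W_f$ lies in $\sM_{8k}(\Gamma_0^+(2))$ in part~(i), resp.\ $\sM_{8k+4}(\Gamma_0^+(2))$ in part~(ii), and attains the maximal vanishing order there, the explicit bases from the proof of Corollary~\ref{corollary: existence of extremal} show $W_f$ is a nonzero multiple of $M_8^k$, resp.\ $M_4M_8^k$; hence $v_\infty(f)=k$, $\kappa_\infty=k/2$, and (using $M_4=M_2^2$) $g_2$ equals, up to a nonzero constant, $f/M_8^{k/2}$ in part~(i) and $f/(M_2M_8^{k/2})$ in part~(ii).

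For part~(i) I would show $Q$ has no pole on $\H$. Since $M_8=\eta(z)^8\eta(2z)^8$ does not vanish on $\H$, the identity $W_f=-f^2+\alpha(f_1f'-f_1'f)$ (from the proof of Lemma~\ref{lemma: Wronskian is modular}) forces $f_0$ and $f_1$ to have no common zero on $\H$ and every zero of $f$ on $\H$ to be simple; at such a zero $z_0$, $g_1=(zf+\alpha f_1)/\sqrt{W_f}$ is nonzero, so the local exponents there are $0,1$, i.e.\ $\kappa_{z_0}=\tfrac12$, and by the criterion recalled in the introduction (when $\kappa_{z_0}=\tfrac12$, apparentness is equivalent to holomorphy of $Q$ at $z_0$) together with \eqref{equation: H}(ii), $Q$ is holomorphic at $z_0$. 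Since $\sM_4(\Gamma_0^+(2))=\C M_4$ by the dimension count of Example~\ref{example: Gamma0(2)+w2}, we get $Q=\lambda M_4$, and since the cusp has width $1$, Theorem~\ref{theorem: Q is modular}(ii) gives $\lambda M_4(\infty)=Q(\infty)=\kappa_\infty^2=(k/2)^2$, so $Q=(k/2)^2M_4$, proving (i).

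Part~(ii) is the same away from $\rho_2$: there $W_f=cM_4M_8^k$ vanishes only on the $\Gamma_0^+(2)$-orbit of $\rho_2$ (to order $2$), so $Q$ is holomorphic on $\H$ outside that orbit. I would then check $v_{\rho_2}(f)=0$ (see the next paragraph), so that $g_2=f/(M_2M_8^{k/2})$ has a simple pole at $\rho_2$, the local exponents there are $-1,2$ (consistent with \eqref{equation: H}(iii)), and $Q$ has a double pole at $\rho_2$ with leading coefficient $A=(1/4-\kappa_{\rho_2}^2)/(4\pi^2)=-1/(2\pi^2)$. Thus $Q$ lies in the two-dimensional space $\C M_4\oplus\C(M_8/M_4)$ of weight-$4$ meromorphic modular forms holomorphic off the orbit of $\rho_2$ with at worst a double pole there. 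Matching constant terms at $\infty$ (where $M_8/M_4$ vanishes) gives the $M_4$-coefficient $(k/2)^2$; and because the principal part of $Q$ at $\rho_2$ is determined by the exponents $-1,2$, which hold for every $k$, the $M_8/M_4$-coefficient is independent of $k$ and can be read off from the base case $k=0$: there $f$ is the normalized extremal form $(E_2(z)+2E_2(2z))/3$, $\kappa_\infty=0$ so $Q$ is a multiple of $M_8/M_4$, and the two-term expansion $(E_2(z)+2E_2(2z))\big/\big(3\,(2E_2(2z)-E_2(z))\big)=1-32q+O(q^2)$ shows $Q=-32\,M_8/M_4+O(q^2)$, hence $Q=-32\,M_8/M_4$. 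Combining, $Q=(k/2)^2M_4-32\,M_8/M_4$, proving (ii).

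The main obstacle is the control of the extremal form at $\rho_2$ in part~(ii), i.e.\ the claim $v_{\rho_2}(f)=0$. I would establish it by recording the residues $\bmod\,4$ of the vanishing orders of $f_0$ and $f_1$ at $\rho_2$ (these are forced by the transformation of $f_0,f_1$, of weights $\equiv2$ resp.\ $\equiv0\pmod4$, under the generator $R$ of the stabilizer of $\rho_2$), noting that $\phi$ does not vanish at $\rho_2$ by Lemma~\ref{lemma: E2 nonvanishing}, and then using $v_{\rho_2}(W_f)=2$ together with $W_f=-f^2+\alpha(f_1f'-f_1'f)$ to rule out every possibility except $v_{\rho_2}(f_1)=0$, hence $v_{\rho_2}(f)=0$. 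The only genuinely computational input beyond this is the two-term $q$-expansion above yielding the constant $-32$.
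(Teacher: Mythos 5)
Your argument is correct, and its skeleton — pin down $W_f$ as a multiple of $M_8^k$ (resp.\ $M_4M_8^k$) via Proposition \ref{proposition: Pellarin} and the dimension counts from Corollary \ref{corollary: existence of extremal}, then identify $Q$ inside a small explicit space of weight-$4$ forms — is exactly the paper's. You diverge in two sub-arguments, both of which hold up. First, for the key nonvanishing $f(\rho_2)\neq 0$ in part (ii), the paper shows $f_1(\rho_2)\neq 0$ by an extremality/dimension argument: if $f_1(\rho_2)=0$ then $f/M_2$ would be an element of $\wt\sM_{4k}^{\le1}(\Gamma_0^+(2),(-1)^{k-1})$ vanishing to order $k$ at $\infty$, which the dimension formulas forbid. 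Your route instead uses only the congruences $v_{\rho_2}(f_1)\equiv 0$, $v_{\rho_2}(f_0)\equiv 3\pmod 4$ (forced by the weights and the common Atkin--Lehner eigenvalue under the order-$8$ stabilizer of $\rho_2$), the nonvanishing of $\phi$ at $\rho_2$ from Lemma \ref{lemma: E2 nonvanishing}, and $v_{\rho_2}(W_f)=2$: if $v_{\rho_2}(f_1)\ge 4$ then $v_{\rho_2}(f)\ge 3$ and every term of $-f^2+\alpha(f_1f'-f_1'f)$ vanishes to order $\ge 6$, a contradiction. This is purely local and does not use extremality a second time; the paper's version is shorter given that the dimension tables are already on hand. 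Second, for the constant $-32$, the paper reads it off the precomputed indicial equation \eqref{equation: indicial Gamma0+(2)} ($x^2-x+s/16=0$ at $\rho_2$, so $s=-32$ from the exponents $-1,2$), whereas you observe that the coefficient of $M_8/M_4$ is determined by the $k$-independent principal part of $Q$ at $\rho_2$ and evaluate it at $k=0$ from the two-term $q$-expansion of $M_2^\ast/M_2$; this trades the appendix computation for one explicit expansion and is a legitimate shortcut. Your extra step in part (i) (simple zeros of $f$, exponents $0,1$, hence holomorphy of $Q$ at zeros of $f$) makes explicit what the paper passes over by noting that $g_1,g_2$ are holomorphic with $\det\SM{g_1}{g_2}{g_1'}{g_2'}=1$.
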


\begin{proof}
  Let $W_f(z)$ be the Wronskian associated to $f$ defined
  in  Lemma \ref{lemma: Wronskian is modular}. By the lemma, $W_f(z)$
  is a modular form of weight $8k$ on $\Gamma_0^+(2)$ (with trivial
  character). We then observe that, by \eqref{equation: dim Gamma0(2)
    +} and \eqref{equation: dim Gamma0(2) -},
  $$
  \dim\wt\sM_{4k}^{\le 1}(\Gamma_0^+(2),(-1)^{k})=1+k
  =\dim\sM_{8k}(\Gamma_0^+(2)).
  $$
  It follows that, by Proposition \ref{proposition: Pellarin}, if $f$
  is an extremal quasimodular form in
  $\wt\sM_{4k}^{\le1}(\Gamma_0^+(2),(-1)^k)$, then 
  $v_\infty(f)=v_\infty(W_f)=k$. However, up to scalars, $M_8(z)^k$ is
  the only modular form in $\sM_{8k}(\Gamma_0^+(2))$ whose order
  of vanishing at $\infty$ is $k$ (see the proof of Corollary
  \ref{corollary: existence of extremal}). In
  other words, $W_f(z)=cM_8(z)^k$ for some nonzero complex number $c$
  and the functions $g_1(z)$ and $g_2(z)$ in \eqref{equation: g1, g2}
  are holomorphic throughout $\H$. It follows that the differential
  equation \eqref{equation: y''=-Qy sect 3} has no singularities in
  $\H$. Then the meromorphic modular form $Q(z)$ in Theorem
  \ref{theorem: Q is modular} must be a holomorphic modular form of
  $4$ and hence a multiple of $M_4(z)$ due to the fact that
  $\dim\sM_4(\Gamma_0^+(2))=1$. Since $g_2(z)=cq^{k/2}+\cdots$ for
  some nonzero constant $c$, we conclude that $Q(z)=(k/2)^2M_4(z)$.
  This proves Part (i).

  The proof of Part (ii) is slightly more involved. Let $f(z)$ be an
  extremal quasimodular form in $\wt\sM_{4k+2}(\Gamma_0^+(2),(-1)^k)$.
  By the same reasoning as in the proof of Part (i), using
  \eqref{equation: dim Gamma0(2) +}, \eqref{equation: dim Gamma0(2)
    -}, and Proposition \ref{proposition: Pellarin}, we see that
  $W_f(z)$ is a modular form of weight $8k+4$ such that
  $v_\infty(W_f)=k$ and hence must be a multiple of $M_4(z)M_8(z)^k$.
  Now $M_4(z)$ has a zero of order $2$ at $\rho_2=(1+i)/2$ (see
  Section \ref{section: Gamma0+(2)}), so the differential equation
  \eqref{equation: y''=-Qy sect 3} has potentially a singularity at
  $\rho_2$ and we need to determine the local exponents at the
  point.

  Write $f(z)$ as $f(z)=M_2^\ast(z)f_1(z)+f_0(z)$, where
  $M_2^\ast(z)=(2E_2(2z)+E_2(z))/3$ and
  $f_j\in\sM_{4k+2-2j}(\Gamma_0^+(2),(-1)^k)$, $j=0,1$. Since
  $\rho_2=(1+i)/2$ is an elliptic point of order $2$ of $\Gamma_0(2)$,
  we must have $f_0(\rho_2)=0$. We claim that $f_1(\rho_2)\neq 0$.
  Indeed, since $M_2(z)=2E_2(2z)-E_2(z)$ has a simple zero at points
  equivalent to $\rho_2$ and is nonvanishing elsewhere, if
  $f_1(\rho)=0$, then $f_0(z)/M_2(z)$ and $f_1(z)/M_2(z)$ are
  holomorphic modular forms on $\Gamma_0(2)$ with Atkin-Lehner
  eigenvalue $(-1)^{k-1}$. In other
  words, $f(z)/M_2(z)\in\wt\sM_{4k}^{\le 1}(\Gamma_0^+(2),(-1)^{k-1})$.
  However, this is impossible because there is no quasimodular form in
  $\wt\sM_{4k}^{\le 1}(\Gamma_0^+(2),(-1)^{k-1})$ whose order of
  vanishing at $\infty$ is $k$, by \eqref{equation: dim Gamma0(2) +},
  \eqref{equation: dim Gamma0(2) -}, and Corollary \ref{corollary:
    existence of extremal}. We conclude that $f_1(\rho_2)\neq 0$.

  We next observe that, by Lemma \ref{lemma: E2 nonvanishing},
  $M_2^\ast(z)$ does not vanish at any elliptic point of
  $\Gamma_0^+(2)$. Therefore, $f(z)=M^\ast_2(z)f_1(z)+f_0(z)$ does not
  vanish at $\rho_2$ and $f(z)/\sqrt{W_f(z)}$ has a simple zero at
  $\rho_2$. It follows that the local exponents of \eqref{equation:
    y''=-Qy sect 3} at $\rho$ must be $-1$ and $2$, i.e., the indicial
  equation at $\rho$ is $x^2-x-2=0$. By \eqref{equation: indicial
    Gamma0+(2)}, we must have
  $$
  Q(z)=\left(\frac k2\right)^2M_4(z)-32\frac{M_8(z)}{M_4(z)}.
  $$
  This proves Part (ii).
\end{proof}

By the same token, we have the following analogous result for
$\Gamma_0^+(3)$.

\begin{Theorem} \label{theorem: DE Gamma0+(3)}
  Set
  \begin{equation} \label{equation: Ms Gamma0+(3)}
    \begin{split}
      M_2(z)&=\frac{3E_2(3z)-E_2(z)}2, \\
      M_4(z)&=\frac{9E_4(3z)+E_4(z)}{10}=M_2(z)^2, \\
      M_6(z)&=\eta(z)^6\eta(3z)^6.
    \end{split}
  \end{equation}
  Let $\chi$ be the character of $\Gamma_0^+(3)$ defined by
  $$
  \chi\left(\frac1{\sqrt3}\M0{-1}30\right)
  =\chi\left(\frac1{\sqrt3}\M3{-1}30\right)=-i.
  $$
  Let $k$ be a nonnegative integer.
  \begin{enumerate}
    \item[(i)] Let $f(z)$ be an extremal quasimodular form in
    $\wt\sM_{3k}^{\le1}(\Gamma_0^+(3),\chi^k)$. Then
    $f(z)/M_6(z)^{k/2}$ is a solution of the differential equation
    $$
    y''(z)=-4\pi^2\left(\frac k2\right)^2M_4(z)y(z).
    $$
  \item[(ii)] Let $f(z)$ be an extremal quasimodular form in
    $\wt\sM_{3k+2}^{\le 1}(\Gamma_0^+(3),\chi^k)$. Then
    $f(z)/M_6(z)^{k/2}M_2(z)$ is a solution of the differential
    equation
    $$
    y''(z)=-4\pi^2\left(\left(\frac k2\right)^2M_4(z)
      -18\frac{M_6(z)}{M_2(z)}\right)y(z).
    $$
  \end{enumerate}
\end{Theorem}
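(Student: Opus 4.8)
The plan is to run the proof of Theorem~\ref{theorem: DE Gamma0+(2)} essentially verbatim, with $\Gamma_0^+(2)$ replaced by $\Gamma_0^+(3)$ and with the forms $M_2,M_4,M_6$ of \eqref{equation: Ms Gamma0+(3)} taking over the roles of $M_2,M_4,M_8$ there. Recall that $M_4=M_2^2$, that $M_6=\eta(z)^6\eta(3z)^6$ is non-vanishing on $\H$, and (as in the proof of Corollary~\ref{corollary: existence of extremal}) that $M_2\in\sM_2(\Gamma_0^+(3),-)$, $M_4\in\sM_4(\Gamma_0^+(3))$, $M_6\in\sM_6(\Gamma_0^+(3),-)$. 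In both parts let $W_f$ be the Wronskian of Lemma~\ref{lemma: Wronskian is modular}; by that lemma $W_f$ is a modular form of weight $6k$ (part (i)) or $6k+4$ (part (ii)) on $\Gamma_0^+(3)$ with character $\chi^{2k}$, which is trivial for even $k$ and the nontrivial Atkin--Lehner character $\chi^2$ for odd $k$. Using the dimension formulas collected in Section~\ref{section: quasimodular}, one checks in each residue class of $k$ that $\dim\wt\sM_{3k}^{\le1}(\Gamma_0^+(3),\chi^k)=\dim\sM_{6k}(\Gamma_0^+(3),\chi^{2k})=k+1$ and $\dim\wt\sM_{3k+2}^{\le1}(\Gamma_0^+(3),\chi^k)=\dim\sM_{6k+4}(\Gamma_0^+(3),\chi^{2k})=k+1$, and moreover (using monomial bases in $M_2,M_4,M_6$ exactly as in the proof of Corollary~\ref{corollary: existence of extremal}) that each of those two modular spaces has a basis with vanishing orders $0,1,\dots,k$ at $\infty$, the unique element of maximal order $k$ being $M_6^k$ in the first case and $M_4M_6^k$ in the second. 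By Proposition~\ref{proposition: Pellarin} and its odd-weight counterpart from Section~\ref{section: quasimodular}, it then follows that $v_\infty(f)=v_\infty(W_f)=k$, hence $W_f=c\,M_6^k$ in part (i) and $W_f=c\,M_4M_6^k$ in part (ii) for some $c\neq0$.

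For part (i), since $M_6$ is non-vanishing on $\H$, so is $W_f$, hence $g_1,g_2$ of \eqref{equation: g1, g2} are holomorphic throughout $\H$ and \eqref{equation: y''=-Qy sect 3} has no singularity on $\H$. By Theorem~\ref{theorem: Q is modular}, $Q$ is then a holomorphic modular form of weight $4$ on $\Gamma_0^+(3)$, hence a scalar multiple of $M_4$ since $\dim\sM_4(\Gamma_0^+(3))=1$. Since $g_2=f/\sqrt{W_f}$ has $v_\infty(g_2)=k-k/2=k/2$ and the cusp $\infty$ has width $1$, Theorem~\ref{theorem: Q is modular}(ii) gives $Q(\infty)=(k/2)^2$; as $M_4(\infty)=1$, we conclude $Q=(k/2)^2M_4$.

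For part (ii), a valence count shows $M_4$ has a zero of order $4$ at $\rho_2$ and none elsewhere on $\H$ (and $M_2$ a zero of order $2$ at $\rho_2$ and none elsewhere), so, together with $W_f=c\,M_4M_6^k$ and the non-vanishing of $M_6$, the only possible singularity of \eqref{equation: y''=-Qy sect 3} on $\H$ is at $\rho_2$. Write $f=M_2^\ast f_1+f_0$ with $M_2^\ast=(3E_2(3z)+E_2(z))/4$ and $f_j\in\sM_{3k+2-2j}(\Gamma_0^+(3),\chi^k)$. The key step is to show $f_1(\rho_2)\neq0$. Analyzing the transformation of a modular form at the order-$6$ elliptic point $\rho_2$, one finds that every element of $\sM_{3k+2}(\Gamma_0^+(3),\chi^k)$ vanishes at $\rho_2$ to order $\equiv5\bmod6$, and that every element of $\sM_{3k}(\Gamma_0^+(3),\chi^k)$ vanishing at $\rho_2$ does so to order $\equiv0\bmod6$; in either case the order is $\ge2=\ord_{\rho_2}M_2$. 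Hence if $f_1(\rho_2)=0$, then $f_1/M_2$ and $f_0/M_2$ are holomorphic modular forms and $f/M_2\in\wt\sM_{3k}^{\le1}(\Gamma_0^+(3),\chi^{k+2})$ has vanishing order $k$ at $\infty$ --- impossible, since that space is $k$-dimensional and, by Corollary~\ref{corollary: existence of extremal} together with its $\chi$-character analogue (uniqueness of extremal forms precludes any nonzero element of vanishing order $\ge\dim$), its maximal vanishing order at $\infty$ is $k-1$. Thus $f_1(\rho_2)\neq0$; since $M_2^\ast$ is non-vanishing at elliptic points (Lemma~\ref{lemma: E2 nonvanishing}) and $f_0(\rho_2)=0$ automatically, we get $f(\rho_2)\neq0$. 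Consequently $g_2=f/\sqrt{W_f}$ has a pole of order exactly $2$ at $\rho_2$, so the local exponents of \eqref{equation: y''=-Qy sect 3} there are $-2$ and $3$. Finally, $Q$ is a weight-$4$ meromorphic modular form on $\Gamma_0^+(3)$ holomorphic on $\H$ away from a double pole at $\rho_2$, hence lies in the $2$-dimensional span of $M_4$ and $M_6/M_2$; matching $Q(\infty)$ yields the term $(k/2)^2M_4$ exactly as in part (i), and matching the indicial equation at $\rho_2$ (the $\Gamma_0^+(3)$-analogue of \eqref{equation: indicial Gamma0+(2)}, established in Section~\ref{section: Gamma0+(3)}) forces the coefficient of $M_6/M_2$ to be $-18$.

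The step I expect to be the main obstacle is the claim $f_1(\rho_2)\neq0$ in part (ii): in the $\Gamma_0^+(2)$ analogue $\rho_2$ has order $4$ and $M_2$ only a simple zero there, so the divisibility argument is immediate, whereas here $\rho_2$ has order $6$ and $M_2$ a double zero, so one must first pin down the admissible orders of vanishing at $\rho_2$ of modular forms carrying the characters $\chi^k$ and $\chi^{k+2}$, and then feed these into the dimension bookkeeping of Section~\ref{section: quasimodular}. The remaining delicate (but routine) point is pinning down the constant $-18$, which amounts to a local computation at $\rho_2$ relating the leading behaviours of $M_2$ and $M_6$ there.
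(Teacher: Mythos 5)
Your proposal is correct and follows essentially the same route as the paper, which only sketches this proof by reference to the $\Gamma_0^+(2)$ case (namely: $W_f$ is a scalar multiple of $M_6^k$ resp.\ $M_4M_6^k$, $f/\sqrt{W_f}$ has a pole of order $2$ at $\rho_2$, the indicial equation there is $x^2-x-6=0$, and \eqref{equation: indicial Gamma0+(3)} then forces the coefficient $-18$); you have simply supplied the omitted details, including the character bookkeeping and the verification that $f_1(\rho_2)\neq0$. One small quibble: the fact that no nonzero element of the $k$-dimensional space $\wt\sM_{3k}^{\le1}(\Gamma_0^+(3),\chi^{k+2})$ vanishes to order $\ge k$ at $\infty$ is not literally a consequence of \emph{uniqueness} of extremal forms, but of the Wronskian bound $v_\infty(f)\le v_\infty(W_f)\le\dim-1$ from the proof of Proposition~\ref{proposition: Pellarin} (transported by multiplication by a form nonvanishing at $\infty$ in the exceptional residue classes) --- the conclusion you need is nonetheless established in the paper.
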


\begin{proof} The proof is similar to that of Theorem \ref{theorem: DE
    Gamma0+(2)}. We omit most of the details and only mention that the
  Wronskian $W_f$ associated to $f$ in Part (ii) is a scalar multiple
  of $M_6(z)^kM_4(z)$, where $M_4(z)$ has a zero of order $4$ at
  $\rho_2=(3+\sqrt{-3})/6$. Hence, $f(z)/\sqrt{W_f(z)}$ has a pole of
  order $2$ at $\rho_2$, and the indicial equation of \eqref{equation:
    y''=-Qy sect 3} at $\rho_2$ is $x^2-x-6=0$. By \eqref{equation:
    indicial Gamma0+(3)}, we therefore have
  $$
  Q(z)=\left(\frac k2\right)^2M_4(z)-18\frac{M_6(z)}{M_2(z)},
  $$
  as claimed.
\end{proof}

\begin{Remark} The proof of the two theorems also applies to
  quasimodular forms on $\SL(2,\Z)$. We can similarly show that
  if $f$ is an extremal quasimodular form of weight $6k$, then
  $f/\Delta^{k/2}$ is a solution of $y''(z)=-\pi^2k^2E_4(z)y(z)$. This
  gives a new proof of \eqref{equation: example of KK}. Also, if $f$
  is an extremal quasimodular form of weight $6k+2$ and depth $1$ on
  $\SL(2,\Z)$, then the Wronskian $W_f(z)$ is a multiple of
  $\Delta(z)^kE_4(z)$. Therefore, the order of $f(z)/\sqrt{W_f(z)}$
  at $\rho=(1+\sqrt{-3})/2$ is $-1/2$ and the indicial equation at
  $\rho$ is $x^2-x-3/4=0$. By \eqref{equation: indicial
    SL(2,Z)}, $f(z)/\sqrt{W_f(z)}$ is a solution of
  $$
  y''(z)=-4\pi^2\left(\left(\frac k2\right)^2E_4(z)-144
    \frac{\Delta(z)}{E_4(z)^2}\right)y(z).
  $$
  This corresponds to Theorem 2.1(2) of \cite{Kaneko-Koike}.

  We remark that the reason why we do not have a corresponding
  statement for extremal quasimodular forms of weight $6k+4$ and depth
  $1$ on $\SL(2,\Z)$ is that such a quasimodular form $f(z)$ must be a
  multiple of $E_4(z)$, but the differential equation satisfied by
  $f(z)/\sqrt{W_f(z)}$ is the same as that for
  $f(z)/E_4(z)\sqrt{W_{f/E_4}(z)}$.
\end{Remark}


\section{MODE in the case of $\SL(2,\Z)$}
\label{section: SL(2,Z)}
In this section, we shall consider Fuchsian differential
  equations of the form
  \begin{equation} \label{equation: DE SL(2,Z)}
  y''(z)=-4\pi^2Q(z)y(z)
  \end{equation}
on $\H$ for some meromorphic modular forms $Q(z)$ of weight $4$ on
$\SL(2,\Z)$ satisfying the conditions in \eqref{equation: H}. In
particular, $Q(z)$ is holomorphic at $q=0$, $q=e^{2\pi iz}$, with
$Q(\infty)\ge 0$. Let $D_q=qd/dq=(2\pi i)^{-1}d/dz$. At $q=0$,
\eqref{equation: DE SL(2,Z)} becomes
\begin{equation} \label{equation: general DE in Dq}
  D_q^2y=Q(z)y.
\end{equation}
Thus, \eqref{equation: DE SL(2,Z)} is also Fuchsian at the cusp
$\infty$ and its local exponents at $\infty$ are $\pm\kappa_\infty$,
where
\begin{equation} \label{equation: kappa at infinity}
  \kappa_\infty=\sqrt{Q(\infty)}.
\end{equation}
Note that in \cite[Proposition 3.6]{Guo-Lin-Yang} we have
shown that if \eqref{equation: DE SL(2,Z)} is apparent at a point
$z_0$, then it is apparent at all $\gamma z_0$, $\gamma\in\SL(2,\Z)$.
Let $z_1,\ldots,z_m$ be the poles of $Q(z)$ such that $z_j$ and $z_k$
are not equivalent under $\SL(2,\Z)$ when $j\neq k$, i.e.,
$z_k\notin\SL(2,\Z)z_j$, and
$z_j\notin\{\rho=(1+\sqrt{-3})/2,i=\sqrt{-1}\}$. The apparentness
condition implies that the local exponents at $\rho$, $i$, and $z_j$,
$j=1,\ldots,m$, are $1/2\pm\kappa_\rho$, $1/2\pm\kappa_i$, and
$1/2\pm\kappa_j$, $1\leq j\leq m$, where all $\kappa$'s are in
$\frac{1}{2}\N$.

To the modular differential equation \eqref{equation: DE SL(2,Z)},
there associates the Bol representation $\rho$, a homomorphism from
$\SL(2,\Z)$ to $\PSL(2,\C)$, that is, by choosing a fundamental
solution $Y(z)=(y_1(z),y_2(z))^t$, there is $\rho(\gamma)\in\SL(2,\C)$
for any $\gamma\in\SL(2,\C)$ such that
\begin{equation} \label{equation: Bol section 4}
  \column{y_1\big|_{-1}\gamma}{y_2\big|_{-1}\gamma}
  =\pm\rho(\gamma)\column{y_1}{y_2}.
\end{equation}
For our purpose, we would like to lift $\rho$ to a homomorphism
$\hat\rho:\SL(2,\Z)\to\SL(2,\C)$ such that $\rho=\pi\circ\hat\rho$,
where $\pi$ is the natural projection from $\SL(2,\C)$ to
$\PSL(2,\C)$. This will be done as follows.

Define $t_j=E_6(z_j)^2/E_4(z_j)^3$ such that
$F_j(z_j)=0$, where
\begin{equation}\label{equation: F_j}
F_j(z)=E_6(z)^2-t_jE_4(z)^3.
\end{equation}

\begin{Lemma}\label{lemma: F_j has simple zero} If $t_j\neq 0,1$, then
$E_6(z)^2-t_jE_4(z)^3$ has zeros only at $z_j$, and $z_j$ is a simple zero.
\end{Lemma}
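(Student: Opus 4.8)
The plan is to view $F_j(z)=E_6(z)^2-t_jE_4(z)^3$ as a holomorphic modular form of weight $12$ on $\SL(2,\Z)$ and apply the valence formula. First I would note that $E_4^3$ and $E_6^2$ are linearly independent in the two‑dimensional space $\sM_{12}(\SL(2,\Z))$ (their $q$-expansions differ already in the coefficient of $q$, equivalently $E_4^3-E_6^2=1728\Delta$), so $F_j$ is a nonzero element of $\sM_{12}(\SL(2,\Z))$ for every value of $t_j$.

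Next I would invoke the $k/12$-valence formula: for a nonzero $g\in\sM_{12}(\SL(2,\Z))$,
$$
v_\infty(g)+\tfrac12 v_i(g)+\tfrac13 v_\rho(g)+\sum_{P}v_P(g)=1,
$$
where the sum runs over $\SL(2,\Z)$-inequivalent points $P\in\H$ other than those equivalent to $i$ or $\rho$. Evaluating at the cusp, $F_j(\infty)=E_6(\infty)^2-t_jE_4(\infty)^3=1-t_j$, which is nonzero because $t_j\neq1$; hence $v_\infty(F_j)=0$ and the entire mass $1$ on the right sits over $\H$.

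Finally I would use two facts: that $F_j(z_j)=0$, which is immediate from the definition $t_j=E_6(z_j)^2/E_4(z_j)^3$ (legitimate since $E_4(z_j)\neq0$, as $z_j$ is not $\SL(2,\Z)$-equivalent to $\rho$); and that $z_j$ is not $\SL(2,\Z)$-equivalent to $i$ or $\rho$. The latter holds because $t_j\neq0$ forces $E_6(z_j)\neq0$ (so $z_j\not\sim i$) while $E_4(z_j)\neq0$ by construction (so $z_j\not\sim\rho$); equivalently, $z_j$ is by hypothesis a non-elliptic pole of $Q$. Therefore $z_j$ contributes $v_{z_j}(F_j)\cdot1$ to the left-hand side with $v_{z_j}(F_j)\ge1$ an integer. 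Since the total equals exactly $1$, we must have $v_{z_j}(F_j)=1$ and $v_P(F_j)=0$ for every other $P$, including $i$ and $\rho$. This says precisely that $F_j$ vanishes only on the $\SL(2,\Z)$-orbit of $z_j$ and that the zero at $z_j$ is simple.

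I do not expect a serious obstacle here; the step most prone to slips is the bookkeeping in the valence formula, namely checking that the elliptic-point denominators $\tfrac12$ and $\tfrac13$ cannot absorb part of the mass $1$ once we already know the non-elliptic point $z_j$ contributes an integer $\ge1$ — which is exactly what the hypotheses $t_j\neq0,1$ (together with $z_j\in\H$) guarantee.
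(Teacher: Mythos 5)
Your proof is correct and is essentially the paper's intended argument: the paper disposes of this lemma by citing the valence formula from Serre's book, and you have simply carried out that computation in detail (weight $12$ gives total mass $1$, the cusp contributes $0$ since $1-t_j\neq 0$, and the non-elliptic zero $z_j$ contributes an integer $\ge 1$, forcing equality and excluding all other zeros). No gaps.
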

The lemma can be proved by the theorem of counting zeros of modular
forms (\cite[p. 85, Theorem 3]{Serre}).

Set
\begin{equation} \label{equation: F SL(2,Z)}
F(z)=\Delta(z)^{\kappa_\infty}E_4(z)^{\kappa_\rho-1/2}
E_6(z)^{\kappa_i-1/2}\prod_{j=1}^m
F_j(z)^{\kappa_j-1/2}
\end{equation}
and
\begin{equation} \label{equation: ell}
  \ell=-1+12\kappa_\infty+4\left(\kappa_\rho-\frac12\right)
  +6\left(\kappa_i-\frac12\right)
  +12\sum^m_{j=1}\left(\kappa_j-\frac12\right),
\end{equation}
so that $F(z)^2$ is a modular form of weight $2(\ell+1)$ on $\SL(2,\Z)$. Note that by assumption \eqref{equation: H}, $\kappa_i-1/2$ is an integer. Thus, $\ell$ is always an odd integer.

For any function $f(z)$ on $\H$, we set
$$
\hat f(z)=F(z)f(z).
$$
It is easy to see that if $y(z)$ is a solution of \eqref{equation: DE
  SL(2,Z)}, then $\hat{y}(z)$ is single-valued function that is
holomorphic throughout $\H$. Furthermore, \emph{its order at $z_j$ is
  either $0$ or $2\kappa_j$}, and a similar property holds at
$\infty$, $\rho$,
and $i$. Now choose a fundamental solution $Y(z)=(y_1(z),y_2(z))^t$ of
\eqref{equation: DE SL(2,Z)}. Then we have a homomorphism
$\hat\rho:\SL(2,\Z)\to\GL(2,\C)$ defined by
\begin{equation} \label{equation: Y weight ell}
\begin{split}
  \left(\hat{Y}\big\vert_{\ell}\gamma\right)(z)
  =\hat\rho(\gamma)\hat{Y}(z),\quad \text{where }
  \hat Y(z)=F(z)Y(z).
\end{split}
\end{equation}
The next lemma shows that this homomorphism $\hat\rho$ has an image in
$\SL(2,\C)$.

\begin{Lemma} \label{lemma: det rho SL(2,Z)}
  We have $\det\hat\rho(\gamma)=1$ for all $\gamma\in\SL(2,\Z)$. That
  is, $\hat\rho$ is a homomorphism from $\SL(2,\Z)$ to $\SL(2,\C)$.
\end{Lemma}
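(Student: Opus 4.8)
The plan is to realize $\det\hat\rho(\gamma)$ as a ratio of Wronskians and check that this ratio equals $1$. For a pair of (locally defined) functions write $\mathcal W[v_1,v_2]:=v_1v_2'-v_2v_1'$; I will use the two elementary identities $\mathcal W[av_1+bv_2,\,cv_1+dv_2]=(ad-bc)\,\mathcal W[v_1,v_2]$ for scalars $a,b,c,d$ and $\mathcal W[gv_1,gv_2]=g^2\,\mathcal W[v_1,v_2]$ for a scalar function $g$. Since \eqref{equation: DE SL(2,Z)} has no first-order term, the Wronskian $W_0:=\mathcal W[y_1,y_2]$ of the chosen fundamental solution $Y=(y_1,y_2)^t$ is a nonzero constant.

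Next I would record two ingredients. (a) By \eqref{equation: F SL(2,Z)} and the standing assumption \eqref{equation: H}, $F(z)^2$ is a holomorphic modular form of weight $2(\ell+1)$ on $\SL(2,\Z)$ with trivial character; hence there is a sign $\epsilon(\gamma)\in\{\pm1\}$ with
$$
F(\gamma z)=\epsilon(\gamma)(cz+d)^{\ell+1}F(z),\qquad \gamma=\M abcd\in\SL(2,\Z).
$$
(b) For each $\gamma\in\SL(2,\Z)$ the functions $y_j|_{-1}\gamma$, $j=1,2$, are again solutions of \eqref{equation: DE SL(2,Z)} (Bol's identity), and a direct computation using $\frac{d}{dz}(\gamma z)=(cz+d)^{-2}$ gives $\mathcal W[y_1|_{-1}\gamma,\,y_2|_{-1}\gamma]=W_0$; this is exactly the observation recalled in Section \ref{section: introduction} that forces the Bol representation into $\PSL(2,\C)$.

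Then the computation is one line. Unwinding the weight-$\ell$ slash and applying (a),
$$
\hat y_j\big|_\ell\gamma=(cz+d)^{-\ell}F(\gamma z)\,y_j(\gamma z)=\epsilon(\gamma)\,F(z)\,(cz+d)\,y_j(\gamma z)=\epsilon(\gamma)\,F(z)\,\bigl(y_j\big|_{-1}\gamma\bigr)(z),
$$
so by the two identities for $\mathcal W$ and (b),
$$
\mathcal W\bigl[\hat y_1|_\ell\gamma,\ \hat y_2|_\ell\gamma\bigr]=\epsilon(\gamma)^2 F^2\,\mathcal W\bigl[y_1|_{-1}\gamma,\ y_2|_{-1}\gamma\bigr]=F^2W_0,
$$
since $\epsilon(\gamma)^2=1$. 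On the other hand $\hat Y|_\ell\gamma=\hat\rho(\gamma)\hat Y$, together with $\mathcal W[\hat y_1,\hat y_2]=\mathcal W[Fy_1,Fy_2]=F^2W_0$, gives $\mathcal W[\hat y_1|_\ell\gamma,\hat y_2|_\ell\gamma]=\det\hat\rho(\gamma)\cdot F^2W_0$. Comparing the two evaluations and using $F^2W_0\not\equiv0$ forces $\det\hat\rho(\gamma)=1$.

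I do not expect any real obstacle. The only point needing a little care is (a): $F$ is a ``half-weight'' object and carries the $\pm1$ multiplier $\epsilon$, but $\epsilon(\gamma)^2=1$ makes it invisible to the Wronskian, which is what keeps the argument clean. (One could alternatively note that $\det\hat\rho$ is a character of $\SL(2,\Z)$ and so factors through the abelianization $\Z/12\Z$, taking values in $12$th roots of unity; but pinning the value down to $1$ still seems to require the Wronskian computation above, so I would just carry that out directly.)
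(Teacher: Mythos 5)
Your proof is correct and is essentially the paper's own argument: both compute the Wronskian of $(\hat y_1,\hat y_2)$ after applying $\gamma$ in two ways and identify $\det\hat\rho(\gamma)$ with the automorphy ratio $\left(F^2\big|_{2(\ell+1)}\gamma\right)/F^2=1$. Your extra care with the sign $\epsilon(\gamma)$ of the half-weight object $F$ (which squares away in the Wronskian) is a welcome clarification but does not change the route.
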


\begin{proof} Let
  $$
  W(z)=\det\M{y_1}{y_1'}{y_2}{y_2'}, \qquad
  \hat W(z)=\det\M{\hat y_1}{\hat y_1'}{\hat y_2}{\hat y_2'}.
  $$
  On the one hand, \eqref{equation: Y weight ell} yields that for
  $\gamma\in\SL(2,\Z)$,
  $$
  \hat W(z)\big|_{2(\ell+1)}\gamma
  =\det\hat\rho(\gamma)\hat W(z).
  $$
  On the other hand, we have $\hat W(z)=F(z)^2W(z)$. Hence,
  \begin{equation*}
    \begin{split}
      \hat W(z)\big|_{2(\ell+1)}
     &=\left(F(z)^2\big|_{2(\ell+1)}\gamma\right)
       \left(W(z)\big|_0\gamma\right) \\
     &=\left(F(z)^2\big|_{2(\ell+1)}\gamma\right)
      \det\rho(\gamma)W(z)
     =\frac{\left(F(z)^2\big|_{2(\ell+1)}\gamma\right)}{F(z)^2}
     \hat W(z).
    \end{split}
  \end{equation*}
  Comparing the two expressions, we find that
  $$
  \det\hat\rho(\gamma)
  =\frac{\left(F(z)^2\big|_{2(\ell+1)}\gamma\right)}{F(z)^2}=1
  $$
  for all $\gamma\in\SL(2,\Z)$ since $F(z)^2$ is a modular form of
  weight $2(\ell+1)$ on $\SL(2,\Z)$. This proves the lemma.
\end{proof}

If no confusion arises, we will also call $\hat\rho$ the Bol  representation.

Throughout the remainder of the section, we will let $y_+(z)$ denote the unique solution of \eqref{equation: DE SL(2,Z)} of the form
\begin{equation} \label{equation: y+}
  y_+(z)=q^{\kappa_\infty}\left(1+\sum_{j\ge 1}c_jq^j\right).
\end{equation}
Also, we will use the standard notations $T=\SM1101$,
$S=\SM0{-1}10$ and $R=TS=\SM1{-1}10$. They satisfy
\begin{equation}\label{equation: S^2 and R^3}
S^2=R^3=-I.
\end{equation}
Out main result of this section is the following theorem
(stated as Theorem \ref{theorem: main 1} in Section \ref{section:
  introduction}).

\begin{Theorem}\label{theorem: infinity is not apparent implies}
Suppose that $Q(z)$ satisfies \eqref{equation: H} with
$\kappa_\infty\in\frac12\Z_{\ge0}$. Then the following
statements hold true.
\begin{enumerate}
  \item[(i)] The differential equation \eqref{equation: DE SL(2,Z)} is
    not apparent at $\infty$.
  \item[(ii)] Let $y_2(z)=y_+(z)$, where $y_+(z)$ is the solution of
  \eqref{equation: DE SL(2,Z)} of the form
  \eqref{equation: y+}. Let $\hat y_1(z)=\left(\hat y_+|_\ell
    S\right)(z)$.  Then $\hat y_1(z)=z\hat y_2(z)+\hat m_1(z)$ for
  some modular form $\hat{m}_1(z)$ of weight $\ell-1$ on $\SL(2,\Z)$.
  \item[(iii)] Using $y_1(z)$ and $y_2(z)$ as the basis, Bol's
    representation satisfies
    $$
    \hat\rho(\gamma)=\gamma,\quad \gamma\in\SL(2,\Z).
    $$
    In particular, the ratio $h(z)=y_1(z)/y_2(z)$ is equivariant,
    i.e., $h(\gamma z)=\gamma\cdot h(z)$ for all $\gamma\in\SL(2,\Z)$.
  \item[(iv)] Write $\hat y_+(z)$ as $\hat y_+(z)=\frac{\pi
      i}{6}\hat m_1(z)E_2(z)+\hat m_2(z)$. Then $\hat{m}_2(z)$ is a
    modular form of weight $\ell+1$ with respect to $\SL(2,\Z)$. In
    other words, $\hat y_+(z)$ is a quasimodular form of weight
    $\ell+1$ and depth $1$ on $\SL(2,\Z)$.
\end{enumerate}
\end{Theorem}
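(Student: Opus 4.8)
The plan is to determine the lifted Bol representation $\hat\rho$ in the basis $(\hat y_1,\hat y_2)=(\hat y_+|_\ell S,\hat y_+)$ completely, after which all of (i)--(iv) will drop out. First I would record two preliminary facts. By \eqref{equation: F SL(2,Z)}, near $\infty$ one has $F(z)=q^{\kappa_\infty}(c_0+c_1q+\cdots)$ with $c_0\ne0$ (the $\Delta$-factor contributes $q^{\kappa_\infty}$, $E_4,E_6\to1$, and each $F_j\to1-t_j\ne0$), so $\hat y_+=Fy_+=q^{2\kappa_\infty}(c_0+\cdots)$ is an honest Fourier series and in particular $\hat y_+|_\ell T=\hat y_+$; moreover $F(z+1)=e^{2\pi i\kappa_\infty}F(z)$. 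Next I would prove the linear-independence assertion of (ii): if $\hat y_1=\hat y_+|_\ell S$ were a scalar multiple $c\hat y_+$, then applying $|_\ell S$ once more and using $S^2=-I$ with $\ell$ odd gives $c^2=-1$, while $\hat y_+|_\ell\gamma=\psi(\gamma)\hat y_+$ then defines a genuine character $\psi\colon\SL(2,\Z)\to\C^\times$ with $\psi(T)=1$; since $\SL(2,\Z)^{\mathrm{ab}}\cong\Z/12$ is generated by the class of $T$, $\psi$ is trivial, contradicting $c^2=-1$. Hence $\{y_1,y_2\}$ is a fundamental system and $\hat\rho$ is well defined in this basis.

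Now I would pin down $\hat\rho(S)$ and $\hat\rho(T)$. From $\hat y_2|_\ell S=\hat y_1$ (the definition of $\hat y_1$) and $\hat y_1|_\ell S=\hat y_+|_\ell S^2=(-1)^\ell\hat y_+=-\hat y_2$ one reads off $\hat\rho(S)=S$. From $\hat y_2|_\ell T=\hat y_2$ together with $\det\hat\rho(T)=1$ (Lemma \ref{lemma: det rho SL(2,Z)}) one gets $\hat\rho(T)=\SM1b01$ for some $b\in\C$. Then $\hat\rho(R)=\hat\rho(T)\hat\rho(S)=\SM b{-1}10$, and since $R^3=-I$ in $\SL(2,\Z)$ while $\hat\rho(-I)=(-1)^\ell I=-I$, we need $\SM b{-1}10^3=-I$; the one-line computation $\SM b{-1}10^3=\SM{b^3-2b}{1-b^2}{b^2-1}{-b}$ shows this forces $b=1$. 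Thus $\hat\rho(S)=S$, $\hat\rho(T)=T$, and as $S,T$ generate $\SL(2,\Z)$ we conclude $\hat\rho(\gamma)=\gamma$ for all $\gamma$. This is (iii), the equivariance of $h=y_1/y_2=\hat y_1/\hat y_2$ being the routine Möbius computation obtained by dividing the two rows of $\hat Y(\gamma z)=(cz+d)^\ell\gamma\,\hat Y(z)$.

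For (i) and (ii) I would set $\hat m_1:=\hat y_1-z\hat y_2$. Using $\hat y_1|_\ell T=\hat y_1+\hat y_2$ one checks $\hat m_1(z+1)=\hat m_1(z)$, and using $\hat\rho(S)=S$ a short substitution gives $\hat m_1(-1/z)=z^{\ell-1}\hat m_1(z)$; so $\hat m_1$ transforms like a modular form of weight $\ell-1$ on $\SL(2,\Z)$ and is holomorphic on $\H$. For the behaviour at $\infty$: if $\infty$ were apparent there would be a logarithm-free second local solution $v=q^{-\kappa_\infty}(1+\cdots)$ near $\infty$, and since $y_1=\hat y_1/F$ is a solution, $y_1=\alpha y_+ +\beta v$ near $\infty$ with $\beta\ne0$ (else $\hat y_1$ would be a multiple of $\hat y_2$); but then $\hat y_1=\alpha\hat y_+ +\beta Fv$ with both $\hat y_+$ and $Fv=q^0(c_0+\cdots)$ genuine Fourier series, forcing $\hat y_1(z+1)=\hat y_1(z)$, which contradicts $\hat y_1(z+1)=\hat y_1(z)+\hat y_2(z)$. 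Hence (i) holds, and in the non-apparent case \eqref{equation: second solution} gives $v=ezy_+ +m$ with $m=q^{-\kappa_\infty}(\hat c_0+\cdots)$, $e\ne0$; then near $\infty$, $\hat y_1=(\alpha+\beta ez)\hat y_2+\beta Fm$ with $Fm$ holomorphic and $T$-periodic, so $\hat m_1=(\alpha+(\beta e-1)z)\hat y_2+\beta Fm$, and $T$-periodicity of $\hat m_1$ (while $z\hat y_2$ is not $T$-periodic) forces $\beta e=1$. Therefore $\hat m_1=\alpha\hat y_2+\beta Fm$ is holomorphic at $\infty$, so $\hat m_1\in\sM_{\ell-1}(\SL(2,\Z))$ and $\hat y_1=z\hat y_2+\hat m_1$; equivalently $y_1=zy_+ +m_0$ with $m_0=\hat m_1/F$ a meromorphic modular form of weight $-2$, so the constant $e$ of \eqref{equation: second solution} attached to $y_1$ equals $1$.

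Finally, for (iv): substituting $\hat y_1=z\hat y_2+\hat m_1$ into $\hat Y|_\ell\gamma=\gamma\hat Y$ gives, for $\gamma=\SM abcd$, the identity $(\hat y_+|_{\ell+1}\gamma)(z)=\hat y_+(z)+\frac{c}{cz+d}\hat m_1(z)$, which is exactly the transformation law \eqref{equation: quasimodular transformation} of a quasimodular form of weight $\ell+1$ and depth $\le1$ whose depth-$1$ part is $\hat m_1$. Comparing with $(E_2|_2\gamma)(z)=E_2(z)+\frac{6c}{\pi i(cz+d)}$ and using that $\hat m_1$ is modular of weight $\ell-1$, the function $\hat m_2:=\hat y_+-\frac{\pi i}{6}\hat m_1E_2$ satisfies $\hat m_2|_{\ell+1}\gamma=\hat m_2$ for all $\gamma$; it is holomorphic on $\H$ and at $\infty$, hence $\hat m_2\in\sM_{\ell+1}(\SL(2,\Z))$ and $\hat y_+=\frac{\pi i}{6}\hat m_1E_2+\hat m_2$ is quasimodular of weight $\ell+1$ and depth $1$ (the depth is exactly $1$ since $\hat m_1\ne0$, as $zy_+$ is never a solution of \eqref{equation: DE SL(2,Z)}). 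The step I expect to be most delicate is the analysis at $\infty$ in the third paragraph -- rewriting $\hat y_1$ in terms of the local solution basis and invoking the single-valuedness of $\hat y=Fy$ guaranteed by \eqref{equation: H} -- together with the sign bookkeeping when $\kappa_\infty$ is a genuine half-integer, so that $F$, unlike $F^2$, is only defined up to sign.
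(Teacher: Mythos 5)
Your proof is correct and follows essentially the same route as the paper: both arguments pin down the lifted Bol representation in the basis $(\hat y_+|_\ell S,\hat y_+)$ using $\hat\rho(S)=\SM0{-1}10$, $\hat\rho(T)=\SM1b01$, the relations $S^2=R^3=-I$ and the oddness of $\ell$ to force $b=1$, and then transfer $\hat\rho(\gamma)=\gamma$ into the modularity of $\hat m_1$ and $\hat m_2$ exactly as in Lemmas \ref{lemma: hat m1 is a modular function} and \ref{lemma: hat(m2) is modular form}. The only (harmless) structural differences are that you establish linear independence via the abelianization $\SL(2,\Z)^{\mathrm{ab}}\cong\Z/12$ rather than the paper's triangular-matrix computation, and you deduce non-apparentness at $\infty$ from $b=1\neq 0$ after the fact, whereas the paper proves Lemma \ref{lemma: not apparent SL(2,Z)} first and then invokes \eqref{equation: second solution} to write $y_1=dzy_2+m_1$ with $d\neq0$.
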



Theorem \ref{theorem: infinity is not apparent implies}
will be proved through a series of lemmas.

\begin{Lemma} \label{lemma: not apparent SL(2,Z)}
  The ODE \eqref{equation: DE SL(2,Z)} is not apparent at $\infty$.
\end{Lemma}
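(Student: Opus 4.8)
The plan is to argue by contradiction, using the Bol representation $\rho\colon\SL(2,\Z)\to\PSL(2,\C)$ attached to \eqref{equation: DE SL(2,Z)} and its lift $\hat\rho$ of \eqref{equation: Y weight ell}. I will assume that the cusp $\infty$ is apparent, show that this forces $\rho$ to be the trivial homomorphism, and then see that this clashes with the parity of the integer $\ell$.

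First I would translate apparentness at $\infty$ into a statement about $\rho(T)$. Since $\kappa_\infty\in\frac12\Z_{\ge0}$, the two local exponents $\pm\kappa_\infty$ at $\infty$ are congruent modulo $1$, and $e^{2\pi i\kappa_\infty}=e^{-2\pi i\kappa_\infty}=(-1)^{2\kappa_\infty}$. If $\infty$ is apparent, then besides $y_+(z)=q^{\kappa_\infty}(1+\cdots)$ there is a second local solution of the form $q^{-\kappa_\infty}(\hat c_0+\hat c_1q+\cdots)$ with \emph{no} $z$-term, i.e.\ without the linear factor $z$ of \eqref{equation: second solution}. Choosing the base point $z_0$ with $\Im z_0$ large and continuing along the horizontal segment from $z_0$ to $z_0+1$ (which meets no pole of $Q$, as $Q$ is holomorphic at $\infty$), every solution $y$ of \eqref{equation: DE SL(2,Z)} then satisfies $y(z+1)=e^{2\pi i\kappa_\infty}y(z)$. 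Reading this against $\left(y_j\big|_{-1}T\right)(z)=y_j(z+1)$ and \eqref{equation: Bol section 4} gives $\pm\rho(T)=e^{2\pi i\kappa_\infty}I$, hence $\rho(T)=I$ in $\PSL(2,\C)$. (The degenerate sub-case $\kappa_\infty=0$ is in fact already impossible, since a double indicial root always produces a logarithm, but the present argument covers it uniformly.)

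Next I would invoke the elementary fact that the normal closure of $T=\SM1101$ in $\SL(2,\Z)$ is all of $\SL(2,\Z)$: in the presentation $\PSL(2,\Z)=\langle\bar S,\bar R\mid \bar S^2=\bar R^3=1\rangle$ one has $\bar T=\bar R\bar S$, so imposing $\bar T=1$ gives $\bar S=\bar R^{-1}$, whence $\bar R^2=\bar S^2=1$, and together with $\bar R^3=1$ this collapses the group. Therefore $\rho(T)=I$ forces $\rho$ to be trivial, so every value $\hat\rho(\gamma)$ of its lift lies in $\ker\bigl(\SL(2,\C)\to\PSL(2,\C)\bigr)=\{\pm I\}$; in particular $\hat\rho(S)=\pm I$, and hence $\hat\rho(-I)=\hat\rho(S^2)=\hat\rho(S)^2=I$. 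On the other hand, by \eqref{equation: H}(iii) we have $(2\kappa_i,2)=1$, so $\kappa_i-\frac12\in\Z$, and then \eqref{equation: ell} shows $\ell$ is odd; evaluating the defining relation $\hat Y\big|_\ell(-I)=\hat\rho(-I)\hat Y$ and noting that $-I$ fixes $z$ with $cz+d=-1$ yields $\hat\rho(-I)=(-1)^{\ell}I=-I$. This contradicts $\hat\rho(-I)=I$, and so $\infty$ cannot be apparent.

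The step I expect to be the real work is the first one: justifying carefully that, under the apparentness hypothesis, the second local solution near $\infty$ genuinely carries no $z$-term; that the two local exponents (being congruent mod $1$ by $2\kappa_\infty\in\Z$) contribute the same monodromy scalar; and that the analytic continuation defining $\rho(T)$ can be run along a path avoiding all poles of $Q$. After that, everything is formal. One should also confirm that the lift appearing in the contradiction is exactly the $\hat\rho$ of \eqref{equation: Y weight ell}, so that $\hat\rho(-I)=-I$ is legitimate; this is where the explicit construction of $\hat\rho$ (and Lemma \ref{lemma: det rho SL(2,Z)}) is used.
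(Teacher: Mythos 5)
Your proof is correct and takes essentially the same route as the paper's: assume apparentness at $\infty$, deduce that the (lifted) Bol representation kills $T$, and derive a contradiction from the relations $S^2=R^3=-I$ in $\SL(2,\Z)$ together with $\hat\rho(-I)=(-1)^{\ell}I=-I$ for odd $\ell$. The paper phrases the final group-theoretic contradiction as $\hat\rho(R)^2=\hat\rho(R)^3=-I$ being impossible, while you pass through the triviality of the normal closure of $T$ in $\PSL(2,\Z)$; the two are equivalent.
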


\begin{proof} Assume that \eqref{equation: DE SL(2,Z)} is apparent at
  $\infty$. Then for any fundamental solutions near $\infty$, we
  have $\hat\rho(T)=I$. Now since $\ell$ is odd, we have
  $\hat\rho(-I)=(-1)^\ell I=-I$. Thus, by \eqref{equation: S^2 and
    R^3}, $\hat\rho(S)^2=\hat\rho(R)^3=-I$. On the other hand, we have
  $\hat\rho(R)^2=(\hat\rho(T)\hat\rho(S))^2=-I$, which is absurd since
  $\hat\rho(R)^2$ and $\hat\rho(R)^3$ cannot be both equal to $-I$.
  This proves that \eqref{equation: DE SL(2,Z)} cannot be apparent at
  $\infty$.
\end{proof}

\begin{Remark} \label{remark: infinity is apparent}
  Observe that the parity of $\ell$ depends only on
  $\kappa_i$. If $\kappa_i\in\N$, i.e., if $\ell$ is even, then
  $\hat\rho(R^3)=\hat\rho(S^2)=\hat\rho(-I)=I$, which implies that
  $\hat\rho(S)=\pm I$ and either $\hat\rho(R)=I$ or
  $\hat\rho(R)^2+\hat\rho(R)+I=0$. In any case, there is a matrix $P$
  such that both $P\hat\rho(S)P^{-1}$ and $P\hat\rho(R)P^{-1}$ are
  diagonal, which implies that $P\hat\rho(T)P^{-1}$ is also
  diagonal, and the apparentness at $\infty$ follows. Thus, the
  condition $\kappa_i\notin\N$ is necessary in order for
  \eqref{equation: DE SL(2,Z)} to be not apparent at $\infty$.
\end{Remark}

\begin{Lemma}\label{lemma: y1,y2}
Let $y_2(z)=y_+(z)$ and $y_1(z)=\left(\hat y_+\big\vert_{\ell}
  S\right)(z)/F(z)$.
Then the following hold:
\begin{enumerate}
\item[(i)]
$y_i$, $i=1,2$, are linearly independent, and
\item[(ii)]
the ratio $h(z)=y_1(z)/y_2(z)$ is equivariant.
\end{enumerate}
\end{Lemma}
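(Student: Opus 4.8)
The plan is to work throughout with the single-valued holomorphic functions $\hat y(z)=F(z)y(z)$ and the weight-$\ell$ slash $\big|_\ell$, using only the relations $S^2=R^3=-I$ in $\SL(2,\Z)$, the fact that $\ell$ is odd (forced by \eqref{equation: H}, since $\kappa_i\in\frac12+\Z$), and the normalization of $y_+$. Write $\hat y_2=\hat y_+$ and $\hat y_1=\hat y_+\big|_\ell S$, so $y_1=\hat y_1/F$; since $F^2$ is modular of weight $2(\ell+1)$ we have $F(Sz)=\pm z^{\ell+1}F(z)$, hence $y_1=\pm\bigl(y_+\big|_{-1}S\bigr)$ is a solution of \eqref{equation: DE SL(2,Z)} by Bol's identity \cite{Bol}. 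Two function identities drive everything: $\hat y_2\big|_\ell S=\hat y_1$ by definition, and $\hat y_1\big|_\ell S=\hat y_+\big|_\ell S^2=\hat y_+\big|_\ell(-I)=(-1)^\ell\hat y_+=-\hat y_2$. Moreover, because $\kappa_\infty\in\frac12\Z_{\ge0}$, the order of $\hat y_+$ at $\infty$ is the nonnegative integer $2\kappa_\infty$, so $\hat y_+$ has a power-series $q$-expansion and is $1$-periodic; thus $\hat y_2\big|_\ell T=\hat y_2$.

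For part (i), suppose $y_1=cy_2$ for a constant $c$, i.e. $\hat y_1=c\hat y_+$. Applying $\big|_\ell S$ and using the two identities gives $-\hat y_2=\hat y_1\big|_\ell S=c\bigl(\hat y_+\big|_\ell S\bigr)=c\hat y_1=c^2\hat y_2$, so $c^2=-1$. Also $\hat y_1=c\hat y_+$ is $1$-periodic, hence $\hat y_+\big|_\ell R=\bigl(\hat y_+\big|_\ell T\bigr)\big|_\ell S=\hat y_+\big|_\ell S=\hat y_1=c\hat y_+$, so $\hat y_+\big|_\ell R^3=c^3\hat y_+$; but $R^3=-I$ yields $\hat y_+\big|_\ell R^3=(-1)^\ell\hat y_+=-\hat y_+$, so $c^3=-1$. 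Then $c=c^3/c^2=1$, contradicting $c^2=-1$. Hence $y_1,y_2$ are linearly independent; notably this step does not require Lemma \ref{lemma: not apparent SL(2,Z)}.

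With $(\hat y_1,\hat y_2)$ now a fundamental system, the Bol representation $\hat\rho\colon\SL(2,\Z)\to\GL(2,\C)$ defined by $\column{\hat y_1\big|_\ell\gamma}{\hat y_2\big|_\ell\gamma}=\hat\rho(\gamma)\column{\hat y_1}{\hat y_2}$ satisfies $\det\hat\rho\equiv1$ by Lemma \ref{lemma: det rho SL(2,Z)}. The two identities give $\hat\rho(S)=\SM0{-1}10=S$ immediately. Periodicity of $\hat y_2$ makes the bottom row of $\hat\rho(T)$ equal to $(0,1)$, and $\det\hat\rho(T)=1$ then forces $\hat\rho(T)=\SM1b01$ for some $b\in\C$. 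Because $\hat\rho$ is a homomorphism, $\hat\rho(R)=\hat\rho(T)\hat\rho(S)=\SM b{-1}10$, and $\hat\rho(R)^3=\hat\rho(-I)=(-1)^\ell I=-I$; a direct computation of $\SM b{-1}10^3$ and comparison with $-I$ forces $b=1$. Hence $\hat\rho(S)=S$ and $\hat\rho(T)=T$, so $\hat\rho(\gamma)=\gamma$ for all $\gamma\in\SL(2,\Z)$ (this is part (iii) of Theorem \ref{theorem: infinity is not apparent implies}, and $\hat\rho(T)\ne I$ re-proves that $\infty$ is not apparent).

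Finally, for part (ii): $h=y_1/y_2=\hat y_1/\hat y_2$ is single-valued and meromorphic on $\H$ (all finite singularities of \eqref{equation: DE SL(2,Z)} are apparent) and non-constant by (i). For $\gamma=\SM abcd\in\SL(2,\Z)$ the common factor $(cz+d)^{-\ell}$ cancels in the ratio, so $h(\gamma z)=\bigl(\hat y_1\big|_\ell\gamma\bigr)(z)\big/\bigl(\hat y_2\big|_\ell\gamma\bigr)(z)=\hat\rho(\gamma)\cdot h(z)=\gamma\cdot h(z)$, i.e. $h$ is equivariant. There is no substantial obstacle here: the only points needing care are that $\big|_\ell$ is a right action (so $\hat\rho$ is an honest homomorphism), that $\hat y_+$ is genuinely $1$-periodic, and the short cubic matrix computation; once these are in place, the parity of $\ell$ together with $S^2=R^3=-I$ leaves $\hat\rho(S),\hat\rho(T)$ no choice but $S,T$.
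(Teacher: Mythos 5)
Your proof is correct and follows essentially the same strategy as the paper's: it exploits the relations $S^2=R^3=-I$, the oddness of $\ell$, and $\det\hat\rho\equiv 1$ to constrain the Bol representation on the generators, exactly as the paper does. The only (harmless) organizational difference is that you bypass Lemma \ref{lemma: not apparent SL(2,Z)} entirely --- deriving the incompatible constraints $c^2=-1$ and $c^3=-1$ directly in part (i), and pinning down $\hat\rho(T)=\SM1b01$ from the periodicity of $\hat y_+$ together with the determinant lemma rather than from the explicit second-solution expansion $y_1=dzy_2+m_1(z)$ --- so your argument in fact recovers the non-apparentness of $\infty$ as a by-product instead of using it as an input.
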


\begin{proof} By Lemma \ref{lemma: not apparent SL(2,Z)},
  \eqref{equation: DE SL(2,Z)} is not apparent at
  $\infty$. Hence, we have a solution $y_-(z)$ of the form
  $y_-(z)=dzy_+(z)+m(z)$ with $m(z)=q^{-\kappa_\infty}(1+\cdots)$ and
  $d\neq 0$. Then with respect to the choice $\hat Y(z)=(\hat
  y_+(z),\hat y_-(z))^t$, we have $\hat\rho(T)=\SM10d1$.
  
  Now suppose that $y_+(z)$ and $\left(\hat
    y_+\big|_{\ell}S\right)(z)/F(z)$ are not
  linearly independent, i.e., $\left(\hat{y}_2\big\vert_\ell
    S\right)(z)=\lambda\hat{y}_2(z)$ for some $\lambda$. Then
  $\hat\rho(S)=\SM \lambda0c{\overline{\lambda}}$, $c\in\C$. Because
  $\ell$ is odd, we have $\hat\rho(S^2)=\hat\rho(-I)=(-1)^\ell
  I=-I$. It follows that $\lambda=\pm i$ and
$$
\hat\rho(R)=\M10{d}1\M\lambda0c{\bar{\lambda}}
=\M{\lambda}0{d\lambda+c}{\bar{\lambda}},
$$
which implies $\lambda^3=-1$, a contradiction because $\lambda=\pm
i$. Thus ${y}_i$, $i=1,2$, are linearly independent.

Since \eqref{equation: DE SL(2,Z)} is not apparent at $\infty$ and we
have shown that $y_2(z)=y_+(z)$ and 
$y_1(z)=\left(\hat y_2\big|_{\ell}S\right)(z)/F(z)$ are linearly
independent, by \eqref{equation: second solution}, there is $d\neq 0$
such that
$y_1(z)=dzy_2(z)+m_1(z)$ for some function
$m_1(z)=q^{-\kappa_\infty}(a_0+\sum_{j\ge1}a_jq^j)$ with $a_0\neq
0$. We claim that
\begin{equation}\label{equation: d=1}
d=1.
\end{equation}
Obviously, with respect to the basis $(\hat y_1(z),\hat y_2(z))^t$,
$\hat\rho(T)=\SM1 {d}01$. On the other hand, since
$\hat{y}_1(z)=\left(\hat{y}_2\big\vert_\ell S\right)(z)$, and
$-\hat{y}_2(z)=\left(\hat{y}_2\big\vert_\ell
  S^2\right)(z)=\left(\hat{y}_1\big\vert_\ell S\right)(z)$, we have
\begin{equation} \label{equation: hat rho(S) SL(2,Z)}
\hat\rho(S)=\M0{-1}10
\end{equation}
and
$$
\hat\rho(R)=\M1{d}01\M0{-1}10=\M{d}{-1}10.
$$
By $\hat\rho(R^3)=-I$, i.e. $\hat\rho(R)^2=-\hat\rho(R)^{-1}$, we have
$$
\M {d}{-1}10\M {d}{-1}10=\M{d^2-1}{- d}{ d}{-1}
=-\M 0{1}{-1}{d},
$$
which yields $d=1$. This proves \eqref{equation: d=1}.

Using $h(z)=\hat{y}_1(z)/\hat{y}_2(z)$, we have
$h(z+1)=(\hat{y}_1(z)+\hat{y}_2(z))/\hat{y}_2(z)=h(z)+1$ and
$$
h(-1/z)=\frac{\hat{y}_1(-1/z)}{\hat{y}_2(-1/z)}=\frac{\left(\hat{y}_1\big\vert_\ell
    S\right)(z)}{\left(\hat{y}_2\big\vert_\ell
    S\right)(z)}=\frac{-\hat{y}_2(z)}{\hat{y}_1(z)}=-1/h(z).
$$
This proves that $h(z)$ is equivariant.
\end{proof}

We recall the identity $\hat{y}_1(z)=\left(\hat{y}_2\big\vert_\ell
  S\right)(z)=z\hat{y}_2(z)+\hat{m}_1(z)$
obtained in the proof of Lemma \ref{lemma: y1,y2}.

\begin{Lemma}\label{lemma: hat m1 is a modular function}
We have
\begin{enumerate}
\item[(i)] $\hat m_1(z+1)=\hat m_1(z)$, and
\item[(ii)] $\left(\hat m_1\big\vert_{\ell
        -1}S\right)(z)=\hat m_1(z)$.
\end{enumerate}
Hence, $\hat m_1(z)$ is a modular form of weight $\ell-1$ on $\SL(2,\Z)$.  
\end{Lemma}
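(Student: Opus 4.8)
The plan is to read off the transformation law of $\hat m_1(z)=\hat y_1(z)-z\hat y_2(z)$ directly from the monodromy data already obtained in the proof of Lemma~\ref{lemma: y1,y2}. There we found that, in the basis $(\hat y_1,\hat y_2)$, the lifted Bol representation satisfies $\hat\rho(T)=\SM1101$ and $\hat\rho(S)=\SM0{-1}10$; unwinding the defining relation $\column{\hat y_1|_\ell\gamma}{\hat y_2|_\ell\gamma}=\hat\rho(\gamma)\column{\hat y_1}{\hat y_2}$ this gives
$$
\hat y_1\big|_\ell T=\hat y_1+\hat y_2,\qquad \hat y_2\big|_\ell T=\hat y_2,\qquad \hat y_1\big|_\ell S=-\hat y_2,\qquad \hat y_2\big|_\ell S=\hat y_1.
$$
Since $T=\SM1101$ has trivial automorphy factor, the first two relations read $\hat y_1(z+1)=\hat y_1(z)+\hat y_2(z)$ and $\hat y_2(z+1)=\hat y_2(z)$.

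For (i) one substitutes these into the definition of $\hat m_1$:
$$
\hat m_1(z+1)=\hat y_1(z+1)-(z+1)\hat y_2(z+1)=\bigl(\hat y_1(z)+\hat y_2(z)\bigr)-(z+1)\hat y_2(z)=\hat y_1(z)-z\hat y_2(z)=\hat m_1(z).
$$
For (ii), since $S=\SM0{-1}10$ has automorphy factor $cz+d=z$ and $\hat m_1(-1/z)=\hat y_1(-1/z)+\tfrac1z\hat y_2(-1/z)$, we get
$$
\bigl(\hat m_1\big|_{\ell-1}S\bigr)(z)=z^{1-\ell}\hat m_1(-1/z)=z^{1-\ell}\hat y_1(-1/z)+z^{-\ell}\hat y_2(-1/z).
$$
Now $z^{-\ell}\hat y_1(-1/z)=\bigl(\hat y_1\big|_\ell S\bigr)(z)=-\hat y_2(z)$ and $z^{-\ell}\hat y_2(-1/z)=\bigl(\hat y_2\big|_\ell S\bigr)(z)=\hat y_1(z)$, so the right-hand side equals $-z\hat y_2(z)+\hat y_1(z)=\hat m_1(z)$, which is (ii).

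Finally, since $T$ and $S$ generate $\SL(2,\Z)$, parts (i) and (ii) show that $\hat m_1$ obeys the weight-$(\ell-1)$ modular transformation law on $\SL(2,\Z)$. Holomorphy on $\H$ is immediate from $\hat m_1=\hat y_1-z\hat y_2$, since each $\hat y_i=Fy_i$ is holomorphic on $\H$ by the choice of $F$. For holomorphy at the cusp, one recalls from the proof of Lemma~\ref{lemma: y1,y2} that $\hat m_1=F\cdot m_1$ with $m_1(z)=q^{-\kappa_\infty}(a_0+\sum_{j\ge1}a_jq^j)$, while $F(z)$ has a $q$-expansion beginning with $q^{\kappa_\infty}$ (from the factor $\Delta(z)^{\kappa_\infty}$ in \eqref{equation: F SL(2,Z)}), so $\hat m_1$ is holomorphic at $q=0$. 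Hence $\hat m_1\in\sM_{\ell-1}(\SL(2,\Z))$. I expect no substantive obstacle here: the only points requiring care are keeping the slash-operator conventions and automorphy factors straight and checking the condition at $\infty$, once the monodromy matrices $\hat\rho(T)$, $\hat\rho(S)$ in this basis are in hand.
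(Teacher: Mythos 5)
Your proof is correct and rests on the same input as the paper's, namely the monodromy data $\hat\rho(T)=\SM1101$, $\hat\rho(S)=\SM0{-1}10$ in the basis $(\hat y_1,\hat y_2)$ established in Lemma~\ref{lemma: y1,y2}; your derivation of (ii) via the slash relations $\hat y_1|_\ell S=-\hat y_2$, $\hat y_2|_\ell S=\hat y_1$ is just the paper's manipulation of the equivariant ratio $h=\hat y_1/\hat y_2$ with the denominator cleared, and your explicit check of holomorphy at $\infty$ is a welcome addition the paper leaves implicit. The only caveat is that deducing (i) from $\hat\rho(T)$ is mildly circular, since that matrix was itself computed from the $q$-expansion of $m_1$, but (i) is immediate from that $q$-expansion anyway (as the paper notes), so this is harmless.
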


\begin{proof}
The identity (i) is obvious.
By the definition, $h(z)$ of Lemma \ref{lemma: y1,y2}(ii)
can be written as
\begin{equation}
  h(z)=z+\frac{m_1(z)}{y_2(z)}=z
  +\frac{\hat m_1(z)}{\hat{y}_2(z)}.
\end{equation}
We have
\begin{equation}\label{equation: -1/h}
  -\frac{1}{h(z)}=h\left(-\frac{1}{z}\right)=-\frac1{z}
  +\frac{\hat m_1(-1/z)}{\hat y_2(-1/z)}.
\end{equation}
Thus by \eqref{equation: -1/h},
\begin{align*}
  \frac{\hat m_1(-1/z)}{\hat y_1(z)}
  &=\frac{z^\ell\hat m_1(-1/z)}{\hat y_2(-1/z)}
    =z^\ell\left(\frac1{z}-\frac{1}{h(z)}\right)
    =\frac{(h(z)-z)z^\ell}{zh(z)}\\
  &=\frac1{z^{1-\ell}h(z)}\frac{\hat m_1(z)}{\hat y_2(z)}
    =\frac{\hat m_1(z)}{z^{1-\ell}\hat y_1(z)},
\end{align*}
which implies
$\hat m_1(-1/z)=z^{\ell-1}\hat m_1(z)$. This proves (ii). 
\end{proof}

\begin{Lemma}\label{lemma: hat(m2) is modular form}
Write $y_+(z)$ as $ y_+(z)=\frac{\pi i}{6}m_1(z)E_2(z)+m_2(z)$. Then
$\hat m_2(z)$ is a modular form of weight $\ell+1$ with
respect to $\SL(2,\Z)$.
\end{Lemma}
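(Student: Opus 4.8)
The plan is to prove directly that $\hat m_2(z) := \hat y_+(z) - \frac{\pi i}{6}\hat m_1(z)E_2(z)$ obeys the weight-$(\ell+1)$ modular transformation law on $\SL(2,\Z)$, checking only the two generators $S$ and $T$, and then to observe holomorphy on $\H$ and at the cusp. The invariance under $T$ is immediate: $\hat y_+=Fy_+$ has a $q$-expansion with exponents in $2\kappa_\infty+\Z_{\ge 0}\subseteq\Z_{\ge 0}$, the modular form $\hat m_1$ is $1$-periodic by Lemma \ref{lemma: hat m1 is a modular function}, and $E_2(z+1)=E_2(z)$; hence $\hat m_2(z+1)=\hat m_2(z)$.

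For the action of $S$ I would feed in three facts already available: from the proof of Lemma \ref{lemma: y1,y2}, the identity $(\hat y_+|_\ell S)(z)=z\hat y_+(z)+\hat m_1(z)$, which rewrites as $\hat y_+(-1/z)=z^{\ell+1}\hat y_+(z)+z^\ell\hat m_1(z)$; from Lemma \ref{lemma: hat m1 is a modular function}(ii), $\hat m_1(-1/z)=z^{\ell-1}\hat m_1(z)$; and the quasimodular transformation $E_2(-1/z)=z^2E_2(z)+\frac{6z}{\pi i}$ of the Eisenstein series normalized in Section \ref{section: introduction}. Substituting these into $\hat m_2(-1/z)=\hat y_+(-1/z)-\frac{\pi i}{6}\hat m_1(-1/z)E_2(-1/z)$, the term $z^\ell\hat m_1(z)$ produced by $\hat y_+(-1/z)$ is cancelled exactly by the cross term $-\frac{\pi i}{6}\,z^{\ell-1}\hat m_1(z)\cdot\frac{6z}{\pi i}=-z^\ell\hat m_1(z)$, leaving
\[
\hat m_2(-1/z)=z^{\ell+1}\Bigl(\hat y_+(z)-\tfrac{\pi i}{6}\hat m_1(z)E_2(z)\Bigr)=z^{\ell+1}\hat m_2(z).
\]
This is the heart of the matter: the constant $\pi i/6$ in the statement is dictated precisely by requiring this cancellation.

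Since $\SL(2,\Z)=\langle S,T\rangle$, the two computations give that $\hat m_2$ transforms with weight $\ell+1$. To finish, I would note that $\hat m_2$ is holomorphic on $\H$ (so are $\hat y_+$, $\hat m_1$, $E_2$) and holomorphic at $\infty$, its $q$-expansion having only nonnegative exponents inherited from those of $\hat y_+$ and $\hat m_1E_2$; hence $\hat m_2\in\sM_{\ell+1}(\SL(2,\Z))$, and $\hat y_+=\frac{\pi i}{6}\hat m_1E_2+\hat m_2$ exhibits $\hat y_+$ as a quasimodular form of weight $\ell+1$ and depth $\le 1$ on $\SL(2,\Z)$.

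I do not expect a real obstacle here; the only delicate point is the bookkeeping of the constant and of the weights, i.e.\ keeping straight that $\hat m_2$ has honest weight $\ell+1$ whereas $\hat y_+$ transforms with the ``weight $\ell$'' slash arising from the Bol representation (so that the factor $F$ contributes $\ell+1$), together with confirming that the $E_2$-normalization in play is the one fixed earlier so that $\pi i/6$ is indeed the correct coefficient.
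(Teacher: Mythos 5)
Your proposal is correct and follows essentially the same route as the paper: both arguments combine the identity $(\hat y_+|_\ell S)(z)=z\hat y_+(z)+\hat m_1(z)$, the transformation $\hat m_1(-1/z)=z^{\ell-1}\hat m_1(z)$, and $E_2(-1/z)=z^2E_2(z)+\frac{6}{\pi i}z$, and the same cancellation of the $z^{\ell}\hat m_1(z)$ term yields $\hat m_2(-1/z)=z^{\ell+1}\hat m_2(z)$; together with $1$-periodicity this gives the claim. The only (cosmetic) difference is that you solve for $\hat m_2(-1/z)$ directly, whereas the paper expands $(\hat y_+|_\ell S)(z)$ and compares the two expressions.
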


\begin{proof} By the previous lemma, we have
\begin{align*}
\hat{y}_1(z)&=\left(\hat y_+\big\vert_{\ell}S\right)(z)\\
&=z^{-\ell}\left(\frac{\pi i}{6}\hat m_1(-1/z)
     E_2(-1/z)+\hat m_2(-1/z)\right)\\
&=z^{-\ell}\left(\frac{\pi i}{6}z^{\ell-1}\hat m_1\left(z\right)\left(z^2E_2(z)+\frac{6}{\pi i}z\right)+\hat m_2(-1/z)\right)\\
&=\frac{\pi i}{6}\hat m_1\left(z\right)\left(zE_2(z)+\frac{6}{\pi i}\right)+z^{-\ell}\hat m_2(-1/z)\\
&=\left(z\hat y_+(z)-z\hat m_2(z)+\hat
                                                                                                        m_1(z)\right)+z^{-\ell}m_2(-1/z).
\end{align*}
On the other hand, recall from the proof of Lemma \ref{lemma: y1,y2}
that the left-hand side is equal to $z\hat y_+(z)+\hat
m_1(z)$. Therefore, $\hat m_2(-1/z)=z^{\ell+1}\hat m_2(z)$. That is,
$\left(\hat{m}_2\big\vert_{\ell+1}S\right)(z)=\hat{m}_2(z)$.

Finally, since $\hat y_+(z+1)=\hat y_+(z)$ and
$\hat m_1(z+1)=\hat m_1(z)$, we have $\hat
m_2(z+1)=\hat m_2(z)$. We conclude that $\hat m_2(z)$ is a modular
form of weight $\ell+1$ on $\SL(2,\Z)$.
\end{proof}

Having proved Theorem \ref{theorem: infinity is not
  apparent implies}, we now give some examples.

\begin{Example} Consider the differential equation
  \begin{equation} \label{equation: KK}
  y''(z)=-4\pi^2\kappa_\infty^2E_4(z)y(z), \qquad\kappa_\infty\in\frac12\N.
  \end{equation}
  Since $E_4(z)$ is a holomorphic modular form, this differential
  equation has no singularities in $\H$. Hence, we have
  $\kappa_\rho=\kappa_i=1/2$, $F(z)=\Delta(z)^{\kappa_\infty}$ and the
  integer $\ell$ in \eqref{equation: ell} is $-1+12\kappa_\infty$.
  Also, the local exponents at $\infty$ are clearly
  $\pm\kappa_\infty$. Thus, Theorem \ref{theorem: infinity is not 
    apparent implies} predicts that
  $\Delta(z)^{\kappa_\infty}y_+(z)$ is a quasimodular form of
  weight $\ell+1$ and depth $1$ on $\SL(2,\Z)$.
  Indeed, as mentioned in Section \ref{subsection: definition of
    quasi}, by the works of Kaneko and Koike \cite{Kaneko-Koike-2003,
    Kaneko-Koike} and Pellarin \cite[Proposition 3.1]{Pellarin}, the
  differential equation \eqref{equation: KK} has
  $y(z)=f(z)/\Delta(z)^{\kappa_\infty}$ as a solution, where $f(z)$ is
  an extremal quasimodular form of weight $12\kappa_\infty=\ell+1$ and
  depth $1$, agreeing with our Theorem \ref{theorem: infinity is not
    apparent implies}.
\end{Example}

\begin{Example} Consider the differential equation
  \begin{equation} \label{equation: DE SL(2,Z) 2}
  y''(z)=-4\pi^2\left(\kappa_\infty^2E_4(z)
    -144\frac{\Delta(z)}{E_4(z)^2}\right)y(z), \quad
  \kappa_\infty\in\frac12\Z_{\ge0}.
  \end{equation}
  The differential equation has only singularities at points
  equivalent to $\rho$ under $\SL(2,\Z)$. By \eqref{equation:
    indicial SL(2,Z)} (or \cite[Theorem 1.7 and
  Corollary 3.10]{Guo-Lin-Yang}), the indicial equation as $\rho$ is
  $x^2-x-3/4=0$. Hence the local exponents at $\rho$ are
  $-1/2$ and $3/2$, i.e., $\kappa_\rho=1$, and the differential
  equation is apparent on $\H$, by Theorem \ref{corollary: apparent
    at elliptic}. The integer $\ell$ in \eqref{equation:
    ell} is $1+12\kappa_\infty$. Theorem \ref{theorem: infinity is not
    apparent implies} then asserts that if we let
  $y_+(z)=q^{\kappa_\infty}+\cdots$ be the 
  solution with exponent $\kappa_\infty$ at $\infty$, then
  $\Delta(z)^{\kappa_\infty}(z)E_4(z)^{1/2}y_+(z)$ is a quasimodular
  form of weight $\ell+1$ and depth $1$ on $\SL(2,\Z)$.
  To check that this is true, we recall that Kaneko and Koike
  \cite[Theorem 2.1]{Kaneko-Koike} has also proved that if $k\equiv
  2\mod 6$, then an extremal quasimodular form $f(z)$ of weight $k$
  and depth $1$ on $\SL(2,\Z)$ satisfies
  $$
  D_q^2f-\left(\frac k6-\frac13\frac{E_6}{E_4}\right)D_qf
  +\left(\frac{k(k-1)}{12}D_qE_2-\frac{k-1}{18}\frac{D_qE_6}{E_4}\right)f=0.
  $$
  A straightforward calculation shows that this is equivalent to the
  assertion that $f(z)/\Delta(z)^{\kappa_\infty}E_4(z)^{1/2}$ is a
  solution of \eqref{equation: DE SL(2,Z) 2} with
  $k=12\kappa_\infty+2=\ell+1$. This agrees with our result.
\end{Example}

\begin{Example} Consider the differential equation
  $$
  y''(z)=-4\pi^2\left(\frac14E_4(z)
    +864\frac{E_4(z)\Delta(z)}{E_6(z)^2}\right)y(z).
  $$
  We have $\kappa_\infty=1/2$ and by \eqref{equation: indicial
    SL(2,Z)}, $\kappa_i=3/2$. The integer $\ell$ in this case is
  $11$ and Theorem \ref{theorem: infinity is not apparent implies}
  implies that $\Delta(z)^{1/2}E_6(z)y_+(z)$ is a quasimodular form of
  weight $12$ and depth $1$ on $\SL(2,\Z)$. Indeed, we compute that
  $$
  y_+(z)=q^{1/2}\left(1+462q+247494q^2+132490928q^3
    +\cdots\right)
  $$
  and find that
  $$
  11088\Delta(z)^{1/2}E_6(z)y_+(z)=E_2(z)E_4(z)E_6(z)+6E_4(z)^3-
  7E_6(z)^2.
  $$
\end{Example}
\section{MODE in the case of $\Gamma_0^+(2)$}
\label{section: Gamma0+(2)}
In this section, we will obtain results analogous to Theorem
\ref{theorem: infinity is not apparent implies} for the group
$\Gamma^+_0(2)$.

First of all, we recall that the graded ring of holomorphic modular
forms with respect to $\Gamma^+_0(2)$ is generated by three modular
forms $M_4(z)$, $M_6(z)$ and $M_8(z)$, whose weights are $4$, $6$ and
$8$, respectively. These modular forms can be written explicitly in
terms of the Eisenstein series on $\SL(2,\Z)$ and the Dedekind eta
function:
\begin{equation}\label{equation: Mi on Gamm^+a0(2)}
\begin{split}
  M_4(z)&=(4E_4(2z)+E_4(z))/5,\\
  M_6(z)&=(8E_6(2z)+E_6(z))/9, \\
  M_8(z)&=\eta(z)^8\eta(2z)^8.
\end{split}
\end{equation}
Noticing that $\dim\sM_{12}(\Gamma_0^+(2))=2$, there must be a
relation among $M_4(z)^3$, $M_6(z)^2$, and $M_4(z)M_8(z)$. We find
that it is $M_6(z)^2=M_4(z)(M_4(z)^2-256M_8(z))$. For our purpose, we also need
$$
M_2(z)=2E_2(2z)-E_2(z),
$$
which is a modular form in $\sM_2(\Gamma_0^+(2),-)$ and satisfies
$M_2(z)^2=M_4(z)$. Define
$$
M^*_2(z)=\frac{1}{2\pi i}\frac{M'_8(z)}{M_8(z)}
=\frac{2E_2(2z)+E_2(z)}3.
$$
The holomorphic function $M^*_2(z)$ is not a modular form. Instead, it
satisfies
\begin{equation} \label{equation: M2* Gamma0+(2)}
  M_2^*(\gamma z)=(cz+d)^2M_2^*(z)+\frac{4}{\pi i}c(cz+d),
  \quad\gamma=\M abcd\in\Gamma^+_0(2).
\end{equation}

We use the notations $S=\frac{1}{\sqrt{2}}\SM 0{-1}20$,
$T=\SM1{1}01$, $R=TS=\frac{1}{\sqrt{2}}\SM2{-1}20$ such that
$S^2=R^4=-I$. We collect some facts about $\Gamma^+_0(2)$. 
\begin{enumerate}
\item[(i)]
There are only two elliptic points $\rho_1=i/\sqrt{2}$ and
$\rho_2=(1+i)/2$ of order $2$ and $4$, respectively. Their stabilizer
subgroups are generated by $S$ and $R$, respectively. Also, $\infty$
is the only cusp and has width $1$.
\item[(ii)] The modular form $M_2(z)$ has only one zero at
  $\rho_2$. The zero is simple. The modular form $M_4(z)$ has only one
  zero (a double zero) at $\rho_2$ and $M_6(z)$ has only zeros at
  $\rho_1$ and $\rho_2$. Both zeros are simple.
\item[(iii)] $M_8(z)\neq 0$, $\forall z\in\H$, and $\infty$ is the
  simple zero of $M_8(z)$.
\end{enumerate}
(Here when we say ``a modular form $f(z)$ has only a zero at a point
$z_0$'', it should be understood as ``$f(z)$ has only zeros at points
that are $\Gamma_0^+(2)$-equivalent to $z_0$''.)

Consider the differential equation
\begin{equation} \label{equation: DE Gamma0+(2)}
  y''(z)=-4\pi^2Q(z)y(z),
\end{equation}
where $Q(z)$ is a meromorphic modular form of weight $4$ on
$\Gamma_0^+(2)$ such that the conditions \eqref{equation: H} are
satisfied. Let $z_1,\ldots,z_m$ be $\Gamma_0^+(2)$-inequivalent poles
of $Q(z)$ other than $\rho_1$ and $\rho_2$, and assume that the local
exponents of \eqref{equation: DE Gamma0+(2)} at $\infty$, $\rho_1$,
$\rho_2$, and $z_1,\ldots,z_m$ are $\pm\kappa_\infty$,
$1/2\pm\kappa_{\rho_1}$, $1/2\pm\kappa_{\rho_2}$, and
$1/2\pm\kappa_1,\ldots,1/2\pm\kappa_m$, respectively, where the
$\kappa$'s satisfy \eqref{equation: H}.

For $j=1,\ldots,m$, set $t_j=M_4(z_j)^2/M_8(z_j)$ and
$F_j(z)=M_4(z)^2-t_jM_8(z)$. This modular form $F_j$ has only one
simple zero at $z_j$. Let
\begin{equation} \label{equation: F Gamma0+(2)}
F(z)=M_8(z)^{\kappa_\infty}\left(\frac{M_6(z)}{M_2(z)}
\right)^{\kappa_{\rho_1}-1/2}
M_2(z)^{\kappa_{\rho_2}-1/2}\prod_{j=1}^mF_j(z)^{\kappa_j-1/2},
\end{equation}
and
\begin{equation} \label{equation: ell Gamma0+(2)}
  \ell=-1+8\kappa_\infty+4\left(\kappa_{\rho_1}-\frac12\right)
  +2\left(\kappa_{\rho_2}-\frac12\right)
+8\sum_{j=1}^m\left(\kappa_j-\frac12\right),
\end{equation}
so that $F(z)^2$ is a modular form of weight $2(\ell+1)$ on
$\Gamma_0^+(2)$. We note that by the condition \eqref{equation: H},
the integer $\ell$ is always odd.

For any function $f(z)$ on $\H$, let $\hat f(z)$ denote the
function $F(z)f(z)$. When $f(z)=y(z)$ is a solution of
\eqref{equation: DE Gamma0+(2)}, by construction, $\hat y(z)$ is a
single-valued function holomorphic throughout $\H$ and its order at
$z_j$ is either $0$ or $2\kappa_j$. If a fundamental solution
$Y(z)=(y_1(z),y_2(z))^t$ of \eqref{equation: DE Gamma0+(2)} is chosen,
then we have a homomorphism $\hat\rho:\Gamma_0^+(2)\to\GL(2,\C)$ with
$\hat\rho(\gamma)$ defined by
$$
\left(\hat Y\big|_\ell\gamma\right)(z)=\hat\rho(\gamma)\hat Y(z).
$$
As in the case of $\SL(2,\Z)$, we can show that the image of
$\hat\rho$ actually lies in $\SL(2,\C)$.

\begin{Lemma} We have $\det\hat\rho(\gamma)=1$ for all
  $\gamma\in\Gamma_0^+(2)$.
\end{Lemma}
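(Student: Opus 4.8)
The plan is to follow the proof of Lemma~\ref{lemma: det rho SL(2,Z)} essentially word for word: the only group-dependent ingredient there is the modularity of $F(z)^2$, and the choices \eqref{equation: F Gamma0+(2)} and \eqref{equation: ell Gamma0+(2)} were arranged precisely so that $F(z)^2$ is a modular form of weight $2(\ell+1)$ on $\Gamma_0^+(2)$. Concretely, I would set
$$
W(z)=\det\M{y_1}{y_1'}{y_2}{y_2'},\qquad
\hat W(z)=\det\M{\hat y_1}{\hat y_1'}{\hat y_2}{\hat y_2'},
$$
and observe that $W$ is a nonzero constant: it is the Wronskian of two linearly independent solutions of the second-order equation \eqref{equation: DE Gamma0+(2)}, which has no first-order term, so $W'\equiv0$, while $W=\hat W/F^2$ is meromorphic on the connected domain $\H$ (the $\hat y_i$ and $F^2$ being holomorphic there), hence $W$ is constant. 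Therefore $\hat W=F^2W$, and since $F^2$ is modular of weight $2(\ell+1)$ and $W$ is constant,
$$
\hat W\big|_{2(\ell+1)}\gamma=\left(F^2\big|_{2(\ell+1)}\gamma\right)\left(W\big|_0\gamma\right)=F^2W=\hat W
$$
for all $\gamma\in\Gamma_0^+(2)$.

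On the other hand, I would extract the transformation of $\hat W$ from the defining relation of $\hat\rho$. Writing $\hat Y=(\hat y_1,\hat y_2)^t$, the equation $(\hat Y\big|_\ell\gamma)(z)=\hat\rho(\gamma)\hat Y(z)$ reads $\hat Y(\gamma z)=(cz+d)^\ell\hat\rho(\gamma)\hat Y(z)$; differentiating in $z$ gives
$$
\hat Y'(\gamma z)=\ell c(cz+d)^{\ell+1}\hat\rho(\gamma)\hat Y(z)+(cz+d)^{\ell+2}\hat\rho(\gamma)\hat Y'(z),
$$
and expanding $\det[\hat Y(\gamma z),\hat Y'(\gamma z)]$ by multilinearity of the determinant — the term containing $\hat Y$ in both columns drops out — yields $\hat W\big|_{2(\ell+1)}\gamma=\det\hat\rho(\gamma)\,\hat W$. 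Comparing the two expressions for $\hat W\big|_{2(\ell+1)}\gamma$ and using $\hat W\not\equiv0$ forces $\det\hat\rho(\gamma)=1$ for every $\gamma\in\Gamma_0^+(2)$.

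I do not expect any substantive obstacle: the determinant computation is a special case of Bol's identity and is identical to the one carried out in the $\SL(2,\Z)$ case. The one point worth a sentence of justification is the constancy of $W$ on all of $\H$ despite the poles of $Q$; this uses condition (ii) of \eqref{equation: H}, which makes those poles apparent, so the local monodromies are $\pm I$ and $\hat y_1,\hat y_2$ genuinely extend to single-valued holomorphic functions on $\H$.
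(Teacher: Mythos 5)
Your proposal is correct and follows essentially the same route as the paper: the paper's proof simply refers back to Lemma \ref{lemma: det rho SL(2,Z)}, which is exactly the comparison of the two transformation laws for $\hat W=F^2W$ that you carry out (the paper phrases the invariance of $W$ via $\det\rho(\gamma)=1$ rather than via constancy of the Wronskian, but this is the same fact). Your extra remark on why $W$ is a well-defined constant on all of $\H$ despite the poles of $Q$ is a harmless and correct elaboration.
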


\begin{proof}
The proof of the lemma follows exactly that of Lemma \ref{lemma: det
  rho SL(2,Z)}. We find that for $\gamma\in\Gamma_0^+(2)$, we have
$$
\det\hat\rho(\gamma)=\frac{\left(F(z)^2\big|_{2(\ell+1)}\gamma\right)}
{F(z)^2}=1
$$
because $F(z)^2$ is a modular form of weight $2(\ell+1)$ on
$\Gamma_0^+(2)$.
\end{proof}

Now let $y_+(z)$ be the unique solution of \eqref{equation: DE
  Gamma0+(2)} of the form
$$
y_+(z)=q^{\kappa_\infty}\left(1+\sum_{j\ge 1}c_jq^j\right).
$$
Then analogous to Theorem \ref{theorem: infinity is not apparent
  implies}, we have the following result for $\Gamma_0^+(2)$
(Theorem \ref{theorem: main 2} in Section \ref{section:
  introduction}).

\begin{Theorem} \label{theorem: main Gamma0+(2)}
  Suppose that $Q(z)$ satisifies \eqref{equation: H} with
  $\kappa_\infty\in\frac12\Z_{\ge0}$. Then the following
  statements hold.
  \begin{enumerate}
  \item[(i)] The differential equation \eqref{equation: DE Gamma0+(2)}
    is not apparent at $\infty$.
  \item[(ii)] Let $y_2(z)=y_+(z)$ and $\hat y_1(z)=\left(\hat
      y_+|_\ell S\right)(z)$. Then $\hat y_1(z)=z\hat y_2(z)+\hat
    m_1(z)$ for some modular form $\hat m_1(z)$ in
    $\sM_{\ell-1}(\Gamma_0^+(2),\JS2\ell)$, where $\JS2\cdot$ is the
    Legendre symbol whose values are given by
    $$
    \JS2\ell=\begin{cases}
      1, &\text{if }\ell\equiv1,7\mod 8, \\
      -1, &\text{if }\ell\equiv3,5\mod 8. \end{cases}
    $$
  \item[(iii)] The ratio $h(z)=\JS2\ell y_1(z)/\sqrt2 y_2(z)$ is
    equivariant. That is, for all $\gamma\in\Gamma_0^+(2)$, we have
    $h(\gamma z)=\gamma\cdot h(z)$.
  \item[(iv)] Write $\hat y_+(z)$ as
    $$
    \hat y_+(z)=\JS2\ell\frac{\pi i}{4\sqrt2}\hat
    m_1(z)M_2^\ast(z)+\hat m_2(z).
    $$
    Then $\hat m_2(z)$ is a modular form in
    $\sM_{\ell+1}(\Gamma_0^+(2),\JS2\ell)$. Hence, $\hat y_+(z)$
    is a quasimodular form in
    $\wt\sM_{\ell+1}^{\le1}(\Gamma_0^+(2),\JS2\ell)$.
  \end{enumerate}
\end{Theorem}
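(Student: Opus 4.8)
The plan is to mimic, step by step, the proof of Theorem~\ref{theorem: infinity is not apparent implies} (that is, Lemmas~\ref{lemma: not apparent SL(2,Z)}, \ref{lemma: y1,y2}, \ref{lemma: hat m1 is a modular function}, \ref{lemma: hat(m2) is modular form}), replacing everywhere the relations $S^2=R^3=-I$ of $\SL(2,\Z)$ by the relations $S^2=R^4=-I$, $R=TS$ of $\Gamma_0^+(2)$ and tracking the new automorphy factors caused by $\Gamma_0^+(2)\not\subset\SL(2,\Z)$. Throughout I would work with the lifted Bol representation $\hat\rho\colon\Gamma_0^+(2)\to\SL(2,\C)$ attached to a fundamental system of $\hat y$'s, using that $\ell$ is odd, so $\hat\rho(-I)=(-1)^\ell I=-I$ and hence $\hat\rho(S)^2=\hat\rho(R)^4=-I$.

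\emph{Parts (i) and the linear independence in (ii).} If \eqref{equation: DE Gamma0+(2)} were apparent at $\infty$, then for a fundamental system adapted to the cusp $\hat\rho(T)=I$, so $\hat\rho(R)=\hat\rho(T)\hat\rho(S)=\hat\rho(S)$ and $\hat\rho(R)^4=(\hat\rho(S)^2)^2=I\ne-I$, a contradiction; this proves (i), and it also supplies, via \eqref{equation: second solution}, a second solution $y_-=ezy_++m$ with $e\ne0$, so that $\hat\rho(T)$ is lower unipotent and nontrivial in the basis $(\hat y_+,\hat y_-)$. Next, if $y_1=(\hat y_+|_\ell S)/F$ and $y_2=y_+$ were dependent, say $\hat y_+|_\ell S=\lambda\hat y_+$, then $\hat\rho(S)$ and $\hat\rho(R)=\hat\rho(T)\hat\rho(S)$ would both be lower triangular with diagonal $(\lambda,\lambda^{-1})$; but $\hat\rho(S)^2=-I$ forces $\lambda^2=-1$ while $\hat\rho(R)^4=-I$ forces $\lambda^4=-1$, which is impossible. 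Hence $y_1,y_2$ are independent, so $\hat y_1=dz\hat y_2+\hat m_1$ for some $d\ne0$ and some $\hat m_1$ with an ordinary $q$-expansion. In the basis $(\hat y_1,\hat y_2)$ one reads off $\hat\rho(T)=\SM1d01$ and $\hat\rho(S)=\SM0{-1}10$ (from $\hat y_1|_\ell S=\hat y_+|_\ell S^2=-\hat y_+=-\hat y_2$ and $\hat y_2|_\ell S=\hat y_1$), hence $\hat\rho(R)=\hat\rho(T)\hat\rho(S)=\SM d{-1}10$.

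\emph{Determining $d$ — the main difficulty.} Substituting $\hat\rho(R)=\SM d{-1}10$ into $\hat\rho(R)^4=-I$ yields only $d^2=2$; unlike the $\SL(2,\Z)$ case, where $\hat\rho(R)^3=-I$ pins $d=1$ outright, the sign of $d$ is not determined by the group relations and must be extracted from local data at the order-$4$ fixed point $\rho_2=(1+i)/2$ of $R$. The relevant facts are: the automorphy factor of $R$ at $\rho_2$ is $j(R,\rho_2)=\sqrt2\,(1+i)/2=e^{i\pi/4}$, so $R$ acts near $\rho_2$ by $z-\rho_2\mapsto R'(\rho_2)(z-\rho_2)$ with $R'(\rho_2)=j(R,\rho_2)^{-2}=e^{-i\pi/2}$; the two local solutions of \eqref{equation: DE Gamma0+(2)} have exponents $1/2\pm\kappa_{\rho_2}$ in $z-\rho_2$; and $F$ has order $\kappa_{\rho_2}-1/2$ at $\rho_2$, because among the factors in \eqref{equation: F Gamma0+(2)} only $M_2$ vanishes there, and only simply (whereas $M_6/M_2$ and the $F_j$ are nonzero at $\rho_2$). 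Hence $\hat y=Fy$ has order $0$ or $2\kappa_{\rho_2}$ at $\rho_2$, so in the eigenbasis of $\hat y$'s at $\rho_2$ the matrix $\hat\rho(R)$ is diagonal with entries $j(R,\rho_2)^{-\ell}=e^{-i\pi\ell/4}$ and $j(R,\rho_2)^{-\ell}R'(\rho_2)^{2\kappa_{\rho_2}}=e^{-i\pi\ell/4}e^{-i\pi\kappa_{\rho_2}}$; comparing traces (a basis-independent quantity) with $\hat\rho(R)=\SM d{-1}10$ gives $d=e^{-i\pi\ell/4}\bigl(1+e^{-i\pi\kappa_{\rho_2}}\bigr)$. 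Since $2\kappa_{\rho_2}$ is odd by \eqref{equation: H}(iii), writing $\kappa_{\rho_2}=b+\tfrac12$ turns this into $d=\sqrt2\,e^{-i\pi(\ell\pm1)/4}$ with sign governed by the parity of $b$, and using $\ell\equiv-1+4(\kappa_{\rho_1}-\tfrac12)+2(\kappa_{\rho_2}-\tfrac12)\pmod8$ one checks in each of the four classes $\ell\equiv1,3,5,7\pmod8$ that this equals $\JS2\ell\sqrt2$. Making the local step fully rigorous (the precise local frame at $\rho_2$, that $\hat y_+$ realizes both local exponents after the base change, and the internal consistency $\det\hat\rho(R)=1$) is the part I expect to need the most care.

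\emph{Parts (iii), the modularity of $\hat m_1$, and (iv).} With $d=\JS2\ell\sqrt2$ in hand, set $h(z)=\JS2\ell y_1(z)/\sqrt2\,y_2(z)=z+m_1/y_2$; then $h(z+1)=h(z)+1=T\cdot h(z)$, while $\hat\rho(S)=\SM0{-1}10$ together with $d^2=2$ gives $h(Sz)=h(-1/2z)=-1/(2h(z))=S\cdot h(z)$, and since $\Gamma_0^+(2)=\langle S,T\rangle$ this proves the equivariance in (iii). For $\hat m_1=\hat y_1-dz\hat y_2$, the $q$-expansion is ordinary (so $T$-invariance holds), and applying $|_{\ell-1}S$ with $\hat y_1|_\ell S=-\hat y_2$, $\hat y_2|_\ell S=\hat y_1$ yields, exactly as in Lemma~\ref{lemma: hat m1 is a modular function}, that $\hat m_1|_{\ell-1}S=\JS2\ell\hat m_1$, the eigenvalue $\JS2\ell$ being forced by matching the automorphy factor $j(S,z)^{\ell-1}$ against the weight of $F^2\in\sM_{2(\ell+1)}(\Gamma_0^+(2))$; hence $\hat m_1\in\sM_{\ell-1}(\Gamma_0^+(2),\JS2\ell)$. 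Finally, put $\hat m_2:=\hat y_+-\JS2\ell\frac{\pi i}{4\sqrt2}\hat m_1M_2^\ast$; applying $|_\ell S$ and using \eqref{equation: M2* Gamma0+(2)} for $M_2^\ast$, the modularity of $\hat m_1$, and the identity $\hat y_+|_\ell S=\hat y_1=dz\hat y_2+\hat m_1$, all the $z$- and $1/(cz+d)$-contributions cancel (which is what pins the constant $\frac{\pi i}{4\sqrt2}$), leaving $\hat m_2|_{\ell+1}S=\JS2\ell\hat m_2$; with the evident $T$-invariance of $\hat m_2$ this gives $\hat m_2\in\sM_{\ell+1}(\Gamma_0^+(2),\JS2\ell)$, and therefore $\hat y_+\in\wt\sM_{\ell+1}^{\le1}(\Gamma_0^+(2),\JS2\ell)$, which is the last assertion.
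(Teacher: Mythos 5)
Your proposal reproduces the paper's own route for part (i), the linear independence, the identities $\hat\rho(T)=\SM1d01$, $\hat\rho(S)=\SM0{-1}10$, the reduction to $d^2=2$, and the arguments for parts (iii), (ii) and (iv) once $d$ is known; but at the one genuinely delicate step, pinning down the \emph{sign} of $d$, you take a different and (as far as I can check) correct path. The paper determines $\epsilon=d/\sqrt2$ indirectly: Lemma \ref{lemma: no common zero} shows $\hat m_1$ and $\hat m_2$ cannot have a common zero on $\H$ (else $y_+$ and $y_+|_{-1}S$ would be proportional), and Lemma \ref{lemma: epsilon Gamma0+(2)} shows that the wrong choice of $\epsilon$ would force both $\hat m_1\in\sM_{\ell-1}(\Gamma_0^+(2),\epsilon)$ and $\hat m_2\in\sM_{\ell+1}(\Gamma_0^+(2),\epsilon)$ to vanish at $\rho_2$ by weight/character considerations alone. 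You instead evaluate $\tr\hat\rho(R)$ locally at the fixed point $\rho_2$ of $R$: in the frame of local solutions whose $\hat y$-orders at $\rho_2$ are $0$ and $2\kappa_{\rho_2}$, the matrix $\hat\rho(R)$ is triangular with diagonal entries $j(R,\rho_2)^{-\ell}$ and $j(R,\rho_2)^{-\ell}R'(\rho_2)^{2\kappa_{\rho_2}}$, giving $d=e^{-i\pi\ell/4}\bigl(1+e^{-i\pi\kappa_{\rho_2}}\bigr)$. I checked this: writing $\kappa_{\rho_2}=b+\tfrac12$, the congruence $\ell\equiv 2b-1\pmod 4$ forced by \eqref{equation: ell Gamma0+(2)} makes the expression real, equal to $\sqrt2\,(-1)^{(\ell\pm1)/4}$, and equal to $\JS2\ell\sqrt2$ in each residue class of $\ell$ modulo $8$; it is also consistent with $\det\hat\rho(R)=1$. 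The steps you flag as needing care do go through: $F$ has order $\kappa_{\rho_2}-\tfrac12$ at $\rho_2$, so the local Frobenius solutions times $F$ are single-valued there, they extend to a global single-valued frame by apparentness, the change of basis to $(\hat y_1,\hat y_2)$ is a genuine conjugation (so the trace comparison with $\SM d{-1}10$ is legitimate), and triangularity holds because the orders $0$ and $2\kappa_{\rho_2}$ are distinct. The trade-off: the paper's argument is softer and avoids computing automorphy factors at elliptic points, but it needs both $\hat m_1$ and $\hat m_2$ (hence the set-up of part (iv)) before the sign is known; your argument determines $d$ up front, localizes the sign entirely at $\rho_2$, and makes structurally transparent that $d$ is the sum of the two eigenvalues of $\hat\rho(R)$ — the same mechanism the paper uses, but does not push to a sign determination, in its $d=\pm\sqrt3,\pm\sqrt{-3}$ computation for $\Gamma_0^+(3)$.
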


The proof of (i) is the same as that of Lemma \ref{lemma: not apparent
  SL(2,Z)} and is omitted. We remark that if $\kappa_{\rho_2}\in\N$,
then it can be shown that the differential equation \eqref{equation:
  DE Gamma0+(2)} is apparent at $\infty$. (C.f. Remark \ref{remark:
  infinity is apparent}.) Thus, the condition that
$(2\kappa_{\rho_2},4)=1$ is necessary in order for \eqref{equation:
  DE Gamma0+(2)} to be not apparent at $\infty$. We
now prove the other parts of the theorem.

By Part (i) of the theorem, there is another solution $y_-(z)$ of
\eqref{equation: DE Gamma0+(2)} of the form
$$
y_-(z)=zy_+(z)+q^{-\kappa_\infty}\left(b_0+\sum_{j\ge 1}b_jq^j\right),
\qquad b_0\neq 0.
$$
Set
\begin{equation} \label{equation: y1 Gamma0+(2)}
  y_1(z)=\left(\hat y_+\big|_\ell
    S\right)(z)/F(z) \qquad
  y_2(z)=y_+(z).
\end{equation}
We first prove that $y_1$ and $y_2$ are linearly
independent. Hence, there are $d\neq 0$ and $m_1(z)$ such that
\begin{equation} \label{equation: m1 Gamma0+(2)}
  y_1(z)=dzy_2(z)+m_1(z),
\end{equation}
where $m_1(z)$ has a $q$-expansion of the
form $q^{-\kappa_\infty}(c_0+\sum_{j\ge 1}c_jq^j)$, $c_0\neq 0$.

\begin{Lemma} \label{lemma: linearly independent Gamma0+(2)}
  The solutions $y_i$, $i=1,2$, are linearly independent,
  and we have $d=\pm\sqrt2$.
\end{Lemma}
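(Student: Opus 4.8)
The plan is to imitate the argument for $\SL(2,\Z)$ in Lemma \ref{lemma: y1,y2}, with the relation $R^3=-I$ there replaced by $R^4=-I$ here, and to let the arithmetic of fourth versus third powers of roots of unity produce the number $\pm\sqrt2$. Two preliminary observations will be used repeatedly. First, since $\ell$ is odd, the weight-$\ell$ slash action of $-I\in\Gamma_0^+(2)$ is multiplication by $(-1)^{-\ell}=-1$, so for \emph{any} fundamental system the Bol representation satisfies $\hat\rho(-I)=-I$; hence $\hat\rho(S)^2=\hat\rho(R)^4=-I$. Second, although $F(z)$ itself need not be $1$-periodic when $\kappa_\infty\notin\Z$, the products $\hat y_2=Fy_+$ and $\hat m:=F\cdot q^{-\kappa_\infty}(b_0+\cdots)$ have integral $q$-exponents (because $2\kappa_\infty\in\Z$ and $M_8=q+O(q^2)$ while the remaining factors of $F$ tend to $1$ at $\infty$), so $\hat y_2(z+1)=\hat y_2(z)$ and likewise for any such $\hat m$; this is what makes $\hat\rho(T)$ unipotent.

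For linear independence I would argue by contradiction. If $(\hat y_+|_\ell S)=\lambda\hat y_+$ for some $\lambda\in\C$, then in the basis $(\hat y_+,\hat y_-)$, where $y_-(z)=zy_+(z)+q^{-\kappa_\infty}(b_0+\cdots)$, $b_0\ne0$, is the second solution supplied by the non-apparentness at $\infty$ (Part (i)), the periodicity observation gives $\hat\rho(T)=\SM1011$, while $\hat y_+$ being an $S$-eigenvector gives $\hat\rho(S)=\SM{\lambda}{0}{a}{1/\lambda}$ for some $a$ (the second diagonal entry forced by $\det\hat\rho(S)=1$). From $\hat\rho(S)^2=-I$ we get $\lambda^2=-1$, hence $\lambda^4=1$. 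But then $\hat\rho(R)=\hat\rho(T)\hat\rho(S)=\SM{\lambda}{0}{\lambda+a}{1/\lambda}$ is lower triangular with diagonal $(\lambda,1/\lambda)$, so $\hat\rho(R)^4$ is lower triangular with diagonal $(1,1)$, contradicting $\hat\rho(R)^4=-I$. Thus $y_1,y_2$ are linearly independent, and \eqref{equation: second solution} then yields $y_1=dzy_2+m_1$ with $d\ne0$ and $m_1(z)=q^{-\kappa_\infty}(c_0+\cdots)$, $c_0\ne0$.

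To pin down $d$, I would switch to the basis $(\hat y_1,\hat y_2)$. By definition $(\hat y_2|_\ell S)=\hat y_1$, and $(\hat y_1|_\ell S)=(\hat y_+|_\ell S^2)=(\hat y_+|_\ell(-I))=-\hat y_2$, so $\hat\rho(S)=\SM0{-1}10$. Since $\hat m_1:=Fm_1$ and $\hat y_2$ are $1$-periodic, $\hat y_1(z+1)=d(z+1)\hat y_2(z)+\hat m_1(z)=\hat y_1(z)+d\hat y_2(z)$, giving $\hat\rho(T)=\SM1d01$ and therefore $\hat\rho(R)=\hat\rho(T)\hat\rho(S)=\SM d{-1}10$. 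The relation $\hat\rho(R)^4=-I$ forces the minimal polynomial of $\SM d{-1}10$ to divide $x^4+1$, so this matrix is diagonalizable with eigenvalues among the primitive eighth roots of unity; since the determinant is $1$, the eigenvalues form a complex-conjugate pair $e^{\pm i\pi/4}$ or $e^{\pm3i\pi/4}$, and hence $d=\tr\hat\rho(R)=2\cos(\pi/4)=\sqrt2$ or $d=2\cos(3\pi/4)=-\sqrt2$. This is the asserted $d=\pm\sqrt2$.

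There is no deep obstacle; the lemma is simply the $\Gamma_0^+(2)$-analogue of Lemma \ref{lemma: y1,y2}, with the arithmetic of fourth (rather than third) roots of unity doing the real work. The only points that need genuine care are verifying the two periodicities above — so that $\hat\rho(T)$ is unipotent with the stated off-diagonal entry — and tracking the sign in $\hat\rho(-I)=-I$, which again rests on $\ell$ being odd, precisely the parity forced by hypothesis \eqref{equation: H}. In the write-up I would also note explicitly that $t_j\ne0$ for each $j$ (because $M_8$ is nonvanishing on $\H$ and $z_j\ne\rho_2$), so the factors $F_j$ in $F(z)$ genuinely have simple zeros exactly at $z_j$.
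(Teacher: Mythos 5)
Your proof is correct and follows essentially the same route as the paper: linear independence via the contradiction between $\lambda^2=-1$ (from $\hat\rho(S)^2=-I$) and $\lambda^4=1$ (from $\hat\rho(R)^4=-I$), which is the exact analogue of Lemma \ref{lemma: y1,y2} that the paper invokes by reference, and then $d=\pm\sqrt2$ from the relation $R^4=-I$ applied to $\hat\rho(R)=\SM d{-1}10$ in the basis $(\hat y_1,\hat y_2)$. The paper extracts $d$ from the matrix identity $\hat\rho(R)^2=-\hat\rho(R)^{-2}$ rather than from the eigenvalue/trace argument, but this is the same computation; your extra care with the $1$-periodicity of $\hat y_2$ and $\hat m_1$ when $\kappa_\infty\notin\Z$ is a point the paper leaves implicit.
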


\begin{proof} The proof of linear independence of $y_1$ and $y_2$ is
  similar to that of Part (i) of Lemma \ref{lemma: y1,y2} and is
  skipped.

  To compute $d$, we use $\hat Y(z)=(\hat y_1(z),\hat y_2(z))^t$ as
  the basis. We have $\hat\rho(T)=\SM1d01$ and
  $\hat\rho(S)=\SM0{-1}10$ (see the proof of \eqref{equation: hat
    rho(S) SL(2,Z)}). Hence,
  $$
  \hat\rho(R)=\hat\rho(T)\hat\rho(S)
  =\M1d01\M0{-1}10=\M d{-1}10.
  $$
  Then $\hat\rho(R)^2=-\hat\rho(R)^{-2}$ yields
  $$
  \M{d^2-1}{-d}d{-1}=-\M{-1}d{-d}{d^2-1}.
  $$
  It follows that $d=\pm\sqrt{2}$, and the proof of the lemma is
  completed.
\end{proof}

\begin{Lemma} \label{lemma: h Gamma0+(2)}
Let $h(z)=y_1(z)/(dy_2(z))$. Then $h(Sz)=S\cdot h(z)$.
\end{Lemma}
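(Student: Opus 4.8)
The plan is to remove the automorphy factor $F(z)$ from the ratio and then compute directly how $\hat y_1$ and $\hat y_2$ behave under the weight-$\ell$ slash operator $|_\ell S$. Since $\hat y_i(z)=F(z)y_i(z)$ for $i=1,2$, the factor $F$ cancels in $h(z)=y_1(z)/(dy_2(z))=\hat y_1(z)/(d\hat y_2(z))$, so it suffices to track $\hat y_1$ and $\hat y_2$. Here $S=\frac1{\sqrt2}\SM0{-1}20$, so $Sz=-1/(2z)$ and, as a Möbius transformation, $S\cdot w=-1/(2w)$; thus the goal is the identity $h(Sz)=-1/(2h(z))$.

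First I would record the action of $|_\ell S$. Since $\det S=1$ and the bottom row of $S$ is $(\sqrt2,0)$, Definition \ref{definition: Shimura-Maass} gives $(g|_\ell S)(z)=(\sqrt2 z)^{-\ell}g(Sz)$ for any $g$. Applying this to $g=\hat y_+=\hat y_2$ and using $\hat y_1=\hat y_+|_\ell S$ yields $\hat y_1(z)=(\sqrt2 z)^{-\ell}\hat y_2(Sz)$, i.e. $\hat y_2(Sz)=(\sqrt2 z)^{\ell}\hat y_1(z)$. Next, applying $|_\ell S$ once more and using $S^2=-I$ together with the cocycle property of the slash operator, we get $\hat y_1|_\ell S=\hat y_+|_\ell S^2=\hat y_+|_\ell(-I)=(-1)^{\ell}\hat y_+=-\hat y_2$, because $\ell$ is odd; hence $\hat y_1(Sz)=-(\sqrt2 z)^{\ell}\hat y_2(z)$.

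Combining the two identities, the common factor $(\sqrt2 z)^{\ell}$ cancels and
$$
h(Sz)=\frac{\hat y_1(Sz)}{d\hat y_2(Sz)}=\frac{-(\sqrt2 z)^{\ell}\hat y_2(z)}{d(\sqrt2 z)^{\ell}\hat y_1(z)}=\frac{-\hat y_2(z)}{d\hat y_1(z)}=\frac{-1}{d^2h(z)}.
$$
Finally, Lemma \ref{lemma: linearly independent Gamma0+(2)} gives $d=\pm\sqrt2$, so $d^2=2$ and $h(Sz)=-1/(2h(z))=S\cdot h(z)$, as desired. The argument is essentially bookkeeping with the slash operator; the only point needing a moment's care is that the normalizing factor $(\det S)^{\ell/2}$ is harmlessly equal to $1$ since $\det S=1$, so no spurious constants or branch ambiguities enter.
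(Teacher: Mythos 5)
Your proof is correct and follows essentially the same route as the paper: the paper simply quotes $\hat\rho(S)=\SM0{-1}10$ with respect to the basis $(\hat y_1,\hat y_2)$ (i.e.\ $\hat y_2|_\ell S=\hat y_1$ and $\hat y_1|_\ell S=-\hat y_2$, the latter from $S^2=-I$ and $\ell$ odd) and then concludes $h(Sz)=-1/(d^2h(z))=-1/(2h(z))$ exactly as you do. Your version just makes the derivation of $\hat\rho(S)$ explicit via the cocycle property of the slash operator.
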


\begin{proof}
By using the basis $\left(\hat{y}_1,\hat{y}_2\right)$, we have
$\hat{\rho}(S)=\SM0{-1}10$, which implies that
\begin{align*}
  h(Sz)=\frac{-y_2(z)}{dy_1(z)}
  =\frac{1}{d^2}\frac{-dy_2(z)}{y_1(z)}=\frac{-1}{2h(z)}
  =\frac{-1/\sqrt{2}}{2h(z)/\sqrt{2}}=S\cdot h(z).
\end{align*}
This proves the lemma.
\end{proof}

Note that together with Lemma \ref{lemma: epsilon Gamma0+(2)} below,
this lemma yields Part (iii) of the theorem.

\begin{Lemma} \label{lemma: m1 Gamma0+(2)}
Let $y_1(z)$ and $m_1(z)$ be defined as in \eqref{equation: y1
  Gamma0+(2)} and \eqref{equation: m1 Gamma0+(2)}, respectively.
Then $\hat m_1(z)$ satisfies
\begin{enumerate}
\item[(i)] $\hat m_1(z+1)=\hat m_1(z)$,
\item[(ii)] $\left(\hat m_1\big|_{\ell-1}S\right)(z)=\epsilon\hat
  m_1(z)$ if $d=\epsilon\sqrt2$, $\epsilon\in\{\pm 1\}$, and
\item[(iii)] $\hat m_1(z)\in\sM_{\ell-1}(\Gamma_0^+(2),\epsilon)$.
\end{enumerate}
\end{Lemma}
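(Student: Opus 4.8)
The plan is to adapt the proof of Lemma~\ref{lemma: hat m1 is a modular function} from the $\SL(2,\Z)$ case, running the same three-step argument while keeping track of the factor $\sqrt2$ that enters through $S=\frac1{\sqrt2}\SM0{-1}20$. Recall that by construction $\hat y_2=\hat y_+$ and $\hat y_1=\left(\hat y_+\big|_\ell S\right)=\left(\hat y_2\big|_\ell S\right)$, so that $\left(\hat y_1\big|_\ell S\right)=\left(\hat y_2\big|_\ell S^2\right)=(-1)^\ell\hat y_2=-\hat y_2$ since $\ell$ is odd, and that $\hat y_1=dz\hat y_2+\hat m_1$ with $d=\epsilon\sqrt2$ by Lemma~\ref{lemma: linearly independent Gamma0+(2)}.

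\emph{Part (i).} I argue from the $q$-expansion at $\infty$. From the definition \eqref{equation: F Gamma0+(2)} one has $F(z)=M_8(z)^{\kappa_\infty}g(z)$, where the remaining factors $\left(M_6/M_2\right)^{\kappa_{\rho_1}-1/2}$, $M_2^{\kappa_{\rho_2}-1/2}$, $F_j^{\kappa_j-1/2}$ are holomorphic and non-vanishing at $\infty$, collected into $g$; since $M_8=\eta(z)^8\eta(2z)^8=q(1+O(q))$, this gives $F(z)=q^{\kappa_\infty}(c_0+c_1q+\cdots)$ with $c_0\neq0$. On the other hand $m_1(z)=q^{-\kappa_\infty}(c_0'+c_1'q+\cdots)$ with $c_0'\neq0$ by \eqref{equation: m1 Gamma0+(2)} (the cusp has width $1$). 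Hence $\hat m_1=Fm_1$ is an ordinary power series in $q$ with non-zero constant term, which is precisely the statement $\hat m_1(z+1)=\hat m_1(z)$; moreover $\hat m_1$ is then holomorphic and non-zero at $\infty$, and as $\hat m_1=\hat y_1-dz\hat y_2$ is a combination of functions holomorphic on $\H$ (by the construction of $F$), it is holomorphic on all of $\H$ too. These holomorphy statements will be used in Part~(iii).

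\emph{Part (ii).} I use the $S$-equivariance of $h$ from Lemma~\ref{lemma: h Gamma0+(2)}. Writing $h(z)=\hat y_1(z)/(d\hat y_2(z))=z+\hat m_1(z)/(d\hat y_2(z))$ and using the weight-$\ell$ relation above in the form $\hat y_2(-1/(2z))=(\sqrt2\,z)^\ell\hat y_1(z)$, I substitute $z\mapsto Sz=-1/(2z)$ into this formula for $h$ and compare with $h(Sz)=-1/(2h(z))$; cancelling the common factors $(\sqrt2\,z)^\ell$ and $\hat y_1(z)$ leaves the identity $\hat m_1(-1/(2z))=\frac{d(\sqrt2)^{\ell+1}}{2}\,z^{\ell-1}\hat m_1(z)$, whose constant equals $\epsilon\,2^{(\ell-1)/2}$ because $d=\epsilon\sqrt2$ and $\ell$ is odd. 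Rewriting this in terms of the weight-$(\ell-1)$ slash, for which $\left(f\big|_{\ell-1}S\right)(z)=2^{-(\ell-1)/2}z^{-(\ell-1)}f(-1/(2z))$, the two powers of $2$ cancel and we obtain $\left(\hat m_1\big|_{\ell-1}S\right)(z)=\epsilon\,\hat m_1(z)$.

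\emph{Part (iii).} Since $R=TS$, the group $\Gamma_0^+(2)$ is generated by $T$ and $S$, and Parts~(i) and (ii) give $\hat m_1\big|_{\ell-1}T=\hat m_1$ and $\hat m_1\big|_{\ell-1}S=\epsilon\hat m_1$. Because $\hat m_1\not\equiv0$ and, the weight $\ell-1$ being even, $\hat m_1\big|_{\ell-1}(-I)=\hat m_1$, the scalar relating $\hat m_1\big|_{\ell-1}\gamma$ to $\hat m_1$ is a well-defined homomorphism $\delta\colon\Gamma_0^+(2)\to\{\pm1\}$ with $\delta(T)=1$ and $\delta(S)=\epsilon$. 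Writing elements of $\Gamma_0^+(2)$ as words in $T^{\pm1},S^{\pm1}$, an element lies in $\Gamma_0(2)$ exactly when the total exponent of $S$ is even (as $T\in\Gamma_0(2)$, $S^2=-I\in\Gamma_0(2)$, and $[\Gamma_0^+(2):\Gamma_0(2)]=2$); hence $\delta$ is trivial on $\Gamma_0(2)$ while $\delta(w_2)=\delta(S)=\epsilon$. Together with the holomorphy on $\H$ and at the single cusp $\infty$ from Part~(i), this is exactly $\hat m_1\in\sM_{\ell-1}(\Gamma_0^+(2),\epsilon)$. I expect no genuine obstacle: the whole argument is a bookkeeping adaptation of the $\SL(2,\Z)$ computation, the one delicate point being to track the powers of $\sqrt2$ so that the character in Parts~(ii)--(iii) comes out precisely as $\epsilon$; the identification $\epsilon=\JS2\ell$ is deferred to Lemma~\ref{lemma: epsilon Gamma0+(2)}.
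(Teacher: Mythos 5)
Your proof is correct and follows essentially the same route as the paper: periodicity from the $q$-expansions of $F$ and $m_1$, the $S$-transformation of $\hat m_1$ extracted from $h(Sz)=S\cdot h(z)$ together with $\hat y_2(Sz)=(\sqrt2\,z)^{\ell}\hat y_1(z)$, and modularity on $\Gamma_0(2)$ from the fact that $T$ and $S$ generate $\Gamma_0^+(2)$ with $\Gamma_0(2)$ the even-$S$-exponent subgroup (the paper phrases this last step as $T$ and $R^2=TSTS$ generating $\Gamma_0(2)$). The only blemish is a slip in the intermediate constant $\frac{d(\sqrt2)^{\ell+1}}{2}$, which should read $\frac{d(\sqrt2)^{\ell}}{2}$; the evaluated value $\epsilon\,2^{(\ell-1)/2}$ that you actually carry forward is correct, so nothing downstream is affected.
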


\begin{proof}
Part (i) is obvious. The proof of (ii) is similar to that of Lemma
\ref{lemma: hat(m2) is modular form}. For the convenience of readers,
we repeat it here. Write
$$
h(z)=\frac{y_1(z)}{dy_2(z)}=z+\frac{m_1(z)}{dy_2(z)}.
$$
By the previous lemma,
$$
-\frac{1}{2h(z)}=h(Sz)=h\left(-\frac{1}{2z}\right)
=-\frac{1}{2z}+\frac{m_1(Sz)}{dy_2(Sz)},
$$
and then
\begin{align*}
  \frac{m_1(Sz)}{dy_2(Sz)}=\frac{-1}{2h(z)}+\frac{1}{2z}
  =\frac{h(z)- z}{2h(z)z}=\frac{m_1(z)}{2dzh(z)y_2(z)}
  =\frac{m_1(z)}{2zy_1(z)}.
\end{align*}
Since
$$
\hat y_1(z)=\left(\hat y_2\big|_{\ell}
  S\right)(z)=(\sqrt{2}z)^{-\ell}\hat y_2(Sz),
$$
we deduce that
\begin{align*}
  \hat m_1(S z)&=\frac{d\hat m_1(z)}{2z}
                 \frac{\hat y_2(Sz)}{\hat y_1(z)}
            =\epsilon\frac{\hat m_1(z)}{\sqrt{2}z}
            \cdot(\sqrt{2}z)^{\ell}\\
&={\epsilon}(\sqrt{2}z)^{\ell-1}\hat m_1(z)\quad\text{if }
            d=\epsilon\sqrt2,
\end{align*}
i.e., $\left(\hat m_1\big|_{\ell-1}S\right)(z)={\epsilon}\hat m_1(z)$.

Now we have $R=TS$. Hence, by (i) and (ii), $\left(\hat
  m_1\big|_{\ell-1}R^2\right)(z)=\hat m_1(z)$. Since
$R^2=\SM1{-1}2{-1}$ and $T$ generate the group $\Gamma_0(2)$, we find
that $\hat m_1(z)$ is a modular form of weight $\ell-1$ on
$\Gamma_0(2)$. It is clear from (ii) that it belongs to the
Atkin-Lehner subspace with eigenvalue $\epsilon$.
\end{proof}

Define $m_2(z)$ by
\begin{equation} \label{equation: m2 Gamma0+(2)}
  y_+(z)=\frac{\pi i}{4d}m_1(z)M_2^\ast(z)+m_2(z).
\end{equation}
Note that since $m_1(z)=q^{-\kappa_\infty}(a_0+O(q))$, $a_0\neq0$, we
have $m_2(z)=q^{-\kappa_\infty}(b_0+O(q))$ for some $b_0\neq 0$.

\begin{Lemma}\label{lemma: transformation of hat m2} We have
  $\hat m_2(z)\in\sM_{\ell+1}(\Gamma_0^+(2),\epsilon)$,
  where $\epsilon\in\{\pm 1\}$ is determined by $d=\epsilon\sqrt2$.
\end{Lemma}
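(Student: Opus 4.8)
The plan is to follow the proof of Lemma~\ref{lemma: hat(m2) is modular form} from the $\SL(2,\Z)$ case almost verbatim, the only new ingredients being the Atkin--Lehner bookkeeping and the powers of $\sqrt2$. The starting point will be the identity $\hat y_1(z)=\bigl(\hat y_+\big|_\ell S\bigr)(z)=(\sqrt2\,z)^{-\ell}\hat y_+(Sz)$ with $Sz=-1/(2z)$, together with three facts: the transformation law \eqref{equation: M2* Gamma0+(2)} specialised to $S=\tfrac1{\sqrt2}\SM0{-1}20$, which gives $M_2^\ast(Sz)=2z^2M_2^\ast(z)+\tfrac{8z}{\pi i}$; the identity $\hat m_1(Sz)=\epsilon(\sqrt2\,z)^{\ell-1}\hat m_1(z)$ from Lemma~\ref{lemma: m1 Gamma0+(2)}(ii); and the relation $\epsilon/d=1/\sqrt2$, valid since $d=\epsilon\sqrt2$ with $\epsilon\in\{\pm1\}$.

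First I would substitute the definition \eqref{equation: m2 Gamma0+(2)} of $m_2$ into $\hat y_1(z)=(\sqrt2\,z)^{-\ell}\hat y_+(Sz)$ and expand. Using the two transformation formulas above and observing that $\tfrac{\pi i}{4d}\,\epsilon=\tfrac{\pi i}{4\sqrt2}$, all the powers of $\sqrt2\,z$ collapse and one is left with
\[
  \hat y_1(z)=\frac{\pi i}4\,z\,M_2^\ast(z)\hat m_1(z)+\hat m_1(z)+(\sqrt2\,z)^{-\ell}\hat m_2(Sz).
\]
On the other hand, from \eqref{equation: m1 Gamma0+(2)} with $d=\epsilon\sqrt2$ together with \eqref{equation: m2 Gamma0+(2)},
\[
  \hat y_1(z)=dz\,\hat y_2(z)+\hat m_1(z)=\frac{\pi i}4\,z\,M_2^\ast(z)\hat m_1(z)+dz\,\hat m_2(z)+\hat m_1(z).
\]
Comparing the two displays, the $M_2^\ast$-terms and the lone $\hat m_1(z)$-terms cancel, leaving $(\sqrt2\,z)^{-\ell}\hat m_2(Sz)=dz\,\hat m_2(z)=\epsilon(\sqrt2\,z)\hat m_2(z)$, i.e.\ $\bigl(\hat m_2\big|_{\ell+1}S\bigr)(z)=\epsilon\hat m_2(z)$.

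It remains to promote this single transformation law to modularity on $\Gamma_0^+(2)$. Since $\hat y_+$, $\hat m_1$, and $M_2^\ast$ are all $1$-periodic, so is $\hat m_2$, so $\hat m_2\big|_{\ell+1}T=\hat m_2$; combining with $\hat m_2\big|_{\ell+1}S=\epsilon\hat m_2$ gives $\hat m_2\big|_{\ell+1}R^2=\hat m_2$ for $R^2=\SM1{-1}2{-1}$, and since $T$ and $R^2$ generate $\Gamma_0(2)$ (as recalled in the proof of Lemma~\ref{lemma: m1 Gamma0+(2)}), $\hat m_2$ transforms like a modular form of weight $\ell+1$ on $\Gamma_0(2)$ lying in the Atkin--Lehner $\epsilon$-eigenspace. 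Holomorphy on $\H$ is clear because $\hat y_+$ and $M_2^\ast$ are holomorphic there and $\hat m_1$ is a genuine modular form; holomorphy at $\infty$ follows from $m_2(z)=q^{-\kappa_\infty}(b_0+O(q))$ (noted after \eqref{equation: m2 Gamma0+(2)}) and $\ord_\infty F=\kappa_\infty$. Hence $\hat m_2\in\sM_{\ell+1}(\Gamma_0^+(2),\epsilon)$.

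The only delicate point will be the constant-chasing in the first step: the coefficient $\tfrac{\pi i}{4d}$ in the definition \eqref{equation: m2 Gamma0+(2)} of $m_2$ is chosen precisely so that the inhomogeneous term $\tfrac{8z}{\pi i}$ generated by $M_2^\ast$ under $S$ is absorbed exactly into the stray $\hat m_1(z)$ present on both sides, making the two expressions for $\hat y_1(z)$ match term by term. Once that cancellation is checked, the remainder of the argument is purely formal and parallel to the $\SL(2,\Z)$ case.
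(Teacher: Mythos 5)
Your proposal is correct and follows essentially the same route as the paper: apply $\big|_\ell S$ to the decomposition \eqref{equation: m2 Gamma0+(2)}, use \eqref{equation: M2* Gamma0+(2)} and Lemma \ref{lemma: m1 Gamma0+(2)}(ii) so that the inhomogeneous term of $M_2^\ast$ produces exactly the stray $\hat m_1(z)$, compare with $\hat y_1=dz\hat y_2+\hat m_1$ to extract $\left(\hat m_2\big|_{\ell+1}S\right)=\epsilon\hat m_2$, and then invoke $T$-periodicity and the fact that $T$ and $R^2$ generate $\Gamma_0(2)$. The constant-chasing you flag as the delicate point checks out ($\epsilon/d=1/\sqrt2$), and your explicit verification of holomorphy on $\H$ and at $\infty$ is a small addition the paper leaves implicit.
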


\begin{proof}
  As seen in the proof of \eqref{lemma: m1 Gamma0+(2)}, it suffices to
  prove that
  $$
  \hat m_2(z+1)=\hat m_2(z), \qquad
  \left(\hat m_2\big|_{\ell+1}S\right)(z)=\epsilon\hat m_2(z).
  $$
  The first identity is obvious. To prove the second identity, by
  \eqref{equation: M2* Gamma0+(2)}, \eqref{equation: m2 Gamma0+(2)}
  and Lemma \ref{lemma: m1 Gamma0+(2)}, we have
\begin{align*}
  \left(\hat y_+\big|_{\ell} S\right)(z)
  &=\sqrt{2}z\frac{\pi i}{4d}\left(\hat m_1\big|_{\ell-1}S\right)(z)
    \left(M_2^\ast\big|_2 S\right)(z)
    +\sqrt{2}z\left(\hat m_2\big|_{\ell+1}S\right)(z)\\
  &=\sqrt{2}z\left(\frac{\pi i}{4d}
    {\epsilon}\hat m_1(z)M_2^\ast(z)
    +\frac{\hat m_1(z)}{\sqrt 2z}\right)
    +\sqrt{2}z\left(\hat m_2\big|_{\ell+1}S\right)(z)\\
  &=\frac{\pi iz}4\hat m_1(z)M_2^\ast(z)+\hat m_1(z)+
    \sqrt 2z\left(\hat m_2\big|_{\ell+1}S\right)(z) \\
  &=dz\left(\hat y_+(z)-\hat m_2(z)\right)+\hat m_1(z)
    +\sqrt2z\left(\hat m_2\big|_{\ell+1}S\right)(z) \\
  &=\left(dz\hat y_+(z)+\hat m_1(z)\right)+\sqrt2 z\left(
    \left(\hat m_2\big|_{\ell+1}S\right)(z)-\epsilon\hat m_2(z)\right).
\end{align*}
We note that the first sum of the right-hand side is equal to
$\left(\hat y_+\big|_{\ell} S\right)(z)$. Therefore, we have
$$
\left(\hat m_2\big|_{\ell+1}S\right)(z)={\epsilon}\hat m_2(z).
$$
This completes the proof.
\end{proof}

It remains to determine $\epsilon$. We first prove a lemma.

\begin{Lemma} \label{lemma: no common zero}
  The two modular forms $\hat m_1(z)$ and $\hat m_2(z)$
  cannot have a common zero on $\H$.
\end{Lemma}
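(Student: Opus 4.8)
The plan is a short argument by contradiction. Suppose some $z_0\in\H$ is a common zero, $\hat m_1(z_0)=\hat m_2(z_0)=0$. First I would use the two structural identities already available: multiplying \eqref{equation: m2 Gamma0+(2)} by $F(z)$ gives $\hat y_+(z)=\frac{\pi i}{4d}\hat m_1(z)M_2^\ast(z)+\hat m_2(z)$, and since $M_2^\ast$ is holomorphic on $\H$, evaluating at $z_0$ yields $\hat y_2(z_0)=\hat y_+(z_0)=0$; then from $\hat y_1(z)=dz\,\hat y_2(z)+\hat m_1(z)$, proved in Lemma \ref{lemma: linearly independent Gamma0+(2)}, we get $\hat y_1(z_0)=0$ as well. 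So a common zero of $\hat m_1,\hat m_2$ would be a point of $\H$ at which both basis solutions $\hat y_1,\hat y_2$ vanish.

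Next I would rule this out by a Wronskian computation. Because \eqref{equation: DE Gamma0+(2)} has no first-order term, the Wronskian $W=y_1y_2'-y_1'y_2$ of the fundamental system is a nonzero constant, and a direct computation gives $\hat y_1\hat y_2'-\hat y_1'\hat y_2=F(z)^2W$, which is $F(z)^2$ times a nonzero constant and hence not identically zero. If $\hat y_1(z_0)=\hat y_2(z_0)=0$, the left-hand side vanishes at $z_0$, forcing $F(z_0)=0$. From the explicit form \eqref{equation: F Gamma0+(2)} of $F$, together with the listed facts on the zeros of $M_2$, $M_6$, $M_8$ and of each $F_j$, the zeros of $F$ on $\H$ lie precisely on the $\Gamma_0^+(2)$-orbits of $\rho_1,\rho_2,z_1,\dots,z_m$; moreover at each such point $\kappa_{z_0}>1/2$, for otherwise $Q$ would be holomorphic there and $F$ would not vanish.

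Finally I would conclude at such a $z_0$. The singularity there is apparent with local exponents $1/2\pm\kappa_{z_0}$, so the local solution space contains the distinguished line spanned by the subdominant solution $u$ with $\ord_{z_0}u=1/2+\kappa_{z_0}$, while every solution off that line has order $1/2-\kappa_{z_0}\le 0$. Since $\ord_{z_0}F=\kappa_{z_0}-1/2$, for any solution $y$ one has $\ord_{z_0}(Fy)=0$ unless $y\in\C u$, in which case $\ord_{z_0}(Fy)=2\kappa_{z_0}$ — this is exactly the order dichotomy used in the construction of $F$. Hence $\hat y_1(z_0)=\hat y_2(z_0)=0$ would force both $y_1$ and $y_2$ to be scalar multiples of $u$, contradicting their linear independence established in Lemma \ref{lemma: linearly independent Gamma0+(2)}. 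Therefore $\hat m_1$ and $\hat m_2$ have no common zero on $\H$.

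I expect the only delicate point — the main (and rather minor) obstacle — to be this orbit-and-order bookkeeping at the zeros of $F$: one must verify that $\ord_{z_0}(Fy)$ really takes only the values $0$ or $2\kappa_{z_0}$, with nothing in between, at every $\Gamma_0^+(2)$-translate of the points $\rho_1,\rho_2,z_j$, which is precisely where the apparentness hypotheses of \eqref{equation: H} are needed. Everywhere else the argument is purely formal, since at a point with $F(z_0)\ne 0$ the vanishing of $\hat y_1,\hat y_2$ would already give $y_1(z_0)=y_2(z_0)=0$, against $W\ne 0$.
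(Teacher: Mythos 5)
Your proposal is correct and follows essentially the same route as the paper: from a common zero of $\hat m_1,\hat m_2$ one deduces $\hat y_1(z_0)=\hat y_2(z_0)=0$, and the order dichotomy built into the construction of $F$ then forces both $y_1$ and $y_2$ to be the subdominant local solution at $z_0$, contradicting their linear independence. The Wronskian detour you insert to locate $z_0$ among the zeros of $F$ is a harmless (and clarifying) elaboration of what the paper leaves implicit.
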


\begin{proof} Suppose that $\hat m_1(z)$ and $\hat m_2(z)$ have a
  common zero at $z_0\in\H$. Then $\hat y_1(z_0)=\hat y_2(z_0)=0$,
  which implies that both $y_+$ and $y_+|_{-1}S$ have
  the behavior $c(z-z_0)^{1/2+\kappa}(1+O(z-z_0))$ for some $c\neq 0$
  near $z_0$. Consequently, $y_+$ and $y_+|_{-1}S$ are linearly
  dependent, contradicting to Lemma \ref{lemma: linearly independent
    Gamma0+(2)}. We conclude that $\hat m_1(z)$ and $\hat m_2(z)$ have
  no common zero on $\H$.
\end{proof}
We now use this lemma to determine $\epsilon$.

\begin{Lemma} \label{lemma: epsilon Gamma0+(2)}
  We have
  $$
  \epsilon=\JS2\ell=\begin{cases}
    1, &\text{if }\ell\equiv1,7\mod 8, \\
    -1, &\text{if }\ell\equiv3,5\mod 8. \end{cases}.
  $$
\end{Lemma}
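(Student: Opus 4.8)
The plan is to pin down $\epsilon$ from the orders of vanishing at the order-$4$ elliptic point $\rho_2=(1+i)/2$ of the two modular forms $\hat m_1\in\sM_{\ell-1}(\Gamma_0^+(2),\epsilon)$ and $\hat m_2\in\sM_{\ell+1}(\Gamma_0^+(2),\epsilon)$ provided by Lemmas \ref{lemma: m1 Gamma0+(2)} and \ref{lemma: transformation of hat m2}, together with the no-common-zero property of Lemma \ref{lemma: no common zero}. First I would record that $\hat m_1$ and $\hat m_2$ are not identically zero, being $F(z)\not\equiv0$ times functions with $q$-expansions $q^{-\kappa_\infty}(c_0+\cdots)$ and $q^{-\kappa_\infty}(b_0+\cdots)$, $c_0,b_0\neq0$; note also that $\ell-1$ and $\ell+1$ are even since $\ell$ is odd.

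The heart of the argument is a local computation at $\rho_2$. This point is fixed by $R=\frac1{\sqrt2}\SM2{-1}20$, and since $T\in\Gamma_0(2)$ acts trivially, every $g\in\sM_w(\Gamma_0^+(2),\epsilon)$ satisfies $g|_wR=g|_wS=\epsilon g$, i.e.\ $g(Rz)=\epsilon(\sqrt2\,z)^wg(z)$. Using $\sqrt2\,\rho_2=(1+i)/\sqrt2=\zeta$, where $\zeta:=e^{\pi i/4}$ is a primitive $8$th root of unity, together with $\frac{d}{dz}(Rz)\big|_{z=\rho_2}=\frac1{2\rho_2^2}=-i=\zeta^{-2}$, and comparing leading Taylor coefficients at $\rho_2$, I would obtain for every nonzero $g$ of even weight $w$ in $\sM_w(\Gamma_0^+(2),\epsilon)$ the identity
$$
(-i)^{\ord_{\rho_2}g}=\epsilon\,\zeta^{w},\qquad\text{equivalently}\qquad
\epsilon=\zeta^{-2\ord_{\rho_2}g-w}.
$$
(I would double-check this against $\ord_{\rho_2}M_2=\ord_{\rho_2}M_6=1$, $\ord_{\rho_2}M_4=2$, $\ord_{\rho_2}M_8=0$.)

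To conclude: of the consecutive even integers $\ell-1$ and $\ell+1$, exactly one is $\equiv 2\mod 4$. Let $j\in\{1,2\}$ be the index for which the weight $w_j$ of $\hat m_j$ (where $w_1=\ell-1$, $w_2=\ell+1$) satisfies $w_j\equiv 2\mod 4$, and let $i$ be the other index. Applying the identity to $\hat m_j$ and using $\epsilon\in\{\pm1\}$ forces $2\ord_{\rho_2}\hat m_j+w_j\equiv0\mod 4$, so $\ord_{\rho_2}\hat m_j$ is odd; in particular $\hat m_j(\rho_2)=0$. By Lemma \ref{lemma: no common zero}, then $\hat m_i(\rho_2)\neq0$, i.e.\ $\ord_{\rho_2}\hat m_i=0$, whence $\epsilon=\zeta^{-w_i}$. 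As $w_i\equiv0\mod 4$, this gives $\epsilon=1$ when $w_i\equiv0\mod 8$ and $\epsilon=-1$ when $w_i\equiv4\mod 8$. Running over the residues of $\ell$: if $\ell\equiv1\mod 8$ then $w_i=\ell-1\equiv0\mod 8$ and $\epsilon=1$; if $\ell\equiv7\mod 8$ then $w_i=\ell+1\equiv0\mod 8$ and $\epsilon=1$; if $\ell\equiv3\mod 8$ then $w_i=\ell+1\equiv4\mod 8$ and $\epsilon=-1$; if $\ell\equiv5\mod 8$ then $w_i=\ell-1\equiv4\mod 8$ and $\epsilon=-1$. These four cases say exactly that $\epsilon=\JS2\ell$.

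The step requiring the most care will be the roots-of-unity bookkeeping in the local identity at $\rho_2$: correctly pinning down the automorphy factor $\sqrt2\,\rho_2=\zeta$ and the multiplier $(Rz)'(\rho_2)=-i$, with the normalization $\det R=1$. After the displayed identity everything is mechanical. An alternative that avoids the local computation is to expand $g\in\sM_w(\Gamma_0^+(2),\epsilon)$ in the monomials $M_2^aN_4^b$ with $N_4:=M_6/M_2\in\sM_4(\Gamma_0^+(2),-)$: then $\ord_{\rho_2}(M_2^aN_4^b)=a$, the eigenvalue condition $(-1)^{a+b}=\epsilon$ fixes the parity of $b$, hence the residue of $a=w/2-2b$ modulo $4$, and the same conclusion follows.
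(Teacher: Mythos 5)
Your proof is correct and follows essentially the same route as the paper's: both arguments hinge on Lemma \ref{lemma: no common zero} applied at the order-$4$ elliptic point $\rho_2=(1+i)/2$, together with congruence constraints on $\ord_{\rho_2}$ forced by the automorphy of $\hat m_1$ and $\hat m_2$ under $R$. The only difference is packaging: you derive the single uniform identity $\epsilon=\zeta^{-2\ord_{\rho_2}g-w}$ with $\zeta=e^{\pi i/4}$ (which checks out against $M_2,M_4,M_6,M_8$ and correctly accounts for the factor $\sqrt2\rho_2=\zeta$ and the multiplier $(Rz)'(\rho_2)=-i$) and argue directly, whereas the paper invokes three separate vanishing criteria at $\rho_2$ and rules out $\epsilon=-\JS2\ell$ by contradiction case-by-case.
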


\begin{proof} We will prove case by case that the assumption
  $\epsilon=-\JS2\ell$ will imply that $\hat m_1(z)$ and $\hat m_2(z)$
  have a common zero at $\rho_2$. By the lemma above, this is absurd.
  Hence, $\epsilon$ must be equal to $\JS2\ell$.

  In general, \emph{if $z_0$ is an elliptic point of order $e$ of a
    subgroup $\Gamma$ of $\SL(2,\R)$ commensurable with $\SL(2,\Z)$,
    then any modular form of even weight $k$ with $(2e)\nmid k$ has a
    zero at $z_0$}
  (see, for instance, Proposition \ref{proposition: vanishing
    coefficients} in the appendix).
  From this property, we see that
  \begin{enumerate}
    \item[(i)] If $k\equiv 2\mod 4$, then any modular form of weight
      $k$ on $\Gamma_0(2)$ has a zero at $\rho_2$ since
      $\rho_2=(1+i)/2$ is an elliptic point of order $2$ of
      $\Gamma_0(2)$.
    \item[(ii)] If $k\equiv 4\mod 8$, then any modular form of weight
      $k$ on $\Gamma_0^+(2)$ has a zero at $\rho_2$ since $\rho_2$ is
      an elliptic point of order $4$ of $\Gamma_0^+(2)$.
  \end{enumerate}
    
  Consider first the case $\ell\equiv 3,5\mod 8$. Suppose that
  $\epsilon=1$. Then $\hat m_i(z)$ are modular forms on
  $\Gamma_0^+(2)$ of weight $\ell-1$ and $\ell+1$. By (i) and (ii)
  above, $\rho_2$ is a common zero of $\hat m_1$ and $\hat m_2(z)$, a
  contradiction to Lemma \ref{lemma: no common zero}. This proves
  $\epsilon=-1$.

  For the remaining cases where $\ell+1$ or $\ell-1$ is divisible by
  $8$, we claim that if $k$ is divisible by $8$, then any modular form
  in $\sM_k(\Gamma_0^+(2),-)$ has a zero at $\rho_2$, which then
  implies that the sign $\epsilon$ must be $1=\JS2\ell$. Indeed, if
  $8|k$ and $f\in\sM_k(\Gamma_0^+(2),-)$, then evaluating the two
  sides of the identity
  $$
  f\left(\frac{2z-1}{2z}\right)=f(Rz)
  =-(\sqrt2 z)^kf(z)
  $$
  at $\rho_2$, we see that $f(\rho_2)$ must be $0$ because $(\sqrt
  2\rho_2)^8=1$. This completes the
  proof of the lemma.
\end{proof}

With this lemma, we conclude the proof of Theorem \ref{theorem: main
  Gamma0+(2)}. We now give three examples and show they agree with our
theorem.

\begin{Example} Consider
  $$
  y''(z)=-4\pi^2\left(\frac k2\right)^2M_4(z)y(z),
  $$
  where $k\in\N$. By Theorem \ref{theorem: main Gamma0+(2)}, if
  $y_+(z)$ is the unique solution of the form
  $y_+(z)=q^{k/2}(1+\sum_{j\ge1}a_jq^j)$, then $M_8(z)^{k/2}y_+(z)$ is
  a quasimodular form in $\wt\sM_{4k}^{\le1}(\Gamma_0^+(2),(-1)^k)$.
  (Note that the function $F(z)$ and the integer $\ell$ in this case
  are $M_8(z)^{k/2}$ and $-1+4k$, respectively.) Moreover, the
  vanishing order of $M_8(z)^{k/2}y_+(z)$ at $\infty$ is
  $k=\dim\wt\sM_{4k}^{\le1}(\Gamma_0^+(2),(-1)^k)-1$. Hence,
  $M_8(z)^{k/2}y_+(z)$ is an extremal quasimodular form. This agrees
  with Part (i) of Theorem \ref{theorem: DE Gamma0+(2)}.
\end{Example}

\begin{Example} Consider
  $$
  y''(z)=-4\pi^2\left(\left(\frac k2\right)^2M_4(z)
    -32\frac{M_8(z)}{M_4(z)}\right)y(z),
  $$
  where $k\in\Z_{\ge 0}$.
  We have $\kappa_\infty=k/2$, $\kappa_{\rho_1}=1/2$,
  $\kappa_{\rho_2}=3/2$, and $\kappa=1/2$ elsewhere. Hence, the
  function $F(z)$ and the integer $\ell$ in this case are
  $F(z)=M_8(z)^{k/2}M_2(z)$ and $\ell=-1+4k+2=4k+1$. Theorem
  \ref{theorem: main Gamma0+(2)} asserts that if $y_+(z)$ is the
  unique solution of the form $y_+(z)=q^{k/2}(1+\cdots)$, then
  $M_8(z)^{k/2}M_2(z)y_+(z)$ is a quasimodular form in
  $\wt\sM_{4k+2}^{\le1}(\Gamma_0^+(2),(-1)^k)$. As in the previous
  example, we find that it is an extremal quasimodular form, agreeing
  with Part (ii) of Theorem \ref{theorem: DE Gamma0+(2)}.
\end{Example}

\begin{Example} Consider
  $$
  y''(z)=-4\pi^2\left(\left(\frac k2\right)^2M_4(z)
    +\frac{128M_4(z)M_8(z)}{M_4(z)^2-256M_8(z)}\right)y(z),
  \quad k\in\Z_{\ge 0}.
  $$
  We have $\kappa_\infty=k/2$ and by \eqref{equation: indicial
    Gamma0+(2)}, $\kappa_{\rho_1}=3/2$. The integer $\ell$ in Theorem
  \ref{theorem: main Gamma0+(2)} is $k+3$. According to the theorem,
  $M_8(z)^{k/2}M_6(z)y_+(z)/M_2(z)$ is a quasimodular form in
  $\wt\sM_{4k+4}^{\le1}(\Gamma_0^+(2),(-1)^{k-1})$. Indeed, when
  $k=1$, we have
  $$
  y_+(z)=q^{1/2}\left(1+70q+5926q^2+503696q^3+42822181q^4
  +\cdots\right)
  $$
  and
  $$
  1120M_8(z)^{1/2}\frac{M_6(z)}{M_2(z)}y_+(z)
  =M_2^\ast(z)M_6(z)-M_4(z)^2+1280M_8(z),
  $$
  which belongs to $\wt\sM_8^{\le 1}(\Gamma_0^+(2))$, while
  when $k=2$, we have
  $$
  y_+(z)=q+\frac{176}3q^2+\frac{13706}3q^3+\frac{1151072}3q^4+\cdots
  $$
  and
  \begin{equation*}
    \begin{split}
      &19008M_8(z)\frac{M_6(z)}{M_2(z)}y_+(z)
      =M_2^\ast(z)M_2(z)\left(-M_4(z)^2+384M_8(z)\right) \\
      &\qquad\qquad+M_2(z)M_4(z)M_6(z)-288M_4^-(z)M_8(z),
    \end{split}
  \end{equation*}
  which lies in $\wt\sM_{12}^{\le1}(\Gamma_0^+(2),-)$,
  where $M_4^-(z)=(4E_4(2z)-E_4(z))/3\in\sM_4(\Gamma_0^+(2),-)$.
\end{Example}

\section{MODE in the case of $\Gamma_0^+(3)$}
\label{section: Gamma0+(3)}
We first recall the following information about $\Gamma_0^+(3)$.
\begin{enumerate}
\item[(i)] The group $\Gamma_0^+(3)$ is generated by
  $S=\frac{1}{\sqrt{3}}\SM 0{-1}30$ and
  $R=TS=\frac{1}{\sqrt{3}}\SM3{-1}30$, where $T=\SM1101$.
  They satisfy $S^2=R^6=-I$.
\item[(ii)] $\Gamma$ has only one cusp at $\infty$ with width
  $1$, and two elliptic points $\rho_1=i/\sqrt{3}$ (order $2$) and
  $\rho_2=(3+\sqrt{-3})/6$ (order $6$). Their stabilizer subgroups are
  generated by $T$, $S$, and $R$, respectively.
\item[(iii)] The ring $\oplus_{k=0}^\infty\sM_k(\Gamma_1(3))$,
  where $k$ runs through both even and odd integers, is freely
  generated by
  $$
  M_1(z)=\sum_{m,n\in\Z}q^{m^2+mn+n^2}
  $$
  and
  $$
  M_3(z)=\frac{9E_4(3z)-E_4(z)}{8M_1(z)}
  =\frac{\eta(z)^9}{\eta(3z)^3}-27\frac{\eta(3z)^9}{\eta(z)^3}
  $$
  of weight $1$ and $3$, respectively.
\item[(iv)] Let $\chi$ be the character of $\Gamma_0^+(3)$ determined
  by $\chi(S)=\chi(R)=-i$. Then
  $$
  M_1(z)\in\sM_1(\Gamma_0^+(3),\chi), \qquad
  M_3(z)\in\sM_1(\Gamma_0^+(3),\chi).
  $$
\end{enumerate}
For our purpose, we also need the modular forms
\begin{equation*}
  \begin{split}
    M_4^+(z)&=\frac{9E_4(3z)+E_4(z)}{10}=M_1(z)^4
    \in\sM_4(\Gamma_0^+(3)), \\
    M_6^-(z)&=\eta(z)^6\eta(3z)^6=\frac{M_1(z)^6-M_3(z)^2}{108}
    \in\sM_6(\Gamma_0^+(3),-).
  \end{split}
\end{equation*}
Up to $\Gamma^+_0(3)$-equivalence, $M_1(z)$, $M_3(z)$, and $M_6^-(z)$
have simple zeros at $\rho_2$, $\rho_1$, and $\infty$, respectively.
The ratio $M_1(z)^6/M_6^-(z)$ is a Hauptmodul for $\Gamma_0^+(3)$.
Let also
$$
M_2^\ast(z):=\frac1{2\pi i}\frac{(M_6^-)'(z)}{M_6^-(z)}
=\frac{3E_2(3z)+E_2(z)}4,
$$
which is a quasimodular form of weight $2$ and depth $1$ on
$\Gamma_0^+(3)$ and satisfies
\begin{equation} \label{equation: M* transform Gamma0+(3)}
\left(M_2^\ast|_2\gamma\right)(z)=M_2^\ast(z)
+\frac{3c}{\pi i(cz+d)}, \qquad
\gamma=\M abcd\in\Gamma_0^+(3).
\end{equation}

Consider the differential equation
\begin{equation} \label{equation: DE Gamma0+(3)}
y''(z)=-4\pi^2Q(z)y(z),
\end{equation}
where $Q(z)$ is a meromorphic modular form of weight $4$ on
$\Gamma_0^+(3)$, and assume that it satisfies the condition
\eqref{equation: H}. Let $z_1,\ldots,z_m$ be
$\Gamma_0^+(3)$-inequivalent singularities of \eqref{equation: DE
  Gamma0+(3)} other than $\rho_1$ 
and $\rho_2$. Let $\pm\kappa_\infty$, $1/2\pm\kappa_{\rho_1}$,
$1/2\pm\kappa_{\rho_2}$, and $1/2\pm\kappa_j$, $j=1,\ldots,m$, be the
local exponents of \eqref{equation: DE Gamma0+(3)} at $\infty$,
$\rho_1$, $\rho_2$, and $z_j$, respectively. For $j=1,\ldots,m$, let
$t_j=M_2^-(z_j)^3/M_6^-(z_j)$, and set
$$
F_j(z)=M_2^-(z)^3-t_jM_6^-(z).
$$
Define
\begin{equation} \label{equation: F Gamma0+(3)}
F(z)=M_6^-(z)^{\kappa_\infty}M_3(z)^{\kappa_{\rho_1}-1/2}
M_1(z)^{\kappa_{\rho_2}-1/2}\prod_{j=1}^m
F_j(z)^{\kappa_j-1/2},
\end{equation}
and set
\begin{equation} \label{equation: ell Gamma0+(3)}
\ell=-1+6\kappa_\infty+3\left(\kappa_{\rho_1}-\frac12\right)
+\left(\kappa_{\rho_2}-\frac12\right)
+6\sum_{j=1}^m\left(\kappa_j-\frac12\right).
\end{equation}
Note that by Condition \eqref{equation: H}, both $\kappa_{\rho_1}-1/2$
and $\kappa_{\rho_2}-1/2$ are integers, and
$(2\kappa_{\rho_2},3)=1$. Hence $\ell$ is an integer not divisible by
$3$. For any function $f(z)$ on $\H$, we let $\hat f(z)$
denote
$$
\hat f(z)=F(z)f(z).
$$
Note that $F(z)^2$ is a holomorphic modular form of weight
$2(\ell+1)$ on $\Gamma_0^+(3)$ with some character depending on
$\ell$. By construction, for
any solution $y(z)$ of \eqref{equation: DE Gamma0+(3)}, the function
$\hat y(z)$ is a single-valued function holomorphic throughout
$\H$. Also, its order at a singularity $z_j$ is either $0$ or
$2\kappa_j$, and a similar property holds for $\infty$, $\rho_1$, and
$\rho_2$. Hence, for any fundamental solution $(y_1(z),y_2(z))$ of
\eqref{equation: DE Gamma0+(3)}, we have a representation
$\hat\rho:\Gamma_0^+(3)\to\GL(2,\C)$ given by
$$
\column{\hat y_1\big|_\ell\gamma}{\hat y_2\big|_\ell\gamma}
=\hat\rho(\gamma)\column{\hat y_1}{\hat y_2}.
$$
We now state the main result of this section (Theorem \ref{theorem: DE
  Gamma0+(3)} of Section \ref{section: introduction}).

\begin{Theorem} \label{theorem: main Gamma0+(3)}
  Suppose that $Q(z)$ satisify \eqref{equation: H} with
  $\kappa_\infty\in\frac12\Z_{\ge0}$. Let
  $\ell$ be the integer defined by \eqref{equation: ell Gamma0+(3)}.
  Set
  $$
  \delta=\begin{cases}
    \chi^0,&\text{if }\ell\equiv 1,11\mod12, \\
    \chi^1, &\text{if }\ell\equiv 2,4\mod 12, \\
    \chi^2, &\text{if }\ell\equiv 5,7\mod 12, \\
    \chi^3, &\text{if }\ell\equiv 8,10\mod 12, \end{cases}
  $$
  where $\chi$ is the character of $\Gamma_0^+(3)$ defined by
  $\chi(S)=\chi(R)=-i$. The following statements hold.
  \begin{enumerate}
  \item[(i)] The differential equation \eqref{equation: DE Gamma0+(3)}
    is not apparent at $\infty$.
  \item[(ii)] Let $y_2(z)=y_+(z)$ and $\hat y_1(z)=\left(\hat
      y_+|_\ell S\right)(Z)$. Then $\hat y_1(z)=z\hat y_2(z)+\hat
    m_1(z)$ for some modular form $\hat m_1(z)$ in
    $\sM_{\ell-1}(\Gamma_0^+(3),\delta)$.
  \item[(iii)] The ratio 
    $h(z)=\delta(S)^{-1}y_1(z)/\sqrt 3y_2(z)$ is
    equivariant. That is, for all $\gamma\in\Gamma_0^+(3)$, we have
    $h(\gamma z)=\gamma\cdot h(z)$.
  \item[(iv)] Write $\hat y_+(z)$ as
    $$
    \hat y_+(z)=\delta(S)^{-1}\frac{\pi i}{3\sqrt3}\hat
    m_1(z)M_2^\ast(z)+\hat m_2(z).
    $$
    Then $\hat m_2(z)$ is a modular form in
    $\sM_{\ell+1}(\Gamma_0^+(3),\delta)$. Hence, $\hat y_+(z)$
    is a quasimodular form in
    $\wt\sM_{\ell+1}^{\le1}(\Gamma_0^+(3),\delta)$.
  \end{enumerate}
\end{Theorem}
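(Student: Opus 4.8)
The plan is to follow the template of the proofs of Theorem \ref{theorem: infinity is not apparent implies} and Theorem \ref{theorem: main Gamma0+(2)}, splitting the statement into a chain of lemmas organized around the lifted Bol representation $\hat\rho\colon\Gamma_0^+(3)\to\GL(2,\C)$ attached to a fundamental system $(\hat y_1,\hat y_2)$. For Part (i) I would argue by contradiction exactly as in Lemma \ref{lemma: not apparent SL(2,Z)}: if \eqref{equation: DE Gamma0+(3)} were apparent at $\infty$, then after normalizing a fundamental system near $\infty$ the matrix $\hat\rho(T)$ would be diagonal, so $\hat\rho(S)$ and $\hat\rho(R)=\hat\rho(T)\hat\rho(S)$ would be simultaneously triangularizable; combined with $S^2=R^6=-I$ and the value of $\hat\rho(-I)$ (fixed by the parity of $\ell$ and the character carried by $F(z)^2$), the eigenvalues of $\hat\rho(R)$ would have to be simultaneously sixth roots of $\pm1$ and compatible with a triangular $\hat\rho(T)\hat\rho(S)$, which is impossible precisely because $3\nmid\ell$. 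This is where the hypothesis $(2\kappa_{\rho_2},6)=1$ from \eqref{equation: H} enters.

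For Parts (ii) and (iii), put $y_2=y_+$ and $y_1=(\hat y_+|_\ell S)/F$. First show $y_1,y_2$ are linearly independent — if not, $\hat\rho(S)$ would be lower triangular and the relations $S^2=R^6=-I$ would be violated, as in Lemma \ref{lemma: y1,y2}(i) and Lemma \ref{lemma: linearly independent Gamma0+(2)}. By Part (i) there are then $d\ne0$ and $\hat m_1$ with $\hat y_1=dz\hat y_2+\hat m_1$. Using the basis $(\hat y_1,\hat y_2)$ one has $\hat\rho(T)=\SM1d01$, and since $\hat y_1=\hat y_2|_\ell S$ while $\hat y_1|_\ell S=\hat y_2|_\ell S^2=(-1)^\ell\hat y_2$, the matrix $\hat\rho(S)=\SM0{(-1)^\ell}10$; imposing $\hat\rho(R)^6=\hat\rho(-I)=(-1)^\ell I$ on $\hat\rho(R)=\hat\rho(T)\hat\rho(S)$ then forces $d=\delta(S)^{-1}\sqrt3$ for the character $\delta$ in the statement (with $d\ne0$ excluding the degenerate eigenvalue case). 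The equivariance of $h(z)=\delta(S)^{-1}y_1/(\sqrt3 y_2)$ follows formally from $h(z+1)=h(z)+1$ and $h(Sz)=S\cdot h(z)$, as in Lemma \ref{lemma: h Gamma0+(2)}. Finally $\hat m_1(z+1)=\hat m_1(z)$ together with the $S$-transformation of $h$ gives $(\hat m_1|_{\ell-1}S)(z)=\delta(S)\hat m_1(z)$, and since $T$ and $S$ (equivalently $T$ and $R^2$) generate $\Gamma_0^+(3)$ this yields $\hat m_1\in\sM_{\ell-1}(\Gamma_0^+(3),\delta)$, reproducing Lemma \ref{lemma: hat m1 is a modular function} and Lemma \ref{lemma: m1 Gamma0+(2)}.

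For Part (iv), define $\hat m_2$ by $\hat y_+=\delta(S)^{-1}\frac{\pi i}{3\sqrt3}\hat m_1 M_2^\ast+\hat m_2$ and apply $|_\ell S$ to both sides; feeding in the transformation law \eqref{equation: M* transform Gamma0+(3)} for $M_2^\ast$, the identity $\hat y_1=\delta(S)^{-1}\sqrt3 z\hat y_2+\hat m_1$, and the $S$-transformation of $\hat m_1$, all $z$-linear terms cancel and one is left with $(\hat m_2|_{\ell+1}S)(z)=\delta(S)\hat m_2(z)$; with $\hat m_2(z+1)=\hat m_2(z)$ this gives $\hat m_2\in\sM_{\ell+1}(\Gamma_0^+(3),\delta)$ and hence $\hat y_+\in\wt\sM_{\ell+1}^{\le1}(\Gamma_0^+(3),\delta)$. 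To identify which power $\chi^m$ of $\chi$ the character $\delta$ actually is, I would prove the analogue of Lemma \ref{lemma: no common zero} — if $\hat m_1$ and $\hat m_2$ had a common zero at some $z_0\in\H$, then $y_+$ and $y_+|_{-1}S$ would both behave like $c(z-z_0)^{1/2+\kappa_{z_0}}$ near $z_0$, hence be linearly dependent, contradicting the independence just shown — and then run a case analysis on $\ell\bmod12$: for each forbidden value of $\delta$, the weights $\ell-1$ and $\ell+1$ together with their characters are such that the generators $M_1,M_3$ of $\bigoplus_k\sM_k(\Gamma_1(3))$ and the vanishing statement of Proposition \ref{proposition: vanishing coefficients} force $\hat m_1$ and $\hat m_2$ to share a zero at $\rho_1$ (order $2$) or $\rho_2$ (order $6$).

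I expect the determination of $\delta$ to be the main obstacle: the order-$6$ elliptic point $\rho_2$ produces more residue classes modulo $12$ to rule out than the order-$4$ point did in the $\Gamma_0^+(2)$ case, and one must coordinate several constraints at once — the weight $\ell-1$ of $\hat m_1$, the weight $\ell+1$ of $\hat m_2$, the fact that the Wronskian $W_f=F^2W$ carries the character $\delta^2=\chi^{2m}$, and the vanishing orders forced at $\rho_1$ and $\rho_2$ by divisibility of the weight by $4$ and $12$ — while keeping the bookkeeping of the four-valued character $\chi$ straight. That passage from the sign $\pm1$ of the $\Gamma_0^+(2)$ argument to a genuine order-$4$ character is the essentially new ingredient here.
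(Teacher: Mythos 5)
Your handling of parts (ii)--(iv) tracks the paper's proof closely: the value of $d$ is extracted from the characteristic polynomial of $\hat\rho(R)=\hat\rho(T)\hat\rho(S)$ together with $\hat\rho(R)^6=(-1)^\ell I$, the modularity of $\hat m_1$ and $\hat m_2$ comes from slashing by $S$ and using \eqref{equation: M* transform Gamma0+(3)}, and $\delta$ is pinned down by the no-common-zero lemma plus forced vanishing at the elliptic points. But there is a genuine gap in your argument for Part (i), and the same gap recurs in your proof that $y_1$ and $y_2$ are linearly independent: the purely group-theoretic contradiction that works for $\SL(2,\Z)$ and $\Gamma_0^+(2)$ simply does not exist for $\Gamma_0^+(3)$. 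If the equation were apparent at $\infty$, then $\hat\rho(T)=I$ and hence $\hat\rho(R)=\hat\rho(S)$; but then $\hat\rho(R)^6=\bigl(\hat\rho(S)^2\bigr)^3=\bigl((-1)^\ell I\bigr)^3=(-1)^\ell I$, which is exactly the required value of $\hat\rho(R^6)=\hat\rho(-I)$. The relations $S^2=R^6=-I$ therefore impose no constraint whatsoever, for any $\ell$; the eigenvalues of $\hat\rho(S)$ are square roots of $(-1)^\ell$ and are automatically sixth roots of $(-1)^\ell$, so your claimed impossibility ``precisely because $3\nmid\ell$'' does not hold. Likewise, in the linear-independence step a triangular $\hat\rho(S)$ with diagonal entries $\mu=-\lambda$ (forced by $\det\hat\rho(S)=(-1)^{\ell-1}$) satisfies $\hat\rho(TS)^6=(-1)^\ell I$ automatically, because the off-diagonal entry of the sixth power is a multiple of $\sum_{j=0}^{5}\mu^j\lambda^{5-j}=0$. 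This is precisely why the paper flags that these two lemmas are ``different from the ones in the case of $\SL(2,\Z)$ and $\Gamma_0^+(2)$ because $R^6=-I$.''

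The missing idea is to convert the hypothesis into a statement about honest modular forms and then use the elliptic point $\rho_2$. If $\hat\rho(T)$, $\hat\rho(S)$, $\hat\rho(R)$ were simultaneously diagonalizable (respectively, if $\hat y_+|_\ell S=\lambda\hat y_+$ with $\lambda^2=(-1)^\ell$), then $\hat y_1^2$ and $\hat y_2^2$ (respectively $\hat y_+^2$) would be modular forms of weight $2\ell$ on $\Gamma_0(3)=\gen{T,R^2}$. By Proposition \ref{proposition: vanishing coefficients}, the order of such a form at the order-$3$ elliptic point $\rho_2$ of $X_0(3)$ is congruent to $2\ell\equiv 2\kappa_{\rho_2}\pmod 3$ (the congruence coming from \eqref{equation: ell Gamma0+(3)}); but by the construction of $F$ that order is either $0$ or $4\kappa_{\rho_2}$, and neither is $\equiv 2\kappa_{\rho_2}\pmod 3$ precisely because $3\nmid 2\kappa_{\rho_2}$. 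So the hypothesis $(2\kappa_{\rho_2},6)=1$ does the work, but through vanishing orders at $\rho_2$ rather than through the eigenvalues of $\hat\rho(R)$. With these two lemmas repaired, the remainder of your outline goes through as written.
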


Note that when $k$ is even, we have $\sM_k(\Gamma_0^+(3),\chi^0)$ and
$\sM_k(\Gamma_0^+(3),\chi^2)$ are simply $\sM_k(\Gamma_0^+(3))$ and
$\sM_k(\Gamma_0^+(3),-)$, respectively, so
$\wt\sM_{\ell+1}^{\le1}(\Gamma_0^+(3),\delta)$ in the case of odd
$\ell$ can also be written as $\wt\sM_{\ell+1}^{\le1}
\left(\Gamma_0^+(3),\JS{12}\cdot\right)$.

Apart from the difference that the integer $\ell$ can be even, the
proof is very similar to those of Theorems \ref{theorem: infinity
  is not apparent implies} and \ref{theorem: main Gamma0+(2)}.

\begin{Lemma} \label{lemma: det rho Gamma0+(3)}
  We have $\det\hat\rho(T)=1$ and
  $\det\hat\rho(S)=(-1)^{\ell-1}$.
\end{Lemma}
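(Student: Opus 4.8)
The plan is to follow the proof of Lemma \ref{lemma: det rho SL(2,Z)}, reducing $\det\hat\rho(\gamma)$ to the transformation character of $F(z)^2$. First I would note that the Wronskian $W(z)=y_1(z)y_2'(z)-y_1'(z)y_2(z)$ of the chosen fundamental system is a nonzero constant $W_0$ (Abel's identity, since \eqref{equation: DE Gamma0+(3)} has no first-order term), and that $\hat W(z):=\hat y_1\hat y_2'-\hat y_1'\hat y_2=F(z)^2W(z)=W_0F(z)^2$. From $\column{\hat y_1|_\ell\gamma}{\hat y_2|_\ell\gamma}=\hat\rho(\gamma)\column{\hat y_1}{\hat y_2}$, expanding the Wronskian bilinearly gives $W(\hat y_1|_\ell\gamma,\hat y_2|_\ell\gamma)=\det\hat\rho(\gamma)\,\hat W$, while the elementary identity $W(f|_\ell\gamma,g|_\ell\gamma)=\bigl(W(f,g)\bigr)|_{2\ell+2}\gamma$ (the weight-$\ell$ version of the Bol computation) gives $W(\hat y_1|_\ell\gamma,\hat y_2|_\ell\gamma)=\hat W|_{2\ell+2}\gamma$. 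Hence $\hat W|_{2\ell+2}\gamma=\det\hat\rho(\gamma)\,\hat W$. On the other hand $F(z)^2$ is a holomorphic modular form of weight $2(\ell+1)$ on $\Gamma_0^+(3)$ with some character $\psi$, so $\hat W|_{2\ell+2}\gamma=W_0\bigl(F^2|_{2\ell+2}\gamma\bigr)=\psi(\gamma)\hat W$. Comparing, $\det\hat\rho(\gamma)=\psi(\gamma)$ for all $\gamma\in\Gamma_0^+(3)$, so it remains to evaluate $\psi$ at $T$ and at $S$.

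Next, by \eqref{equation: F Gamma0+(3)},
$$
F(z)^2=M_6^-(z)^{2\kappa_\infty}\,M_3(z)^{2\kappa_{\rho_1}-1}\,M_1(z)^{2\kappa_{\rho_2}-1}\prod_{j=1}^mF_j(z)^{2\kappa_j-1},
$$
and by \eqref{equation: H} the exponents $2\kappa_\infty\in\Z_{\ge0}$ and $2\kappa_{\rho_1}-1,\,2\kappa_{\rho_2}-1,\,2\kappa_j-1\in\Z_{\ge0}$ are nonnegative integers, so $F(z)^2$ is an honest polynomial in $M_6^-,M_3,M_1,F_j$, each of which is a holomorphic modular form with a $q$-expansion and so is invariant under $z\mapsto z+1$. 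Since $T=\SM1101$ has lower-left entry $0$, this gives $F^2|_{2\ell+2}T=F(z+1)^2=F(z)^2$, i.e. $\psi(T)=1$ and $\det\hat\rho(T)=1$.

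For $S$ I would read off the $S$-eigenvalue of each factor: $M_6^-,F_j\in\sM_6(\Gamma_0^+(3),-)$ give $M_6^-|_6S=-M_6^-$ and $F_j|_6S=-F_j$, while $M_1\in\sM_1(\Gamma_0^+(3),\chi)$ and $M_3\in\sM_3(\Gamma_0^+(3),\chi)$ with $\chi(S)=-i$ give $M_1|_1S=-iM_1$ and $M_3|_3S=-iM_3$. Using \eqref{equation: H}, which forces $\kappa_{\rho_1}-\frac12,\kappa_{\rho_2}-\frac12\in\Z$ so that $2\kappa_{\rho_1}-1$ and $2\kappa_{\rho_2}-1$ are even, together with $(-i)^2=-1$, the four factors contribute the signs $(-1)^{2\kappa_\infty}$, $(-1)^{\kappa_{\rho_1}-1/2}$, $(-1)^{\kappa_{\rho_2}-1/2}$, $(-1)^{\sum_j(2\kappa_j-1)}$, so
$$
\psi(S)=(-1)^{2\kappa_\infty+(\kappa_{\rho_1}-1/2)+(\kappa_{\rho_2}-1/2)+\sum_j(2\kappa_j-1)}.
$$
Finally I would compare with \eqref{equation: ell Gamma0+(3)}: writing $\ell-1=-2+6\kappa_\infty+3(\kappa_{\rho_1}-\frac12)+(\kappa_{\rho_2}-\frac12)+6\sum_j(\kappa_j-\frac12)$ and using $6\kappa_\infty\equiv2\kappa_\infty$, $3(\kappa_{\rho_1}-\frac12)\equiv\kappa_{\rho_1}-\frac12$, and $6(\kappa_j-\frac12)=3(2\kappa_j-1)\equiv2\kappa_j-1\pmod 2$, one sees the exponent of $\psi(S)$ is congruent to $\ell-1$ modulo $2$. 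Hence $\psi(S)=(-1)^{\ell-1}$ and $\det\hat\rho(S)=(-1)^{\ell-1}$.

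The only genuinely delicate point, and the one I would argue with care, is the reduction in the first paragraph: although each $y_i$ and $F$ is multivalued on $\H$, one must know that $\hat y_i=Fy_i$ and $\hat W=W_0F^2$ are bona fide single-valued holomorphic functions before the modular transformation law of $F^2$ may be invoked. This is precisely why $F$ is built from the half-integral powers $\kappa_\bullet-\frac12$ (cancelling the branching of the solutions at the singular points) and why $F^2$, whose exponents are genuine nonnegative integers, is an honest modular form on $\Gamma_0^+(3)$. Everything after that is parity bookkeeping using \eqref{equation: H} and \eqref{equation: ell Gamma0+(3)}.
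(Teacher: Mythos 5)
Your proof is correct and takes essentially the same route as the paper's: it reduces $\det\hat\rho(\gamma)$ to the automorphy factor of $F(z)^2$ via the Wronskian (exactly as in Lemma \ref{lemma: det rho SL(2,Z)}), reads off the $S$-eigenvalue $-1$ of each of $M_6^-$, $M_3^2$, $M_1^2$, and $F_j$, and matches the resulting exponent with $\ell-1$ modulo $2$ using \eqref{equation: ell Gamma0+(3)}. The paper's proof is just a terser version of this same computation, so no further comment is needed.
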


\begin{proof} Following the proof of Lemma \ref{lemma: det rho
    SL(2,Z)}, we have
  $$
  \det\hat\rho(\gamma)
  =\frac{\left(F(z)^2\big|_{2(\ell+1)}\gamma\right)}{F(z)^2}
  $$
  for $\gamma\in\Gamma_0^+(3)$. It is clear that when $\gamma=T$, we
  have $\det\hat\rho(T)=1$. 
  For the case $\gamma=S$, we note that
  $$
  \frac{M_6^-|_6S}{M_6^-}=\frac{M_3^2|_6S}{M_3^2}
  =\frac{M_1^2|_2S}{M_1^2}=\frac{F_j|_6S}{F_j}=-1.
  $$
  It follows that
  $$
  \det\hat\rho(S)
  =(-1)^{2\kappa_\infty+(\kappa_{\rho_1}-1/2)
    +(\kappa_{\rho_2}-1/2)+\sum_j(2\kappa_j-1)}
  =(-1)^{\ell-1}.
  $$
  This proves the lemma.
\end{proof}

The proofs of the next two lemmas are different from the ones in the
case of $\SL(2,\Z)$ and $\Gamma_0^+(2)$ because $R^6=-I$.

\begin{Lemma} \label{lemma: infinity is not apparent Gamma0+(3)}
  The differential equation \eqref{equation: DE
    Gamma0+(3)} is never apparent at $\infty$.
\end{Lemma}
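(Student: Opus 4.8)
The plan is to argue by contradiction, following the template of Lemma~\ref{lemma: not apparent SL(2,Z)} but replacing the relation $\hat\rho(R)^3=\pm I$ used there by an eigenvalue computation at the order‑$6$ elliptic point $\rho_2$. So assume \eqref{equation: DE Gamma0+(3)} is apparent at $\infty$. Then near $\infty$ the equation has two solutions $y_+=q^{\kappa_\infty}(1+\cdots)$ and $y_-=q^{-\kappa_\infty}(1+\cdots)$ with no logarithmic term, and by the construction of $F$ the functions $\hat y_\pm=Fy_\pm$ are single‑valued and holomorphic on $\H$, with $q$‑expansions in integer powers of $q$ (their orders at $\infty$ being $2\kappa_\infty$ and $0$). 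Hence $\hat y_\pm(z+1)=\hat y_\pm(z)$, so in this basis $\hat\rho(T)=I$; consequently $\hat\rho(R)=\hat\rho(TS)=\hat\rho(T)\hat\rho(S)=\hat\rho(S)$, and in particular $\hat\rho(R)$ and $\hat\rho(S)$ have the same (unordered) pair of eigenvalues, hence the same ratio of eigenvalues.

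The next step is to compute the two ratios and see that they are incompatible. Since $S^2=-I$, we have $\hat\rho(S)^2=\hat\rho(-I)=(-1)^\ell I$, a scalar matrix, so the two eigenvalues of $\hat\rho(S)$ have ratio $\pm1$ (a short local computation at $\rho_1=i/\sqrt3$, using that $S$ fixes $\rho_1$ with $S'(\rho_1)=(\sqrt3\,\rho_1)^{-2}=-1$ together with $\det\hat\rho(S)=(-1)^{\ell-1}$ from Lemma~\ref{lemma: det rho Gamma0+(3)}, in fact pins the ratio to $-1$). On the other hand, $R$ fixes $\rho_2$, which is an apparent singular point of \eqref{equation: DE Gamma0+(3)} with local exponents $1/2\pm\kappa_{\rho_2}$, and $(2\kappa_{\rho_2},6)=1$ by \eqref{equation: H}; thus $2\kappa_{\rho_2}$ is odd, so both exponents $1/2+\kappa_{\rho_2}$ and $1/2-\kappa_{\rho_2}$ are integers and, by apparentness, the solution space near $\rho_2$ has a basis of pure powers $(z-\rho_2)^{1/2+\kappa_{\rho_2}}(1+\cdots)$ and $(z-\rho_2)^{1/2-\kappa_{\rho_2}}(1+\cdots)$. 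Since $R$ acts near $\rho_2$ by $z-\rho_2\mapsto R'(\rho_2)(z-\rho_2)+\cdots$ with $R'(\rho_2)=(\sqrt3\,\rho_2)^{-2}=e^{-\pi i/3}$, a primitive sixth root of unity, these two solutions are eigenfunctions of $\hat\rho(R)$, and the scalar automorphy factors coming from $F$ and from $(cz+d)^{-\ell}$ cancel in the eigenvalue ratio, which therefore equals $R'(\rho_2)^{\pm2\kappa_{\rho_2}}=e^{\pm 2\pi i\kappa_{\rho_2}/3}$; as $2\kappa_{\rho_2}$ is coprime to $6$, this is a primitive sixth root of unity. Comparing the two computations forces $\pm1$ to equal a primitive sixth root of unity, which is absurd; hence \eqref{equation: DE Gamma0+(3)} cannot be apparent at $\infty$.

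The step I expect to require the most care is this local analysis at $\rho_2$: making precise, with the weight‑$\ell$ slash operator and the automorphy factor of $F$ (so that $\hat\rho(R)$ is literally a fixed nonzero scalar times the $|_{-1}$‑monodromy matrix of $R$), that the two pure‑power solutions are genuine eigenvectors of $\hat\rho(R)$ and that all scalar factors cancel in the eigenvalue ratio, leaving exactly $R'(\rho_2)^{\pm2\kappa_{\rho_2}}$. The diagonalizability of the local monodromies used above (and the well‑definedness of $\hat\rho$) follows from \eqref{equation: H}, as in \cite{Guo-Lin-Yang}. Note that the argument does not use the parity of $\ell$ at all; it is precisely this robustness to $\ell$ being even that is needed here, since for $\Gamma_0^+(3)$ the integer $\ell$ of \eqref{equation: ell Gamma0+(3)} need not be odd, unlike in the cases $\SL(2,\Z)$ and $\Gamma_0^+(2)$.
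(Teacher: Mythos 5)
Your proof is correct, and it reaches the contradiction by a genuinely different final step than the paper, although the opening move is the same: both arguments start from the observation that apparentness at $\infty$ forces $\hat\rho(T)=I$ and hence $\hat\rho(R)=\hat\rho(S)$, and both locate the obstruction at the order-$6$ elliptic point $\rho_2$. The paper then diagonalizes, observes that $\hat y_1^2,\hat y_2^2$ become holomorphic modular forms of weight $2\ell$ on $\Gamma_0(3)$ (generated by $T$ and $R^2$), and derives the contradiction from Proposition \ref{proposition: vanishing coefficients}: the order of $\hat y_i^2$ at $\rho_2$ must be $\equiv 2\ell\equiv 2\kappa_{\rho_2}\pmod 3$ by \eqref{equation: ell Gamma0+(3)}, yet it can only be $0$ or $4\kappa_{\rho_2}$, impossible when $3\nmid 2\kappa_{\rho_2}$. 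You instead compare eigenvalue ratios directly: $\hat\rho(S)^2=(-1)^\ell I$ forces ratio $\pm1$, while the local analysis at the fixed point $\rho_2$ (exponents $1/2\pm\kappa_{\rho_2}\in\Z$, multiplier $(\sqrt3\rho_2)=e^{\pi i/6}$) forces the ratio of $\hat\rho(R)$ to be $(\sqrt3\rho_2)^{-4\kappa_{\rho_2}}$, a primitive sixth root of unity. These are two packagings of the same obstruction — the paper's vanishing-order congruence at elliptic points is itself proved by evaluating the automorphy relation at the fixed point — but yours avoids the detour through modular forms on $\Gamma_0(3)$ and the computation of $\ell\bmod 3$, at the cost of the monodromy bookkeeping you flag. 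Two small points of care there: your identification of $\hat\rho(R)$ with a scalar times the $|_{-1}$-action of $R$ is right and does make the scalar drop out of the ratio; and the solution with the lower exponent $1/2-\kappa_{\rho_2}$ need not literally be an eigenvector (it may pick up a multiple of the higher-exponent solution), but the action is triangular in that basis with the stated diagonal entries, so the eigenvalue ratio is as you claim. Your closing remark is also exactly the right diagnosis: the proof of Lemma \ref{lemma: not apparent SL(2,Z)} breaks down here because $\ell$ can be even, and both your argument and the paper's replace the parity of $\ell$ by the condition $(2\kappa_{\rho_2},6)=1$ from \eqref{equation: H}.
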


\begin{proof} If \eqref{equation: DE Gamma0+(3)} is apparent at
  $\infty$, then with respect to any basis, we have
  $\hat\rho(T)=I$. Therefore, there exists a basis $(\hat y_1(z),\hat
  y_2(z))$ such that $\hat\rho(T)$, $\hat\rho(S)$, and
  $\hat\rho(R)=\hat\rho(S)$ are all diagonal with the diagonal entries
  of $\hat\rho(R)$ lying inside $\{\pm1,\pm i\}$, which
  implies that $\hat y_1^2$ and $\hat y_2^2$ are both modular forms of
  weight $2\ell$ on $\Gamma_0(3)$. (Note that $\Gamma_0(3)$ is
  generated by $T$ and $R^2$.) Hence, the order of $\hat y_1^2$ at
  $\rho_2$ is congruent to $2\ell$ modulo $3$ (see Proposition
  \ref{proposition: vanishing coefficients}), which is congruent to
  $2\kappa_{\rho_2}$ modulo $3$. However, this contradicts to the fact
  that the order of $\hat y_1^2$ at $\rho_2$ is either $0$ or
  $4\kappa_{\rho_2}$. (Recall that by \eqref{equation: H},
  $3\nmid2\kappa_{\rho_2}$.) We conclude that 
  \eqref{equation: DE Gamma0+(3)} cannot be apparent at $\infty$,
  under the condition \eqref{equation: H}.
\end{proof}

\begin{Lemma} The solutions $y_+\big|_{-1}S$ and $y_+$ are linearly
  independent.
\end{Lemma}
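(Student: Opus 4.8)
The plan is to argue by contradiction, along the lines of the proof of Lemma~\ref{lemma: infinity is not apparent Gamma0+(3)}. Suppose $y_+\big|_{-1}S$ and $y_+$ are linearly dependent. Since $y_+$ is, up to scalars, the unique solution of \eqref{equation: DE Gamma0+(3)} with a $q$-expansion $q^{\kappa_\infty}(1+\cdots)$, this forces $\left(\hat y_+\big|_\ell S\right)(z)=\lambda\hat y_+(z)$ for some $\lambda\neq 0$. First I would note that $\left(\hat y_+\big|_\ell T\right)(z)=\hat y_+(z+1)=\hat y_+(z)$, so $\hat y_+$ is a simultaneous eigenvector for the weight-$\ell$ slash action of the generators $T$ and $S$ of $\Gamma_0^+(3)$; hence $\hat y_+\big|_\ell\gamma=\chi_0(\gamma)\hat y_+$ for the character $\chi_0$ of $\Gamma_0^+(3)$ with $\chi_0(T)=1$ and $\chi_0(S)=\lambda$. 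From $S^2=-I$ we get $\lambda^2=\chi_0(-I)=(-1)^\ell$, and therefore $\lambda^4=1$.

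Next I would pass to $\hat y_+^2$. Since $\chi_0(T)^2=1$ and $\chi_0(R^2)^2=\chi_0(R)^4=\lambda^4=1$, and $\Gamma_0(3)$ is generated by $T$ and $R^2$, the function $\hat y_+^2=F^2y_+^2$ transforms like a modular form of weight $2\ell$ with trivial character on $\Gamma_0(3)$, and it is holomorphic on $\H$ by the construction of $F$. By Proposition~\ref{proposition: vanishing coefficients}, its order at the elliptic point $\rho_2$ of order $3$ of $\Gamma_0(3)$ is congruent to $2\ell$ modulo $3$. On one hand, from \eqref{equation: ell Gamma0+(3)} and the fact that $\kappa_{\rho_2}-1/2\in\Z$ one gets $\ell\equiv -1+(\kappa_{\rho_2}-1/2)\pmod 3$, hence $2\ell\equiv 2\kappa_{\rho_2}\pmod 3$. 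On the other hand, $F(z)$ has order $\kappa_{\rho_2}-1/2$ at $\rho_2$ — the contributing factor is $M_1(z)^{\kappa_{\rho_2}-1/2}$, since $M_1$ has a simple zero there while $M_6^-$, $M_3$, and the $F_j$ are nonvanishing at $\rho_2$ — while, by the apparentness of \eqref{equation: DE Gamma0+(3)} at $\rho_2$, the solution $y_+$ has order $1/2-\kappa_{\rho_2}$ or $1/2+\kappa_{\rho_2}$ at $\rho_2$. Thus $\ord_{\rho_2}\hat y_+\in\{0,2\kappa_{\rho_2}\}$ and $\ord_{\rho_2}\hat y_+^2\in\{0,4\kappa_{\rho_2}\}$. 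Comparing with $2\ell\equiv 2\kappa_{\rho_2}\pmod 3$ gives $2\kappa_{\rho_2}\equiv 0\pmod 3$ in either case, contradicting $(2\kappa_{\rho_2},3)=1$ from \eqref{equation: H}. This contradiction proves that $y_+\big|_{-1}S$ and $y_+$ are linearly independent.

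I expect the genuinely delicate point — and the reason the argument departs from the corresponding lemmas for $\SL(2,\Z)$ and $\Gamma_0^+(2)$ — to be that one cannot derive a contradiction from the monodromy relations $S^2=R^6=-I$ alone: a lower-triangular $\hat\rho(S)$ with eigenvalues $\lambda$ and $-\lambda$ satisfying $\lambda^2=(-1)^\ell$ is entirely compatible with $\hat\rho(R)^6=(-1)^\ell I$ (the off-diagonal entry of $\hat\rho(R)^6$ vanishes automatically because $\sum_{k=0}^{5}(-1)^k=0$). The extra leverage therefore has to come from the arithmetic of the auxiliary form $F$ at the order-$6$ elliptic point $\rho_2$, precisely as in Lemma~\ref{lemma: infinity is not apparent Gamma0+(3)}; keeping track of the orders of $F$ and of $y_+$ at $\rho_2$ and verifying that the hypothesis $(2\kappa_{\rho_2},3)=1$ is exactly what closes the argument is the step requiring care, though it is routine. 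Alternatively, once one observes that linear dependence would make $\hat y_+^2$ a modular form of weight $2\ell$ on $\Gamma_0(3)$, one may simply invoke verbatim the contradiction already obtained for $\hat y_1^2$ in the proof of Lemma~\ref{lemma: infinity is not apparent Gamma0+(3)}.
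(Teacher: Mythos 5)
Your proposal is correct and follows essentially the same route as the paper: assume linear dependence, deduce $\hat y_+|_\ell S=\lambda\hat y_+$ with $\lambda^2=(-1)^\ell$ so that $\hat y_+^2$ becomes a modular form of weight $2\ell$ on $\Gamma_0(3)$, and then derive a contradiction at the elliptic point $\rho_2$ from Proposition \ref{proposition: vanishing coefficients} and the hypothesis $(2\kappa_{\rho_2},3)=1$. The paper simply cites the identical computation already carried out in the proof of Lemma \ref{lemma: infinity is not apparent Gamma0+(3)}, which you have (correctly) written out in full, and your closing remark about why the monodromy relations $S^2=R^6=-I$ alone cannot suffice matches the paper's own observation.
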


\begin{proof} Suppose that $y_+\big|_{-1}S$ and $y_+$ are linearly
  dependent. Then $\hat y_+\big|_{\ell}S=\lambda y_+$ for some
  $\lambda\in\C$. Then with respect to the basis $(\hat y_-,\hat
  y_+)$, we have
  $$
  \hat\rho(T)=\M1b01, \qquad\hat\rho(S)
  =\M{(-1)^{\ell-1}/\lambda}c0\lambda
  $$
  for some complex numbers $b\neq0$ and $c$. Since
  $\hat\rho(S)^2=\hat\rho(-I)=(-1)^\ell I$, we have
  $\lambda\in\{\pm i\}$ when $\ell$ is odd and $\lambda\in\{\pm 1\}$
  when $\ell$ is even. In either case, $\hat y_+^2$ is a modular form
  of weight $2\ell$ on $\Gamma_0(3)$. By the same reasoning as in the
  proof of Lemma \ref{lemma: infinity is not apparent Gamma0+(3)},
  we find that this yields a contradiction. Hence, $y_+\big|_{-1}S$ is
  linearly independent of $y_+$.
\end{proof}

Since \eqref{equation: DE Gamma0+(3)} is not apparent at $\infty$ and $y_+\big|_{-1}S$ and $y_+$ are linearly independent, there is a
nonzero constant $d$ and $m_1(z)$ with
$m_1(z)=q^{-\kappa_\infty}(a_0+\sum_{j\ge 1}a_jq^j)$, $a_0\neq 0$,
such that
\begin{equation} \label{equation: hat m1 Gamma0+(3)}
\left(\hat y_+\big|_\ell S\right)(z)=dz\hat y_+(z)+\hat m_1(z)
\end{equation}
(and $dzy_+(z)+m_1(z)$ is a solution of \eqref{equation: DE
  Gamma0+(3)}).

\begin{Lemma} If $\ell$ is even, then $d=\pm\sqrt{-3}$ and if $\ell$
  is odd, then $d=\pm\sqrt3$.
\end{Lemma}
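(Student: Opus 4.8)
The plan is to follow the pattern of the proofs of Lemma \ref{lemma: y1,y2} and Lemma \ref{lemma: linearly independent Gamma0+(2)}, the new feature being that the relation to exploit is $R^6=-I$ and that, since $\ell$ may now be even, the matrix $-I$ acts under $|_\ell$ by the scalar $(-1)^\ell$ rather than always by $-1$. First I would take $\hat y_1(z)=(\hat y_+|_\ell S)(z)$ and $\hat y_2(z)=\hat y_+(z)$ as a fundamental system (they are linearly independent by the previous lemma) and write down the Bol representation $\hat\rho\colon\Gamma_0^+(3)\to\GL(2,\C)$ in this basis. From \eqref{equation: hat m1 Gamma0+(3)} together with the periodicity $\hat m_1(z+1)=\hat m_1(z)$ and $\hat y_+(z+1)=\hat y_+(z)$ one gets $\hat\rho(T)=\SM1d01$. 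Since $(\hat y_2|_\ell S)=\hat y_1$ and $(\hat y_1|_\ell S)=(\hat y_+|_\ell S^2)=(\hat y_+|_\ell(-I))=(-1)^\ell\hat y_2$, one finds $\hat\rho(S)=\SM0{(-1)^\ell}10$ and therefore $\hat\rho(R)=\hat\rho(T)\hat\rho(S)=\SM{d}{(-1)^\ell}10=:M$.

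Next I would use $M^6=\hat\rho(R)^6=\hat\rho(R^6)=\hat\rho(-I)=(-1)^\ell I$. Put $e=(-1)^\ell$, so $\det M=-e$ and $\tr M=d$, and the Cayley--Hamilton identity gives $M^2=dM+eI$; iterating this produces $M^6=d(d^2+e)(d^2+3e)\,M+e(d^4+3ed^2+e^2)\,I$. Because the lower-left entry of $M$ is $1$, $M$ is not a scalar matrix, so $I$ and $M$ are linearly independent over $\C$ and $M^6=eI$ forces both coefficients to match: $d(d^2+e)(d^2+3e)=0$ and $e(d^4+3ed^2+e^2)=e$. Using $e^2=1$ the latter collapses to $d^2(d^2+3e)=0$ (which also makes the former automatic), hence $d=0$ or $d^2=-3e=-3(-1)^\ell$. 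Finally $d\neq 0$: if $d=0$ then $\hat y_1=\hat m_1$ and $\hat y_2=\hat y_+$ would be single-valued $q$-series with no logarithmic term, so $\infty$ would be apparent, contradicting Lemma \ref{lemma: infinity is not apparent Gamma0+(3)}. Thus $d^2=-3(-1)^\ell$, which reads $d^2=-3$ (i.e.\ $d=\pm\sqrt{-3}$) when $\ell$ is even and $d^2=3$ (i.e.\ $d=\pm\sqrt3$) when $\ell$ is odd.

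I do not expect a real obstacle here; the only places to be careful are the sign bookkeeping in $\hat\rho(S)$ --- exactly where the parity of $\ell$ enters the conclusion --- and the verification of the iterated Cayley--Hamilton identity for $M^6$. The two structural inputs, namely $R^6=-I$ (to constrain $\tr\hat\rho(R)$) and the non-apparentness of \eqref{equation: DE Gamma0+(3)} at $\infty$ (to discard $d=0$), are both already in hand.
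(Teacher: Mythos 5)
Your argument is correct and follows essentially the same route as the paper: the same basis $(\hat y_+|_\ell S,\hat y_+)$, the same matrices $\hat\rho(T)$, $\hat\rho(S)$, $\hat\rho(R)$, and the same constraint $\hat\rho(R)^6=(-1)^\ell I$ with $d\neq0$ supplied by non-apparentness at $\infty$. The only (cosmetic) difference is that you extract $d^2=-3(-1)^\ell$ via an iterated Cayley--Hamilton identity, whereas the paper reads off the eigenvalues of $\hat\rho(R)$ as sixth (resp.\ twelfth) roots of unity with prescribed product and nonzero sum; both computations are valid and equivalent.
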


\begin{proof} We use $\hat y_+\big|_\ell S$ and $\hat y_+$ as a
  basis. By Lemma \ref{lemma: det rho Gamma0+(3)},
  $$
  \hat\rho(R)=\hat\rho(T)\hat\rho(S)
  =\M1d01\M0{(-1)^\ell}10=\M d{(-1)^\ell}10,
  $$
  which shows that the characteristic polynomial of $\hat\rho(R)$ is
  $x^2-dx+(-1)^{\ell-1}$. On the other hand, we have
  $\hat\rho(R)^6=\hat\rho(-I)=(-1)^\ell I$. When $\ell$ is even, the
  eigenvalues of $\hat\rho(R)$ are two sixth roots of unity
  $\lambda_1$ and $\lambda_2$ such that $\lambda_1\lambda_2=-1$ and
  $\lambda_1+\lambda_2=d\neq 0$. We find that $d$ must be one of
  $\pm\sqrt{-3}$. Likewise, when $\ell$ is odd, the eigenvalues
  $\lambda_1$ and $\lambda_2$ of $\hat\rho(R)$ satisfy
  $\lambda_1\lambda_2=1$ and $\lambda_1^6=\lambda_2^6=-1$, but
  $\lambda_1+\lambda_2=d\neq 0$. We find that $d$ must be of one
  $\pm\sqrt3$.
\end{proof}

\begin{Lemma} Let $h(z)=y_1(z)/dy_2(z)$. Then $h(Sz)=S\cdot h(z)$.
\end{Lemma}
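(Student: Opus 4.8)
The plan is to adapt the proof of Lemma~\ref{lemma: h Gamma0+(2)} almost verbatim; the only new feature is that one must carry the sign $(-1)^\ell$ coming from $S^2=-I$ together with the sign of $d^2$, and check that the two cases (\,$\ell$ even, $\ell$ odd\,) collapse to the same formula.

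First I would work in the fundamental system $(\hat y_1,\hat y_2)=(\hat y_+\big|_\ell S,\,\hat y_+)$ and read off $\hat\rho(S)$. Since $\hat y_1=\hat y_2\big|_\ell S$ and $\hat y_1\big|_\ell S=\hat y_2\big|_\ell S^2=\hat y_2\big|_\ell(-I)=(-1)^\ell\hat y_2$, the matrix of $\hat\rho(S)$ in this basis is $\SM0{(-1)^\ell}10$, which is exactly what is implicit in the computation $\hat\rho(R)=\hat\rho(T)\hat\rho(S)=\SM d{(-1)^\ell}10$ used just above. Because $S=\frac1{\sqrt3}\SM0{-1}30$ has determinant $1$ and automorphy factor $cz+d=\sqrt3\,z$, the definition of the slash operator unwinds to $f(Sz)=(\sqrt3\,z)^\ell(f\big|_\ell S)(z)$ for any $f$; applying this to $\hat y_1$ and $\hat y_2$ gives
$$\hat y_1(Sz)=(-1)^\ell(\sqrt3\,z)^\ell\hat y_2(z),\qquad \hat y_2(Sz)=(\sqrt3\,z)^\ell\hat y_1(z).$$

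Next, noting that the factor $F(z)$ cancels in the quotient so that $h(z)=y_1(z)/(dy_2(z))=\hat y_1(z)/(d\hat y_2(z))$, I would compute
$$h(Sz)=\frac{\hat y_1(Sz)}{d\,\hat y_2(Sz)}=\frac{(-1)^\ell\hat y_2(z)}{d\,\hat y_1(z)}=\frac{(-1)^\ell}{d^2}\cdot\frac1{h(z)}.$$
Finally I would invoke the preceding lemma on the value of $d$: one has $d^2=3$ when $\ell$ is odd and $d^2=-3$ when $\ell$ is even, so in both cases $(-1)^\ell/d^2=-\tfrac13$, whence $h(Sz)=\dfrac{-1}{3h(z)}=S\cdot h(z)$, the last equality because $S=\frac1{\sqrt3}\SM0{-1}30$ acts on $\H$ by $w\mapsto -1/(3w)$.

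The only point requiring care is this parity/sign bookkeeping ensuring that the $\ell$ even and $\ell$ odd cases give the same answer; beyond that the argument is entirely routine, so I do not expect any genuine obstacle.
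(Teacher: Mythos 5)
Your proposal is correct and follows essentially the same route as the paper: it works in the basis $(\hat y_+\big|_\ell S,\hat y_+)$, reads off $\hat\rho(S)=\SM0{(-1)^\ell}10$, and uses $d^2=3$ for $\ell$ odd, $d^2=-3$ for $\ell$ even to see that $(-1)^\ell/d^2=-1/3$ in both cases, whence $h(Sz)=-1/(3h(z))=S\cdot h(z)$. The paper merely compresses this by citing the $\Gamma_0^+(2)$ computation and noting the sign change in the even case; your write-up makes the same bookkeeping explicit.
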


\begin{proof} The proof is almost the same as that of Lemma
  \ref{lemma: h Gamma0+(2)}. The only difference is that when $\ell$
  is even, we have $\hat\rho(S)=\SM0110$, but in this case we have
  $d^2=-3$, by the lemma above, and $h(Sz)=S\cdot h(z)$ still holds.
\end{proof}

Define $m_2(z)$ by
\begin{equation} \label{equation: m2 Gamma0+(3)}
  y_+(z)=\frac{\pi i}{3d}m_1(z)M_2^\ast(z)+m_2(z),
\end{equation}
and let $\epsilon=d/\sqrt3$.

\begin{Lemma} The following holds.
  \begin{enumerate}
  \item[(i)] $\hat m_1(z+1)=\hat m_1(z)$ and $\left(\hat
      m_1\big|_{\ell-1}S\right)(z)=\epsilon\hat m_1(z)$.
  \item[(ii)] $\hat m_2(z+1)=\hat m_2(z)$ and $\left(\hat
      m_2\big|_{\ell+1}S\right)(z)=\epsilon\hat m_2(z)$.
  \end{enumerate}
\end{Lemma}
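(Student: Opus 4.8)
The plan is to mirror the proofs of Lemmas \ref{lemma: hat m1 is a modular function}--\ref{lemma: hat(m2) is modular form} (for $\SL(2,\Z)$) and Lemmas \ref{lemma: m1 Gamma0+(2)}--\ref{lemma: transformation of hat m2} (for $\Gamma_0^+(2)$), the only genuinely new feature being that $S=\frac1{\sqrt3}\SM0{-1}30$ has bottom row $(\sqrt3,0)$, so the slash operator $\big|_k S$ carries the factor $(\sqrt3 z)^{-k}$ (recall $\det S=1$). I would first dispose of the two periodicity statements. Since the factors $M_3$, $M_1$ and $F_j$ of $F$ are nonvanishing at $\infty$ while $M_6^-$ has a simple zero there, $F$ vanishes to order exactly $\kappa_\infty$ at $\infty$; as $m_1=q^{-\kappa_\infty}(a_0+\cdots)$ and $y_+=q^{\kappa_\infty}(1+\cdots)$, the functions $\hat m_1=Fm_1$ and $\hat y_+=Fy_+$ have honest Fourier expansions in integral powers of $q$, and since $M_2^\ast$ does too, so does $\hat m_2=\hat y_+-\frac{\pi i}{3d}\hat m_1M_2^\ast$. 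Hence $\hat m_1(z+1)=\hat m_1(z)$ and $\hat m_2(z+1)=\hat m_2(z)$.

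For the $S$-transformation of $\hat m_1$ I would use the equivariance $h(Sz)=S\cdot h(z)$ just established, where $h(z)=y_1(z)/(dy_2(z))=z+\hat m_1(z)/(d\hat y_2(z))$. Writing $h(Sz)=-1/(3h(z))$, subtracting $Sz=-1/(3z)$, and using $h(z)\hat y_2(z)=\hat y_1(z)/d$ gives $\hat m_1(Sz)/(d\hat y_2(Sz))=\hat m_1(z)/(3z\hat y_1(z))$. Then, since $\hat y_1(z)=(\hat y_+\big|_\ell S)(z)=(\sqrt3 z)^{-\ell}\hat y_2(Sz)$, one can eliminate $\hat y_2(Sz)$ and obtain $\hat m_1(Sz)=\frac{d(\sqrt3 z)^\ell}{3z}\hat m_1(z)$, i.e. $(\hat m_1\big|_{\ell-1}S)(z)=\frac d{\sqrt3}\hat m_1(z)=\epsilon\hat m_1(z)$.

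For the $S$-transformation of $\hat m_2$ the idea is to compute $(\hat y_+\big|_\ell S)(z)$ in two ways. On the one hand it is $dz\hat y_+(z)+\hat m_1(z)$, by the definition of $\hat m_1$ in \eqref{equation: hat m1 Gamma0+(3)}. On the other hand, substituting $\hat y_+=\frac{\pi i}{3d}\hat m_1 M_2^\ast+\hat m_2$, using part (i) for $\hat m_1(Sz)$, the quasimodular identity $(M_2^\ast\big|_2 S)(z)=M_2^\ast(z)+3/(\pi i z)$ read off from \eqref{equation: M* transform Gamma0+(3)}, the slash normalization for $\hat m_2$, and the relation $\epsilon\sqrt3=d$, everything $z$-dependent collapses and one gets $(\hat y_+\big|_\ell S)(z)=\frac{\pi i z}3\hat m_1(z)M_2^\ast(z)+\hat m_1(z)+\sqrt3 z(\hat m_2\big|_{\ell+1}S)(z)$. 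Equating the two expressions and using $dz\hat y_+(z)=\frac{\pi i z}3\hat m_1(z)M_2^\ast(z)+dz\hat m_2(z)$ leaves $d\hat m_2(z)=\sqrt3(\hat m_2\big|_{\ell+1}S)(z)$, which is exactly $(\hat m_2\big|_{\ell+1}S)(z)=\epsilon\hat m_2(z)$.

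This is really a careful-bookkeeping exercise rather than one with a conceptual obstacle. The two points that must be got right are: the exponent of $\sqrt3 z$ in each slash operator (so that, e.g., $(\sqrt3 z)^{-\ell}(\sqrt3 z)^{\ell-1}$ reduces to a pure constant), and the fact that the constant $\frac{\pi i}{3d}$ in the definition of $m_2$ is tuned precisely so that the defect term $9z/(\pi i)$ contributed by $M_2^\ast$ under $S$ becomes $\hat m_1(z)$ and cancels. Finally one should note that nothing in the computation uses the parity of $\ell$ --- only $\epsilon=d/\sqrt3$ enters --- so the argument simultaneously covers $\ell$ odd ($d=\pm\sqrt3$, $\epsilon=\pm1$) and $\ell$ even ($d=\pm\sqrt{-3}$, $\epsilon=\pm i$).
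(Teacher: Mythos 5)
Your proposal is correct and is precisely the computation the paper has in mind: its own proof of this lemma is omitted with the remark that it "follows exactly those of Lemmas \ref{lemma: m1 Gamma0+(2)} and \ref{lemma: transformation of hat m2}," and you have carried out that adaptation faithfully, with the right slash factors $(\sqrt3 z)^{\pm k}$, the correct defect term $(M_2^\ast|_2S)(z)=M_2^\ast(z)+3/(\pi i z)$, and the observation that only $\epsilon=d/\sqrt3$ (not the parity of $\ell$) enters.
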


\begin{proof} The proof follows exactly those of Lemmas \ref{lemma: m1
    Gamma0+(2)} and \ref{lemma: transformation of hat m2} and is
  omitted.
\end{proof}

It remains to determine $\epsilon$.

\begin{Lemma} We have $\epsilon=\delta(S)$.
\end{Lemma}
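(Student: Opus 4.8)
The plan is to adapt the argument of Lemma~\ref{lemma: epsilon Gamma0+(2)}. By the previous lemma we already know that $\epsilon\in\{1,-1\}$ when $\ell$ is odd and $\epsilon\in\{i,-i\}$ when $\ell$ is even, and these are exactly the two possible values of $\delta(S)=\chi(S)^m=(-i)^m$ in the corresponding parity; moreover, since $-(-i)^m=(-i)^{m+2}$, the ``wrong'' alternative $\epsilon=-\delta(S)$ is the same as $\epsilon=(-i)^{m+2}$. From the preceding lemma, $\hat m_1(z)$ (resp.\ $\hat m_2(z)$) is a modular form of weight $\ell-1$ (resp.\ $\ell+1$) on $\Gamma_0(3)$ that is an eigenform of $w_3=S$ with eigenvalue $\epsilon$; equivalently, writing $\epsilon=(-i)^{m'}$, we have $\hat m_1\in\sM_{\ell-1}(\Gamma_0^+(3),\chi^{m'})$ and $\hat m_2\in\sM_{\ell+1}(\Gamma_0^+(3),\chi^{m'})$, where $m'=m$ if $\epsilon=\delta(S)$ and $m'=m+2$ otherwise. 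So the task is to rule out $m'=m+2$.

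First I would record, exactly as in Lemma~\ref{lemma: no common zero}, that $\hat m_1$ and $\hat m_2$ have no common zero on $\H$: if $\hat m_1(z_0)=\hat m_2(z_0)=0$ for some $z_0\in\H$, then, since $M_2^\ast$ is holomorphic on $\H$, \eqref{equation: m2 Gamma0+(3)} gives $\hat y_+(z_0)=0$, and then \eqref{equation: hat m1 Gamma0+(3)} gives $\hat y_1(z_0)=dz_0\hat y_+(z_0)+\hat m_1(z_0)=0$; hence $y_+$ and $y_+\big|_{-1}S$ both have leading term $c(z-z_0)^{1/2+\kappa_{z_0}}$ near $z_0$ and are linearly dependent, contradicting the linear independence established above. (If $\hat m_1\equiv 0$ this conclusion still holds: $z_0$ may be taken to be any zero of $\hat m_2$, and $\hat m_2\not\equiv 0$ since its leading Fourier coefficient is nonzero.)

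The one genuinely computational ingredient is the vanishing criterion at the order-$6$ elliptic point $\rho_2=(3+\sqrt{-3})/6$, which is fixed by $R=\frac1{\sqrt3}\SM3{-1}30$. For $f\in\sM_k(\Gamma_0^+(3),\chi^j)$ the transformation law at $\gamma=R$ gives $f(\rho_2)=\chi(R)^j(\sqrt3\,\rho_2)^kf(\rho_2)$, so if $f(\rho_2)\neq 0$ then, using $\sqrt3\,\rho_2=e^{\pi i/6}$ and $\chi(R)=-i=e^{-\pi i/2}$, one gets $e^{\pi i(k/6-j/2)}=1$, i.e.\ $k\equiv 3j\mod 12$; consequently every element of $\sM_k(\Gamma_0^+(3),\chi^j)$ vanishes at $\rho_2$ whenever $k\not\equiv 3j\mod 12$. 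Assuming $m'=m+2$, I would then check case by case that neither $\ell-1$ nor $\ell+1$ is congruent to $3(m+2)\mod 12$: for $\ell\equiv 1,11$ (so $m=0$) one would need $\ell\equiv 7$ or $\ell\equiv 5$; for $\ell\equiv 2,4$ ($m=1$), $\ell\equiv 10$ or $\ell\equiv 8$; for $\ell\equiv 5,7$ ($m=2$), $\ell\equiv 1$ or $\ell\equiv 11$; for $\ell\equiv 8,10$ ($m=3$), $\ell\equiv 4$ or $\ell\equiv 2$ — in every case the needed congruence fails. Thus both $\hat m_1$ and $\hat m_2$ vanish at $\rho_2$, contradicting the previous paragraph, and therefore $\epsilon=\delta(S)$, which completes the proof of Theorem~\ref{theorem: main Gamma0+(3)}. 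The main obstacle here is purely bookkeeping: keeping track of the fourth root of unity $\chi(R)=-i$ and the twelfth root of unity $\sqrt3\,\rho_2=e^{\pi i/6}$ so as to land on the criterion $k\equiv 3j\mod 12$, and recognizing that $\rho_2$ — not the order-$2$ point $\rho_1$, which would only give a congruence mod $4$ — is the point that separates the two signs of $\epsilon$ across all residue classes of $\ell\mod 12$.
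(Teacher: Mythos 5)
Your proof is correct and follows essentially the same route as the paper: invoke the no-common-zero property of $\hat m_1$ and $\hat m_2$, then show that the wrong choice of $\epsilon$ forces both to vanish at the order-$6$ elliptic point $\rho_2$ via the transformation law under $R$ (the computation $\chi(R)^j(\sqrt3\rho_2)^k=e^{\pi i(k/6-j/2)}$ is exactly the paper's mechanism). The only difference is organizational: you package everything into the single criterion $k\equiv 3j\pmod{12}$ and sweep all eight residue classes of $\ell$ at once, whereas the paper treats odd $\ell$ by reference to the $\Gamma_0^+(2)$ argument and even $\ell$ by the explicit root-of-unity computation.
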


\begin{proof} We first note that Lemma \ref{lemma: no common zero}
  is also valid here. Our strategy is to show that if
  $\epsilon\neq\delta(S)(=\delta(R))$, then $\hat m_1(z)$ and $\hat
  m_2(z)$ have a common zero at $\rho_2$.

  When $\ell$ is odd, the argument is the same as that in the proof of
  Lemma \ref{lemma: epsilon Gamma0+(2)}. We shall skip the proof in
  this case.

  Now consider the case $\ell$ is even. Since $\ell$ is not divisible
  by $3$, exactly one of $\ell-1$ and $\ell+1$ is not a multiple of
  $3$. Let $k$ be the one in $\{\ell-1,\ell+1\}$ that is a
  multiple of $3$, and $k'$ be the other. Since $\rho_2$ is an
  elliptic point of order $3$ on $X_0(3)$, any modular form of weight
  $k'$ has a zero at $\rho_2$. On the other hand, if
  $f\in\sM_k(\Gamma_0^+(3),\chi)$, then we have
  $$
  f(Rz)=\chi(R)(\sqrt 3z)^kf(z).
  $$
  Observe that $\sqrt 3\rho_2=(\sqrt3+i)/2=e^{2\pi i/12}$. If $k\equiv
  9\mod 12$, then $\chi(R)(\sqrt3\rho_2)=(-i)(-i)=-1$ and hence
  $f(\rho_2)=0$. Therefore, if $k\equiv9\mod 12$, i.e., if $\ell\equiv
  8,10\mod 12$, the assumption that $\epsilon=-i$ leads to a
  contradiction. Likewise, if $\ell\equiv 2,4\mod 12$, then $\epsilon$
  cannot be $i$. This proves the lemma.
\end{proof}

Having completed the proof of the theorem, we now give some examples.

\begin{Example} Consider the differential equation
  $$
  y''(z)=-4\pi^2\left(\frac k2\right)^2M_4^+(z)y(z),
  $$
  where $k$ is a positive integer. The function $F(z)$ and the integer
  $\ell$ in this case are $M_6^-(z)^{k/2}$ and $-1+3k$. The character
  $\delta$ in Theorem \ref{theorem: main Gamma0+(3)} is $\chi^k$ and
  the theorem asserts that $F(z)y_+(z)$ is a quasimodular form in
  $\wt\sM_{3k}^{\le1}(\Gamma_0^+(3),\chi^k)$. Indeed, according to
  Part (i) of Theorem \ref{theorem: DE Gamma0+(3)}, if $f(z)$ is an
  extremal quasimodular form in
  $\wt\sM_{3k}^{\le1}(\Gamma_0^+(3),\chi^k)$, then $f(z)/M_6^-(z)^{k/2}$
  is a solution of the differential equation above.
\end{Example}

\begin{Example} Consider the differential equation
  $$
  y''(z)=-4\pi^2\left(\left(\frac k2\right)^2M_4^+(z)
    -18\frac{M_6^-(z)}{M_2(z)}\right)M_4^+(z)y(z),
  $$
  where $k$ is a positive integer and
  $M_2(z)=(3E_2(3z)-E_2(z))/2=M_1(z)^2$. The function $F(z)$ and the
  integer $\ell$ in this case are $M_6^-(z)^{k/2}M_2(z)$ and $3k+1$,
  respectively, with $\kappa_{\rho_2}=5/2$. The character
  $\delta$ in this case is again $\chi^k$ and
  the theorem asserts that $F(z)y_+(z)$ is a quasimodular form in
  $\wt\sM_{3k+2}^{\le1}(\Gamma_0^+(3),\chi^k)$. This agrees with Part
  (ii) of Theorem \ref{theorem: DE Gamma0+(3)}.
\end{Example}

\begin{Example} Consider the differential equation
  $$
  y''(z)=-4\pi^2\left(\left(\frac k2\right)^2M_4^+(z)
    +54\frac{M_4^+(z)M_6^-(z)}{M_3(z)^2}\right)y(z), \quad
  k\in\Z_{\ge 0}.
  $$
  We have $\kappa_\infty=k/2$ and by \eqref{equation: indicial
    Gamma0+(3)}, $\kappa_{\rho_1}=3/2$. The integer $\ell$ in Theorem
  \ref{theorem: main Gamma0+(3)} is $3k+2$ and the character $\delta$
  is $\chi^{k+1}$. The theorem predicts that
  $M_6^-(z)^{k/2}M_3(z)y_+(z)$ belongs to the space
  $\wt\sM_{3k+3}^{\le1}(\Gamma_0^+(3),\chi^{k+1})$. Indeed, for $k=0$,
  we compute that
  $$
  y_+(z)=1+54q+1944q^2+73092q^3+2749032q^4+\cdots
  $$
  and find that
  $$
  2M_3(z)y_+(z)=3M_2^\ast(z)M_1(z)-M_3(z)\in
  \wt\sM_3^{\le1}(\Gamma_0^+(3),\chi).
  $$
  For $k=1$, we have
  $$
  y_+(z)=q^{1/2}\left(1+30q+1119q^2+42077q^3+1582920q^4+\cdots\right)
  $$
  and
  $$
  360M_6(z)^{1/2}M_3(z)y_+(z)=M_2^\ast(z)M_1(z)M_3(z)
  +3M_1(z)^6-4M_3(z)^2,
  $$
  which does lie in $\wt\sM_6^{\le
    1}(\Gamma_0^+(3),\chi^2)=\wt\sM_6^{\le1}(\Gamma_0^+(3),-)$.
  For $k=2$, we find that
  $$
  y_+(z)=q+26q^2+888q^3+32818q^4+1231645q^5+\cdots
  $$
  and
  \begin{equation*}
    \begin{split}
      18144M_6(z)M_3(z)y_+(z)
      &=M_2^\ast(z)(3M_1(z)^7-9M_1(z)M_3(z)^2) \\
      &\qquad\qquad  -M_1(z)^6M_3(z)+7M_3(z)^3,
    \end{split}
  \end{equation*}
  which indeed belongs to $\wt\sM_9^{\le1}(\Gamma_0^+(3),\chi^3)$.
  For $k=3$, we have
  $$
  y_+(z)=q^{3/2}\left(1+27q+\frac{4131}5q^2
    +\frac{146031}5q^3+\frac{5426163}5q^4+\cdots\right)
  $$
  and
  \begin{equation*}
    \begin{split}
      &2138400M_6(z)^{3/2}M_3(z)y_+(z) \\
      &\qquad=M_2^\ast(z)\left(M_1(z)^7M_3(z)
        +35M_1(z)M_3(z)^3\right) \\
      &\qquad+3M_1(z)^{12}-19M_1(z)^6M_3(z)^2-20M_3(z)^4,
    \end{split}
  \end{equation*}
  which is indeed lying in $\wt\sM_{12}^{\le 1}(\Gamma_0^+(3))$.
\end{Example}

\appendix
\section{Existence of $Q(z)$ satisfying \eqref{equation: H}}
Let $\Gamma$ be a subgroup of $\SL(2,\R)$ commensurable with
$\SL(2,\Z)$ and $Q(z)$ be a meromorphic modular form of weight $4$ on
$\Gamma$. Consider the modular differential equation
\begin{equation} \label{equation: y''=-Qy appendix}
y''(z)=-4\pi^2Q(z)y(z), \qquad z\in\H.
\end{equation}
We assume that the differential equation is Fuchsian, i.e., that
the order of pole at any pole of $Q(z)$ is $\le 2$. Let $z_0\in\H$ be
a pole of $Q(z)$. In Section \ref{section: ODE and quasi}, we have
discussed how to compute the local exponents and determine whether
\eqref{equation: y''=-Qy appendix} is apparent at $z_0$
using the standard Laurent expansions. However, in order to take
advantage of the modular properties of $Q(z)$, here we shall introduce
another type of expansions of modular forms.

For an integer $k$, let $\partial_k$ be the Shimura-Maass operator
defined in Definition \ref{definition: Shimura-Maass}.
By Lemma \ref{lemma: Maass}, when $f$ is a nearly holomorphic modular
form of weight $k$ on $\Gamma$, $\partial_kf$ is a nearly holomorphic
modular form of weight $k+2$ on $\Gamma$.
For a positive integer $n$, we let
$$
\partial_k^n=\partial_{k+2n-2}\partial_{k+2n-4}\ldots
\partial_{k+2}\partial_k.
$$
If the weight of $f$ is clear from the context, we will simply write
$\partial_k^n$ as $\partial^n$.

\begin{Proposition}[{\cite[Proposition 17]{Zagier123}}]
  \label{proposition: power series expansion}
  Assume that $f(z)$ is a (holomorphic) modular form
  of weight $k$ on $\Gamma$. Then for a point $z_0\in\H$, we have
  \begin{equation} \label{equation: power series expansion}
  f(z)=(1-w)^k\sum_{n=0}^\infty
  \frac{(\partial_k^nf)(z_0)(-4\pi\Im z_0)^n}{n!}w^n, \quad
  w=\frac{z-z_0}{z-\overline z_0}.
  \end{equation}
\end{Proposition}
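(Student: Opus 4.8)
The plan is to pass to the disc coordinate $w$ and reduce the identity to an elementary recursion. Throughout, write $r=\Im z_0>0$. The map $z\mapsto w=(z-z_0)/(z-\overline z_0)$ is a biholomorphism of $\H$ onto the unit disc sending $z_0$ to $0$, and one has the two identities $1-w=2ir/(z-\overline z_0)$ and $dz/dw=(z-\overline z_0)^2/(2ir)$, hence $d/dw=\frac{(z-\overline z_0)^2}{2ir}\,\frac{d}{dz}$. I would set $\varphi(w):=(z-\overline z_0)^k f(z)$, viewed as a function of $w$. Since $z$ is a holomorphic function of $w$ taking values in $\H$, since $z-\overline z_0$ is nowhere zero on $\H$, and since $f$ is holomorphic on all of $\H$ (this is the only place the modularity of $f$ enters — merely to guarantee holomorphy on the whole upper half-plane), the function $\varphi$ is holomorphic on the unit disc; moreover $f(z)=(2ir)^{-k}(1-w)^k\varphi(w)$. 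Thus it suffices to compute the Taylor coefficients of $\varphi$ at $w=0$.

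Next I would introduce the holomorphic surrogate $\hat\partial_j g:=g'+\dfrac{jg}{z-\overline z_0}$ of the Shimura–Maass operator, with $z_0$ treated as a fixed constant, together with its iterates $\hat\partial^{\,n}=\hat\partial_{k+2n-2}\cdots\hat\partial_{k+2}\hat\partial_k$. Using $d/dw=\frac{(z-\overline z_0)^2}{2ir}\,d/dz$ and the Leibniz rule one verifies the recursion
$$\frac{d}{dw}\Bigl[(z-\overline z_0)^{k+2n}(\hat\partial^{\,n}f)(z)\Bigr]=\frac1{2ir}\,(z-\overline z_0)^{k+2(n+1)}(\hat\partial^{\,n+1}f)(z),$$
the point being that $(z-\overline z_0)^{k+2n}(\hat\partial^{\,n}f)'+(k+2n)(z-\overline z_0)^{k+2n-1}(\hat\partial^{\,n}f)$ equals $(z-\overline z_0)^{k+2n}\,\hat\partial_{k+2n}(\hat\partial^{\,n}f)$ after factoring out $(z-\overline z_0)$. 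Iterating this from $n=0$, where the bracketed expression is exactly $\varphi$, and then setting $w=0$ (so $z=z_0$ and $z-\overline z_0=2ir$), yields $\varphi^{(n)}(0)=(2ir)^{k+n}(\hat\partial^{\,n}f)(z_0)$.

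It then remains to identify $(\hat\partial^{\,n}f)(z_0)$ with $(2\pi i)^n(\partial_k^{\,n}f)(z_0)$. Writing $\tilde\partial_j g:=2\pi i\,\partial_j g=g'+\dfrac{jg}{z-\overline z}$ and $\tilde\partial^{\,n}=\tilde\partial_{k+2n-2}\cdots\tilde\partial_k$, a short induction on $n$ shows that $\hat\partial^{\,n}f$ and $\tilde\partial^{\,n}f$ are given by one and the same universal polynomial in $f,f',\dots,f^{(n)}$ and $u^{-1}$ — with $u=z-\overline z_0$ in the first case and $u=z-\overline z$ in the second — simply because $\partial_z(u^{-j})=-j\,u^{-j-1}$ in both situations. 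Evaluating at $z=z_0$, where $z-\overline z_0$ and $z-\overline z$ both equal $2ir$, gives $(\hat\partial^{\,n}f)(z_0)=(\tilde\partial^{\,n}f)(z_0)=(2\pi i)^n(\partial_k^{\,n}f)(z_0)$. Substituting this and the Taylor expansion $\varphi(w)=\sum_{n\ge0}\varphi^{(n)}(0)\,w^n/n!$ into $f(z)=(2ir)^{-k}(1-w)^k\varphi(w)$, and using $(2ir)(2\pi i)=-4\pi r=-4\pi\Im z_0$, produces exactly \eqref{equation: power series expansion}; the resulting series converges for $|w|<1$ because $\varphi$ is holomorphic on the unit disc.

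The one genuinely delicate point — what I expect to be the main obstacle in writing this up cleanly — is the last step: one must make sure that replacing the non-holomorphic factor $(z-\overline z)^{-1}$ occurring in the Shimura–Maass iteration by the constant $(z-\overline z_0)^{-1}$ does not change the value at $z_0$, which is precisely what the "same universal polynomial'' observation (and the coincidence $z_0-\overline z_0=z_0-\overline{z_0}$) accomplishes. Everything else is routine bookkeeping built on the two identities $1-w=2ir/(z-\overline z_0)$ and $d/dw=\frac{(z-\overline z_0)^2}{2ir}\,d/dz$.
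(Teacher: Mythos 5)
Your proof is correct. Note that the paper itself gives no proof of this proposition: it is quoted from Zagier's survey (Proposition 17 of \cite{Zagier123}), with only the remark that a minus sign is missing there. Your argument therefore supplies a complete, self-contained verification, and it follows the standard route: pass to the disc coordinate $w$, observe that $\varphi(w)=(z-\overline z_0)^k f(z)$ is holomorphic on $|w|<1$, and compute its Taylor coefficients via the recursion $\frac{d}{dw}\bigl[(z-\overline z_0)^{k+2n}\hat\partial^{\,n}f\bigr]=\frac1{2ir}(z-\overline z_0)^{k+2n+2}\hat\partial^{\,n+1}f$. The one point that genuinely needs care is exactly the one you flag: the iterated Shimura--Maass operator involves the non-holomorphic factor $(z-\overline z)^{-1}$, and one must justify replacing it by the constant $(z-\overline z_0)^{-1}$ before evaluating at $z_0$. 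Your ``same universal polynomial'' observation does this correctly, since the Wirtinger derivative satisfies $\partial_z(z-\overline z)^{-j}=-j(z-\overline z)^{-j-1}$ just as $\frac{d}{dz}(z-\overline z_0)^{-j}=-j(z-\overline z_0)^{-j-1}$, and both quantities equal $2i\Im z_0$ at $z=z_0$. The bookkeeping $(2ir)(2\pi i)=-4\pi\Im z_0$ reproduces the sign as stated in the paper (i.e.\ the corrected sign), and convergence on $|w|<1$ follows from holomorphy of $\varphi$ on the disc. You are also right that modularity of $f$ plays no role beyond guaranteeing holomorphy on all of $\H$.
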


We will call the expansion of a meromorphic modular form
$f(z)$ of weight $k$ of the form
$$
f(z)=(1-w)^k\sum_{n=n_0}^\infty a_nw^n, \qquad
w=\frac{z-z_0}{z-\overline z_0},
$$
the \emph{power series expansion} of $f(z)$ at $z_0$.
Note that there is a misprint in \cite[Proposition 17]{Zagier123}, in
which a minus sign is missing.

One of the main advantages to use the power series expansion in
$w=(z-z_0)/(z-\overline z_0)$ instead of the usual expansion in
$z-z_0$ is that the coefficients can be recursively computed. To
describe the recursion, let us first recall the definition of Serre's
derivative. In the case of $\Gamma=\SL(2,\Z)$, the well-known
Ramanujan's identities state that
\begin{equation} \label{equation: Ramanujan}
  \begin{split}
    D_qE_2(z)&=\frac{E_2(z)^2-E_4(z)}{12}, \\
    D_qE_4(z)&=\frac{E_2(z)E_4(z)-E_6(z)}3, \\
    D_qE_6(z)&=\frac{E_2(z)E_6(z)-E_4(z)^2}2,
  \end{split}
\end{equation}
where $D_q=qd/dq$. Generalizing these identities, we can easily verify
that if $f(z)$ is a modular form of weight $k$ on $\SL(2,\Z)$, then
$D_qf(z)-kE_2(z)f(z)/12$ is a modular form of weight $k+2$. Then the
Serre's derivative $\vartheta_k$ of weight $k$ in the case of
$\Gamma=\SL(2,\Z)$ is defined to
$$
\vartheta_kf:=D_qf-\frac k{12}E_2f.
$$
For a general group $\Gamma$, we choose a quasimodular form $\phi$ of
weight $2$ and depth $1$ on $\Gamma$ and let $\alpha$ be the nonzero
number such that
$$
\left(\phi|_2\gamma\right)(z)=\phi(z)+\frac{\alpha c}{2\pi i(cz+d)}
$$
for all $\gamma=\SM abcd\in\Gamma$. Then we can analogously prove that
if $f$ is a modular form of weight $k$ on $\Gamma$, then
$D_qf-k\phi f/\alpha$ is a modular form of weight $k+2$ on $\Gamma$.
Also,
$$
M(z):=D_q\phi(z)-\frac{\phi(z)^2}\alpha
$$
is a modular form of weight $4$ on $\Gamma$.

\begin{Definition} \label{definition: Serre}
  The differential operator
  $\vartheta_k:\sM_k(\Gamma)\to\sM_{k+2}(\Gamma)$ defined by
  $$
  \vartheta_kf:=D_qf-\frac k\alpha\phi f
  $$
  is called \emph{Serre's derivative} of weight $k$ (with respect to
  $\phi$). If the weight of $f$ is clear from the context, we will
  omit the subscript $k$. We then define recursively
  $\vartheta^{[n]}f$ by
  $$
  \vartheta^{[0]}f=f, \qquad
  \vartheta^{[1]}f=\vartheta f,
  $$
  and
  $$
  \vartheta^{[n+1]}f=\vartheta(\vartheta^{[n]}f)
  +\frac{n(k+n-1)}\alpha M\vartheta^{[n-1]}f
  $$
  for $n\ge 1$.
\end{Definition}

The Shimura-Maass differential operators and Serre's derivatives are
connected through the following combinatorial identity.

\begin{Proposition} Set
  $$
  \phi^\ast(z)=\phi(z)+\frac{\alpha}{2\pi i(z-\overline z)}.
  $$
  For a modular form $f(z)$ of weight $k$ on $\Gamma$, define two
  formal power series
  $$
  \wt f_\partial(z,X):=\sum_{n=0}^\infty
  \frac{\partial^nf(z)}{n!(k)_n}X^n
  $$
  and
  $$
  \wt f_\vartheta(z,X):=\sum_{n=0}^\infty
  \frac{\vartheta^{[n]}f(z)}{n!(k)_n}X^n,
  $$
  where $(k)_n=k(k+1)\ldots(k+n-1)$ is the Pochhammer symbol.
  Then we have
  $$
  \wt f_\partial(z,X)=e^{X\phi^\ast(z)/\alpha}\wt f_\vartheta(z,X)
  $$
  as formal power series in $X$.
\end{Proposition}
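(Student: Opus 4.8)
The plan is to prove the identity by comparing the coefficients of $X^n$ on both sides, after recognizing that the exponential factor $e^{X\phi^\ast(z)/\alpha}$ is exactly what converts the Shimura--Maass tower $\partial^n f$ into the Serre tower $\vartheta^{[n]}f$. The key structural observation is that, for a nearly holomorphic modular form $g$ of weight $k$, one has $\partial_k g = \vartheta_k(\text{holomorphic part}) + (\text{correction involving }\phi^\ast)$; more precisely, applying $\partial_k$ to a holomorphic modular form $f$ of weight $k$ gives
$$
\partial_k f = D_q f + \frac{k}{2\pi i(z-\overline z)}f = \vartheta_k f + \frac{k}{\alpha}\phi^\ast f,
$$
using $\partial_k f = D_q f + k\phi f/\alpha - k\phi f/\alpha + k f/(2\pi i(z-\overline z))$ together with the definition $\phi^\ast = \phi + \alpha/(2\pi i(z-\overline z))$ and $\vartheta_k f = D_q f - k\phi f/\alpha$. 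First I would record this relation and, more importantly, its iterate: I need to understand $\partial^{n}f = \partial_{k+2n-2}\cdots\partial_k f$ in terms of $\vartheta^{[m]}f$ and powers of $\phi^\ast$. Here one must also use that $\partial_j \phi^\ast = (\phi^\ast)^2/\alpha + M$ (the weight-$2$ Shimura--Maass derivative of $\phi^\ast$ produces the weight-$4$ form $M$ plus the "square" term), which is the nearly-holomorphic analogue of $\vartheta\phi = D_q\phi - 2\phi^2/\alpha$ and $M = D_q\phi - \phi^2/\alpha$; this is where the modular form $M$ of weight $4$ enters, matching its appearance in the recursion defining $\vartheta^{[n]}$.

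Second, I would set up the bookkeeping via generating functions. Writing $D = \partial$ acting with the appropriate weight shift at each stage, the relation $\partial = \vartheta + (k'/\alpha)\phi^\ast$ (with $k'$ the running weight) shows that the two formal power series $\wt f_\partial(z,X)$ and $\wt f_\vartheta(z,X)$, normalized by the Pochhammer denominators $n!(k)_n$, are related by a first-order "difference/differential" relation in $X$. Concretely, I expect that differentiating $\wt f_\partial(z,X)$ with respect to $X$ and using the formula for $\partial^{n+1}f$ in terms of lower $\partial$-derivatives (which folds in the action of $\partial$ on $\phi^\ast$ and hence the term $n(k+n-1)M\,\partial^{n-1}f/\alpha$) yields precisely the ODE in $X$ satisfied by $e^{X\phi^\ast/\alpha}\wt f_\vartheta(z,X)$; the exponential prefactor accounts for the $\phi^\ast$-linear part of each application of $\partial$, while the Pochhammer normalization absorbs the weight-dependent constants $(k)_n$ and turns the $M$-term into exactly the one in Definition~\ref{definition: Serre}. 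The two series then agree because they satisfy the same first-order recursion in $n$ with the same initial term $f$ at $n=0$.

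The main obstacle will be the third step: carefully verifying that the combinatorial constants produced by iterating $\partial$ — namely the Pochhammer factors $(k)_n$ coming from the weight shifts $k, k+2, k+4,\dots$, combined with the binomial-type contributions when the exponential $e^{X\phi^\ast/\alpha}$ is expanded — reproduce exactly the coefficients $n(k+n-1)/\alpha$ in the definition of $\vartheta^{[n+1]}f$. I would do this by induction on $n$: assume $\wt f_\partial$ and $e^{X\phi^\ast/\alpha}\wt f_\vartheta$ agree modulo $X^{n+1}$, then extract the $X^{n+1}$-coefficient of $\partial^{n+1}f = \partial(\partial^n f)$, substitute $\partial^n f = \sum_{j} \binom{n}{j}(\phi^\ast/\alpha)^{j}\,(\text{stuff})\cdots$ — here I would instead argue more cleanly at the level of the full generating function, checking that $\partial_X \bigl(e^{-X\phi^\ast/\alpha}\wt f_\partial\bigr)$ equals $e^{-X\phi^\ast/\alpha}$ times the series whose $X^n$-coefficient is $\vartheta^{[n+1]}f/\bigl((n+1)!(k)_{n+1}\bigr)$, using $\partial_X(\phi^\ast) $ replaced by the operator identity $\partial\phi^\ast = (\phi^\ast)^2/\alpha + M$. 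Once that single operator identity and the Pochhammer arithmetic are pinned down, the rest is a routine, if slightly tedious, matching of coefficients, and the proposition follows.
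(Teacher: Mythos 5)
Your proposal is essentially correct, and it is worth noting that the paper itself gives no argument here --- it simply cites \cite[Section 5.2]{Zagier123} --- so you are supplying a proof where the authors supply a reference; your argument is in substance the standard one from that reference. The two identities you isolate are exactly the crux and both check out: $\partial_k f=\vartheta_k f+\tfrac{k}{\alpha}\phi^\ast f$ for a holomorphic modular form $f$ of weight $k$, and $\partial_2\phi^\ast=(\phi^\ast)^2/\alpha+M$ with $M=D_q\phi-\phi^2/\alpha$ (a direct computation from the definitions confirms the latter, with the $1/(z-\overline z)^2$ terms matching on both sides). With these, the equivalent coefficient identity
$$
\partial^nf=\sum_{r=0}^n\frac{n!}{r!}\binom{k+n-1}{k+r-1}
\left(\frac{\phi^\ast}{\alpha}\right)^{n-r}\vartheta^{[r]}f
$$
follows by induction on $n$ using the Leibniz rule for $\partial$ (Lemma \ref{lemma: Maass}): the $M$-terms produced by $\partial\bigl((\phi^\ast)^{n-r}\bigr)$ cancel exactly against the $\tfrac{r(k+r-1)}{\alpha}M\vartheta^{[r-1]}f$ correction in the definition of $\vartheta^{[r+1]}$, and the remaining coefficients combine via $(k+n+s)(n+1-s)+s(k+s-1)=(k+n)(n+1)$ to give the Pochhammer/binomial factors at level $n+1$. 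One small caution: in your third step you drift between differentiating the generating function in $X$ and applying the Shimura--Maass operator in $z$; these are different operations (the weight shifts mean $\partial$ does not act as a single fixed operator on the series), so the clean way to finish is the termwise induction just described rather than a literal ODE in $X$. Also, your aside ``$\vartheta\phi=D_q\phi-2\phi^2/\alpha$'' applies Serre's derivative to the quasimodular form $\phi$, which is outside the scope of Definition \ref{definition: Serre}; it is harmless since the argument only needs $\partial_2\phi^\ast$.
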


\begin{proof} See \cite[Section 5.2]{Zagier123}.
\end{proof}

Another way to state the proposition is that
$$
\partial^nf(z_0)=\sum_{r=0}^n\frac{n!}{r!}\binom{k+n-1}{k+r-1}
\left(\frac{\phi^\ast(z_0)}{\alpha}\right)^{n-r}\vartheta^{[r]}f(z_0)
$$
for all $n$.
This gives a formula for the coefficients of the power
series expansion of $f$. However, while the values
$\vartheta^{[r]}f(z_0)$ can be computed recursively
using identities analogous to \eqref{equation: Ramanujan},
the determination of $\phi^\ast(z_0)$ can be problematic in practice,
so we will introduce yet another type of expansions using a different
local parameter for $z_0$.

Letting $y_0=\Im z_0$, by \eqref{equation: power series expansion},
we have
\begin{equation*}
  \begin{split}
    f(z)&=(1-w)^k\sum_{n=0}^\infty w^n(-4\pi y_0)^n \\
    &\qquad\qquad\qquad\sum_{r=0}^n\frac1{r!}
    \frac{(k+n-1)!}{(n-r)!(k+r-1)!}
    \left(\frac{\phi^\ast(z_0)}{\alpha}\right)^{n-r}
    \vartheta^{[r]}f(z_0) \\
    &=(1-w)^k\sum_{r=0}^\infty\frac{\vartheta^{[r]}f(z_0)}{r!}
    (-4\pi y_0w)^r \\
    &\qquad\qquad\qquad\sum_{n=0}^\infty
    \frac{(k+n+r-1)!}{n!(k+r-1)!}
    \left(-\frac{4\phi^\ast(z_0)\pi y_0}\alpha w\right)^n
  \end{split}
\end{equation*}
Using
$$
(1+x)^{-k-r}=\sum_{n=0}^\infty\frac{(k+n+r-1)!}{n!(k+r-1)!}(-x)^n,
$$
we find that
$$
f(z)=\left(\frac{1-w}{1+Aw}\right)^k\sum_{r=0}^\infty
\frac{\vartheta^{[r]}f(z_0)}{r!}\left(-\frac{4\pi y_0w}{1+Aw}\right)^r,
\quad A=\frac{4\phi^\ast(z_0)\pi y_0}\alpha.
$$
Let
\begin{equation} \label{equation: wt w}
\wt w=\frac w{1+Aw},
\end{equation}
which is also a local parameter at $z_0$. Observing that
$$
\frac{1-w}{1+Aw}=1-(1+A)\wt w,
$$
we obtain the following series expansion of $f$.

\begin{Proposition} \label{proposition: expansion in wt w}
  We have
$$
f(z)=(1-(1+A)\wt w)^k\sum_{r=0}^\infty\frac{\vartheta^{[r]}f(z_0)}{r!}
(-4\pi y_0\wt w)^r.
$$
\end{Proposition}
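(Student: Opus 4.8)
The statement follows directly from the two propositions proved just above it — the power series expansion of Proposition~\ref{proposition: power series expansion} and the combinatorial identity expressing the iterated Shimura--Maass operator $\partial^n$ in terms of Serre's derivatives $\vartheta^{[r]}$ — and indeed the displayed computation immediately preceding the statement already carries out the argument. My plan is simply to organize those steps into a clean proof.

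First I would start from the expansion
$$
f(z)=(1-w)^k\sum_{n=0}^\infty\frac{(\partial^nf)(z_0)(-4\pi y_0)^n}{n!}\,w^n,
\qquad w=\frac{z-z_0}{z-\overline{z_0}},\quad y_0=\Im z_0,
$$
valid in a neighbourhood of $z_0$ by Proposition~\ref{proposition: power series expansion}, and substitute into it the identity
$$
\partial^nf(z_0)=\sum_{r=0}^n\frac{n!}{r!}\binom{k+n-1}{k+r-1}
\left(\frac{\phi^\ast(z_0)}{\alpha}\right)^{n-r}\vartheta^{[r]}f(z_0).
$$
Since both the power series in $w$ and the finite inner sums assemble into a series that converges absolutely on a punctured disc about $z_0$, I may interchange the order of summation and group the terms according to the index $r$.

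Next, for each fixed $r$ I would reindex the remaining sum by $n\mapsto n+r$ and factor out $\vartheta^{[r]}f(z_0)\,(-4\pi y_0 w)^r/r!$, absorbing the leftover powers of $\phi^\ast(z_0)/\alpha$ into powers of $-Aw$; what remains is
$$
\sum_{n=0}^\infty\frac{(k+n+r-1)!}{n!\,(k+r-1)!}(-Aw)^n=(1+Aw)^{-k-r},
\qquad A=\frac{4\phi^\ast(z_0)\pi y_0}{\alpha},
$$
by the negative binomial series. This yields
$$
f(z)=\left(\frac{1-w}{1+Aw}\right)^k\sum_{r=0}^\infty
\frac{\vartheta^{[r]}f(z_0)}{r!}\left(-\frac{4\pi y_0 w}{1+Aw}\right)^r,
$$
after which I would introduce the local parameter $\wt w=w/(1+Aw)$ and use the elementary identities $w/(1+Aw)=\wt w$ and $(1-w)/(1+Aw)=1-(1+A)\wt w$ — both obtained at once by clearing denominators — to rewrite the right-hand side in the asserted form.

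I do not expect any genuine obstacle here: the whole argument is bookkeeping with formal (indeed convergent) power series. The only points that deserve a sentence of care are the justification of the rearrangement of the double series, for which the absolute convergence of the expansions near $z_0$ suffices, and the remark that $\wt w$ really is a local parameter at $z_0$, which holds because $w(z_0)=0$ so $1+Aw$ is nonzero at $z_0$.
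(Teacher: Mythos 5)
Your proposal is correct and follows essentially the same route as the paper: substitute the combinatorial identity relating $\partial^n f(z_0)$ to the Serre derivatives into the power series expansion of Proposition \ref{proposition: power series expansion}, interchange the order of summation, sum the inner series via the negative binomial expansion to produce the factor $(1+Aw)^{-k-r}$, and then pass to the local parameter $\wt w=w/(1+Aw)$. The paper presents exactly this computation (in the displayed chain of equalities preceding the statement), so nothing further is needed.
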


The advantage of this expansion in $\wt w$ over that in $w$ is that
there is no need to compute $\phi^\ast(z_0)$.

In order to use Proposition \ref{proposition: expansion in wt w} to
compute the local exponents and determine whether \eqref{equation:
  y''=-Qy sect 3} is apparent at $z_0$, we need the following lemma.

\begin{Lemma} \label{lemma: local exponents}
  Let $z_0\in\H$ be a pole of $Q(z)$ and assume that $\wt
  Q(x)=\sum_{n\ge-2}a_nx^n$ is the power series such that
  $$
  Q(z)=(1-(1+A)\wt w)^4\wt Q(-4\pi(\Im z_0)\wt w),
  $$
  where $\wt w$ is given by \eqref{equation: wt w}. Then
  \begin{equation} \label{equation: y temp appendix}
  y(z)=\frac1{1-(1+A)\wt w}\sum_{n=0}^\infty c_n
  (-4\pi(\Im z_0)\wt w)^{n+\alpha}
  \end{equation}
  is a solution of \eqref{equation: y''=-Qy appendix} if and only if
  the series
  $\wt y(x)=\sum_{n=0}^\infty c_nx^{n+\alpha}$ satisfies
  \begin{equation} \label{equation: DE in wt y}
  \frac{d^2}{dx^2}\wt y(x)=\wt Q(x)\wt y(x),
  \end{equation}
  i.e., if and only if
  \begin{equation} \label{equation: indicial}
  \alpha^2-\alpha-a_{-2}=0
  \end{equation}
  and
  \begin{equation} \label{equation: consistency}
  \left((\alpha+n)(\alpha+n-1)-a_{-2}\right)c_n
  =\sum_{j=0}^{n-1}a_{n-j-2}c_j
  \end{equation}
  holds for all $n\ge 1$.
\end{Lemma}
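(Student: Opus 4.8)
The plan is to recognize that the change of variable $z\mapsto x:=-4\pi(\Im z_0)\wt w$ is a Möbius transformation of $z$, so that the substitution in \eqref{equation: y temp appendix} is, up to a harmless nonzero constant, the classical ``$(x')^{-1/2}$''-substitution that removes the first-order term from a second-order linear ODE, and that the Schwarzian derivative of a Möbius map vanishes. This reduces \eqref{equation: y''=-Qy appendix} to \eqref{equation: DE in wt y} on the nose, after which \eqref{equation: indicial} and \eqref{equation: consistency} fall out of the Frobenius method.

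First I would record the elementary identities
$$
1-w=\frac{2i\,\Im z_0}{z-\overline z_0},\qquad
1-(1+A)\wt w=\frac{1-w}{1+Aw},\qquad
\frac{dw}{dz}=\frac{2i\,\Im z_0}{(z-\overline z_0)^2},\qquad
\frac{d\wt w}{dw}=\frac1{(1+Aw)^2},
$$
which together give, after a short computation, $x':=\dfrac{dx}{dz}=2\pi i\bigl(1-(1+A)\wt w\bigr)^2$. In particular $\dfrac1{1-(1+A)\wt w}=\sqrt{2\pi i}\,(x')^{-1/2}$, so the function in \eqref{equation: y temp appendix} is $\sqrt{2\pi i}\,(x'(z))^{-1/2}\wt y(x(z))$; since \eqref{equation: y''=-Qy appendix} is linear and homogeneous, the constant $\sqrt{2\pi i}$ is irrelevant. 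Also, since $x$ is a composition of the Möbius map $z\mapsto w$, the Möbius map $w\mapsto\wt w$, and an affine map, $x$ is a Möbius transformation of $z$.

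Next I would invoke the standard fact (see, e.g., \cite{Hille}) that if $x=x(z)$ with $x'\neq0$ and $y(z)=(x'(z))^{-1/2}v(x(z))$, then $y''=-4\pi^2Qy$ holds if and only if
$$
\frac{d^2v}{dx^2}=\frac1{(x')^2}\Bigl(-4\pi^2Q(z)+\tfrac12\{x;z\}\Bigr)v,
$$
where $\{x;z\}$ is the Schwarzian of $x$ with respect to $z$; this is precisely the computation underlying Bol's identity \cite{Bol}. Because $x$ is Möbius in $z$, $\{x;z\}=0$, and using $x'=2\pi i(1-(1+A)\wt w)^2$ together with the defining relation $Q(z)=(1-(1+A)\wt w)^4\wt Q(x)$ one computes $\dfrac{-4\pi^2Q(z)}{(x')^2}=\dfrac{Q(z)}{(1-(1+A)\wt w)^4}=\wt Q(x)$. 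Hence $y(z)$ solves \eqref{equation: y''=-Qy appendix} if and only if $\wt y(x)$ solves \eqref{equation: DE in wt y}.

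Finally I would substitute $\wt y(x)=\sum_{n\ge0}c_nx^{n+\alpha}$ and $\wt Q(x)=\sum_{n\ge-2}a_nx^n$ into $\wt y''=\wt Q\,\wt y$ and compare the coefficients of $x^{n+\alpha-2}$: the $n=0$ term gives $c_0\bigl(\alpha(\alpha-1)-a_{-2}\bigr)=0$, which is the indicial equation \eqref{equation: indicial}, and the $n\ge1$ terms give exactly \eqref{equation: consistency}. The only place requiring genuine care is the bookkeeping in the first step — verifying that the Möbius factor $1/(1-(1+A)\wt w)$ equals $(x')^{-1/2}$ up to a constant and tracking $2\pi i$ versus $-4\pi^2$ so that the Schwarzian-free reduction yields precisely $\wt Q(x)$ with no stray factor; everything else is routine.
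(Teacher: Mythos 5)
Your proposal is correct and is essentially the paper's own argument in different clothing: the paper applies Bol's identity $\left(f\big|_{-1}\gamma\right)''=f''\big|_3\gamma$ with the explicit matrix $\gamma$ sending $z$ to $x=-4\pi(\Im z_0)\wt w$, which is exactly your Liouville substitution $y=(x')^{-1/2}v(x)$ with vanishing Schwarzian for a M\"obius change of variable (as you yourself note), and the bookkeeping $x'=2\pi i(1-(1+A)\wt w)^2$, $(x')^2=-4\pi^2(1-(1+A)\wt w)^4$ checks out. The concluding Frobenius comparison of coefficients matches the paper as well.
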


\begin{proof}
  Recall that Bol's identity states that if $f$ is a
  twice-differential function and $\gamma=\SM abcd\in\GL(2,\C)$, then
  $$
  \left(f\big|_{-1}\gamma\right)''(z)
  =\left(f''\big|_3\gamma\right)(z).
  $$
  Let $x=\gamma z$. We observe that
  $$
  a-cx=a-c\frac{az+b}{cz+d}=\frac{\det\gamma}{cz+d}.
  $$
  Hence the factor $(\det\gamma)^{1/2}/(cz+d)$ appearing in the slash
  operator can be written as
  $$
  \frac{(\det\gamma)^{1/2}}{cz+d}=\frac{a-cx}{(\det\gamma)^{1/2}}.
  $$
  Therefore, Bol's identity can be also written as
  $$
  \frac{d^2}{dz^2}\left(\frac{(\det\gamma)^{1/2}}{a-cx}f(x)\right)
  =\frac{(a-cx)^3}{(\det\gamma)^{3/2}}\frac{d^2}{dx^2}f(x).
  $$
  Applying this version of Bol's identity with
  $$
  \gamma=\M{-4\pi\Im z_0}001\M10A1\M1{-z_0}1{-\overline z_0}
  =\M{-4\pi\Im z_0}{(4\pi\Im z_0)z_0}{1+A}{-Az_0-\overline z_0},
  $$
  $\det\gamma=-4\pi(\Im z_0)(z_0-\overline z_0)$, $x=\gamma
  z=-4\pi(\Im z_0)\wt w$, and $f(x)=\wt y(x)$, we obtain
  $$
  \frac{d^2}{dz^2}
  \frac1{1-(1+A)\wt w}\wt y(-4\pi(\Im z_0)\wt w)
  =-4\pi^2(1-(1+A)\wt w)^3\frac{d^2}{dx^2}\wt y(x).
  $$
  The left-hand side of this identity is simply $y''(z)$, where $y(z)$
  is the function in \eqref{equation: y temp appendix}. From this,
  we see that \eqref{equation: y temp appendix} is a solution of
  \eqref{equation: y''=-Qy appendix} if and only if $\wt y(x)$ is a
  solution of \eqref{equation: DE in wt y}.
\end{proof}

\begin{Remark} \label{remark: apparentness condition}
  In view of \eqref{equation: indicial}, we may write the
  two local exponents at $z_0$ at $1/2\pm\kappa$ for some $\kappa\ge
  0$. When $\kappa\notin\frac12\Z_{\ge0}$, the term
  $((\alpha+n)(\alpha+n-1)-a_{-2})$ on the left-hand
  side of \eqref{equation: consistency} is never $0$ for
  both $\alpha=1/2\pm\kappa$ and any $n\ge 1$. In such a case, we can
  recursively determine $c_n$ and get two linearly independent
  solutions of \eqref{equation: y''=-Qy appendix}. On the other hand,
  when $\kappa\in\frac12\Z$, we can always get a
  solution of the form $\wt w^{1/2+\kappa}(1+O(\wt w))$. However, when
  $\alpha=1/2-\kappa$, we can only solve for $c_n$ recursively up to
  $n=2\kappa-1$ because when $n=2\kappa$, the left-hand side of
  \eqref{equation: consistency} becomes $0$. In order for
  \eqref{equation: y''=-Qy appendix} to have a solution of the form
  $\wt w^{1/2-\kappa}(1+O(\wt w))$, i.e., in order for
  \eqref{equation: y''=-Qy appendix} to be apparent at $z_0$, we need
  the numbers $c_0,\ldots,c_{2\kappa-1}$ to satisfy
  \begin{equation} \label{equation: apparent condition}
    \sum_{j=0}^{2\kappa-1}a_{2\kappa-j-2}c_j=0.
  \end{equation}
  Note that this is the only condition required.
\end{Remark}

We now consider the special case where $z_0$ is an elliptic point of
$\Gamma$.

\begin{Proposition} \label{proposition: vanishing coefficients}
  Assume that the stabilizer subgroup $\Gamma_{z_0}$ of $z_0\in\H$
  in $\Gamma$ has order $N$. Let
  $$
  f(z)=(1-(1+A)\wt w)^k\sum_{n=n_0}^\infty a_n\wt w^n
  $$
  be the power series expansion in $\wt w$ of a meromorphic modular
  form $f$ of weight $k$ on $\Gamma$ at $z_0$, where $\wt w$ is given
  by \eqref{equation: wt w}. Then $a_n=0$ whenever
  $k+2n\not\equiv0\mod N$.
\end{Proposition}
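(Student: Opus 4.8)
The plan is to read off the vanishing directly from the way $f$ transforms under the stabilizer $\Gamma_{z_0}$, applied to the power series expansion itself, after first reducing to the case where $z_0$ is a genuine elliptic point.

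First I would fix notation and dispose of the trivial cases. The stabilizer in $\SL(2,\R)$ of a point of $\H$ is conjugate to $\mathrm{SO}(2)$, so $\Gamma_{z_0}$ is cyclic; let $\gamma_0=\SM abcd$ be a generator, of order $N$. Since $\gamma_0 z_0=z_0$, the vector $(z_0,1)^t$ is an eigenvector of $\gamma_0$ with eigenvalue $\zeta:=cz_0+d$, so $\zeta$ is a root of unity whose order equals that of $\gamma_0$, i.e. $\zeta$ is a primitive $N$-th root of unity; in particular $|\zeta|=1$, hence $c\overline z_0+d=\overline\zeta=\zeta^{-1}$. If $N=1$ there is nothing to prove, and if $N=2$ then $\gamma_0=-I$ (the only element of order $2$ in $\SL(2,\R)$), so $-I\in\Gamma$, which forces $k$ to be even and hence $k+2n\equiv0\bmod 2$ for all $n$. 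So I may assume $N\ge 3$, in which case $\zeta^2\neq1$ and $z_0$ is an elliptic point. As in the proof of Lemma \ref{lemma: E2 nonvanishing}, $\phi^\ast(z)=\phi(z)+\alpha/(2\pi i(z-\overline z))$ transforms like a modular form of weight $2$, so evaluating at $z_0$ gives $\phi^\ast(z_0)=\zeta^2\phi^\ast(z_0)$, whence $\phi^\ast(z_0)=0$ and therefore $A=4\pi(\Im z_0)\phi^\ast(z_0)/\alpha=0$. Thus $\wt w=w=(z-z_0)/(z-\overline z_0)$ and the expansion in the statement is simply the power series expansion $f(z)=(1-w)^k\sum_{n\ge n_0}a_n w^n$ of Proposition \ref{proposition: power series expansion}.

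Next I would compute the effect of $\gamma_0$ on $w$ and substitute into the modularity relation. Applying the standard identity $\gamma_0 z-\gamma_0 p=(z-p)/((cz+d)(cp+d))$ (valid because $\det\gamma_0=1$) at the fixed points $p=z_0$ and $p=\overline z_0$, and using $cz_0+d=\zeta$, $c\overline z_0+d=\zeta^{-1}$, one gets $\gamma_0 z-z_0=\zeta^{-1}(z-z_0)/(cz+d)$ and $\gamma_0 z-\overline z_0=\zeta(z-\overline z_0)/(cz+d)$, hence
\[
w(\gamma_0 z)=\zeta^{-2}\,w(z),\qquad 1-w(\gamma_0 z)=\zeta^{-1}(cz+d)\,\bigl(1-w(z)\bigr).
\]
Substituting $f(z)=(1-w)^k\sum_n a_n w^n$ into $f(\gamma_0 z)=(cz+d)^k f(z)$, the left-hand side becomes $\zeta^{-k}(cz+d)^k(1-w)^k\sum_n a_n\zeta^{-2n}w^n$ and the right-hand side is $(cz+d)^k(1-w)^k\sum_n a_n w^n$; cancelling the common factor $(cz+d)^k(1-w)^k$ and comparing the coefficient of $w^n$ (legitimate since $z\mapsto w$ is a local coordinate at $z_0$) yields $a_n(\zeta^{-(k+2n)}-1)=0$ for every $n$. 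Since $\zeta$ has exact order $N$, this forces $a_n=0$ whenever $N\nmid k+2n$, which is the assertion. The step demanding care will be the second one: verifying that $w$ is scaled by exactly $\zeta^{-2}$ while $1-w$ picks up exactly the automorphy factor $\zeta^{-1}(cz+d)$, so that after substitution the factors $(cz+d)^k$ cancel and only the scalar $\zeta^{-(k+2n)}$ survives; everything else is formal.
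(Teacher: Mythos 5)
Your proof is correct, and it takes a genuinely different route from the one in the paper. The paper first reduces to holomorphic $f$ (writing a meromorphic modular form as a ratio of holomorphic ones), then invokes Proposition \ref{proposition: expansion in wt w} to identify $a_n$ with a nonzero multiple of $\vartheta^{[n]}f(z_0)$, and finally evaluates the weight-$(k+2n)$ transformation law of $\vartheta^{[n]}f$ at the fixed point to conclude $\vartheta^{[n]}f(z_0)=0$ when $N\nmid(k+2n)$. You instead work directly with the local coordinate: after observing that $\phi^\ast(z_0)=0$ at an elliptic point (so $A=0$ and $\wt w=w$, reducing to Proposition \ref{proposition: power series expansion}), you compute $w(\gamma_0 z)=\zeta^{-2}w(z)$ and $1-w(\gamma_0 z)=\zeta^{-1}(cz+d)(1-w(z))$, substitute into $f(\gamma_0 z)=(cz+d)^kf(z)$, and compare Laurent coefficients to get $a_n(\zeta^{-(k+2n)}-1)=0$. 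Your computations check out (including $c\overline z_0+d=\zeta^{-1}$ and the exact order of $\zeta$), and your argument has two advantages: it handles meromorphic $f$ directly, with no need for the ratio reduction (which in the paper's version tacitly requires checking that the congruence-class support of coefficients behaves multiplicatively under division), and it makes no use of the Serre-derivative identification of the coefficients. What it costs is the preliminary verification that $A=0$ at elliptic points and the explicit transformation of $w$ and $1-w$, which the paper's approach sidesteps by working with the intrinsic objects $\vartheta^{[n]}f$. The only cosmetic remark: in the case $N=2$ you say $-I\in\Gamma$ ``forces $k$ to be even''; strictly, odd $k$ is allowed but then $f\equiv0$ and the claim is vacuous, which you should note rather than exclude.
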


\begin{proof}
  The analogous statement for \eqref{equation: power series
    expansion} is already known in literature (for
  example, see \cite{I-O-elliptic} for the case $k$ is even). For the
  convenience of the reader, we give a proof here.
  
  Since any meromorphic modular form is the ratio of two holomorphic
  modular forms, it suffices to prove the proposition under the
  assumption that $f$ is a holomorphic modular form. By
  Proposition \ref{proposition: expansion in wt w},
  $a_n$ is a multiple of $\vartheta^{[n]}f(z_0)$ and the proof reduces
  to showing that $\vartheta^{[n]}f(z_0)=0$ whenever
  $k+2n\not\equiv0\mod N$.
  
  Recall that $\Gamma_{z_0}$ is a finite cyclic group (see
  \cite[Proposition 1.16]{Shimura-book}), say, $\Gamma_{z_0}$ is
  generated by $\gamma=\SM abcd$ of order $N$. Since $z_0$ is fixed by
  $\gamma$, we have
  $$
  z_0=\frac{a-d+\sqrt{(d-a)^2+4bc}}{2c}
  =\frac{a-d+\sqrt{(a+d)^2-4}}{2c}
  $$
  and
  $$
  cz_0+d=\frac{a+d+\sqrt{(a+d)^2-4}}{2}.
  $$
  Observe that it is a root of the characteristic polynomial
  $x^2-(a+d)x+1$ of $\gamma$, i.e., an eigenvalue of $\gamma$. Hence,
  $cz_0+d$ is a primitive $N$th root of unity. Now $\vartheta^{[n]}f$
  is a holomorphic modular form of weight $k+2n$ on $\Gamma$, i.e.,
  $$
  \vartheta^{[n]}f(\gamma z)=(cz+d)^{k+2n}
  \vartheta^{[n]}f(z).
  $$
  Evaluating the two sides at $z_0$, we see that
  $\vartheta^{[n]}f(z_0)=0$ whenever $N\nmid(k+2n)$.
  This proves the proposition.
\end{proof}

Using this proposition, we immediately obtain a sufficient condition
for \eqref{equation: y''=-Qy appendix} to be apparent at an elliptic
point.

\begin{Theorem} \label{corollary: apparent at elliptic}
  Assume that $z_0$ is an elliptic point of order $e$
  on $X(\Gamma)$ and the local exponents of \eqref{equation: y''=-Qy
    appendix} at $z_0$ is $1/2\pm\kappa$ for some
  $\kappa\in\frac12\Z_{\ge 0}$ such that $e\nmid(2\kappa_{z_0})$, then
  \eqref{equation: y''=-Qy appendix} is apparent at $z_0$.
\end{Theorem}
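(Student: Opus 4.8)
The plan is to read off apparentness from the $\wt w$-expansion of $Q$ via Lemma~\ref{lemma: local exponents} and Remark~\ref{remark: apparentness condition}, feeding in the vanishing pattern of the expansion coefficients supplied by Proposition~\ref{proposition: vanishing coefficients}. Write $Q(z)=(1-(1+A)\wt w)^4\wt Q(-4\pi(\Im z_0)\wt w)$ with $\wt Q(x)=\sum_{n\ge-2}a_nx^n$ as in Lemma~\ref{lemma: local exponents}. The stabilizer $\Gamma_{z_0}$ has order $N=2e$ if $-I\in\Gamma$ and $N=e$ otherwise; note that an elliptic point of even order $e$ forces $-I\in\Gamma$, since a generator $\gamma$ of $\Gamma_{z_0}$ of order $e$ would have $\gamma^{e/2}$ of order $2$, and the only order-$2$ element of $\SL(2,\R)$ is $-I$. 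Applying Proposition~\ref{proposition: vanishing coefficients} to $Q$ (a meromorphic modular form of weight $4$), the coefficient $a_n$ vanishes unless $N\mid(4+2n)$, and in each of the three cases ($e$ even with $N=2e$; $e$ odd with $N=2e$; $e$ odd with $N=e$) this collapses to the single condition $a_n=0$ unless $n\equiv-2\pmod e$. This is the one structural input.

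Next I would run the recursion \eqref{equation: consistency} for the solution with the smaller exponent $\alpha=1/2-\kappa$, normalized by $c_0=1$ (here $\kappa\ge1/2$, since $e\mid0$ rules out $\kappa=0$). With this $\alpha$ the left-hand coefficient $(\alpha+n)(\alpha+n-1)-a_{-2}$ factors as $n(n-2\kappa)$, hence is nonzero for $1\le n\le2\kappa-1$, so $c_1,\dots,c_{2\kappa-1}$ are genuinely determined and the recursion degenerates exactly at $n=2\kappa$. I claim $c_n=0$ for $1\le n\le2\kappa-1$ whenever $e\nmid n$: in $\sum_{j=0}^{n-1}a_{n-j-2}c_j$ a summand is nonzero only if $a_{n-j-2}\ne0$, which forces $n-j-2\equiv-2\pmod e$, i.e.\ $j\equiv n\pmod e$; if $e\nmid n$ this excludes $j=0$ and all remaining admissible $j$ have $c_j=0$ by strong induction, so the right side vanishes and $c_n=0$.

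Finally, by Remark~\ref{remark: apparentness condition}, apparentness of \eqref{equation: y''=-Qy appendix} at $z_0$ is equivalent to the single identity $\sum_{j=0}^{2\kappa-1}a_{2\kappa-j-2}c_j=0$. In this sum a summand is nonzero only when $a_{2\kappa-j-2}\ne0$ (so $j\equiv2\kappa\pmod e$) and $c_j\ne0$ (so $j\equiv0\pmod e$ by the claim); together these force $2\kappa\equiv0\pmod e$, contradicting $e\nmid2\kappa$. Hence every summand vanishes and the equation is apparent at $z_0$. The main obstacle I anticipate is purely bookkeeping: handling the $-I\in\Gamma$ versus $-I\notin\Gamma$ dichotomy so that $N\mid(4+2n)$ always reduces to $n\equiv-2\pmod e$, and using the factorization $n(n-2\kappa)$ of the indicial coefficient to be sure the $c_n$ are well defined for $n<2\kappa$ before the resonance at $n=2\kappa$.
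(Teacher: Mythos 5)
Your proposal is correct and follows essentially the same route as the paper: expand $Q$ in the parameter $\wt w$, use Proposition~\ref{proposition: vanishing coefficients} to get $a_n=0$ unless $n\equiv-2\pmod e$, deduce inductively from \eqref{equation: consistency} that $c_n=0$ for $e\nmid n$ up to $n=2\kappa-1$, and observe that the apparentness condition \eqref{equation: apparent condition} then holds automatically since $j\equiv0$ and $j\equiv2\kappa\pmod e$ are incompatible when $e\nmid 2\kappa$. Your explicit treatment of the $-I\in\Gamma$ versus $-I\notin\Gamma$ dichotomy is a slightly more careful version of the paper's remark that one may assume $-I\in\Gamma$ because $Q$ has even weight.
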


\begin{proof} Let $Q(z)=(1-(1+A)\wt w)^4\sum_{n=-2}^\infty a_n\wt w^n$
  be the expansion of $Q(z)$ in $\wt w$. Since $Q(z)$ has weight $4$,
  we may assume that $\Gamma$ contains $-I$. Then the stabilizer
  subgroup $\Gamma_{z_0}$ of $z_0$ in $\Gamma$ has order $2e$ (see
  \cite[Proposition 1.20]{Shimura-book}). Hence, by Proposition
  \ref{proposition: vanishing coefficients},
  $a_n=0$ whenever $4+2n\not\equiv 0\mod 2e$. Now the differential
  equation \eqref{equation: y''=-Qy appendix} is apparent if and only
  if it has a solution of the form
  $$
  y(z)=\frac1{1-(1+A)\wt w}\wt w^{1/2-\kappa}\sum_{n=0}^\infty
  c_n\wt w^n, \qquad c_0\neq 0.
  $$
  By Lemma \ref{lemma: local exponents}, the coefficients $c_n$ must
  satisfy \eqref{equation: consistency}. Since $a_n=0$ whenever $n\neq
  -2\mod e$, we can inductively prove using \eqref{equation:
    consistency} that $c_n=0$ whenever $e\nmid n$, up to
  $n=2\kappa-1$. Then the apparentness condition \eqref{equation:
    apparent condition} holds because $2\kappa-j-2\equiv-2\mod e$ and
  $j\equiv 0\mod e$ cannot hold simultaneously, by the assumption that
  $e\nmid(2\kappa_{z_0})$. This proves the theorem.
\end{proof}

In the remainder of the section, we will specialize $\Gamma$ to one of
the three groups $\SL(2,\Z)$, $\Gamma_0^+(2)$, and $\Gamma_0^+(3)$,
and prove the existence of $Q(z)$ satisfying \eqref{equation: H}.

Recall that $\rho_1$ and $\rho_2$ are the two elliptic points of
$\Gamma$. Given $\{\rho_1,\rho_2,z_1,\ldots,z_m\}$ with
$\rho_1,\rho_2,z_1,\ldots,z_m$ mutually $\Gamma$-inequivalent and
$\{\kappa_{\rho_1},\kappa_{\rho_2},\kappa_1,\ldots,\kappa_m\}
\in\frac12\N$, where $(2\kappa_{\rho_i},e_i)=1$ for $i=1,2$, we want
to show the existence of a meromorphic modular form $Q(z)$ of weight
$4$ such that $\{\rho_1,\rho_2,z_1,\ldots,z_m\}$ is the set of
singular points of $Q(z)$ and \eqref{equation: H} holds. Note that
\eqref{equation: H} allows $\rho_i$ to be smooth if
$\kappa_{\rho_i}=1/2$.

When $\Gamma=\SL(2,\Z)$, we let $t_j=E_4(z_j)^3/E_6(z_j)^2$, which is
not $0$ nor $1$ since $z_j$ is not an elliptic point nor a cusp. We
observe that if $Q(z)$ satisfies \eqref{equation: H}, then $Q(z)$ can
be expressed as
\begin{equation} \label{equation: Q SL(2,Z)}
  \begin{split}
    Q(z)&=rE_4(z)+s\frac{\Delta(z)}{E_4(z)^2}
    +t\frac{E_4(z)\Delta(z)}{E_6(z)^2} \\
    &\qquad+E_4(z)\sum_{j=1}^m\left(r_1^{(j)}
      \frac{\Delta(z)^2}{F_j(z)^2}
      +r_2^{(j)}\frac{\Delta(z)}{F_j(z)}\right),
  \end{split}
\end{equation}
where $r$, $s$, $t$, $r_1^{(j)}$, and $r_2^{(j)}$ are complex
parameters and $F_j(z):=E_4(z)^3-t_jE_6(z)^2$ are modular forms of
weight $12$. Note that at any elliptic point $\rho_j$, Proposition
\ref{proposition: vanishing coefficients} implies that
$Q(z)=A(z-\rho_j)^{-2}+(\text{a holomorphic function near }\rho_j)$,
and the expression \eqref{equation: Q SL(2,Z)} follows immediately
from this fact. The indicial equations at $\rho$, $i$, and $z_j$ can
be computed using Lemma \ref{lemma: local exponents}, together with
Proposition \ref{proposition: expansion in wt w} and Ramanujan's
identities \eqref{equation: Ramanujan}. We find they are
\begin{equation} \label{equation: indicial SL(2,Z)}
x^2-x+\frac s{192}=0, \
x^2-x-\frac t{432}=0, \
x^2-x-\frac{r_1^{(j)}}{1728^2t_j}=0,
\end{equation}
respectively. Thus, the parameters $s$, $t$, and $r_j^{(1)}$ are
uniquely determined by the given data $\kappa_{\rho}$, $\kappa_i$,
and $\kappa_j$. The rest of parameters will be determined by the
apparentness condition. See Theorem \ref{theorem: existence of Q}
below.

When $\Gamma=\Gamma_0^+(2)$, we define modular forms
\begin{equation*}
  \begin{split}
    M_2^-(z)&=2E_2(2z)-E_2(z), \qquad
    M_4^-(z)=\frac{4E_4(2z)-E_2(z)}3, \\
    M_4^+(z)&=\frac{4E_4(2z)+E_4(z)}5, \qquad
    M_6^+(z)=\frac{8E_6(2z)+E_6(z)}9, \\
    M_8^+(z)&=\eta(z)^8\eta(2z)^8,
  \end{split}
\end{equation*}
where the integers in the subscripts denote the weights and the signs
in the superscripts denote the eigenvalues under the Atkin-Lehner
involution. Among them, $M_2^-(z)$ and $M_4^-(z)$ freely generate the
ring of modular forms on $\Gamma_0(2)$, while $M_4^+(z)$, $M_6^+(z)$,
and $M_8^+(z)$ generate the ring of modular forms on $\Gamma_0^+(2)$
with a single relation $M_6^+(z)^2=M_4^+(z)(M_4^+(z)^2-256M_8^+(z))$.
The expressions for $M_4^+$, $M_6^+$, and $M_8^+$ as products of
$M_2^-$ and $M_4^-$ are
\begin{equation} \label{equation: M4 in M2-}
  M_4^+=(M_2^-)^2, \quad
  M_6^+=M_2^-M_4^-, \quad
  M_8^+=\frac{(M_2^-)^4-(M_4^-)^2}{256}.
\end{equation}
Also, set $M_2^\ast(z)=(2E_2(2z)+E_2(z))/3$, which is a quasimodular
form of weight $2$ and depth $1$ on $\Gamma_0^+(2)$. We have
\begin{equation} \label{equation: Ramanujan Gamma0+(2)}
  \begin{split}
    D_qM_2^\ast(z)&=\frac{M_2^\ast(z)^2-M_4^+(z)}8, \\
    D_qM_2^-(z)&=\frac{M_2^\ast(z)M_2^-(z)-M_4^-(z)}4, \\
    D_qM_4^-(z)&=\frac{M_2^\ast(z)M_4^-(z)-M_2^-(z)^3}2.
  \end{split}
\end{equation}
The modular forms $M_2^-(z)$ and $M_4^-(z)$ have a simple zero at
$\rho_2=(1+i)/2$ and $\rho_1=i/\sqrt2$, respectively, and are
nonvanishing elsewhere, up to $\Gamma_0(2)$-equivalence.

As in the case of $\SL(2,\Z)$, we let $t_j=M_4^-(z_j)^2/M_4^+(z_j)^2$,
which is never $0$ nor $1$ since $z_j$ is not a cusp or an elliptic
point, and set $F_j(z)=M_4^-(z)^2-t_jM_4^+(z)^2$. We then observe that
the ratios $M_8^+(z)/M_4^+(z)$ and
$M_4^+(z)M_8^+(z)/(M_4^+(z)^2-256M_8^+(z))$ have double poles at
$\rho_2$ and $\rho_1$, respectively. Also, the modular
form $F_j(z)$ has a simple zero at the point $z_j$. Hence Proposition
\ref{proposition: vanishing coefficients} implies that
\begin{equation} \label{equation: Q Gamma0+(2)}
  \begin{split}
    Q(z)&=rM_4^+(z)+s\frac{M_8^+(z)}{M_4^+(z)}
    +t\frac{M_4^+(z)M_8^+(z)}{M_4^+(z)^2-256M_8^+(z)} \\
    &\qquad+M_4^+(z)\sum_{j=1}^m
    \left(r_1^{(j)}\frac{M_8^+(z)^2}{F_j(z)^2}
      +r_2^{(j)}\frac{M_8^+(z)}{F_j(z)}\right),
  \end{split}
\end{equation}
where $r$, $s$, $t$, $r_1^{(j)}$, and $r_2^{(j)}$, $j=1,\ldots,m$, are
complex parameters. Among the parameters, $s$, $t$, and $r_1^{(j)}$
are determined by the data $\kappa_{\rho_i}$ and $\kappa_j$ since the
indicial equations at $\rho_1$, $\rho_2$, and $z_j$ are
\begin{equation} \label{equation: indicial Gamma0+(2)}
  x^2-x-\frac t{64}=0, \
  x^2-x+\frac s{16}=0, \
  x^2-x-\frac{r_1^{(j)}}{256^2t_j}=0,
\end{equation}
respectively. (They are computed using Lemma \ref{lemma: local
  exponents}, Proposition \ref{proposition: expansion in wt w} and
\eqref{equation: Ramanujan Gamma0+(2)}.) Again, the remaining
parameters will be determined by the apparentness condition.

When $\Gamma=\Gamma_0^+(3)$, we define the modular forms
\begin{equation*}
  \begin{split}
    M_1(z)=\sum_{m,n\in\Z}q^{m^2+mn+n^2}, \qquad
    M_3(z)=\frac{\eta(z)^9}{\eta(3z)^3}
    -27\frac{\eta(3z)^9}{\eta(z)^3}
  \end{split}
\end{equation*}
of weight $1$ and $3$, respectively, on $\Gamma_1(3)$.
They generate the ring of modular forms on $\Gamma_1(3)$. Set
$M_2^\ast(z)=(3E_2(3z)+E_2(z))/4$. We have
\begin{equation} \label{equation: Ramanujan Gamma0+(3)}
  \begin{split}
    D_qM_2^\ast(z)&=\frac{M_2^\ast(z)^2-M_1(z)^4}6, \\
    D_qM_1(z)&=\frac{M_2^\ast(z)M_1(z)-M_3(z)}6, \\
    D_qM_3(z)&=\frac{M_2^\ast(z)M_3(z)-M_1(z)^5}2.
  \end{split}
\end{equation}
Also let
\begin{equation*}
  \begin{split}
    M_2^-(z)&=\frac{3E_2(2z)-E_2(z)}2=M_1(z)^2
    \in\sM_2(\Gamma_0^+(3),-), \\
    M_4^+(z)&=\frac{9E_4(3z)+E_4(z)}{10}=M_1(z)^4
    \in\sM_4(\Gamma_0^+(3)), \\
    M_6^-(z)&=\eta(z)^6\eta(3z)^6=\frac{M_1(z)^6-M_3(z)^2}{108}
    \in\sM_6(\Gamma_0^+(3),-).
  \end{split}
\end{equation*}
Then meromorphic modular forms $Q(z)$ of weight $4$ on $\Gamma_0^+(3)$
such that all poles have order $\le 2$ are of the form
\begin{equation} \label{equation: Q Gamma0+(3)}
  \begin{split}
    Q(z)&=rM_4^+(z)+s\frac{M_6^-(z)}{M_2^-(z)}
    +t\frac{M_4^+(z)M_6^-(z)}{M_3(z)^2} \\
    &\qquad+M_4^+(z)\sum_{j=1}^m\left(
      r_1^{(j)}\frac{M_6^-(z)^2}{F_j(z)^2}
      +r_2^{(j)}\frac{M_6^-(z)}{F_j(z)}\right),
  \end{split}
\end{equation}
where $F_j(z)$ is a modular form in $\sM_6(\Gamma_0^+(3),-)$ of the
form $F_j(z)=M_1(z)^6-t_jM_3(z)^2$ with $t_1,\ldots,t_m$ being
distinct complex numbers such that none of them is $0$ or $1$. The
ratio $M_6^-(z)/M_2^-(z)$ has a double pole at
$\rho_2=(3+\sqrt{-3})/6$, while $M_4^+(z)M_6^-(z)/M_3(z)^2$ has a
double pole at $\rho_1=i/\sqrt 3$. The modular form $F_j(z)$ has a
simple zero at the point $z_j$ such that $t_j=M_1(z_j)^6/M_3(z_j)^2$.

Again, $s$, $t$, and $r_1^{(j)}$ are uniquely determined by
the indicial equations
\begin{equation} \label{equation: indicial Gamma0+(3)}
  x^2-x-\frac t{27}=0, \
  x^2-x+\frac s3=0, \
  x^2-x-\frac{r_1^{(j)}}{108^2t_j}=0
\end{equation}
at $\rho_1$, $\rho_2$, and $z_j$, respectively, computed using
\ref{lemma: local exponents}, Proposition \ref{proposition: expansion
  in wt w} and \eqref{equation: Ramanujan Gamma0+(3)}. It remains to
determine the other parameters $r$ and $r_2^{(j)}$, $j=1,\ldots,m$.
Note that by Theorem \ref{corollary: apparent at elliptic}, the ODE
\eqref{equation: y''=-Qy appendix} is apparent at all the elliptic
points.

\begin{Theorem} \label{theorem: existence of Q}
  Given $\{\rho_1,\rho_2,z_1,\ldots,z_m\}$ and
  $\{\kappa_{\rho_1},\kappa_{\rho_2},\kappa_1,\ldots,\kappa_m\}
  \subset\frac12\N$ and $\kappa_\infty\ge 0$, there are
  $\prod_{j=1}^m(2\kappa_j)$ meromorphic modular forms $Q(z)$ of
  weight $4$ on $\Gamma$, counted with multiplicity, such that
  \eqref{equation: H} holds and the local exponents at $\infty$ are
  $\pm\kappa_\infty$.
\end{Theorem}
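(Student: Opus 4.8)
The plan is to reduce the statement to a B\'ezout count of solutions of a polynomial system in the finitely many accessory parameters, after fixing all the other parameters. By Proposition~\ref{proposition: vanishing coefficients}, any meromorphic modular form $Q$ of weight $4$ on $\Gamma$ whose poles all lie in $\{\rho_1,\rho_2,z_1,\dots,z_m\}$ and have order $\le 2$ must be one of \eqref{equation: Q SL(2,Z)}, \eqref{equation: Q Gamma0+(2)}, \eqref{equation: Q Gamma0+(3)} according to $\Gamma$; denote its parameters by $r$, $s$, $t$, $r_1^{(j)}$, $r_2^{(j)}$. Any such $Q$ is automatically holomorphic at the cusp and Fuchsian on $\H$. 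The indicial equations \eqref{equation: indicial SL(2,Z)} / \eqref{equation: indicial Gamma0+(2)} / \eqref{equation: indicial Gamma0+(3)} (computed via Lemma~\ref{lemma: local exponents}) show that requiring the local exponents at $\rho_1,\rho_2,z_j$ to be $1/2\pm\kappa_{\rho_1}$, $1/2\pm\kappa_{\rho_2}$, $1/2\pm\kappa_j$ pins down $s$, $t$ and each $r_1^{(j)}$ uniquely. Since the cusp $\infty$ has width $1$, requiring its local exponents to be $\pm\kappa_\infty$ means $Q(\infty)=\kappa_\infty^2$; as every basis function in the formula for $Q$ other than the leading one ($E_4$ for $\SL(2,\Z)$, $M_4^+$ otherwise) vanishes at $\infty$ while the leading one equals $1$ there, this forces $r=\kappa_\infty^2$. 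By Theorem~\ref{corollary: apparent at elliptic}, the equation is automatically apparent at $\rho_1$ and $\rho_2$ because $(2\kappa_{\rho_i},e_i)=1$. Hence $Q$ satisfies \eqref{equation: H} with the prescribed singular set and exponents if and only if the remaining parameters $\mathbf r=(r_2^{(1)},\dots,r_2^{(m)})$ satisfy the $m$ apparentness conditions \eqref{equation: apparent condition} at $z_1,\dots,z_m$; moreover the map $Q\mapsto(r,s,t,r_1^{(j)},r_2^{(j)})$ is injective (the basis functions have distinct pole behaviour), so distinct solutions $\mathbf r$ give distinct $Q$'s.

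Next I would analyse the $j$-th apparentness polynomial $P_j(\mathbf r)$. Expanding $Q$ at $z_j$ in the local parameter $\wt w$ of \eqref{equation: wt w} by Proposition~\ref{proposition: expansion in wt w}, write the associated series $\wt Q$ as $\sum_{n\ge-2}a_n x^n$. Only the two terms of the formula for $Q$ having $F_j$ in the denominator are singular at $z_j$, so $a_{-2}$ is the already-fixed constant controlled by $r_1^{(j)}$; since $F_j$ has a simple zero at $z_j$ (Lemma~\ref{lemma: F_j has simple zero} and the analogous statements for $\Gamma_0^+(2)$, $\Gamma_0^+(3)$) and the relevant modular forms are nonzero at $z_j$, the coefficient $a_{-1}$ equals $\beta_j r_2^{(j)}+(\text{const})$ with $\beta_j\ne0$, and each $a_n$ with $n\ge0$ is affine-linear in $\mathbf r$. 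Running the recursion \eqref{equation: consistency} with $c_0=1$ and $\alpha=1/2-\kappa_j$, where the left-hand factor is $n(n-2\kappa_j)\ne0$ for $1\le n\le 2\kappa_j-1$, one shows by induction that $c_n$ is a polynomial in $\mathbf r$ of total degree $\le n$ whose degree-$n$ homogeneous part equals $\gamma_n a_{-1}^n$ with $\gamma_n=\prod_{l=1}^n (l(l-2\kappa_j))^{-1}\ne0$. Substituting into $P_j=\sum_{j''=0}^{2\kappa_j-1}a_{2\kappa_j-j''-2}\,c_{j''}$ then gives $\deg P_j\le 2\kappa_j$, with degree-$2\kappa_j$ part $\gamma_{2\kappa_j-1}a_{-1}^{2\kappa_j}$, which is a nonzero scalar times $(r_2^{(j)})^{2\kappa_j}$.

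Finally I would conclude by B\'ezout's theorem. The leading forms of $P_1,\dots,P_m$ are, up to nonzero constants, $(r_2^{(1)})^{2\kappa_1},\dots,(r_2^{(m)})^{2\kappa_m}$, so their only common zero in the hyperplane at infinity of $\P^m$ would require $r_2^{(1)}=\dots=r_2^{(m)}=0$, which is not a projective point; hence the homogenized system has no solution at infinity, so its zero set is finite and lies in the affine chart $\C^m$. By B\'ezout the number of solutions in $\C^m$, counted with intersection multiplicity, is $\prod_{j=1}^m 2\kappa_j$, and by the correspondence of the first step this is precisely the number of $Q$ of weight $4$ on $\Gamma$ satisfying \eqref{equation: H} with the prescribed singular set and local exponents, counted with multiplicity. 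The step I expect to be the main obstacle is the degree analysis of $P_j$: one must track carefully how $r_2^{(1)},\dots,r_2^{(m)}$ enter the $\wt w$-expansion coefficients $a_n$ and then propagate through the nonlinear recursion for the $c_n$, so as to be certain that the top-degree part of $P_j$ collapses to a pure power of $r_2^{(j)}$ and in particular does not involve the other accessory parameters.
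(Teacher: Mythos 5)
Your proposal is correct and follows essentially the same route as the paper: fix $s$, $t$, $r_1^{(j)}$ and $r$ via the indicial equations and the cusp condition, reduce \eqref{equation: H} to the $m$ apparentness polynomials $P_j$ in the accessory parameters $r_2^{(j)}$, verify that the top-degree part of $P_j$ is a nonzero multiple of $(r_2^{(j)})^{2\kappa_j}$, and conclude by B\'ezout. The only difference is that you carry out the degree analysis of the recursion for the $c_n$ explicitly, whereas the paper defers this step to \cite{Guo-Lin-Yang}; your computation of the leading homogeneous part is correct.
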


\begin{proof} The theorem was proved in \cite{Guo-Lin-Yang} when
  $\Gamma=\SL(2,\Z)$. For the other two groups $\Gamma_0^+(2)$ and
  $\Gamma_0^+(3)$, the proofs are basically the same. For the
  convenience of the reader, we sketch them here.

  Using the expansion in Lemma \ref{lemma: local exponents}, we have
  that \eqref{equation: y''=-Qy appendix} is apparent at $z_i$ if and
  only if there is a solution $y(z)=(1-(1+A)\wt w)^{-1}\wt
  w^{1/2-\kappa_i}\sum_{n=0}^\infty c_n\wt w^n$, $c_0=1$. By Remark
  \ref{remark: apparentness condition}, it is equivalent to
  \eqref{equation: apparent condition} holds for $c_j$, $1\le
  j\le2\kappa_i-1$, obtained by the recursive relation
  \eqref{equation: consistency}. Then the condition \eqref{equation:
    apparent condition} at $z_i$ yields a polynomial
  $P_j(r,r_2^{(1)},\ldots,r_2^{(m)})$ of degree $2\kappa_j$ and it is
  not difficult to see that the polynomial $P_j$ is of the form
  \begin{equation} \label{equation: P}
    P_j(r,r_2^{(1)},\ldots,r_2^{(m)})=B_j(r_2^{(j)})^{2\kappa_j}
      +(\text{terms of lower order})
  \end{equation}
  for some constant $B_j\neq 0$ independent of $r$ and
  $r_2^{(j)}$. (For details, we refer to \cite[Theorem
  1.3]{Guo-Lin-Yang}.) Suppose that $\kappa_\infty$ is given. Then we
  have $r=\kappa_\infty^2$. Thus, $Q(z)$ is apparent at all
  $z_j$ with the given data if and only if
  $(r_2^{(1)},\ldots,r_2^{(m)})$ is a common root of the $m$
  polynomials \eqref{equation: P}. It is easy to see that there are no
  common roots at $\infty$ when all polynomials are homogenized. By
  Bezout's theorem, there are $\prod_{j=1}^m(2\kappa_j)$ solutions,
  counted with multiplicities. This proves the theorem.
\end{proof}

\begin{Example} Consider the quasimodular form
  $f(z)=E_2(z)E_4(z)+aE_6(z)$ of weight $6$ and depth $1$ on
  $\SL(2,\Z)$, where $a$ is a complex number. We compute that
  $$
  W_f(z)=(-1-6a)E_4(z)^3-(a^2-4a)E_6(z)^2
  $$
  and $f(z)/\sqrt{W_f(z)}$ is a solution of \eqref{equation: y''=-Qy
    appendix} with
  $$
  Q(z)=E_4(z)\left(\frac34t_1\frac{1728^2\Delta(z)^2}{F_1(z)^2}
    -\frac{1+31a}{12(1+6a)}\frac{1728\Delta(z)}{F_1(z)}\right),
  $$
  where $t_1=(4a-a^2)/(1+6a)$ and $F_1(z)=E_4(z)^3-t_1E_6(z)^2$.
  Let $z_1$ be a point in $\H$ such that $t_1=E_4(z_1)^3/E_6(z_1)^2$.
  The example shows that for a generic point $z_1$, there are two
  meromorphic modular form $Q(z)$ of weight $4$ on $\SL(2,\Z)$
  (corresponding to the two complex numbers $a$ such that
  $t_1=(4a-a^2)/(1+6a)$) such that \eqref{equation: y''=-Qy appendix}
  is apparent at $z_1$ with local exponents $-1/2,3/2$.
  Theorem \ref{theorem: existence of Q} asserts that these are the
  only two such $Q(z)$.
\end{Example}

\end{document}